\numberwithin{equation}{section}
\def\hangbox to #1 #2{\vskip3pt\hangindent #1\noindent \hbox to #1{#2}$\!\!$}
\theoremstyle{plain}
\newtheorem{theorem}{Theorem}[section]
\newtheorem{proposition}[theorem]{Proposition}
\newtheorem{corollary}[theorem]{Corollary}
\newtheorem{lemma}[theorem]{Lemma}
\theoremstyle{definition}
\newtheorem{remark}[theorem]{Remark}
\DeclareSymbolFont{bbold}{U}{bbold}{m}{n}
\DeclareSymbolFontAlphabet{\mathbbold}{bbold}
\def\sfrac#1#2{\kern.1em\raise.5ex\hbox{$#1$}
        \kern-.1em/\kern-.05em\lower.25ex\hbox{$#2$}}
\newcommand{\fw}{\text{\fw}}
\title[The optimal Sobolev threshold for evolution equations with rough nonlinearities]{On the optimal Sobolev threshold for evolution equations with rough nonlinearities}
\author{Ben Pineau}
\address{Courant Institute for Mathematical Sciences\\
New York University
} \email{brp305@nyu.edu}
\author{Mitchell~A.\ Taylor}
\address{Department of Mathematics\\
ETH Z\"urich, Ramistrasse 101, 8092 Z\"urich, Switzerland.
} \email{mitchell.taylor@math.ethz.ch}
\date{\today}
\subjclass[2020]{35A01, 35B65 (primary);  35K58, 35Q55  (secondary)} 
\keywords{Nonlinear Schr\"odinger equation; nonlinear heat equation; well-posedness; rough nonlinearity.}
\begin{document}

\allowdisplaybreaks

 \begin{abstract}
In this article we are concerned with evolution equations of the form
\begin{equation*}
\partial_tu-A(D)u=F(u,\overline{u},\nabla u, \nabla \overline{u})
\end{equation*}
where $A(D)$ is a Fourier multiplier of either dispersive or parabolic type and the nonlinear term $F$ is of limited regularity. Our objective is to develop a robust set of principles which can be used in many cases to predict the \emph{highest} Sobolev exponent $s=s(q,d)$ for which the above evolution is well-posed in $W_x^{s,q}(\mathbb{R}^d)$ (necessarily restricting to $q=2$ for dispersive problems). We will confirm the validity of these principles for two of the most important model problems; namely, the nonlinear Schr\"odinger and heat equations. More precisely, we will prove that the nonlinear heat equation
\begin{equation*}
\partial_tu-\Delta u=\pm |u|^{p-1}u, \hspace{5mm} p>1,
\end{equation*}
is well-posed in $W_x^{s,q}(\mathbb{R}^d)$ when $\max\{0,s_c\}<s<2+p+\frac{1}{q}$ and is \emph{strongly ill-posed} when $s\geq \max\{s_c,2+p+\frac{1}{q}\}$ and $p-1\not\in 2\mathbb{N}$  in the sense of non-existence of solutions even for smooth, small and compactly supported data. When $q=2$, we establish the same ill-posedness result for the nonlinear Schr\"odinger equation and the corresponding well-posedness result when $p\geq \frac{3}{2}$. Identifying the optimal Sobolev threshold for even a single non-algebraic $p>1$ was a rather longstanding open problem in the literature. As an immediate corollary of the fact that our ill-posedness threshold is dimension independent, we may conclude by taking $d\gg p$ that there are nonlinear Schr\"odinger equations which are ill-posed in \emph{every} Sobolev space $H_x^s(\mathbb{R}^d)$.
\end{abstract} 
\maketitle
\setcounter{tocdepth}{1}
\tableofcontents
\section{Introduction}\label{Intro}
The objective of this article is to develop a robust set of principles which can be used to predict the maximal possible Sobolev regularity of solutions to a large class of evolution equations with rough nonlinearities. We will then rigorously justify these heuristics for two of the most important families of such equations, namely, the nonlinear Schr\"odinger equations
\begin{equation}\label{NLS}\tag{NLS}
\begin{cases}
&i\partial_tu-\Delta u=\pm |u|^{p-1}u,
\\
&u(0)=u_0,
\end{cases}
\end{equation}
and the nonlinear heat equations
\begin{equation}\label{NLH}\tag{NLH}
\begin{cases}
&\partial_tu-\Delta u=\pm |u|^{p-1}u,
\\
&u(0)=u_0.
\end{cases}
\end{equation}
The question of characterizing the Sobolev exponents $s$ for which the above evolutions are well-posed in $H^s(\mathbb{R}^d)$ and $W^{s,q}(\mathbb{R}^d)$, respectively, is a fundamental problem, which has received significant attention over several decades. Focusing our attention first on \eqref{NLS}, we recall that due to the advent of Strichartz estimates, the following general result is by now classical.
\begin{theorem}[\cite{MR1667895,MR1691575,MR1946761}]\label{Classical LWP}
    Let $p>1$ and let $s\geq \max\{0,s_c\}$ where $s_c:=\frac{d}{2}-\frac{2}{p-1}$ is the scaling critical Sobolev index. If $p-1\not\in 2\mathbb{N}$, assume further that $\lfloor s\rfloor<p-1$. Then \eqref{NLS} is locally well-posed in $H^s(\mathbb{R}^d)$.
\end{theorem}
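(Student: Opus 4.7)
The plan is the standard Strichartz-based contraction argument. I would first rewrite \eqref{NLS} in Duhamel (mild) form
\[
\Phi(u)(t) := e^{-it\Delta}u_0 \mp i\int_0^t e^{-i(t-\tau)\Delta}\bigl(|u|^{p-1}u\bigr)(\tau)\,d\tau,
\]
and seek a fixed point of $\Phi$ in a Banach space tailored to the scaling and to the regularity exponent $s$. A natural choice, for a Strichartz-admissible pair $(q,r)$, is
\[
X_T := C\bigl([0,T];H^s(\R^d)\bigr)\,\cap\, L_t^q\bigl([0,T];W^{s,r}(\R^d)\bigr),
\]
endowed with the maximum of the two norms.

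Next I would combine the homogeneous and inhomogeneous Strichartz estimates with a nonlinear estimate on the Duhamel integrand. The former yield $\|e^{-it\Delta}u_0\|_{X_T}\lesssim \|u_0\|_{H^s}$ and bound the Duhamel term in $X_T$ by $\bigl\||u|^{p-1}u\bigr\|_{L_t^{\tilde q'}W^{s,\tilde r'}_x}$ for a dual admissible pair $(\tilde q,\tilde r)$. The heart of the argument is therefore a nonlinear estimate of the form
\[
\bigl\| |u|^{p-1}u\bigr\|_{L_t^{\tilde q'}W^{s,\tilde r'}_x([0,T]\times\R^d)} \lesssim T^{\theta}\,\|u\|_{X_T}^p,
\]
with $\theta\ge 0$ strictly positive in the subcritical range $s>s_c$ and vanishing at $s=s_c$. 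Together with its bilinear analogue governing differences $|u|^{p-1}u-|v|^{p-1}v$, this would allow the Banach fixed point theorem to produce a unique solution on a small closed ball of $X_T$; in the subcritical case $T$ depends only on $\|u_0\|_{H^s}$, whereas at $s=s_c$ one exploits smallness of $\|e^{-it\Delta}u_0\|_{X_T}$ as $T\to 0^+$ instead.

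The main obstacle, and the source of the hypothesis $\lfloor s\rfloor<p-1$ when $p-1\notin 2\N$, is establishing the nonlinear estimate. The map $z\mapsto |z|^{p-1}z$ is only of class $C^{\lfloor p-1\rfloor}$, with a top derivative that is H\"older continuous of order $p-1-\lfloor p-1\rfloor$, so classical Moser-type estimates obtained by iterating the chain rule accommodate at most $\lfloor p-1\rfloor$ integer derivatives. For non-integer $s$ I would combine the Kato--Ponce fractional Leibniz rule with the Christ--Weinstein fractional chain rule; the latter requires the H\"older exponent of the composed map to exceed the fractional part of the derivative being applied, and these two restrictions together enforce $\lfloor s\rfloor<p-1$. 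When $p-1\in 2\N$, the nonlinearity is polynomial in $u$ and $\overline{u}$, the chain-rule obstruction disappears, and the nonlinear estimate can be closed for every $s\geq \max\{0,s_c\}$ by ordinary Leibniz and Sobolev embedding.
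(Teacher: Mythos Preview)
Your outline is correct and is precisely the standard Strichartz/contraction-mapping scheme used in the cited references \cite{MR1667895,MR1691575,MR1946761}. Note, however, that the paper does not supply its own proof of this statement: \Cref{Classical LWP} is quoted as a classical result from the literature and serves only as background, so there is no in-paper argument to compare against.
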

When $p-1\in 2\mathbb{N}$ and $s_c\geq 0$, the above theorem characterizes all Sobolev exponents $s$ for which \eqref{NLS} is well-posed in $H^s(\mathbb{R}^d)$. On the other hand, the lower bound in \Cref{Classical LWP} can be improved for certain integrable, mass subcritical ($s_c<0$) models. See, for instance, the recent sharp result \cite{harrop2020sharp} for the 1D cubic \eqref{NLS}, which builds on ideas from the breakthrough paper \cite{MR3990604}. 
\medskip

Unsurprisingly, in the case when $p-1\not\in 2\mathbb{N}$, the condition $\lfloor s\rfloor<p-1$ in \Cref{Classical LWP} is not optimal, and it has been a longstanding open question (for even a single example of $p$) to identify the sharp upper bound on $s$ for which \Cref{Classical LWP} remains true. Perhaps the first progress on this question was made in the 1980s by Kato \cite{MR877998, MR1037322} and Tsutsumi \cite{MR0913674}, who were able to establish $H^2(\mathbb{R}^d)$ well-posedness for certain \eqref{NLS} equations with $p<2$. Roughly speaking,  their strategy was to differentiate \eqref{NLS} in time,  obtain an $L^2(\mathbb{R}^d)$ a priori estimate for $\partial_tu$, and then use the equation to convert this into an estimate for $\Delta u$ (and thus, the $H^2(\mathbb{R}^d)$ norm of $u$). The motivation for this approach is that it only requires one to differentiate the nonlinearity a single time, as opposed to twice when carrying out the analogous estimate using only spatial derivatives. 



\medskip

It is natural to ask whether one can extend the  strategy of Kato and Tsutsumi  to higher order fractional Sobolev spaces $H^s(\mathbb{R}^d)$. At a high level, one might try to control $D_x^su$ (where $D_x^s$ is the multiplier $\xi\mapsto |\xi|^s$) by $D_t^{\frac{s}{2}}u$ (neglecting for the time being that this expression may not a priori make sense without suitable time truncations) and then carry out an energy estimate for the latter term. This strategy comes with a host of delicate issues. For instance, it will generally only be possible to implement when the spacetime Fourier transform of $u$ is localized near the characteristic paraboloid $\tau=-|\xi|^2$, where the space and time frequencies of $u$ are comparable.  Moreover, this method is particularly difficult to implement from a technical standpoint for fractional $s$, as it requires one to carefully truncate the nonlinearity in time. Since such a truncation will often have to be performed at small time scales,  it will cause the nonlinearity to shift to higher time frequencies, making it difficult to use the length of the time interval as a smallness parameter. This issue becomes especially non-trivial to deal with when $s>2$ (in which case, more than one time derivative would be involved in the analysis), as we will later see. We remark that for mass subcritical problems it is possible to circumvent this issue by rescaling the (inhomogeneous) $H^s(\mathbb{R}^d)$ data to be small so that one can work with unit time scales, but in supercritical regimes, this is not feasible. Notwithstanding the above caveats, we remark that implementing this strategy might lead one to believe that a natural upper bound on $s$ to ensure local well-posedness of \eqref{NLS} would be $s<2p$. However, this is not entirely accurate. 
\medskip


Due to the  combined efforts  of several authors (see~e.g.~\cite{MR2765515,MR3546788, MR3017270,MR4388268, MR3424613, MR3953030,MR1472820,MR4581790, uchizono2012well,MR3917711}) the  consensus became that the \eqref{NLS} equations with $p-1\not\in 2\mathbb{N}$ should be well-posed when
\begin{equation}\label{well-posedness known}
\max\{0,s_c\}\leq s <\begin{cases}
    2p,& \text{if } p\in (1,2),\\
    p+2,& \text{if} \ p\in (2,\infty).        
\end{cases}
\end{equation}
Morally speaking, the upper bound $s<2p$ in \eqref{well-posedness known} comes from the H\"older regularity of $|u|^{p-1}u$ and a proper execution of the ideas of Kato and Tsutsumi in fractional Sobolev spaces. The restriction $s<p+2$ essentially comes from the fact that away from the characteristic set $\tau=-|\xi|^2$ (where one can no longer use time derivatives to measure the spatial regularity of a solution) the operator $(i\partial_t-\Delta)$ can be viewed as elliptic of order $2$ in the spatial variable,  which leads one to estimate (roughly speaking) $D_x^{s-2}(|u|^{p-1}u)$ in $L_t^{\infty}L_x^2$. In view of the H\"older regularity of $|u|^{p-1}u$, this naturally leads to the restriction $s<p+2$ in the above conjecture. 

\begin{remark} One can make a similar conjecture for the nonlinear heat equation \eqref{NLH}. The situation turns out to be simpler in this case than $\eqref{NLS}$, as the characteristic hypersurface $i\tau=|\xi|^2$ for the heat operator $(\partial_t-\Delta)$ consists of the single point $(\tau,\xi)=(0,0)$ and therefore only the restriction $s<p+2$ is relevant. 
\end{remark}
Complementing the above well-posedness conjecture, Cazenave, Dickstein and Weissler \cite{cazenave2017non}   have shown that the \eqref{NLS} equations are \emph{ill-posed} in $H^s(\mathbb{R}^d)$ when $1<p<2$ and $s>p+2+\frac{d}{2}$, with a similar result for the \eqref{NLH} equations in $W^{s,q}(\mathbb{R}^d)$ for $1< q<\infty$. To our knowledge, \cite{cazenave2017non}  is the only article that proves high-regularity ill-posedness results for \eqref{NLS} and \eqref{NLH}. However, as we will soon see, both the ill-posedness and expected well-posedness thresholds for \eqref{NLS} and \eqref{NLH} can be considerably improved.
\medskip

 

The main objective of the current article is to provide a general method that can be used to correctly predict the sharp well-posedness (and ill-posedness) thresholds for a large class of nonlinear evolution equations. To validate these predictions, we give a mostly definitive resolution to the high regularity well-posedness problems for \eqref{NLS} and \eqref{NLH}, by proving the following theorems.
\begin{theorem}\label{M1}
    Let $p\geq \frac{3}{2}$. Then \eqref{NLS} is locally well-posed in $H^s(\mathbb{R}^d)$ when $\max\{0,s_c\}< s<p+\frac{5}{2}$.
\end{theorem}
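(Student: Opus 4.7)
The target is an a priori energy estimate in $H^s$ for smooth solutions on a short time interval with $s<p+\tfrac{5}{2}$; existence, uniqueness and continuous dependence will then follow through parallel estimates for the linearized/difference equation and a standard Bona--Smith style argument. The core difficulty is that $|u|^{p-1}u$ is at best $C^p$ in $u$, so a straightforward Moser bound on $\||u|^{p-1}u\|_{H^s_x}$ is already lost for $s>p$, ruling out a naive energy estimate well below the target regularity.

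To get around this, I would perform a spacetime Littlewood--Paley decomposition of $u$ and separate the ``elliptic'' region $|\tau+|\xi|^2|\gtrsim|\xi|^2$ from the ``hyperbolic'' region $|\tau+|\xi|^2|\ll|\xi|^2$. On the elliptic piece the Schr\"odinger symbol $i\tau-|\xi|^2$ has size $\gtrsim|\xi|^2$, so inverting $i\partial_t-\Delta$ yields a clean two-derivative gain and reduces the $H^s$ bound to controlling $|u|^{p-1}u$ at order $s-2$. On the hyperbolic piece, where time and space frequencies are comparable, I would trade spatial derivatives for time derivatives: setting $v=\partial_tu$ and differentiating the equation in time produces the linearized problem
\begin{equation*}
i\partial_tv-\Delta v \;=\; (\partial_uF)(u)\,v + (\partial_{\bar u}F)(u)\,\bar v, \qquad F(u)=\pm|u|^{p-1}u,
\end{equation*}
whose coefficients are now of type $|u|^{p-1}$ with H\"older regularity $p-1$ rather than $p$. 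An energy estimate for $v$ in $H^\sigma$ with $\sigma$ just below this H\"older exponent propagates the regularity I need, and the equation itself converts $v$-regularity back into regularity for $\Delta u$ on the hyperbolic piece.

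The half-derivative gain that separates the naive elliptic threshold $s<p+2$ from the sharp threshold $s<p+\tfrac{5}{2}$ should arise from working in $L^2_t$-based norms (Bourgain-type $X^{s,b}$ spaces adapted to the characteristic set, or a suitable Besov analog) rather than in $L^\infty_t$-based ones; these gain an extra half-derivative under elliptic inversion and are also the natural home for the hyperbolic-side energy estimate on $v$, so the two pieces assemble without loss. The assumption $p\ge\tfrac{3}{2}$ enters precisely here: the H\"older exponent $p-1$ of the coefficient $|u|^{p-1}$ must be at least $\tfrac{1}{2}$ so that a near-sharp $H^\sigma$ estimate for $v$ can be run without differentiating the coefficient too many times.

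The main technical obstacle, and the place where the argument is substantially harder than in the integer setting of Kato--Tsutsumi, is making the time-derivative step rigorous. Defining $v=\partial_tu$ on a finite time interval forces a time truncation, and such a truncation shifts the spacetime Fourier support of $v$ onto arbitrarily high time frequencies, spilling out of the hyperbolic region in a way that cannot be absorbed simply by shrinking the time interval. Handling this mixing demands a careful paradifferential analysis together with frequency envelopes tailored to the characteristic set, and the interplay between the time cutoff, the $X^{s,b}$-type norms, and the limited regularity of $|u|^{p-1}$ is, in my view, the heart of the proof.
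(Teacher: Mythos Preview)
Your high-level architecture matches the paper's: a spacetime decomposition into an elliptic region (far from $\tau=-|\xi|^2$) where $(i\partial_t-\Delta)$ gains two spatial derivatives, and a near-characteristic region where one trades space for time derivatives. You also correctly flag the time-truncation problem as a genuine technical difficulty; the paper devotes substantial effort to a carefully designed truncation $F_T=\eta_{T_1}\partial_t^{-1}(\eta_T\partial_tF)+\eta_1F(0)$ precisely to deal with this.

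However, your explanation of where the crucial extra half-derivative comes from is wrong, and this is the heart of the matter. Inverting the symbol in the elliptic region gives exactly two derivatives whether you work in $L^\infty_t$ or $L^2_t$ based norms; $X^{s,b}$ spaces do not buy you an extra $\tfrac12$ derivative under elliptic inversion of a power nonlinearity. With only the standard fractional chain rule, the elliptic side requires $\||u|^{p-1}u\|_{H^{s-2}_x}$, and the na\"ive Moser/H\"older bound caps this at $s-2<p$, i.e.\ $s<p+2$, which is the old conjectured threshold, not $p+\tfrac52$. The paper's key new ingredient is a \emph{fixed-time} nonlinear superposition estimate (its Theorem~3.1): for complex-valued $u$,
\[
\||u|^{p-1}u\|_{W^{s,q}}\lesssim \|u\|_{W^{1/q+,\,q}}^{p-1}\|u\|_{W^{s,q}} \qquad \text{for } p\le s<p+\tfrac{1}{q},
\]
extending boundedness of $u\mapsto|u|^{p-1}u$ a full $\tfrac1q$ derivatives past the H\"older regularity. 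Taking $q=2$ yields the extra half-derivative on the elliptic side ($s-2<p+\tfrac12$). The same estimate, applied in the \emph{time} variable, pushes the low-modulation threshold to $s<2p+1$; the restriction $p\ge\tfrac32$ is then precisely the condition $2p+1\ge p+\tfrac52$, so that the elliptic bound is binding. Without this nonlinear estimate your scheme recovers at best $s<p+2$ (and $s<2p$ on the hyperbolic side), not the sharp $p+\tfrac52$. The proof of the superposition bound is itself nontrivial---it proceeds by piecewise-linear approximation in the spirit of Oswald and Bourdaud--Meyer, adapted to complex-valued inputs---and is the genuinely new analytic input you are missing.
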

\begin{remark}
    For now, we have chosen to exclude the case  $p\in (1,\frac{3}{2})$ from the statement of \Cref{M1}; we will state our well-posedness theorem for general $p>1$ in \Cref{M1 OOTP} which also considerably improves upon all previous well-posedness results in this range. 
\end{remark}
Remarkably, we can prove that the above theorem is sharp by complementing it with a corresponding ill-posedness result!
\begin{theorem}\label{M2}
   Let $p>1$ with $p-1\not\in 2\mathbb{N}$. There is an initial data $u_0\in C_c^\infty(\mathbb{R}^d)$ of arbitrarily small norm such that for any $T>0$ there is no  corresponding solution $u\in C([0,T];H^{s}(\mathbb{R}^d))$ for any $s\geq \max\{s_c,p+\frac{5}{2}\}$.
\end{theorem}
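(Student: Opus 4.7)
The proof hinges on a sharp one-variable observation: for $p>1$ with $p-1\notin 2\N$, a direct localized Fourier computation gives the decay $\widehat{|y|^{p-1}y\,\chi(y)}(\xi)\sim |\xi|^{-(p+1)}$, placing $|y|^{p-1}y\,\chi\in H^{p+1/2-\epsilon}(\R)$ for every $\epsilon>0$ but \emph{not} in $H^{p+1/2}(\R)$ (the critical exponent $s=p+\tfrac{1}{2}$ produces a logarithmic divergence). This threshold is dimension-independent: for $v\in C_c^\infty(\R^d)$ vanishing transversally along a smooth hypersurface, slicing along the transverse direction yields $|v|^{p-1}v\notin H^{p+1/2}(\R^d)$. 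The two-derivative shift from $\Delta$ in the equation converts this into the Sobolev threshold $p+\tfrac{5}{2}$ for \eqref{NLS}.

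\textbf{Reduction and contradiction scheme.} Choose $u_0(x)=\delta\,x_1\,\chi(x)\in C_c^\infty(\R^d)$ with $\chi(0)=1$ and $\delta>0$ arbitrarily small, so that $u_0$ vanishes transversally on $\{x_1=0\}$ and $|u_0|^{p-1}u_0\notin H^{p+1/2}(\R^d)$ by the previous paragraph. Since $H^{s'}\hookrightarrow H^{p+5/2}$ for $s'\geq p+\tfrac{5}{2}$, it suffices to rule out any $u\in C([0,T];H^{p+5/2})$ solving \eqref{NLS} with this data. The critical link is to show that any such $u$ would force $|u|^{p-1}u\in C([0,T];H^{p+1/2})$; evaluating at $t=0$ then yields $|u_0|^{p-1}u_0\in H^{p+1/2}$, directly contradicting the choice of data.

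\textbf{Main obstacle: propagating regularity to $t=0$.} The principal difficulty is precisely the implication that $u\in C([0,T];H^{p+5/2})$ solving \eqref{NLS} forces $|u|^{p-1}u\in C([0,T];H^{p+1/2})$. Naive composition estimates fail because $u_0$ has transversal zeros, and a direct appeal to the equation is circular. The plan is to exploit the Duhamel representation
\[
u(t)=e^{-it\Delta}u_0\mp i\int_0^t e^{-i(t-\tau)\Delta}|u(\tau)|^{p-1}u(\tau)\,d\tau
\]
together with a Littlewood--Paley decomposition. Uniform $H^{p+5/2}$ control of $u$ translates, after inspecting the high-frequency pieces of the Duhamel integral via dyadic analysis of $e^{-it\Delta}$, into a uniform $H^{p+1/2}$ bound on $|u(t)|^{p-1}u(t)$ for $t$ away from $0$. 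A weak-$\ast$ compactness extraction as $t\to 0^+$, combined with the strong $L^2_{\mathrm{loc}}$ convergence $|u(t)|^{p-1}u(t)\to|u_0|^{p-1}u_0$, then identifies the limit with $|u_0|^{p-1}u_0$ and places it in $H^{p+1/2}$, producing the contradiction. Making this dyadic propagation quantitative---tracking exactly how a loss of $H^{p+1/2}$ regularity at $t=0$ would obstruct the existence of an $H^{p+5/2}$ solution at later times---is expected to be the technical core of the proof.
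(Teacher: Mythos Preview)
Your proposal has a genuine gap in the step you correctly flag as the ``main obstacle.'' The implication you need---that $u\in C([0,T];H^{p+5/2})$ solving \eqref{NLS} forces $|u(t)|^{p-1}u(t)\in H^{p+1/2}$ uniformly (or even at $t=0$)---cannot be obtained by the Duhamel/dyadic mechanism you sketch. From the equation one has $|u|^{p-1}u=i\partial_t u-\Delta u$, so you would need $\partial_t u\in H^{p+1/2}$; but the only available nonlinear estimate (\Cref{farestimateH}) gives $|u|^{p-1}u\in C_tH^{p+1/2-\epsilon}$ strictly below the endpoint, hence $\partial_t u\in C_tH^{p+1/2-\epsilon}$ and no better. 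Worse, your claim that $|u(t)|^{p-1}u(t)\in H^{p+1/2}$ for $t>0$ is actually false under the contradiction hypothesis: by uniqueness the solution inherits the odd symmetry in $x_1$, so $u(t,\cdot)$ retains a transversal zero on $\{x_1=0\}$ with $u_{x_1}(t,0,x')\approx\delta$ for small $t$, and your own opening computation then shows $|u(t)|^{p-1}u(t)\notin H^{p+1/2}$ for every small $t$. There is thus no uniform $H^{p+1/2}$ bound on the nonlinearity from which to run a weak-$\ast$ extraction.

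The paper avoids this endpoint trap by working not with the pointwise-in-time nonlinearity but with the \emph{time-integrated} equation. After a Fubini-type reduction (\Cref{Strich fub}) to one space dimension, the slice $w(t,x_1)=u(t,x_1,x')$ satisfies
\[
w(t,x_1)=\delta\chi(x_1)x_1+\int_0^t\bigl(\lambda|w|^{p-1}w+h\bigr)\,ds,\qquad h=\Delta u\big|_{x'},\quad h(t,0)=0.
\]
Differentiating $n$ times in $x_1$ (with $n<p<n+1$) isolates a worst term $I(t,x_1)=\int_0^t C|w|^{p+1-2n}w_{x_1}\,(\Re(\bar w w_{x_1}))^{n-1}\,ds+\ldots$, while all remaining terms---$\partial_{x_1}^n w_0$, the lower-order pieces $F$, and $\partial_{x_1}^n h$---lie in $L_T^1W^{p+\frac{1}{q}-n,q}$. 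The persistent zero $w(s,0)=0$ with $w_{x_1}(s,0)\approx\delta$ allows a direct Taylor expansion giving the sharp H\"older-difference bound $|I(t,x_1+h)-I(t,x_1)|\approx t|x_1|^{p-n}$ near the origin (\Cref{lemma high reg ill}), which via Strichartz's difference characterization of Sobolev norms (\Cref{Strich character}) shows $I(t,\cdot)\notin W^{p+\frac{1}{q}-n,q}$ for every small $t>0$. This contradicts $w\in C_TW^{p+\frac{1}{q},q}$. The key conceptual difference from your scheme is that the time integration converts the non-quantitative failure $|u_0|^{p-1}u_0\notin H^{p+1/2}$ into an explicit, computable singularity of the integrated term, so one never needs the endpoint composition estimate that your argument requires.
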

For \eqref{NLH}, our main result is as follows and is sharp for the entire range $p>1$.
\begin{theorem}\label{M1H}
    Let $p>1$ with $p-1\not\in 2\mathbb{N}$ and $1<q<\infty$. Let $s_c=\frac{d}{q}-\frac{2}{p-1}$ be the scaling critical threshold. Then \eqref{NLH} is locally well-posed in $W^{s,q}(\mathbb{R}^d)$ when $\max\{0,s_c\}<s<p+2+\frac{1}{q}$. Moreover, there is an initial data $u_0\in C_c^\infty(\mathbb{R}^d)$ of arbitrarily small norm such that for any $T>0$ there is no  corresponding solution $u\in C([0,T];W^{s,q}(\mathbb{R}^d))$ for any $s\geq\max\{s_c,p+2+\frac{1}{q}\}$.
\end{theorem}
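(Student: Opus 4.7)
The plan is to handle the two halves—well-posedness for $\max\{0,s_c\}<s<p+2+\tfrac{1}{q}$ and non-existence for $s\geq\max\{s_c,p+2+\tfrac{1}{q}\}$—separately, both via the parabolic smoothing of $e^{t\Delta}$ coupled with a sharp Sobolev composition estimate for $u\mapsto|u|^{p-1}u$.

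\emph{Well-posedness.} I would set up a contraction mapping argument on the Duhamel formulation
\[
 u(t)=e^{t\Delta}u_0\pm\int_0^t e^{(t-\tau)\Delta}(|u|^{p-1}u)(\tau)\,d\tau
\]
in a resolution space built on $L^\infty_TW^{s,q}_x$ augmented by a smoothing norm that tracks $\int_0^t e^{(t-\tau)\Delta}$ via parabolic maximal regularity. The technical heart is a sharp Moser-type estimate
\[
 \bigl\||u|^{p-1}u\bigr\|_{W^{\sigma,q}}\lesssim (1+\|u\|_{L^\infty})^{p-1}\|u\|_{W^{\sigma,q}},\qquad 0\le\sigma<p+\tfrac{1}{q},
\]
together with its difference analogue. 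The threshold $\sigma<p+\tfrac{1}{q}$ is exactly the sharp Sobolev regularity of the local model $x_1\mapsto|x_1|^{p-1}x_1$ at a transversal zero. Applied with $\sigma=s-2$, and combined with the gain of two spatial derivatives from parabolic maximal regularity, this yields a closed estimate of the form $\|u\|_{W^{s,q}}\lesssim\|u_0\|_{W^{s,q}}+T^\theta C(\|u\|_{L^\infty})\|u\|_{W^{s,q}}$, which closes Picard iteration on small $T$. Subcritical/lower-order terms (relevant when $s$ is close to $s_c$) are handled by standard Sobolev embedding.

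\emph{Ill-posedness.} I would fix $\phi\in C_c^\infty(\R^d)$ whose zero set contains a smooth compact hypersurface across which $\phi$ vanishes transversally, set $u_0=\varepsilon\phi$ for $\varepsilon>0$ arbitrarily small, and argue by contradiction. Assuming $u\in C([0,T];W^{s,q}(\R^d))$ is a corresponding solution, the key step is to deduce from the equation that
\[
 |u_0|^{p-1}u_0\in W^{s-2,q}(\R^d).
\]
This is obtained as follows: $s>s_c$ plus Sobolev embedding places $u\in L^\infty_{t,x}$, so $|u|^{p-1}u\in C_tL^q_x$; the Duhamel representation then upgrades $u$ to $C^1((0,T];W^{s-2,q})$, and a test-function / approximation argument lets one pass to $t\to 0^+$ in $\partial_tu=\Delta u\pm|u|^{p-1}u$ to conclude $|u_0|^{p-1}u_0=\mp\bigl((\partial_tu)(0^+)-\Delta u_0\bigr)\in W^{s-2,q}$. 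On the other hand, near a transversal zero of $u_0$ the nonlinearity $|u_0|^{p-1}u_0$ is locally modeled by $|x_1|^{p-1}x_1$ times a smooth nonvanishing factor, and this model lies in $W^{\sigma,q}$ only for $\sigma<p+\tfrac{1}{q}$. Hence $s-2<p+\tfrac{1}{q}$, contradicting the hypothesis $s\geq p+2+\tfrac{1}{q}$.

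The main obstacles I expect are: (i) proving the composition estimate sharply up to the endpoint $\sigma\to p+\tfrac{1}{q}$, which requires a Bony-type paradifferential decomposition of $|u|^{p-1}u$ rather than the soft Nemytskii calculus that suffices well below the H\"older exponent $p$; and (ii) making the $t\to 0^+$ reduction in the ill-posedness argument rigorous at the mere regularity $C([0,T];W^{s,q})$, where classical time differentiability is unavailable. A robust workaround for (ii) is to test the equation against $\psi(t)\rho(x)$, with $\psi$ concentrated near $t=0$ and $\rho\in C_c^\infty$ supported in a small neighborhood of a transversal zero of $u_0$, and to use the smoothing of $e^{t\Delta}$ on such localized objects to extract the needed pointwise-in-time information about $u_0$.
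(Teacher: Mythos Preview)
Your strategy correctly isolates the threshold $\sigma<p+\tfrac{1}{q}$ for the composition operator and the two-derivative parabolic gain, but both halves have genuine gaps at the execution level that the paper handles differently.

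\textbf{Well-posedness.} A direct contraction mapping will not close. The needed difference estimate $\||u|^{p-1}u-|v|^{p-1}v\|_{W^{s-2,q}}\lesssim C(\|u\|,\|v\|)\,\|u-v\|_{W^{s-2,q}}$ fails in the range $s-2>p-1$, since the first derivative of $z\mapsto|z|^{p-1}z$ is only $(p-1)$-H\"older; the paper explicitly flags that the data-to-solution map is not Lipschitz at this regularity. Instead the paper constructs solutions $u_j$ to the regularized problem $(\partial_t-\Delta)u_j=P_{<j}(|u_j|^{p-1}u_j)$, proves uniform $W^{s,q}$ bounds via the composition estimate, and passes to the limit by interpolating this uniform bound against a difference bound established only in $L^q$. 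On your obstacle (i): the sharp composition estimate is not obtained by paraproduct calculus---the paper argues this breaks down for $\sigma>p$---but by approximating $u$ by piecewise-linear functions on dyadic scales and controlling the error through first- and second-order finite-difference inequalities in the style of Oswald.

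\textbf{Ill-posedness.} Your reduction to $|u_0|^{p-1}u_0\in W^{s-2,q}$ via $t\to 0^+$ is precisely the delicate step, and the paper does not attempt it: the Duhamel term $\tfrac{1}{t}\int_0^t e^{(t-\tau)\Delta}N(\tau)\,d\tau$ cannot be bounded in $W^{s-2,q}$ without already knowing $N\in W^{s-2,q}$, which is circular. The paper instead works at fixed small $t>0$. From the equation one has directly $\int_0^t|u|^{p-1}u\,d\tau = \pm\bigl(u(t)-u_0-\int_0^t\Delta u\,d\tau\bigr)\in W^{p+\frac{1}{q},q}$. After reducing to one dimension via the Fubini property of $W^{s,q}$ (restricting to a line $x'=\text{const}$ and treating $\Delta u$ as an inhomogeneity $h$), the paper shows by Strichartz's difference-quotient characterization of $W^{\sigma,q}$ and a precise H\"older expansion of the nonlinearity near the transversal zero of $u(t,\cdot)$ that this time-integrated quantity fails to lie in $W^{p+\frac{1}{q},q}$ for every $0<t\ll T$. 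This completely sidesteps your obstacle (ii).
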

\begin{remark}\label{Rem3}
    The methods that we will develop in this paper to prove sharp high regularity well-posedness for \eqref{NLS} and \eqref{NLH} are rather general. In particular, they should apply not only to \eqref{NLS} and \eqref{NLH} but also to nonlinear evolution equations where the principal linear operator has order other than $2$ as well as to certain classes of evolution equations with derivatives in the nonlinearity. We will postpone a precise discussion of these general models to \Cref{Broader applications}.
\end{remark}
\begin{remark}
In our ill-posedness results, we assume that $s$ is not in the supercritical range (i.e., we impose the restriction that $s\geq s_c$). This is natural and is mostly done for technical convenience. For instance, it will ensure (among other things) that the nonlinearity is a well-defined space-time distribution. Moreover, it will allow us to justify certain uniqueness properties of solutions which, in turn, will allow us to conclude non-existence of solutions rather than other standard forms of ill-posedness such as non-uniqueness or norm inflation. One should obviously still expect a similar kind of ill-posedness mechanism to be present in supercritical regimes, but the theorems would need to be phrased more carefully and we do not pursue this optimization here.
\end{remark}

As a small sample of the applications of Theorems \ref{M1}, \ref{M2}, and \ref{M1H}, we can answer several outstanding open questions from the literature. For example, in \cite{MR3611666,MR3968019} it was asked whether there are Sobolev spaces $H^s(\mathbb{R}^{12})$ for which \eqref{NLS} with $p=2$ is well-posed. Notice that for this equation we have  $s_c=4$, which is borderline for \eqref{well-posedness known}, but well within the scope of  \Cref{M1}. On the other extreme, it was not known whether \emph{every} \eqref{NLS} equation admits a well-posed evolution in \emph{some} Sobolev space $H^s(\mathbb{R}^d)$ with $s\geq 0$. Since $s_c$ increases with dimension but the high regularity threshold of $p+\frac{5}{2}$ in \Cref{M2} does not, we may immediately rule out well-posedness for any $s\geq s_c$. On the other hand, when $s<s_c$, ill-posedness is known (see e.g. \cite{MR3546788}).  Hence, we may answer this question in the negative.
\begin{corollary}\label{cor non}
    There are \eqref{NLS} equations which are ill-posed in every Sobolev space $H^s(\mathbb{R}^d)$.
\end{corollary}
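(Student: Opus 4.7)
The plan is to leverage the dimension-independence of the high-regularity threshold $p+\frac{5}{2}$ in \Cref{M2} against the dimension-dependence of the scaling critical index $s_c=\frac{d}{2}-\frac{2}{p-1}$. Since $s_c$ can be made arbitrarily large by inflating $d$, while the upper threshold in \Cref{M2} stays fixed once $p$ is fixed, the hypothesis $s\geq\max\{s_c,p+\frac{5}{2}\}$ simply reduces to $s\geq s_c$ in sufficiently high dimension.

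Concretely, I would fix any exponent $p>1$ with $p-1\notin 2\mathbb{N}$ and then choose the spatial dimension $d$ large enough that
\[
s_c=\frac{d}{2}-\frac{2}{p-1}\;>\;p+\tfrac{5}{2},
\]
which is possible for all $d>2p+5+\frac{4}{p-1}$. With this choice, $\max\{s_c,p+\frac{5}{2}\}=s_c$. I would then split the regularity range into two cases. For every $s\geq s_c$, \Cref{M2} provides a smooth, compactly supported, arbitrarily small datum for which no corresponding $C([0,T];H^s(\mathbb{R}^d))$ solution exists on any time interval, yielding ill-posedness in $H^s(\mathbb{R}^d)$.

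For the remaining supercritical window $0\leq s<s_c$, the associated $H^s(\mathbb{R}^d)$ ill-posedness is already in the literature; I would simply invoke the scaling-based norm inflation / non-existence statements of \cite{MR3546788}, which apply precisely in this subcritical-scaling regime. Concatenating the two cases covers every $s\geq 0$, proving that for the chosen $(p,d)$ the equation \eqref{NLS} is ill-posed in $H^s(\mathbb{R}^d)$ for every admissible Sobolev exponent. The only real step is the arithmetic verification that $d$ can be chosen to eliminate the $p+\frac{5}{2}$ branch of the maximum, and there is no genuine obstacle — the entire content of the corollary lies in \Cref{M2} itself, which already contains the essential feature that the high-regularity threshold is dimension-free.
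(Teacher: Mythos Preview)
Your argument is correct and matches the paper's own justification essentially verbatim: fix $p$ with $p-1\notin 2\mathbb{N}$, take $d$ large so that $s_c>p+\tfrac{5}{2}$, apply \Cref{M2} for $s\ge s_c$, and cite the known supercritical ill-posedness (e.g.\ \cite{MR3546788}) for $s<s_c$. The explicit inequality $d>2p+5+\tfrac{4}{p-1}$ is a harmless addition.
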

As a more general comment, issues regarding  roughness of the nonlinearity can be found throughout  the literature on nonlinear evolution equations.  See, e.g.,~the  books \cite[p.~7, 18, 27, 115]{MR1691575},  \cite[Sections 4.8-4.10]{MR2002047}, \cite[p.~54]{sulem2007nonlinear}, \cite[Chapter 3]{tao2006nonlinear} or the papers \cite{MR2018661,MR1961447,MR3513584,MR1941262,MR2154347}. Notably, such difficulties are not only restricted to the local theory --  they appear in the analysis of blowup solutions \cite{MR1655515}, in  the scattering theory \cite{MR1655835,MR2753625}, and make the question of extending a global well-posedness result to higher regularity norms non-trivial. Although we will not pursue such questions here, we expect that the methods developed in this paper will help clarify several of these issues. We refer the reader to \Cref{Broader applications} for a further discussion on this.

\subsection{Outline of the proof}\label{OOTP}
We now outline the basic strategy for proving Theorems \ref{M1}, \ref{M2} and \ref{M1H} as well as the general principles that make the scheme of proof applicable to other evolution equations. 
\medskip

Let us begin by fixing a Schwartz function $u\in\mathcal{S}(\mathbb{R}^d)$. In order to understand the obstructions to high regularity well-posedness for \eqref{NLS} and \eqref{NLH} we must first understand the precise regularity of the nonlinear term $|u|^{p-1}u$. To the best of our knowledge, all previous approaches to this problem estimate $|u|^{p-1}u$ by using some variant of the standard fractional chain rule. That is, they use the  estimate:
\begin{equation}\label{fractionalchainintro}
    \| F(u)\|_{W^{s,q}(\mathbb{R}^d)}\lesssim_{q,\|u\|_{L^\infty(\mathbb{R}^d)}}\|u\|_{W^{s,q}(\mathbb{R}^d)}, \hspace{5mm} F(0)=0,\hspace{5mm} 1<q<\infty,
\end{equation}
which holds when $F$ is H\"older continuous of order larger than $s$. However, somewhat surprisingly, the H\"older regularity of the function $z\mapsto |z|^{p-1}z$ is not what obstructs high regularity well-posedness for \eqref{NLS} and \eqref{NLH}. In fact, a version of the fractional chain rule \eqref{fractionalchainintro} actually holds for a certain range of $s>p$.
 \medskip
 
 To get a better idea of the true regularity of $|u|^{p-1}u$, we take our cue from the classical papers of Bourdaud-Meyer \cite{MR1111186} and Oswald \cite{MR1173747}, which do not seem to be particularly well-known in the dispersive community. These papers study the boundedness of the absolute value function $u\mapsto |u|$ in the scale of Besov spaces $B^s_{q,r}(\mathbb{R}^d;\mathbb{R})$ and (independently and by two different methods) prove the following theorem.
 \begin{theorem}\label{Abs value reg}
     Let $1\leq q,r\leq \infty$ and $0<s<1+\frac{1}{q}$. There exists a constant $C>0$ such that for all $u\in B^s_{q,r}(\mathbb{R}^d;\mathbb{R})$ we have 
     \begin{equation}\label{abs val reg ine}
         \| |u|\|_{B^s_{q,r}(\mathbb{R}^d)}\leq C\|u\|_{B^s_{q,r}(\mathbb{R}^d)}.
     \end{equation}
     Moreover, for $s\geq 1+\frac{1}{q}$ there exist $u\in B^s_{q,r}(\mathbb{R}^d)$ such that $|u|\not\in B^s_{q,r}(\mathbb{R}^d)$.
 \end{theorem}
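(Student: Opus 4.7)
The plan is to work directly with the modulus-of-smoothness characterization of Besov spaces, splitting the upper bound into a trivial low-regularity case and a more delicate case involving second-order differences. Recall that for any integer $k>s$ one has
\[
\|u\|_{B^s_{q,r}(\mathbb{R}^d)}\sim\|u\|_{L^q(\mathbb{R}^d)}+\left(\int_{\mathbb{R}^d}\left(|h|^{-s}\|\Delta_h^k u\|_{L^q(\mathbb{R}^d)}\right)^{r}\frac{dh}{|h|^{d}}\right)^{1/r}
\]
(with obvious modifications when $r=\infty$), where $\Delta_h^1 u(x)=u(x+h)-u(x)$ and $\Delta_h^2 u(x)=u(x+2h)-2u(x+h)+u(x)$. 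For $0<s<1$ we use $k=1$ and the pointwise inequality $||u(x+h)|-|u(x)||\leq|u(x+h)-u(x)|$ immediately yields \eqref{abs val reg ine}. So the content of the theorem is the range $1\leq s<1+\frac{1}{q}$ and the sharpness of this threshold.

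For $1\leq s<1+\frac{1}{q}$ the strategy is to use $k=2$ and bound $\Delta_h^2|u|$ pointwise. Writing $a=u(x)$, $b=u(x+h)$, $c=u(x+2h)$, one checks case-by-case that
\[
\bigl||a|-2|b|+|c|\bigr|\leq|a-2b+c|+4\,|b|\,\mathbf{1}_{ab<0\text{ or }bc<0}+2|a|\,\mathbf{1}_{ac<0,\ ab\geq0}+2|c|\,\mathbf{1}_{ac<0,\ bc\geq0},
\]
i.e.\ the only obstruction to the naive inequality occurs on the set where $u$ changes sign on $[x,x+2h]$. In the sign-change region, whichever of $a,b,c$ is isolated in sign is bounded (by the intermediate value property) by a first-order difference of $u$ across the neighboring point. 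This gives
\[
\|\Delta_h^2|u|\|_{L^q}\leq\|\Delta_h^2 u\|_{L^q}+C\|\Delta_h u\,\mathbf{1}_{Z_h}\|_{L^q},
\]
where $Z_h$ is the set where $u$ changes sign in a cube of size $|h|$. The key estimate is then
\[
\|\Delta_h u\,\mathbf{1}_{Z_h}\|_{L^q}\lesssim |h|^{\,1+\frac{1}{q}}\|u\|_{B^s_{q,r}},
\]
which one proves by combining the a priori Lipschitz control $\|\Delta_h u\|_{L^\infty}\lesssim|h|$ (for smooth $u$) with the bound $|Z_h|\lesssim|h|$ on the measure of the sign-change strip (coming from a coarea-type argument near the zero set of $u$); this second factor is what yields the extra $\frac{1}{q}$ of regularity. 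Plugging back and using $\int_{|h|\leq 1}|h|^{(1+\frac{1}{q})r-sr-d}\,dh<\infty$ precisely when $s<1+\frac{1}{q}$ closes the estimate. A standard density/mollification step extends the bound from smooth $u$ to general $u\in B^s_{q,r}$.

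For the sharpness statement one simply exhibits an explicit example. Take $u(x)=x_1\chi(x)$ with $\chi\in C_c^\infty(\mathbb{R}^d)$ identically $1$ near the origin. Then $u$ is smooth, yet $|u|(x)=|x_1|\chi(x)$ has a genuine crease across $\{x_1=0\}$; one can compute
\[
\partial_1|u|(x)=\mathrm{sgn}(x_1)\chi(x)+|x_1|\partial_1\chi(x),
\]
and the first summand agrees (up to a smooth error) with $2\mathbf{1}_{x_1>0}-1$. The Besov regularity of the characteristic function of a half-space is known to be exactly $B^{1/q}_{q,\infty}$ (and it fails to lie in $B^{1/q}_{q,r}$ for any $r<\infty$), so $|u|\in B^{1+1/q}_{q,\infty}\setminus\bigcup_{s\geq 1+1/q,\,(s,r)\neq(1+1/q,\infty)}B^s_{q,r}$; a small further sharpening (e.g.\ superposing translates with well-chosen amplitudes along a rapidly decaying sequence) supplies a counterexample in the borderline case $s=1+1/q$, $r=\infty$ as well.

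The main obstacle is the sign-change estimate in the second paragraph: one must justify both the measure bound $|Z_h|\lesssim|h|$ and the interpolation between $L^\infty$ and $L^q$ controls \emph{intrinsically} in the Besov class, where $u$ is not assumed Lipschitz. I expect this to require an approximation argument together with a uniform-in-mollification estimate, plus a careful handling of the set of "flat" zeros of $u$; everything else is a routine application of the modulus-of-smoothness definition.
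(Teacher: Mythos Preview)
The paper does not prove this theorem; it is quoted from Bourdaud--Meyer and Oswald as background for the paper's own nonlinear estimate (\Cref{Nonlinear estimate}). There is therefore no in-paper proof to compare against directly, but the paper does sketch and adapt Oswald's method in \Cref{Nonlinear est section} (see \Cref{Oswald} and \Cref{splineapprox}), so one can compare your approach with that.

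Your proposal has a genuine gap at exactly the place you flag. The estimate $\|\Delta_h u\,\mathbf{1}_{Z_h}\|_{L^q}\lesssim|h|^{1+1/q}\|u\|_{B^s_{q,r}}$ cannot be obtained by the route you describe. The bound $|Z_h|\lesssim|h|$ is dimensionally wrong in $\mathbb{R}^d$ for $d\geq 2$ (the zero set is generically a hypersurface, so its $|h|$-thickening has measure $\sim|h|\cdot\mathcal{H}^{d-1}(\{u=0\})$, and that surface area is not controlled by any Besov norm), and even in $d=1$ the number of sign changes of $u$ is not bounded by $\|u\|_{B^s_{q,r}}$. Likewise $\|\Delta_h u\|_{L^\infty}\lesssim|h|$ is a Lipschitz bound that $B^s_{q,r}$ simply does not provide in the range $1\leq s<1+\tfrac{1}{q}$ for general $q$. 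For smooth approximants both constants---the Lipschitz norm and the measure of the zero set---can blow up simultaneously as the mollification parameter tends to zero, so the density argument you allude to does not close.

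Oswald's argument (one-dimensional, with Fubini for higher $d$), which the paper adapts for the more general nonlinearity $|u|^{p-1}u$, proceeds quite differently. One approximates $u$ by continuous piecewise-linear splines $u_j$ on dyadic grids of mesh $2^{-j}$ and uses the characterization $\|u\|_{B^s_{q,q}}\sim\|u\|_{L^q}+\|2^{js}(u-u_j)\|_{\ell^q_jL^q}$ (this is \Cref{splineapprox}). The sign-change contribution is handled not by a global measure bound on $Z_h$ but by partitioning each dyadic level into maximal monotonicity intervals of $u_j$ and noting that the first differences $\Delta_{j,k}u$ at the endpoints of consecutive such intervals alternate in sign; this is precisely the hypothesis in \Cref{Oswald}\eqref{parta}, which converts the sum over sign changes into a quantity bounded by the Besov norm. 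The second-order-difference part of your decomposition corresponds to \Cref{Oswald}\eqref{partb}. In short, the extra $\tfrac{1}{q}$ of regularity is not extracted from a measure bound on the sign-change set, but from summability of first-order differences over a \emph{sparse, alternating-sign} subset of the dyadic grid---an intrinsically discrete mechanism that your continuous coarea heuristic does not capture.
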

 To understand the numerology of the above theorem, we observe that if $\chi$ is a cutoff function equal to $1$ in a neighborhood of the origin then the function $u(x)=\chi(x) x_1$ is Schwartz and (for $q\neq\infty$) one may compute that $|u|\in B_{q,r}^s(\mathbb{R}^d)$ if and only if $s<1+\frac{1}{q}$. The excess regularity when compared with \eqref{fractionalchainintro} is heuristically tied to the fact that one may ``integrate" over the local singularity in $D_x^{s}(\chi|x_1|)$ in $L^q$ when $s<1+\frac{1}{q}$. Morally speaking, \Cref{Abs value reg} states that for a generic function $u\in B_{q,r}^s(\mathbb{R}^d)$, the local singularities in $|u|$ coming from the simple zeros of $u$ can be harmlessly ``integrated" over if $s<1+\frac{1}{q}$. This extra regularity will be crucial for our well-posedness proofs, as it will allow us to place another $\frac{1}{q}$ derivatives on the nonlinear term. However, we will not be able to use \Cref{Abs value reg} directly for two reasons. First, it does not apply to the higher-order expression $|u|^{p-1}u$ and, in our setting, $|u|^{p-1}u$ can be complex-valued, which poses considerable challenges compared to the real-valued case.
\medskip

Nevertheless, in light of \Cref{Abs value reg}, it is natural to conjecture that an inequality of the form
\begin{equation}\label{our desired bound}
    \||u|^{p-1}u\|_{W^{s,q}(\mathbb{R}^d)}\lesssim \|u\|_{L^\infty(\mathbb{R}^d)}^{p-1}\|u\|_{W^{s,q}(\mathbb{R}^d)}
\end{equation}
holds if and only if $s<p+\frac{1}{q}$. However, to the best of our knowledge, such an estimate is not known. Some related estimates have been proven in \cite{MR2094590,MR1950719}, but unfortunately these only apply to real-valued expressions.  
In  \Cref{Nonlinear est section} (and more precisely in \Cref{Nonlinear estimate}) we will establish a slightly weaker version of the estimate \eqref{our desired bound} (which will still entirely suffice for our applications) in both the real and complex-valued case. We remark that the method of proof that we use to establish \Cref{Nonlinear estimate} seems to also be adaptable to more general nonlinear expressions $F(u)$ where $F(0)=0$ and $F\in C^{k,\alpha}$. This may be of independent interest. 
\medskip

 To prove the standard fractional chain rule \eqref{fractionalchainintro} one may invoke well-known techniques from paradifferential calculus. However, this strategy seems to break down in the range $p<s<p+\frac{1}{q}$ as it is too ``global" in nature. Instead, motivated by the fact that the main obstructions to obtaining \eqref{our desired bound} should come from points where $u$ vanishes linearly, it is natural to try to approximate $u$ by a suitable family of piecewise-linear functions, where one can more explicitly estimate the $W^{s,q}$ norm. This is essentially the method employed in \cite{MR1173747} for the case $u\mapsto |u|$ and $u$ real-valued. Of course, when $s$ is large, one cannot use this strategy directly because piecewise-linear functions may not themselves belong to $W^{s,q}$. We will postpone describing the necessary modifications to \Cref{Nonlinear est section}. For now, we remark that carrying out this strategy for complex-valued functions is not straightforward and will be the source of several interesting technical issues in our proof. Generally speaking, almost all previous results on the boundedness of nonlinear superposition operators are proven for real-valued functions.  In certain special cases (such as $u\mapsto |u|^p$) one can upgrade to a vector-valued inequality by standard tensorization techniques \cite{MR1781826}. However,  \eqref{our desired bound} is new even in the real case, and our proof works directly for complex scalars.
\medskip

To our knowledge, sharp inequalities of the form \eqref{abs val reg ine} or \eqref{our desired bound} have not been extensively applied to analyze nonlinear dispersive or parabolic equations. We expect, however,  that they will have a broad range of applicability. On the other hand, we stress that certain difficulties must be overcome when utilizing these inequalities in a well-posedness proof. In particular, since we will not always have control of the $L^\infty$ norm of solutions to \eqref{NLS} and \eqref{NLH},  we will have to appropriately modify  \eqref{our desired bound} before using it. We will not prove (or use) \eqref{our desired bound} precisely as stated, but will prove something of a similar form.
\medskip

With the fixed-time regularity of $|u|^{p-1}u$ now essentially understood, we move on to understanding the local well-posedness of \eqref{NLS} and \eqref{NLH}. For \eqref{NLS}, we begin by identifying the regime where the ideas of Kato and Tsutsumi may be justified on a fractional level. For this, we recall that solutions to the linear Schr\"odinger equation
\begin{equation}\label{linear schrodinger}
    i\partial_tu=\Delta u
\end{equation}
are strictly localized to the characteristic hypersurface $\tau=-|\xi|^2$, and therefore, one can freely measure the regularity of the solution using both time and space derivatives. On the other hand, for the nonlinear equation we can only ``trade" space and time derivatives when we a priori know that the solution has spacetime Fourier support near this hypersurface. Outside of this regime, $\partial_tu$ may not well approximate $\Delta u$, so other methods are needed.
\medskip

For the sake of our general discussion, let us suppose that we have a global solution to \eqref{NLS} and see to what extent we can prove a priori estimates. In practice, to be able to work with global solutions, we will need to work with an \eqref{NLS} equation with an appropriately time-truncated and regularized nonlinearity  (which we caveat is quite non-trivial to implement). Generally speaking, we will divide our analysis into estimating $u$ in two distinct regimes. The first is the ``low modulation" regime, where the solution is localized near the characteristic hypersurface and the second is the ``high modulation" regime, where it is localized far away from this hypersurface. In the low modulation regime,  we will obtain suitable $L_t^{\infty}L_x^2$ (and auxiliary) Strichartz estimates for $D_t^\frac{s}{2}u$. This strategy will require one to obtain an estimate for $\|D_t^{\frac{s}{2}}(|u|^{p-1}u)\|_{L^{q'}([0,T];L_x^{r'})}$ where $(q,r)$ is an admissible Strichartz pair. The simplest way to estimate this is as follows: Since necessarily $q',r'\leq 2$, we can apply H\"older's inequality in time and then Minkowski's inequality to obtain
\begin{equation*}
\|D_t^{\frac{s}{2}}(|u|^{p-1}u)\|_{L^{q'}([0,T];L_x^{r'})}\lesssim_T \|D_t^{\frac{s}{2}}(|u|^{p-1}u)\|_{L_x^{r'}L_t^2}.
\end{equation*}
In light of \eqref{our desired bound}, we expect such an estimate to close when $s<2p+1$, which already constitutes a whole derivative of improvement over the first threshold in \eqref{well-posedness known}. 
\begin{remark}
The reader may (correctly) note that applying H\"older in time in the above estimate to work exclusively with the $L_t^2$ norm is rather wasteful. We will later describe how one can further improve the restriction $s<2p+1$ by better optimizing the choice of Strichartz exponents above. This is an interesting point since classically Strichartz estimates had been used to improve the low regularity well-posedness theory for \eqref{NLS} rather than the high regularity theory.
\end{remark}
In the high modulation regime, we can interpret the Schr\"odinger operator $(i\partial_t-\Delta)$ as being elliptic (of order $1$ in time or of order $2$ in space). Therefore, in this region, we expect an estimate (possibly up to a minor dyadic summation loss) of the form
\begin{equation*}
\|u\|_{L_t^{\infty}H_x^s}\lesssim \|(i\partial_t-\Delta)u\|_{L_t^{\infty}H_x^{s-2}}\lesssim \||u|^{p-1}u\|_{L_t^{\infty}H_x^{s-2}}.
\end{equation*}
One then expects to be able to estimate the latter term on the right when $s<p+\frac{5}{2}$, which is a whole half derivative better than the second threshold in \eqref{well-posedness known}! That these heuristics can be justified is the content of our next theorem.
\begin{theorem}\label{M1 OOTP}
      Let $p>1$. Then \eqref{NLS} is locally well-posed in $H^s(\mathbb{R}^d)$ when $\max\{0,s_c\}< s<\min\{p+\frac{5}{2}, 2p+1\}$.
\end{theorem}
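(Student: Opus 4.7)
The plan is to turn the heuristic dichotomy between ``low'' and ``high'' spacetime modulation sketched in the preceding discussion into a rigorous a priori estimate on a regularized approximation scheme. First I would replace \eqref{NLS} by a family of equations in which the nonlinearity $|u|^{p-1}u$ is smoothed and truncated in time, so that for small data (reached either by rescaling in the subcritical regime, or by genuine smallness in the critical one) one obtains global smooth solutions $u_\varepsilon$. The goal then reduces to a uniform $H^s$ bound for $u_\varepsilon$ on some interval $[0,T]$; difference estimates in a weaker norm together with a Bona--Smith regularization argument would then promote this to the required local well-posedness statement.

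To control $u_\varepsilon$ I would use a spacetime Fourier partition of unity adapted to the modulation $\langle \tau + |\xi|^2\rangle$, writing $u_\varepsilon = u_{\text{low}} + u_{\text{high}}$. In the low modulation region the identity $|\tau| \sim |\xi|^2$ lets one replace $D_x^s$ by $D_t^{s/2}$ up to harmless errors; applying $D_t^{s/2}$ to the equation and running an $L_t^\infty L_x^2$ energy estimate paired with a suitable Strichartz estimate reduces matters to bounding $\|D_t^{s/2}(|u_\varepsilon|^{p-1}u_\varepsilon)\|_{L_t^{q'} L_x^{r'}}$ for an admissible pair $(q,r)$. After H\"older and Minkowski in time this becomes $\|D_t^{s/2}(|u_\varepsilon|^{p-1}u_\varepsilon)\|_{L_x^{r'} L_t^2}$, which, by invoking the sharp nonlinear estimate from Section~2 in the time variable with exponent $2$, closes whenever $s/2 < p + \tfrac12$, i.e.\ $s < 2p+1$. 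For the high modulation piece the symbol $i\tau + |\xi|^2$ is elliptic of order two in the spatial variable, yielding schematically
\begin{equation*}
\|u_{\text{high}}\|_{L_t^\infty H_x^s} \lesssim \|(i\partial_t - \Delta) u_\varepsilon\|_{L_t^\infty H_x^{s-2}} \lesssim \||u_\varepsilon|^{p-1}u_\varepsilon\|_{L_t^\infty H_x^{s-2}},
\end{equation*}
and the sharp nonlinear estimate (now with $q=2$ applied spatially) closes this provided $s-2 < p + \tfrac12$, i.e.\ $s < p + \tfrac52$. Combining the two regimes gives the bootstrap bound in the stated range $s < \min\{p+\tfrac52, 2p+1\}$.

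The principal obstacle will be reconciling the spacetime modulation decomposition with the sharp nonlinear estimate, which is a fixed-time spatial bound rather than a spacetime one. Localizing to low modulation requires a sharp time cutoff at the same scale, and this cutoff shifts the nonlinearity to high time frequencies and obstructs the standard strategy of using $T$ as a smallness parameter; to handle this I would choose the modulation threshold as a function of $T$ and repeatedly use the equation to convert time derivatives of $u_\varepsilon$ into spatial derivatives modulo nonlinear terms. When $s>2$, two or more time derivatives enter the analysis, so this substitution produces a cascade of expressions of the form $D_t^k(|u_\varepsilon|^{p-1}u_\varepsilon)$ with $k \ge 2$, each of which must be controlled by an iterated application of the sharp nonlinear estimate together with vector-valued refinements adapted to the modulation cutoffs. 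Organizing these commutators while keeping the total derivative count within the allowable range $s<p+\tfrac12\cdot\text{(integration exponent)}^{-1}$ at each step is the main technical content; once achieved, the energy and Strichartz estimates close and the regularization scheme converges to the desired solution.
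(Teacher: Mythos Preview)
Your proposal is correct and follows essentially the same route as the paper: the same low/high modulation dichotomy, the same use of the sharp nonlinear estimate (\Cref{Nonlinear estimate}) in the time variable to produce the threshold $s<2p+1$ and in the space variable to produce $s<p+\tfrac52$, and the same regularize--uniform-bound--weak-difference scheme (the paper uses frequency envelopes rather than Bona--Smith, but these are interchangeable here). The one point where the paper is more specific than your sketch is the time-truncation obstacle you correctly flag: rather than tuning modulation thresholds, the paper introduces the two-scale truncation $F_T=\eta_{T_1}\partial_t^{-1}(\eta_T\partial_tF)+\eta_1 F(0)$ with $T\ll T_1$ (\Cref{truncationestimates}), which tolerates up to two time derivatives landing on the cutoff while still yielding a factor of $T^{\delta}$, and for $s\ge 4$ organizes the cascade you anticipate via the splitting $\partial_t^2(|u|^{p-1}u)=G+H$ into terms linear in $u_{tt}$ versus quadratic in $u_t$.
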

Notice that \Cref{M1 OOTP} recovers \Cref{M1}, as when $p\geq \frac{3}{2}$, the threshold $p+\frac{5}{2}$ is what obstructs well-posedness. Moreover, the above heuristics clearly identify the root cause of ill-posedness when $s\geq p+\frac{5}{2}$; we will show how to take advantage of this in \Cref{Ill section}.  For now, we remark that if one substitutes \eqref{our desired bound} with a more na\"ive Moser estimate, the above heuristics predict the previous well-posedness threshold in \eqref{well-posedness known}. However, contrary to previous works on \eqref{NLS}, we now have a clear and simple explanation as to why this threshold appears. That being said, the execution of \Cref{M1 OOTP} is still far from trivial, as we will have to carefully truncate the nonlinearity in order to use Littlewood-Paley theory and fractional derivatives in the time variable. The method of truncation that we present is a novel technical part of our analysis. Indeed, previous attempts at establishing results even in the restricted range \eqref{well-posedness known} at fractional regularity levels seem to have non-trivial gaps. For instance, in some previous approaches, there are issues with endpoint regularity losses that arise when appealing to abstract interpolation theorems. These issues were initially observed by the authors of \cite{MR3017270}. Difficulties in proving continuity of the data-to-solution map due to not having enough room to apply a standard contraction argument were also pointed out and resolved in special cases in \cite{MR2765515}. A more subtle (and pervasive) issue was observed in \cite{MR3917711} by Wada. There it was noted that many previous works do not make a consistent choice of function space to measure the fractional time regularity of solutions. For instance, such works often switch between the restriction characterization of fractional (in time) Besov spaces and the characterization by finite differences without accounting for the dependence on the length of the time interval $[0,T]$ (which is crucially used as a smallness parameter to close the requisite a priori estimates). A partial fix to this issue was given in \cite{MR3917711}, but it only applies when $s\leq 4$ and imposes the restrictions in \eqref{well-posedness known}.
\medskip

A second benefit of our well-posedness scheme is that it readily predicts the optimal high regularity well-posedness threshold for \eqref{NLH}. For \eqref{NLH}, the characteristic hypersurface degenerates to a point, so no low modulation region exists. Therefore, only the elliptic region matters. In this region, we may use the full scale of $W^{s,q}(\mathbb{R}^d)$ norms, which directly leads to the first half of \Cref{M1H}, i.e., the following theorem.
\begin{theorem}\label{heat theorem}
    Let $p>1$. Then \eqref{NLH} is locally well-posed in $W^{s,q}(\mathbb{R}^d)$ when $\max\{0,s_c\}< s<p+2+\frac{1}{q}$.
\end{theorem}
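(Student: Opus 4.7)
The plan is to prove local well-posedness by a standard contraction-mapping argument applied to the Duhamel formulation
\[
u(t) = e^{t\Delta}u_0 \pm \int_0^t e^{(t-\tau)\Delta}(|u|^{p-1}u)(\tau)\,d\tau,
\]
exploiting the fact that, in contrast with \eqref{NLS}, the characteristic set of $\partial_t-\Delta$ collapses to the single point $(\tau,\xi)=(0,0)$. Consequently there is no ``low-modulation'' regime in which one must exchange spatial derivatives for time derivatives, and the entire argument reduces to balancing parabolic smoothing against the nonlinear estimate developed in \Cref{Nonlinear est section}.

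Since $s<p+2+\tfrac{1}{q}$, I can fix an auxiliary exponent $s'$ with $\max\{0,s-2\}<s'<p+\tfrac{1}{q}$. For this $s'$, the slightly weakened form of \eqref{our desired bound} established as \Cref{Nonlinear estimate} delivers
\[
\bigl\||u|^{p-1}u\bigr\|_{W^{s',q}(\mathbb{R}^d)} \lesssim \|u\|_{L^\infty}^{p-1}\|u\|_{W^{s',q}},
\]
while the usual heat-kernel smoothing gives
\[
\|e^{t\Delta}f\|_{W^{s,q}} \lesssim t^{-(s-s')/2}\|f\|_{W^{s',q}},
\]
and the choice $s-s'<2$ makes this singularity integrable in time. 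Combining the two with $\|u\|_{W^{s',q}}\leq \|u\|_{W^{s,q}}$ yields for the Duhamel map $\Phi$ a bound of the form
\[
\|\Phi(u)\|_{L^\infty_{[0,T]}W^{s,q}} \lesssim \|u_0\|_{W^{s,q}} + T^{1-(s-s')/2}\|u\|_{L^\infty_{[0,T]}L^\infty_x}^{p-1}\|u\|_{L^\infty_{[0,T]}W^{s,q}},
\]
together with an analogous estimate for $\Phi(u)-\Phi(v)$.

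The iteration space $X_T$ is then taken to be $C([0,T];W^{s,q})$ equipped with an auxiliary mixed-norm component recovering $L^\infty_x$ from parabolic smoothing when $s\leq d/q$ (when $s>d/q$ it comes for free from Sobolev embedding). A contraction on a closed ball of $X_T$ follows either by smallness of $T$ in the critical and supercritical regimes, or, in the subcritical regime $s>s_c$, after rescaling $u_\lambda(t,x)=\lambda^{2/(p-1)}u(\lambda^2 t,\lambda x)$ to make the initial data small in $W^{s,q}(\mathbb{R}^d)$.

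The main obstacle is already packaged into \Cref{Nonlinear estimate}: establishing the fixed-time bound on $|u|^{p-1}u$ in $W^{s',q}$ for $s'$ just under $p+\tfrac{1}{q}$ is where the genuinely new analytic input sits, and working with complex scalars rather than the real-valued setting of \cite{MR1173747} introduces substantial technical complications. Once that estimate is in hand, the residual issues are choosing $s'$ close enough to $p+\tfrac{1}{q}$ to cover the entire range $s<p+2+\tfrac{1}{q}$ without losing integrability of $t^{-(s-s')/2}$, and promoting Lipschitz dependence in a slightly weaker norm to continuity of the data-to-solution map in the strong topology via a Bona--Smith-type approximation; both are standard once the a priori estimate is secured.
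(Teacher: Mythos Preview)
Your high-level picture---parabolic smoothing gains two spatial derivatives, so one only needs to bound $|u|^{p-1}u$ in $W^{s',q}$ for some $s'<p+\tfrac{1}{q}$---matches the paper's philosophy. However, two concrete steps are missing or misstated.

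First, \Cref{Nonlinear estimate} is a \emph{one-dimensional} bound for $u:\mathbb{R}\to\mathbb{C}$, and the paper proves it with the factor $\|u\|_{W^{\frac{1}{q}+\epsilon,q}}^{p-1}$, not $\|u\|_{L^\infty}^{p-1}$. Passing to $\mathbb{R}^d$ is not automatic: the paper carries this out in \Cref{farestimateH} via the Fubini property of $W^{s,q}$ together with a case analysis in $d$, $p$, $q$ balancing H\"older and Sobolev embeddings (splitting according to the sign of $s_c$ and the size of $\tfrac{d-1}{2}(p-1)$ relative to $q$). The resulting estimate takes the form $\||u|^{p-1}u\|_{W_x^{s_+-2,q}}\lesssim \|u\|_{W_x^{s_0,q}}^{p-1}\|u\|_{W_x^{s,q}}$ with $s_0>\max\{2,s_c\}$, which is what actually closes when $W^{s,q}\not\hookrightarrow L^\infty$. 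Your auxiliary ``mixed-norm component recovering $L^\infty_x$'' does not substitute for this.

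Second, the sentence ``together with an analogous estimate for $\Phi(u)-\Phi(v)$'' does not hold in $W^{s',q}$ for $s'$ close to $p+\tfrac1q$: the map $u\mapsto |u|^{p-1}u$ is not Lipschitz at that regularity when $p-1\notin 2\mathbb{N}$, so a direct contraction in $C_TW^{s,q}$ fails. The paper does not attempt it. Instead (\Cref{Well for NLH}) it solves the frequency-truncated equations $(\partial_t-\Delta)u_j=P_{<j}(|u_j|^{p-1}u_j)$ by contraction, obtains uniform $W^{s,q}$ bounds from \Cref{farestimateH}, proves $L^q$ difference bounds for $u_j-u_k$ via \Cref{heatdiff}, and interpolates against a slightly higher $W^{s+\epsilon,q}$ bound (available because $u_j-u_k$ solves an equation with smooth forcing) to get convergence in $C_TW^{s,q}$. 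Your closing remark about Bona--Smith hints at this, but the argument has to be run at the level of existence, not merely continuous dependence, and you cannot invoke the ``analogous'' Lipschitz bound on the way there.
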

As we will see below, the high regularity threshold $s<p+2+\frac{1}{q}$ in  \Cref{heat theorem} is sharp for all $p>1$. Moreover, the proofs of ill-posedness for \eqref{NLS} and \eqref{NLH} at high regularity are very similar, as the same mechanism is responsible for the non-existence. We remark that \Cref{heat theorem} answers a question from \cite{cazenave2017non}. Interestingly, \cite[Theorem 1.1]{cazenave2017non} identifies the optimal high regularity well-posedness threshold for \eqref{NLH} in H\"older spaces when $1<p<2$ -- their theorem is consistent with the limiting case of our result but is seemingly much easier to execute on a technical level as standard Moser estimates may be employed.
\medskip

More generally, the above heuristics  predict a high regularity well-posedness threshold for a wide variety of dispersion generalized equations with a wide variety of nonlinearities $F(u)$. Roughly speaking, if the dispersion is of order $\alpha>1$ and the nonlinearity is bounded  on $H^s(\mathbb{R}^d)$ when $s<\mu$ (with a good estimate), then we expect to prove well-posedness when $s<\min\{\alpha\mu, \alpha + \mu\}$. Since the condition $s<\alpha+\mu$ stems from elliptic regularity, we expect it to be rather sharp. However, one may ask whether the low modulation threshold  is sharp. Surprisingly, the answer to this question is no!
\begin{theorem}\label{NLS improved}
    Let $p>1$. Then \eqref{NLS} is locally well-posed in $H^s(\mathbb{R})$ when $\max\{0,s_c\}\leq s<\min\{3p,p+\frac{5}{2}\}$.
\end{theorem}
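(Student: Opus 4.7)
The plan is to follow the overall scheme of \Cref{M1 OOTP}, splitting the a priori estimate into contributions from the low and high modulation regimes. The high modulation piece (where the spacetime Fourier transform of the solution sits away from the paraboloid $\tau=-|\xi|^2$) is dimensionally insensitive; the elliptic regularity argument for the operator $(i\partial_t-\Delta)$ still delivers the range $s<p+\tfrac{5}{2}$ without modification in $d=1$. All the new work must therefore take place in the low modulation regime, where the goal is to push the threshold $s<2p+1$ coming from \Cref{M1 OOTP} up to $s<3p$.

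Recall that in the proof of \Cref{M1 OOTP} one controls $D_t^{s/2}(|u|^{p-1}u)$ in a dual Strichartz norm $L^{q'}_tL^{r'}_x$, but wastefully applies H\"older in time followed by Minkowski in order to reduce matters to the time-variable fractional chain rule in $L^2_t$, which forces the threshold $s/2<p+\tfrac{1}{2}$. My plan is to avoid this reduction and instead apply a mixed-norm space-time version of \Cref{Nonlinear estimate} directly, with the time Lebesgue exponent $q'$ strictly smaller than $2$, so that the threshold becomes $s/2<p+\tfrac{1}{q'}$. In $d=1$ we have access to a particularly rich family of dispersive estimates, including the endpoint 1D Strichartz pair $(q,r)=(4,\infty)$ that is unavailable in higher dimensions, as well as Kato-type local smoothing and bilinear/refined Strichartz estimates. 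Carefully optimizing over these 1D-specific tools, together with the 1D Sobolev embedding $H^s(\mathbb{R})\hookrightarrow L^\infty(\mathbb{R})$ (available whenever $s>\tfrac{1}{2}$, which is automatic in the well-posedness range) used to tame the $\|u\|_{L^\infty}^{p-1}$ factor produced by the nonlinear estimate, is what allows us to reach the improved threshold $s<3p$.

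The main obstacle will be implementing the fractional chain rule in time outside $L^2_t$: one must adapt \Cref{Nonlinear estimate} to a mixed space-time setting, and continue to handle the delicate time-truncation and regularization of the nonlinearity that was central to \Cref{M1 OOTP}, so that the Littlewood-Paley theory in the time variable remains applicable and the length of the time interval stays usable as a smallness parameter. A secondary difficulty is that, unlike in the proof of \Cref{M1 OOTP}, the duality with a non-$L^2$ Strichartz pair blocks the most direct Minkowski manipulation of the dual space $L^{q'}_t L^{r'}_x$, so one has to perform the frequency decomposition with the mixed time-space integration order in mind from the outset. Once the mixed-norm nonlinear estimate is in place and the correct dispersive estimate is identified, the contraction mapping and continuous dependence arguments proceed along the lines already developed in \Cref{M1 OOTP}.
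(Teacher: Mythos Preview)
Your direction is correct --- the high-modulation analysis is unchanged and the improvement must come from running the nonlinear estimate in time with a Lebesgue exponent below $2$ --- but you are over-engineering the execution and missing a one-line trick that renders all of the additional machinery unnecessary.

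The paper's argument is short. Reduce via \Cref{M1 OOTP} to $p\le\tfrac{3}{2}$ and $2<s<4$. In one dimension the pair $\bigl(\tfrac{4}{p-1},\tfrac{2}{2-p}\bigr)$ is Strichartz admissible, with dual exponents $\bigl(\tfrac{4}{5-p},\tfrac{2}{p}\bigr)$. Since $\tfrac{4}{5-p}\le\tfrac{2}{p}$ for $p\le\tfrac{5}{3}$ and the truncated nonlinearity is compactly supported in time, H\"older in $t$ raises the dual time exponent to $\tfrac{2}{p}$, at which point the two exponents \emph{coincide} and Fubini (not Minkowski) swaps them for free:
\[
\bigl\|D_t^{s/2}(|u|^{p-1}u)\bigr\|_{L_t^{4/(5-p)+}L_x^{2/p}}
\lesssim \bigl\|D_t^{s/2}(|u|^{p-1}u)\bigr\|_{L_x^{2/p}L_t^{2/p}}.
\]
Now the \emph{existing} scalar estimate of \Cref{Nonlinear estimate}, applied pointwise in $x$ with $q=\tfrac{2}{p}$, gives exactly the threshold $\tfrac{s}{2}<p+\tfrac{p}{2}$, i.e.\ $s<3p$. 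A further H\"older in $x$ and H\"older in $t$ (using compact time support) place the right-hand side in $H_t^{s/2}L_x^2\subset X^s$, and the a priori bound closes as in \Cref{M1 OOTP}.

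No mixed-norm extension of \Cref{Nonlinear estimate}, no endpoint $(4,\infty)$ (which is in any case excluded from \Cref{truncationestimates}), no local smoothing, no bilinear Strichartz. In fact the paper remarks immediately after this proof that the natural vector-valued estimate in genuinely mixed norms $L_t^{q'}L_x^{r'}$ with $q'<r'$ is \emph{false} for general Schwartz inputs (the counterexample being $u(t,x)=\chi(t)\chi(x)(x-t)$), so the route you flag as the ``main obstacle'' is not merely hard but blocked if you push $q'$ below $r'$. The trick you are missing is simply to select the Strichartz pair so that the dual exponents can be made equal; once they are, everything collapses to the scalar estimate already in hand.
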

The idea for proving \Cref{NLS improved} is to use the dispersive properties of the equation  (in particular, Strichartz estimates) to deduce improved integrability for the solution. From this extra integrability, we gain access to estimates of the form \eqref{our desired bound} with $q<2$, and hence can place more derivatives on the nonlinearity. To our knowledge, this is the first time Strichartz estimates have been used to seriously improve the high (rather than low) regularity results for a nonlinear dispersive PDE. Although \Cref{NLS improved} demonstrates that the low modulation threshold can generally be improved, we do not attempt to further optimize our results in this paper. Rather, \Cref{NLS improved} is presented more so as a proof-of-concept in its simplest, one-dimensional form. Whether this new threshold is sharp (and what the sharp threshold in higher dimensions is), we believe, is a very interesting open question. We also emphasize that the low modulation issues are confined to very low power nonlinearities and low dispersion models. In particular, for cubic and higher dispersion, we expect only the elliptic region to matter and hence sharp well-posedness to be entirely within reach in all cases.
\subsection{Broader applications}\label{Broader applications}
In this article we have primarily focused on the local well-posedness problem for the nonlinear Schr\"odinger and heat equations. However, we believe that the methods developed in this paper should have a more far-reaching impact. In particular, we expect that our proofs should apply (after suitable modifications) to dispersion generalized equations with a wide class of (derivative) nonlinearities, which appear quite frequently in the literature \cite{MR3901833,MR4533334,MR4304690}. Moreover, since we now know that solutions to rough evolution equations can have much more regularity than initially expected, certain problems related to their global dynamics should now be within reach. Below, we discuss a handful of interesting problems where our methods could prove to be useful. We emphasize, however, that the statements below should be seen as informal \emph{conjectures} that we believe to be within reach and of interest to the community. Proving them is well beyond the scope of this article.
\subsubsection{General nonlinearities and dispersion} The method of proof outlined above should apply to a wide class of dispersion generalized nonlinear evolution equations. As a model example, let us consider equations of the form
\begin{equation}\label{model eqn}
\begin{cases}
&\partial_tu-A(D_x)u=F(u),
\\
&u(0)=u_0,
\end{cases}
\end{equation}
where $u$ is the unknown, $F$ vanishes at the origin  and the symbol $A=A(\xi)$ satisfies the asymptotics
\begin{equation*}\label{multiplier condition}
0<\liminf_{|\xi|\to\infty}\frac{|A(\xi)|}{|\xi|^{\alpha}}<\infty
\end{equation*} 
 for some $\alpha> 1$. To formulate a precise high regularity well-posedness threshold for \eqref{model eqn}, let us recall that the fundamental property of the function $|z|^{p-1}z$ appearing in the nonlinearity of \eqref{NLS} and \eqref{NLH} is the boundedness of the superposition operator $u\mapsto |u|^{p-1}u$ on $W^{s,q}(\mathbb{R}^d)\cap L^\infty(\mathbb{R}^d)$. We remark that there is an extensive literature devoted to the boundedness of superposition operators $u\mapsto F(u)$ on various function spaces. In particular, we refer the reader to \cite{MR2652183,MR3262646} for a general conjecture which hopes to characterize all nonlinear maps $F:\mathbb{R}\to\mathbb{R}$ for which the superposition operator $u\mapsto F(u)$ is bounded on the Besov scale $B^s_{q,r}(\mathbb{R})\cap L^\infty(\mathbb{R})$ or on the Triebel-Lizorkin scale  $F^s_{q,r}(\mathbb{R})\cap L^\infty(\mathbb{R})$. For our purposes, however, let us \emph{assume} that for a fixed $q$ we have an estimate of the form
 \begin{equation*}\label{our desired bound gen}
    \|F(u)\|_{W^{s,q}(\mathbb{R}^d)}\lesssim_{\|u\|_{L^\infty(\mathbb{R}^d)}}\|u\|_{W^{s,q}(\mathbb{R}^d)}
\end{equation*}
for all $0\leq s<s^*=s^*(q)$ and then predict the high regularity well-posedness threshold for \eqref{model eqn}. From the above heuristics, in the high modulation regime we expect to place up to $s^*$ derivatives on the nonlinearity $F(u)$ and also obtain an elliptic-type gain of $\alpha$ derivatives for the solution. This leads to a $W^{\sigma,q}$ well-posedness threshold of $\sigma< s^*+\alpha$. Ignoring low regularity issues, in the parabolic case, we conjecture that this threshold should be sharp. On the other hand, in the dispersive case, one should expect $H^\sigma$ well-posedness at least up to the threshold $\sigma < \min \{\alpha s^*,s^*+\alpha\}$, with the $s^*+\alpha$ threshold being entirely sharp but with the low modulation threshold of $\alpha s^*$ being improvable in certain cases. One also expects these heuristics to be partially applicable to certain variable coefficient systems of pseudodifferential equations more general than \eqref{model eqn}. However, since low regularity issues play a prominent technical role in the analysis, we have chosen to carry out the systematic analysis only for \eqref{NLS} and \eqref{NLH}. Similarly, to keep the analysis in this paper somewhat manageable, we have chosen to work on $\mathbb{R}^d$ rather than on other domains such as $\mathbb{T}^d$ and have not attempted to further optimize to obtain the critical endpoint $s=\min\{0,s_c\}$ on $\mathbb{R}^d$. Nevertheless, this is likely of interest, especially if one can use it to obtain a low regularity continuation criterion for the full range of Sobolev exponents for which  \eqref{NLS} and \eqref{NLH} are well-posed. See \Cref{ssprop} for a further discussion on this.
\subsubsection{Derivative nonlinearities and quasilinear equations}  Although the scheme of proof that we present below will not be directly applicable, the ideas from this paper should also predict the high regularity well-posedness threshold for a large family of equations with derivative nonlinearities and more general quasilinear interactions. For the sake of simplicity, we will focus our exposition on Schr\"odinger equations rather than parabolic or dispersion generalized models.
\medskip

The study of the low regularity dynamics of general quasilinear Schr\"odinger equations with smooth background metrics and nonlinearities has seen significant advances in recent years \cite{ifrim2025global,ifrim2024global, MR4331023,MR4830552,MR4820290}. However, the case where the coefficients have limited regularity has been so far largely neglected, except in very specific situations. Rather than attempt to state a general conjecture, let us consider the most well-studied model problem; namely, that of the \emph{generalized derivative nonlinear Schr\"odinger equation}
\begin{equation}\label{gDNLS eq}\tag{gDNLS}
i\partial_tu+\partial_x^2u=i|u|^{2\sigma}\partial_xu.
\end{equation}
The equation \eqref{gDNLS eq} lies on the border between quasilinear and semilinear Schr\"odinger equations, and the semilinear techniques presented in this paper are insufficient to prove well-posedness of \eqref{gDNLS eq}. Nevertheless, many techniques have been developed to analyze such equations and, in particular, in our previous paper \cite{us} (see also \cite{MR4820290,MR4675424}) we proved global well-posedness for \eqref{gDNLS eq} in $H^s(\mathbb{R})$ when $1\leq s<4\sigma$ and $\sigma<1$ is not too small. 
\medskip

With the combination of the techniques from \cite{us} and this paper we now believe that the \emph{sharp} high regularity well-posedness threshold for \eqref{gDNLS eq} is within reach, for a large range of exponents $\sigma\geq \frac{1}{2}$. Indeed, in the proof of our main nonlinear estimate in \Cref{Nonlinear est section} we will characterize the precise boundedness properties of the mapping $u\mapsto |u|^{2\sigma}\partial_xu$, which when combined with the heuristics mentioned above and the tools from \cite{us} should allow one to prove well-posedness of  \eqref{gDNLS eq} in $H^s$ when $s<\min\{2\sigma+\frac{5}{2}, 2(2\sigma+\frac{1}{2})\}$. Conceivably, by carefully optimizing the estimates in \cite{us}, one could extend this to a sharp global well-posedness result, at least in the range of exponents $\sigma<1$ considered in \cite{us}. We emphasize, however, that actually proving this high regularity global well-posedness result for \eqref{gDNLS eq} would require significant effort, as establishing the necessary $H^1$ continuation criterion (which was necessary for a global result in this paper) is far from trivial. Moreover, we remark that when $\sigma<\frac{1}{2}$ we are unaware of \emph{any} well-posedness results for \eqref{gDNLS eq} and we do not claim that our methods will be applicable in this case.
\subsubsection{Nonlinear wave equations}
We next note that our methods are not restricted to equations which are first-order in time. In fact, they yield a large improvement in the well-posedness theory of nonlinear wave equations and related models, as can be found, e.g., in \cite{MR4627226,MR4228955,MR2763767,MR1368792,MR1683051}.
\medskip

 Since the linear operator $(\partial_{tt}-\Delta)$ for the nonlinear wave equation
\begin{equation}\label{NLW}\tag{NLW}
\begin{cases}
&\partial_{tt}u-\Delta u=\pm |u|^{p-1}u,
\\
&u(0)=u_0, \ \ \partial_tu(0)=u_1,
\end{cases}
\end{equation}
has a balanced number of space and time derivatives, the low modulation regime is what restricts the analysis. For this reason (as the wave operator can be viewed as elliptic of order $1$ near the characteristic surface), our methods predict an upper well-posedness threshold of $s<p+\frac{3}{2}$ for $(u,u_t)\in H^s(\mathbb{R}^d)\times H^{s-1}(\mathbb{R}^d)$. Assuming \eqref{our desired bound} (or more precisely, using \Cref{Nonlinear estimate}), it is not particularly difficult to verify that well-posedness for \eqref{NLW} holds when $\frac{d}{2}<s<p+\frac{3}{2}$. This seems to already significantly improve over the known high regularity results. However, a complete characterization in the style of Theorems~\ref{M1} and \ref{M1H} of all Sobolev exponents $s$ for which \eqref{NLW} is well-posed in $H^s(\mathbb{R}^d)\times H^{s-1}(\mathbb{R}^d)$ is beyond the scope of this article, and is an interesting open question. 
\subsubsection{Propagation of regularity and global dynamics}\label{ssprop} We finally make two remarks regarding global dynamics. First, we note that often one should expect that if a nonlinear evolution equation is global (and scattering) in some low regularity Sobolev space, then the flow should also be global (and scattering) in higher order Sobolev spaces for which the evolution is well-posed locally in time. When the nonlinearity is smooth, this often comes for free from the estimates one typically establishes in the proof of low regularity well-posedness. However, the question of propagation of regularity for rough evolution equations is often quite difficult to deal with. By adapting the techniques introduced in this paper, one should hopefully be able to prove sharp propagation of regularity results for various semilinear models of interest. We caveat that for quasilinear models, ensuring that the estimates are strong enough to facilitate a high regularity continuation result is more difficult. As an example, in our previous work \cite{us} we proved that global well-posedness in $H^1(\mathbb{R})$ for \eqref{gDNLS eq} when $\sigma<1$ propagates to $H^s(\mathbb{R})$ when $1\leq s<4\sigma$, but this involved a very delicate analysis and a more complicated functional structure than we use in the current article. Given our new advances, however, one would now expect global well-posedness to hold in even higher regularity norms, though this would require a detailed verification.
\medskip

Secondly, we note that to properly address the question of global well-posedness for evolution equations, one typically needs to know a priori that the solution is of sufficient regularity for known techniques to apply. As an example, we highlight the works of Killip-Visan \cite{MR2753625,MR2763767}, where for the nonlinear Schr\"odinger and wave equations the authors developed techniques to address the global well-posedness of solutions, assuming a strong enough local well-posedness theory. With the advances made in the current article, it is natural to expect that the problems left open in \cite{MR2753625,MR2763767} regarding removing the technical restrictions on the parameter $p$ which ensure sufficient smoothness of the nonlinearity may now also be within reach.
\subsection{Acknowledgments} We thank Sung-Jin Oh and Daniel Tataru for helpful discussions related to this work. During the writing of this paper, the authors were partially supported by the NSF grant DMS-2054975 as well as by the Simons Investigator grant of Daniel Tataru. The first author was also supported by a fellowship in the Simons Society of Fellows.  

\section{Notation and preliminaries}
 In this section, we settle notation and recall some standard tools. 
\subsection{Littlewood-Paley theory, function spaces and frequency envelopes}\label{LWP}
First, we recall the standard Littlewood-Paley decomposition. For this, we let $\phi_0:\mathbb{R}^d\to [0,1]$ be a smooth radial function supported in the ball $B_2(0)$ of radius $2$ which is identically one on $B_1(0).$ We let  $\phi(\xi):=\phi_0(\xi)-\phi_0(2\xi)$ and define 
$$ \widehat{P_ju}(\xi):=\phi(2^{-j}\xi)\widehat{u}(\xi), \hspace{5mm} j\in \mathbb{N},$$
$$\widehat{P_{0}u}(\xi):=\phi_0(\xi)\widehat{u}(\xi).$$
We then define for each $k\in \mathbb{N}$,
\begin{equation*}
P_{<k}:=\sum_{j=0}^{k-1}P_j, \hspace{5mm} P_{\geq k}:=\sum_{j=k}^\infty P_j.
\end{equation*}
 With the above notation, we have the standard inhomogeneous Littlewood-Paley decomposition
\begin{equation*}
1=\sum_{j= 0}^\infty P_j.
\end{equation*}
Correspondingly, we have the associated Littlewood-Paley trichotomy, or Bony paraproduct decomposition,
 \begin{equation*}
uv=T_uv+T_vu+\Pi (u,v),  
 \end{equation*}
where the paraproduct
\begin{equation*}
T_vu:=\sum_{k\geq 0}P_{<k-4}vP_ku
\end{equation*}
 selects the portion of the product $uv$ where $u$ is at high frequency compared to $v$. 
We refer the reader to \cite{bony1981calcul} and \cite{metivier2008differential} for some basic properties of these operators.
 We will use the notation $\tilde{P}_j$, $\tilde{P}_{<j}$, $\tilde{P}_{\geq j}$ to denote slightly enlarged or shrunken frequency localizations. For example, we may denote $P_{[j-3,j+3]}$ by $\tilde{P_j}$. 
 \medskip
 
 In this paper, we will have to apply Littlewood-Paley theory in both space and time. To clearly distinguish between these cases, we will follow the convention that $P_j$  denotes a spatial projection and $S_j$ denotes a temporal projection, with similar conventions for $P_{\geq j}$, $S_{\geq j}$, etc.
\medskip

For $1<q<\infty$ and $s\in \mathbb{R}$, we define the Sobolev spaces $W_x^{s,q}(\mathbb{R}^d)$ via Fourier multipliers as the closure of Schwartz functions under the norm
\begin{equation*}
\|u\|_{W_x^{s,q}(\mathbb{R}^d)}:=\|\langle D_x\rangle^su\|_{L^q_x(\mathbb{R}^d)}:=\|\mathcal{F}^{-1}(\langle \xi \rangle^s\widehat{u})\|_{L^q_x(\mathbb{R}^d)}.
\end{equation*}
As usual, we let $H_x^s(\mathbb{R}^d):=W_x^{s,2}(\mathbb{R}^d)$. When $q=\infty$ and $s$ is an integer, $W_x^{s,q}(\mathbb{R}^d)$ will have its usual meaning. When $s$ is not an integer, we will sometimes abuse notation and write $W_x^{s,\infty}(\mathbb{R}^d)$ for the Bessel potential space. 
\medskip

For a time $T>0$ we will use the notation $L^r_TW_x^{s,q}(\mathbb{R}^d)$ to denote either $L^r([0,T]; W_x^{s,q}(\mathbb{R}^d))$ or $L^r([-T,T]; W_x^{s,q}(\mathbb{R}^d))$ depending on the context (i.e.~for the nonlinear heat and Schr\"odinger equations, respectively). We will reserve the notation $L^r_tW_x^{s,q}:=L^r(\mathbb{R}; W_x^{s,q}(\mathbb{R}^d))$ for functions defined globally in time. By slight abuse of notation, we will also write $\|u\|_{W_x^{s,q}L^r_T}$ to mean $\|\langle D_x\rangle^s u\|_{L^q_xL^r_T}$, with a similar convention for $W_t^{s,q}L_x^r$ when using time derivatives. 
\medskip

The Littlewood Paley projections are evidently bounded on $W_x^{s,q}(\mathbb{R}^d)$ and satisfy various \emph{Bernstein inequalities} (see~\cite[p.~333]{tao2006nonlinear}). Such estimates  will be used liberally. We will also need the following somewhat less standard vector-valued variant of these inequalities.
\begin{proposition}
Let $1\leq p,q\leq \infty$, $j>0$ and $s\in \mathbb{R}$. Then we have 
\begin{equation*}
    \|D^s_xP_ju\|_{L^p_xL^q_T}\approx 2^{js}\|P_ju\|_{L^p_xL^q_T}.
\end{equation*}
\end{proposition}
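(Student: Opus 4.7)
The plan is to verify both directions of the equivalence by means of a standard scaled-kernel computation combined with Minkowski's integral inequality to handle the mixed norm.

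Since $j>0$, the symbol $\phi(2^{-j}\xi)$ of $P_j$ is supported in the annulus $\{2^{j-1}\leq|\xi|\leq 2^{j+1}\}$, on which $|\xi|^s$ is smooth. Choose a smooth bump $\tilde{\phi}$ supported away from the origin and equal to $1$ on the support of $\phi$, and set $m_j(\xi):=|\xi|^s\tilde{\phi}(2^{-j}\xi)$, so that $D_x^s P_ju=m_j(D_x)P_ju$. Writing $m_j(\xi)=2^{js}m(2^{-j}\xi)$ with $m(\xi):=|\xi|^s\tilde{\phi}(\xi)\in C_c^\infty(\mathbb{R}^d\setminus\{0\})$, a change of variables yields the spatial convolution kernel
\[
K_j(x)=2^{js}\,2^{jd}K(2^jx),\qquad K:=\mathcal{F}^{-1}m\in\mathcal{S}(\mathbb{R}^d),
\]
so in particular $\|K_j\|_{L^1_x}=2^{js}\|K\|_{L^1_x}\lesssim 2^{js}$.

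For the forward bound, I would apply Minkowski's integral inequality in the time variable for each fixed $x$:
\[
\|D_x^sP_ju(x,\cdot)\|_{L^q_T}=\Bigl\|\int K_j(x-y)P_ju(y,\cdot)\,dy\Bigr\|_{L^q_T}\leq\int|K_j(x-y)|\,\|P_ju(y,\cdot)\|_{L^q_T}\,dy.
\]
Taking the $L^p_x$ norm and invoking Young's convolution inequality $L^1\ast_x L^p\hookrightarrow L^p$ yields
\[
\|D_x^sP_ju\|_{L^p_xL^q_T}\leq\|K_j\|_{L^1_x}\,\|P_ju\|_{L^p_xL^q_T}\lesssim 2^{js}\|P_ju\|_{L^p_xL^q_T}.
\]
The reverse bound is obtained by the identical argument with $m_j$ replaced by $n_j(\xi):=|\xi|^{-s}\tilde{\phi}(2^{-j}\xi)$: scaling shows the kernel of $n_j(D_x)$ has $L^1_x$ norm $\lesssim 2^{-js}$, and using $P_ju=n_j(D_x)D_x^sP_ju$ gives $\|P_ju\|_{L^p_xL^q_T}\lesssim 2^{-js}\|D_x^sP_ju\|_{L^p_xL^q_T}$.

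There is no substantive obstacle here; the argument is essentially the scalar-valued Bernstein inequality transported to the mixed-norm setting. The only point worth emphasizing is that because the multiplier $|\xi|^s\tilde{\phi}(2^{-j}\xi)$ acts purely in the spatial variable, Minkowski's inequality applied in the inner time slot reduces matters to a genuinely spatial $L^1\ast L^p$ convolution estimate. Consequently the exponent $q$, and the fact that the time norm is taken as the inner rather than outer norm, play no role in the argument, which is why the statement holds for the full range $1\leq p,q\leq\infty$.
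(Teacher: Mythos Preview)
Your argument is correct and is precisely the standard scaled-kernel plus Minkowski--Young computation one expects; the paper itself does not give a proof here but simply refers to \cite[Proposition~2.9]{us}, whose proof is essentially identical to what you wrote. There is nothing to add.
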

\begin{proof}
See \cite[Proposition 2.9]{us} and its proof.
\end{proof}
Another way we will employ the Littlewood-Paley projections is to define frequency envelopes, which were a tool introduced by Tao in \cite{tao2001global}.  To define these, suppose that we are given a Sobolev type space $X$ such that
\begin{equation*}
\|P_{0}u\|_X^2+\sum_{j=1}^\infty\|P_ju\|_{X}^2\approx \|u\|_{X}^2.
\end{equation*}
A \emph{frequency envelope} for $u$ in $X$ is a positive sequence $(c_j)_{j\in \mathbb{N}_0}$ such that 
\begin{equation*}\label{property1}
  \|P_{0}u\|_X\lesssim c_0\|u\|_X, \ \ \   \|P_ju\|_X\lesssim c_j\|u\|_X, \ \ \ \sum_{j=0}^\infty c_j^2\lesssim 1.
\end{equation*}
We say that a frequency envelope is \emph{admissible} if $c_0\approx 1$ and it is slowly varying, meaning that
$$c_j\leq 2^{\delta|j-k|}c_k, \ \ \ j,k\geq 0, \ \ \ 0<\delta \ll 1.$$
An admissible frequency envelope always exists, say by 
\begin{equation*}\label{env1}
    c_j=2^{-\delta j}+\|u\|_X^{-1}\max_{k\geq 0} 2^{-\delta |j-k|}\|P_ku\|_X.
\end{equation*}
Frequency envelopes will be primarily utilized for studying the regularity of the data-to-solution map for \eqref{NLS}.
\begin{remark}
The reader might wonder why we are using frequency envelopes (which are typically more suited to quasilinear problems) to study the regularity of the data-to-solution map. In our setting, the standard approach of going through the contraction mapping theorem will not work, because we can no longer justify the Lipschitz-type bound one would have to prove (due to the limited regularity of the nonlinearity).
\end{remark}

\subsection{Standard nonlinear estimates}
We now recall various standard estimates that will be used throughout the paper. 
\subsubsection{Moser estimates} We begin by recalling various Moser estimates. The first is a standard vector-valued variant for $C^1$ functions. 
\begin{proposition}\label{Moservec} Let $F\in C^1(\mathbb{C})$, $\alpha\in (0,1)$, $q,r,q_1,q_2,r_2\in (1,\infty)$ and $r_1\in (1,\infty]$ with
\begin{equation*}
\frac{1}{q}=\frac{1}{q_1}+\frac{1}{q_2},\hspace{5mm}\frac{1}{r}=\frac{1}{r_1}+\frac{1}{r_2}.
\end{equation*}
Then
\begin{equation*}
\|D_x^{\alpha}F(u)\|_{L_x^qL_T^r}\lesssim \|F'(u)\|_{L_x^{q_1}L_T^{r_1}}\|D_x^{\alpha}u\|_{L_x^{q_2}L_T^{r_2}}.
\end{equation*}
\end{proposition}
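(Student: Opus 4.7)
The plan is to adapt the classical Christ-Weinstein fractional chain rule to the mixed-norm setting by viewing $u$ as an $L^r_T$-valued function of $x$ and applying vector-valued harmonic analysis in the spatial variable. The key ingredient is the vector-valued Triebel-Lizorkin characterization: for $\alpha\in(0,1)$, $q\in(1,\infty)$, and $B$ a UMD Banach space (in particular $B=L^r_T$ with $r\in(1,\infty)$),
\begin{equation*}
\|D^\alpha_x g\|_{L^q_x(B)}\approx \left\|\left(\int_{\mathbb{R}^d}\frac{\|g(\cdot+y)-g(\cdot)\|_B^2}{|y|^{d+2\alpha}}\,dy\right)^{1/2}\right\|_{L^q_x}.
\end{equation*}
Applied to $g=F(u)$, this reduces the claim to a pointwise-in-$x$ comparison of the square function of $F(u)$ with that of $u$.

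For the pointwise estimate I would use the fundamental theorem of calculus,
\begin{equation*}
F(u(x+y,t))-F(u(x,t))=\Delta_y u(x,t)\int_0^1 F'\bigl(u(x,t)+\tau\Delta_y u(x,t)\bigr)\,d\tau,
\end{equation*}
with $\Delta_y u(x,t):=u(x+y,t)-u(x,t)$. Taking the $L^r_T$-norm in $t$ and applying H\"older's inequality in $t$ with $\tfrac{1}{r}=\tfrac{1}{r_1}+\tfrac{1}{r_2}$ yields, pointwise in $(x,y)$,
\begin{equation*}
\|F(u(x+y,\cdot))-F(u(x,\cdot))\|_{L^r_T}\leq K(x,y)\,\|\Delta_y u(x,\cdot)\|_{L^{r_2}_T},
\end{equation*}
where $K(x,y):=\int_0^1\|F'(u(x,\cdot)+\tau\Delta_y u(x,\cdot))\|_{L^{r_1}_T}\,d\tau$. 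Following the Christ-Weinstein strategy, the next step is a pointwise bound of the form
\begin{equation*}
K(x,y)\lesssim \mathcal{M}_x G(x)+\mathcal{M}_x G(x+y),\qquad G(x):=\|F'(u)(x,\cdot)\|_{L^{r_1}_T},
\end{equation*}
where $\mathcal{M}_x$ is the Hardy-Littlewood maximal operator in $x$; this comes out of the $\tau$-integration together with a change of variables $z=x+\tau y$ and Jensen's inequality, which recover an averaging interpretation.

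Substituting into the square-function representation, applying H\"older in $x$ with $\tfrac{1}{q}=\tfrac{1}{q_1}+\tfrac{1}{q_2}$, using the $L^{q_1}_x$-boundedness of $\mathcal{M}_x$ (valid since $q_1>1$), and reassembling the remaining square function of $u$ in the $L^{r_2}_T$ direction as $\|D^\alpha_x u\|_{L^{q_2}_x L^{r_2}_T}$ via the reverse Triebel-Lizorkin characterization completes the proof. The case $r_1=\infty$ is handled by replacing $L^{r_1}_T$ with $L^\infty_T$ throughout and using monotone convergence. The main obstacle is the pointwise maximal-function bound for $K(x,y)$: although its scalar analogue is classical, the vector-valued version requires care because the segment $[u(x,t),u(x+y,t)]\subset\mathbb{C}$ is not in general the image of any spatial segment under $u(\cdot,t)$, so the $\tau$-integration is essential to recover an averaging interpretation that is compatible with a spatial maximal function acting only in the $x$ variable.
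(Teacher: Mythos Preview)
The paper does not actually prove this proposition; it simply cites Theorem~A.6 of Kenig--Ponce--Vega (1993). So the comparison is between your sketch and the original KPV argument.

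Your overall strategy is in the right spirit, but there is a genuine gap at the step you yourself flag as ``the main obstacle.'' The claimed bound
\[
K(x,y)=\int_0^1\bigl\|F'\bigl(u(x,\cdot)+\tau\Delta_y u(x,\cdot)\bigr)\bigr\|_{L^{r_1}_T}\,d\tau\ \lesssim\ \mathcal{M}_xG(x)+\mathcal{M}_xG(x+y)
\]
does not follow from ``a change of variables $z=x+\tau y$.'' The point $u(x,t)+\tau\Delta_y u(x,t)$ is a convex combination in the \emph{target} $\mathbb{C}$, not the value $u(x+\tau y,t)$; there is no reason these coincide, so $F'$ evaluated there is not $F'(u(z,t))$ for any spatial point $z$. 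Without this, there is no way to feed $K(x,y)$ into a spatial maximal function of $G$. This is precisely the obstruction that makes the vector-valued case nontrivial, and your sketch does not resolve it.

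The KPV proof avoids this difficulty by working with the Littlewood--Paley square function characterization rather than the first-difference Triebel--Lizorkin norm. One writes $P_kF(u)=P_k\bigl(F(u)-F(P_{<k}u)\bigr)+P_kF(P_{<k}u)$ and obtains a pointwise-in-$(t,x)$ bound of the schematic form
\[
|P_kF(u)|\lesssim M\bigl(F'(u)\bigr)\cdot \bigl(|P_{\geq k}u|+2^{k}M(|\nabla^{-1}P_{<k}u|)\bigr),
\]
where the maximal function acts on $F'(u)$ itself (not on a linear interpolant). Taking $L^r_T$ in $t$, H\"older, and then the $\ell^2_k$-square function and $L^q_x$ norm closes the estimate via vector-valued Hardy--Littlewood and Fefferman--Stein. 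The key difference from your sketch is that the pointwise bound is established \emph{before} passing to time norms, so one never needs to control $F'$ at an artificial intermediate point.
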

\begin{proof}
See	Theorem A.6 of \cite{kenig1993well}. 
\end{proof}
The scalar version of the above estimate is as follows.
\begin{proposition}\label{scalarMoser}
Let $F\in C^1(\mathbb{C})$, $\alpha\in (0,1)$ and $r,r_1,r_2\in(1,\infty)$ with $\frac{1}{r}=\frac{1}{r_1}+\frac{1}{r_2}$. Then
\begin{equation*}
\|D_x^{\alpha}F(u)\|_{L^r}\lesssim \|F'(u)\|_{L^{r_1}}\|D_x^{\alpha}u\|_{L^{r_2}}.   
\end{equation*}
\end{proposition}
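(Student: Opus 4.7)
The scalar Moser estimate of Proposition \ref{scalarMoser} can be regarded as a specialization of the vector-valued Proposition \ref{Moservec}: taking $u$ independent of the time variable and choosing the spatial exponents in Proposition \ref{Moservec} equal to the scalar exponents $r, r_1, r_2$, with any compatible triple of time exponents $(\rho,\rho_1,\rho_2)$ satisfying $1/\rho = 1/\rho_1 + 1/\rho_2$, the vector-valued estimate specialises to
\[
T^{1/\rho}\, \|D_x^{\alpha} F(u)\|_{L^r_x} \;\lesssim\; T^{1/\rho_1 + 1/\rho_2}\, \|F'(u)\|_{L^{r_1}_x}\, \|D_x^{\alpha} u\|_{L^{r_2}_x},
\]
and cancelling the common factor $T^{1/\rho}$ yields the scalar statement. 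Since Proposition \ref{Moservec} has already been invoked as a black box, this is the most efficient route.

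Alternatively, one could give a direct proof along the lines of Kenig-Ponce-Vega via the Strichartz-type characterization of fractional derivatives: for $\alpha \in (0,1)$ and $r \in (1,\infty)$,
\[
\|D_x^{\alpha} f\|_{L^r(\mathbb{R}^d)} \;\approx\; \biggnorm{ \Bigl(\int_{\mathbb{R}^d} \frac{|f(\cdot+y)-f(\cdot)|^2}{|y|^{d+2\alpha}}\, dy \Bigr)^{1/2} }_{L^r(\mathbb{R}^d)}.
\]
Applying this with $f = F(u)$ together with the fundamental theorem of calculus
\[
F(u(x+y)) - F(u(x)) = (u(x+y) - u(x)) \int_0^1 F'\bigl(u(x) + \theta(u(x+y) - u(x))\bigr)\, d\theta,
\]
one estimates the resulting difference pointwise and applies H\"older in $x$ with $1/r = 1/r_1 + 1/r_2$ to produce the two factors on the right-hand side, the factor involving $u$ being controlled by the same characterization applied to $u$ itself.

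The main obstacle in the direct proof is that, since $u$ is complex-valued, the integrand $F'(u(x) + \theta(u(x+y) - u(x)))$ is sampled along a line segment in $\mathbb{C}$ which does not correspond to any spatial path, and so cannot be dominated by a Hardy-Littlewood maximal function of $F'(u)$. The standard remedy is to interpose a telescoping Littlewood-Paley decomposition $F(u) = \sum_{j\geq 0}\bigl[F(P_{\leq j+1}u) - F(P_{\leq j}u)\bigr]$, apply the fundamental theorem of calculus to each increment (where the segment from $P_{\leq j} u(x)$ to $P_{\leq j+1} u(x)$ is short and well-localised), and then sum the pieces using the Fefferman-Stein vector-valued maximal inequality to obtain the $L^{r_1}$ control by $\|F'(u)\|_{L^{r_1}}$. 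This is precisely the manoeuvre underlying the Kenig-Ponce-Vega proof of Proposition \ref{Moservec}, so in either case no genuinely new ideas are required beyond those already packaged in that reference.
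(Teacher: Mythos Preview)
Your proposal is correct. The paper itself does not give a proof at all but simply cites Proposition~3.1 of Christ--Weinstein, so your reduction to Proposition~\ref{Moservec} (by making $u$ time-independent and cancelling the factors $T^{1/\rho}=T^{1/\rho_1+1/\rho_2}$) is already more than the paper provides, and your sketch of the direct telescoping/maximal-function argument is an accurate summary of the underlying Christ--Weinstein/Kenig--Ponce--Vega proof.
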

\begin{proof}
See Proposition 3.1 of \cite{MR1124294}.
\end{proof}
For the purposes of this paper, it will be crucial to understand the precise regularity of the nonlinearity $|u|^{p-1}u$. However, we will still occasionally make use of the more standard Moser estimate for $|u|^{p-1}u$ below. 
\begin{proposition}\label{Crude Moser est}
Let $p>1$ and assume that $0<s<p$ and $r,r_1,r_2\in (1,\infty)$ satisfy $\frac{1}{r}=\frac{1}{r_1}+\frac{1}{r_2}$. Then 
\begin{equation*}
\||u|^{p}\|_{W^{s,r}}+\| |u|^{p-1}u\|_{W^{s,r}(\mathbb{R}^d)}\lesssim \|u\|_{L^{r_1(p-1)}(\mathbb{R}^d)}^{p-1}\|u\|_{W^{s,r_2}(\mathbb{R}^d)}.
\end{equation*}
\end{proposition}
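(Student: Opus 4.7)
The plan is to reduce everything to the fractional chain rule of Proposition \ref{Moservec} combined with the classical chain rule for integer derivatives. I would argue by induction on $k = \lfloor s \rfloor$, peeling off integer derivatives one at a time until the remaining fractional part falls in the range $[0,1)$ where Proposition \ref{Moservec} applies directly.

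\textbf{Base case ($0 < s < 1$).} The function $F(z) = |z|^{p-1}z$ lies in $C^1(\mathbb{C})$ whenever $p > 1$, with $|F'(z)| \lesssim |z|^{p-1}$. I would apply Proposition \ref{Moservec} with the natural exponent split, taking $q_1 = r_1$, $r_1 = \infty$ in time if no time norm is present (or the obvious vector-valued variant), to obtain
\[
\|D_x^s F(u)\|_{L^r} \lesssim \||u|^{p-1}\|_{L^{r_1}} \|D_x^s u\|_{L^{r_2}} = \|u\|_{L^{r_1(p-1)}}^{p-1} \|D_x^s u\|_{L^{r_2}}.
\]
The lower-order piece $\|F(u)\|_{L^r} = \||u|^p\|_{L^r}$ is handled by H\"older's inequality together with Sobolev embedding of $W^{s,r_2}$, and the estimate for $|u|^p$ is identical after noting $|u|^p = |u|\,|u|^{p-1}u \cdot \overline{u}/|u|^2$ type manipulations (or more simply, by applying Proposition \ref{Moservec} directly to $G(z) = |z|^p$, which is likewise $C^1$ with $|G'(z)| \lesssim |z|^{p-1}$).

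\textbf{Inductive step ($1 \le s < p$).} Write $s = k + \alpha$ with $k \ge 1$ an integer and $0 \le \alpha < 1$, and note the hypothesis forces $k < p$ and $\alpha < p - k$. Since $k < p$, the function $F$ belongs to $C^k$, with $k$-th derivative H\"older continuous of exponent $p - k > \alpha$. The classical chain rule then expresses $\nabla^k F(u)$ as a finite linear combination of terms
\[
F^{(j)}(u) \cdot \nabla^{\beta_1} u \cdots \nabla^{\beta_j} u, \qquad \sum_i \beta_i = k, \quad 1 \le j \le k,
\]
where $|F^{(j)}(z)| \lesssim |z|^{p-j}$. Applying $D_x^\alpha$ and distributing via the fractional Leibniz rule produces terms where $D_x^\alpha$ either hits one of the derivative factors $\nabla^{\beta_i} u$ (harmless, since it simply turns $\nabla^{\beta_i}$ into $D_x^{\beta_i + \alpha'}$ for some $\alpha' \le \alpha$) or hits the composition $F^{(j)}(u)$. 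I would then redistribute the $L^r$-norm across factors using H\"older's inequality and Gagliardo-Nirenberg interpolation to exchange derivatives and $L^p$ powers, always arranging the bookkeeping so that the $p-1$ powers of $u$ consolidate into $\|u\|_{L^{r_1(p-1)}}^{p-1}$ and a single derivative of order $s$ lands in $\|u\|_{W^{s,r_2}}$.

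\textbf{Main obstacle.} The delicate step is the fractional chain rule applied to $F^{(j)}(u)$, since $F^{(j)}$ is merely H\"older of order $p - j$, not Lipschitz. This is precisely the setting where the Christ-Weinstein-type fractional chain rule for H\"older functions is required, and it is available because the hypothesis $s < p$ enforces $\alpha < p - j$ strictly for each $j \le k$. The proof must also track that no endpoint case with $\alpha = p - j$ arises, but this follows from the strict inequality $s < p$. Once this fractional chain rule is invoked cleanly, the remaining Gagliardo-Nirenberg bookkeeping is routine.
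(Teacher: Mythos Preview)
The paper does not prove this proposition; it is listed among the standard preliminaries in Section~2 and used as a black box. Your outline—differentiate $k=\lfloor s\rfloor$ times via the classical chain rule, distribute the remaining $D_x^{\alpha}$ with the fractional Leibniz rule (Proposition~\ref{Leib1}), control $D_x^{\alpha}F^{(j)}(u)$ via the fractional chain rule for H\"older functions (Proposition~\ref{KV Prop} being the paper's version of this), and rebalance with Gagliardo--Nirenberg—is exactly the standard argument and is correct. Two minor cleanups: in the base case the relevant reference is the scalar chain rule of Proposition~\ref{scalarMoser} rather than the vector-valued Proposition~\ref{Moservec}, and the $L^r$ piece follows from H\"older alone, since $\||u|^p\|_{L^r}\le\|u\|_{L^{r_1(p-1)}}^{p-1}\|u\|_{L^{r_2}}$ directly, so no Sobolev embedding is needed there.
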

The following result due to Killip and Visan from \cite[Appendix A]{MR2709575} will also serve as a convenient tool at various points in the paper.
\begin{proposition}\label{KV Prop}
Let $F$ be a H\"older continuous function of order $0<\alpha<1$. Then for every $0<\sigma<\alpha$, $1<r<\infty$ and $\frac{\sigma}{\alpha}<s<1$ we have
\begin{equation*}
\| D_x^\sigma F(u) \|_{L_x^r}\lesssim \| |u|^{\alpha-\frac{\sigma}{s}} \|_{L_x^{r_1}} \|D_x^s u\|_{L_x^{\frac{\sigma}{s}r_2}}^{\frac{\sigma}{s}}
\end{equation*}
provided that $\frac{1}{r}=\frac{1}{r_1}+\frac{1}{r_2}$ and $(1-\frac{\sigma}{\alpha s})r_1>1$.
\end{proposition}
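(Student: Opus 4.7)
The plan is to work with the integral representation $D_x^\sigma F(u)(x) = c_{d,\sigma}\int \tfrac{F(u)(x)-F(u)(y)}{|x-y|^{d+\sigma}}\,dy$, which is available since $0<\sigma<1$, and to combine it with the H\"older continuity $|F(u)(x)-F(u)(y)|\lesssim |u(x)-u(y)|^\alpha$. This reduces the entire estimate to a pointwise control of the positive integral $I(x):=\int \tfrac{|u(x)-u(y)|^\alpha}{|x-y|^{d+\sigma}}\,dy$, which is what I would attack directly.

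The key observation is that $0<\sigma/s<\alpha$, which lets me factor $|u(x)-u(y)|^\alpha \le (|u(x)|^{\alpha-\sigma/s}+|u(y)|^{\alpha-\sigma/s})\cdot |u(x)-u(y)|^{\sigma/s}$. The first factor will eventually be absorbed into an $L^{r_1}$ norm of $|u|^{\alpha-\sigma/s}$ after passing to a Hardy--Littlewood maximal function. The second factor encodes the $s$-regularity of $u$, and I would quantify it via the Haj\l asz / sharp fractional maximal function $M^\sharp_s u(x):=\sup_{r>0} r^{-s}\fint_{B(x,r)} |u(y)-u(x)|\,dy$, which for $0<s<1$ and $1<p<\infty$ satisfies $\|M^\sharp_s u\|_{L^p}\lesssim \|D_x^s u\|_{L^p}$. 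Splitting the $y$-integral at an optimized radius $R=R(x)$, in the near field I use $|u(x)-u(y)|^{\sigma/s}\lesssim |x-y|^\sigma (M^\sharp_s u(x))^{\sigma/s}$ (which makes $|x-y|^{-d-\sigma}\cdot|x-y|^\sigma=|x-y|^{-d}$ integrable on a ball after the $R$-cutoff), while in the far field I use the crude bound $|u(x)-u(y)|\le |u(x)|+|u(y)|$ to restore integrability at infinity. Balancing $R$ pointwise yields the product estimate $|D_x^\sigma F(u)(x)|\lesssim \mathcal{M}(|u|^{\alpha-\sigma/s})(x)\cdot (M^\sharp_s u(x))^{\sigma/s}$, where $\mathcal{M}$ denotes the Hardy--Littlewood maximal operator.

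With this pointwise bound in hand, the proof would conclude by applying H\"older's inequality with the split $\tfrac{1}{r}=\tfrac{1}{r_1}+\tfrac{1}{r_2}$, invoking the $L^{r_1}$-boundedness of $\mathcal{M}$ and the sharp-maximal bound at exponent $(\sigma/s)r_2$ to obtain $\|D_x^\sigma F(u)\|_{L^r}\lesssim \||u|^{\alpha-\sigma/s}\|_{L^{r_1}}\|D_x^s u\|_{L^{(\sigma/s)r_2}}^{\sigma/s}$. The condition $(1-\sigma/(\alpha s))r_1>1$ is precisely what is needed to put $|u|^{\alpha-\sigma/s}$ in a space on which $\mathcal{M}$ is strong-type (rather than only weak-type), so that the maximal function bound is legitimate after a small auxiliary interpolation.

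The main obstacle is the near-field estimate: the naive pointwise bound $|u(x)-u(y)|\le |x-y|^s \|D_x^s u\|_{L^\infty}$ is unavailable in the $L^p$ setting, and if one tries to use only pointwise fractional derivative operators one loses exactly a power of $|x-y|^{-d}$, which barely fails to be integrable on balls. The genuine ingredient is therefore the Haj\l asz / sharp-maximal characterization of $W^{s,p}$, which replaces the missing $L^\infty$ control of the $s$-oscillation of $u$ by a maximal function that is comparable to $D_x^s u$ in the relevant $L^p$ norm. Once this substitution is in place, the rest of the argument is a routine interpolation combined with standard maximal function inequalities.
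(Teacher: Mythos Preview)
The paper does not supply its own proof of this proposition; it simply cites Killip--Visan \cite[Appendix A]{MR2709575}. Their argument is quite different from yours: it proceeds via the Littlewood--Paley square-function characterization of $\|D_x^\sigma g\|_{L^r}$ together with a telescoping decomposition $F(u)=\sum_N\bigl(F(u_{\le N})-F(u_{\le N/2})\bigr)$, bounding each block pointwise by $|u_{\le N}-u_{\le N/2}|^\alpha$ and then using Bernstein-type inequalities at the level of the square function.

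Your near-field step contains two concrete errors. First, the pointwise bound $|u(x)-u(y)|\lesssim |x-y|^s\,M^\sharp_s u(x)$ is false: the sharp fractional maximal function you wrote only controls \emph{averaged} oscillations $r^{-s}\fint_{B(x,r)}|u-u(x)|$, not pointwise differences, and even the correct Haj\l asz inequality is two-sided, $|u(x)-u(y)|\lesssim |x-y|^s\bigl(g(x)+g(y)\bigr)$. Second --- and this is fatal to the argument as written --- after your substitution the near-field integrand is $\sim |x-y|^{-d}$, and $\int_{|x-y|<R}|x-y|^{-d}\,dy=\infty$ for every $R>0$; the kernel $|x-y|^{-d}$ is exactly the borderline case that is \emph{not} locally integrable. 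A salvage is possible if instead of splitting off $|u|^{\alpha-\sigma/s}$ you keep the full power $|u(x)-u(y)|^\alpha\lesssim |x-y|^{s\alpha}\bigl(g(x)^\alpha+g(y)^\alpha\bigr)$ in the near field: since $s\alpha>\sigma$ the kernel $|x-y|^{s\alpha-d-\sigma}$ \emph{is} integrable on balls, and after balancing $R$ the desired product structure emerges. However, the maximal-function bound on the $g(y)^\alpha$ term then needs the extra hypothesis $(\sigma/(\alpha s))\,r_2>1$, which is not among the stated assumptions, so even the repaired version does not quite recover the proposition as stated.
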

\subsubsection{Fractional Leibniz rules}
Next, we recall the vector-valued fractional Leibniz rule.
	\begin{proposition}\label{Leib2}
		Let $\alpha\in (0,1)$, $\alpha_1,\alpha_2\in [0,\alpha]$,  $q,q_1,q_2,r,r_1,r_2\in (1,\infty)$ satisfy $\alpha_1+\alpha_2=\alpha$ and $\frac{1}{q}=\frac{1}{q_1}+\frac{1}{q_2}$, $\frac{1}{r}=\frac{1}{r_1}+\frac{1}{r_2}$. Then
		\begin{equation*}
			\|D_x^{\alpha}(fg)-D_x^{\alpha}fg-fD_x^{\alpha}g\|_{L_x^qL_T^r}\lesssim \|D_x^{\alpha_1}f\|_{L_x^{q_1}L_T^{r_1}}\|D_x^{\alpha_2}g\|_{L_x^{q_2}L_T^{r_2}}.
		\end{equation*}
		The endpoint cases $r_1=\infty, \alpha_1=0$ as well as $(q,r)=(1,2)$ are also allowed.
	\end{proposition}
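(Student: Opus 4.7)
The plan is to reduce to the scalar case via Bony's paraproduct decomposition and then verify the vector-valued extension through the UMD property of $L_T^r$ (for $1<r<\infty$). Write
$$fg = T_f g + T_g f + \Pi(f,g), \qquad \Pi(f,g) = \sum_{|k-k'|\le 4} P_k f \, P_{k'} g,$$
so that
$$D_x^{\alpha}(fg) - D_x^{\alpha} f\cdot g - f\cdot D_x^{\alpha} g = \bigl(D_x^{\alpha} T_f g - T_f D_x^{\alpha} g\bigr) + \bigl(D_x^{\alpha} T_g f - T_g D_x^{\alpha} f\bigr) + R,$$
where the remainder $R$ collects the paraproducts with a derivative inside (e.g.\ $T_{D_x^{\alpha} g} f$) and the resonant correction $D_x^{\alpha}\Pi(f,g) - \Pi(D_x^{\alpha} f, g) - \Pi(f, D_x^{\alpha} g)$. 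The first two pieces are genuine commutators, and the remainder terms $R$ already have a definite distribution of derivatives baked in.

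First I would handle the commutator $[D_x^{\alpha}, T_f] g = \sum_k [D_x^{\alpha}, P_{<k-4} f]\, P_k g$. Each summand is a bilinear Fourier multiplier with symbol $|\xi+\eta|^{\alpha} - |\eta|^{\alpha}$ localized where $|\xi|\lesssim 2^{k-4}$ and $|\eta|\approx 2^k$. The mean value theorem yields the pointwise bound $||\xi+\eta|^\alpha - |\eta|^\alpha|\lesssim |\xi||\eta|^{\alpha-1}$, and the rescaled symbol satisfies Coifman--Meyer conditions uniformly in $k$. This gives
$$\bigl\|[D_x^{\alpha}, P_{<k-4}f]\, P_k g\bigr\|_{L^q_x} \lesssim 2^{-k}\|\nabla P_{<k-4}f\|_{L^{q_1}_x}\, 2^{k\alpha}\|P_k g\|_{L^{q_2}_x},$$
and by inserting the factor $2^{-k\alpha_2}\cdot 2^{k\alpha_2}$ and using Bernstein I redistribute the derivatives as $\alpha_1$ on $f$ and $\alpha_2$ on $g$. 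A square-function/Littlewood--Paley dyadic summation (using that output frequencies are $\approx 2^k$) then yields the claimed estimate for this piece. The symmetric commutator in $f\leftrightarrow g$ is identical. For the resonant remainder, output frequencies are bounded by the smaller input frequency, so one gains a $2^{-k(\alpha-\alpha_1-\alpha_2)}=1$ redistribution trivially and concludes by Bernstein plus $\ell^2$-Cauchy--Schwarz.

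For the vector-valued upgrade, the fact that $D_x^{\alpha}$ acts solely in the spatial variable means time plays the role of an inert parameter. Since $L^r_T$ is a UMD lattice for $1<r<\infty$, the Littlewood--Paley square function equivalence, the Mikhlin/H\"ormander multiplier theorem, and Bernstein's inequalities all hold for $L^q_x(L^r_T)$-valued functions; all of the pointwise-in-$t$ estimates above lift verbatim by taking the $L^r_T$ norm inside and applying H\"older in the time slot $\tfrac{1}{r}=\tfrac{1}{r_1}+\tfrac{1}{r_2}$. The endpoint $r_1=\infty,\ \alpha_1=0$ is cleanest because the commutator then reduces to an $L^\infty_TL^{q_1}_x$-multiplier estimate, while the endpoint $(q,r)=(1,2)$ exploits the Hilbert-space structure in time: the square-function inequality $\|(\sum_k|P_k h|^2)^{1/2}\|_{L^1_x L^2_T} \lesssim \|h\|_{L^1_x L^2_T}$ still holds by Fefferman--Stein applied coordinate-wise.

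The main obstacle, and the place where all the technical content lives, is the paradifferential commutator estimate together with its endpoint extensions: the naive estimate only yields one derivative redistribution, and getting the sharp splitting $\alpha=\alpha_1+\alpha_2$ with arbitrary $\alpha_1\in[0,\alpha]$ requires using the refined Coifman--Meyer bound and not simply H\"older plus Bernstein. The $(q,r)=(1,2)$ endpoint in particular is delicate because $L^1_x$ is not UMD, so one must exploit that the Hilbert-valued extension of the scalar estimate still controls $L^1_x(L^2_T)\to L^1_x(L^2_T)$ through an atomic or square-function argument. Once these ingredients are in place, the rest is dyadic bookkeeping.
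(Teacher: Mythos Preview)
The paper does not give its own proof of this proposition; it simply cites \cite[Lemma 2.6]{kenig2003local} and \cite[Lemma 3.8]{kenig2006global} and moves on. So there is nothing to compare at the level of argument---the proposition is treated as a black-box tool from the literature.

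Your sketch is a reasonable outline of how one might reconstruct the proof from scratch, and the paraproduct-plus-commutator framework is indeed what underlies the cited results. A few comments. The redistribution of derivatives in the commutator term is correct in spirit, but the sentence ``by inserting the factor $2^{-k\alpha_2}\cdot 2^{k\alpha_2}$ and using Bernstein I redistribute the derivatives as $\alpha_1$ on $f$ and $\alpha_2$ on $g$'' glosses over the fact that you need to sum $2^{-k(1-\alpha_1)}\|\nabla P_{<k-4}f\|$ back down to $\|D_x^{\alpha_1}f\|$, which requires a maximal-function or square-function argument rather than a single Bernstein step. More importantly, your treatment of the endpoint $(q,r)=(1,2)$ is where the sketch is weakest: you correctly flag that $L^1_x$ is not UMD, but the fix you propose (Fefferman--Stein ``coordinate-wise'') does not obviously give what you need, since the vector-valued Littlewood--Paley inequality in $L^1_x$ fails in general. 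The actual proofs in the cited references handle the mixed norms more directly, via pointwise kernel estimates and Minkowski-type arguments adapted to the specific $L^q_xL^r_T$ structure, rather than invoking abstract UMD machinery; this is what makes the endpoints accessible. If you want a self-contained argument, it would be safer to follow that route for the borderline cases.
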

	\begin{proof}
	See \cite[Lemma 2.6]{kenig2003local} or \cite[Lemma 3.8]{kenig2006global}.
	\end{proof}
	We also have the scalar fractional Leibniz rule, also known as the Kato-Ponce inequality.
	\begin{proposition}\label{Leib1}
Let $s>0$ and $\frac{1}{r}=\frac{1}{p_1}+\frac{1}{q_1}=\frac{1}{p_2}+\frac{1}{q_2}$, $1<r<\infty$, $1<p_1,q_2\leq\infty$, $1<p_2,q_1<\infty$. Then
\begin{equation*}\label{katoponce}
\|D_x^s(fg)\|_{L_x^r}\lesssim_{s,d,p_1,p_2,q_1,q_2} \|f\|_{L_x^{p_1}}\|g\|_{W_x^{s,q_1}}+\|f\|_{W_x^{s,p_2}}\|g\|_{L_x^{q_2}}.\
\end{equation*}
	\end{proposition}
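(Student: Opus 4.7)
The plan is to reduce to frequency-localized building blocks via Bony's paraproduct decomposition
$$fg = T_f g + T_g f + \Pi(f,g),$$
where $T_f g = \sum_k P_{<k-4}f \cdot P_k g$, and $\Pi(f,g) = \sum_{|j-k|\le 3} P_j f \cdot P_k g$ collects the high-high resonant interactions. Each of the three pieces can then be estimated using the Littlewood-Paley square function characterization of $L^r$ (valid since $1<r<\infty$), the Hardy-Littlewood maximal theorem, and Bernstein-type inequalities.

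For the low-high paraproduct $T_fg$, each summand has spatial Fourier support in a dyadic annulus of size $2^k$, so $D_x^s$ acts essentially as multiplication by $2^{ks}$ on that piece. The vector-valued square function estimate then gives
$$\|D_x^s T_f g\|_{L^r} \lesssim \left\|\left(\sum_k 2^{2ks}|P_{<k-4}f|^2|P_k g|^2\right)^{1/2}\right\|_{L^r} \lesssim \left\|(Mf)\left(\sum_k 2^{2ks}|P_k g|^2\right)^{1/2}\right\|_{L^r},$$
where $M$ denotes the Hardy-Littlewood maximal function. Applying H\"older with exponents $(p_1,q_1)$ together with the maximal inequality yields the bound $\|f\|_{L^{p_1}}\|g\|_{W^{s,q_1}}$, and the symmetric paraproduct $T_gf$ produces $\|f\|_{W^{s,p_2}}\|g\|_{L^{q_2}}$ by an identical argument.

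The high-high remainder $\Pi(f,g)$ is the delicate piece because each summand $P_kf\cdot\tilde P_kg$ has Fourier support in the ball $\{|\xi|\lesssim 2^k\}$ rather than in an annulus, so the derivative $D_x^s$ no longer simply amounts to multiplication by $2^{ks}$. The remedy is to project at output frequency $2^m$ and note that only $k\gtrsim m$ contribute to $P_m\Pi(f,g)$, which allows one to trade $2^{ms}$ for $2^{ks}$ at the cost of a summable factor (this is precisely where the positivity $s>0$ is used). After this maneuver one arrives at the same type of square function expression as above, and the argument closes as before.

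The principal obstacle is this $\Pi(f,g)$ term: the off-diagonal summation over the pair $(m,k)$ must be arranged carefully so that a Cauchy-Schwarz with a geometric weight $2^{-\epsilon(k-m)s}$ absorbs the loss. Moreover, the endpoint cases $p_1=\infty$ or $q_2=\infty$ must be handled separately, since the square function fails to characterize $L^\infty$. The hypotheses $p_2,q_1<\infty$ are built in precisely so that the square function characterization is available on the side carrying the derivative, whereas the pointwise bound $\sup_k|P_{<k-4}f|\lesssim Mf$ is what allows the other side to include the $L^\infty$ endpoint.
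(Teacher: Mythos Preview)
The paper does not prove this proposition at all; it simply refers the reader to \cite{grafakos2014kato}. Your paraproduct sketch is the standard route to the Kato--Ponce inequality and is correct as written, so there is nothing to compare beyond noting that you have supplied an argument where the paper only supplies a citation. One small remark: your comment that ``the endpoint cases $p_1=\infty$ or $q_2=\infty$ must be handled separately'' is slightly overcautious, since the pointwise bound $Mf\le\|f\|_{L^\infty}$ already covers those endpoints in the paraproduct and diagonal terms; the genuine restriction is that $p_2,q_1<\infty$ so that the square-function side is available, exactly as you say.
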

	\begin{proof}
			See \cite{grafakos2014kato}.
	\end{proof}

    \subsubsection{Strichartz Estimates}
We now recall the standard Strichartz estimates for the Schr\"odinger equation. Recall that a pair of exponents $(q,r)$ is \emph{Schr\"odinger-admissible} if $\frac{2}{q}+\frac{d}{r}=\frac{d}{2}$, $2\leq q,r\leq \infty$ and $(q,r,d)\neq (2,\infty,2)$.
\begin{theorem}[Strichartz estimates for Schr\"odinger] For any admissible exponents $(q,r)$ and $(\tilde{q},\tilde{r})$ we have the homogeneous Strichartz estimate
\begin{equation*}
\|e^{-it\Delta}u_0\|_{L^q_tL^r_x(\mathbb{R}\times\mathbb{R}^d)}\lesssim_{d,q,r}\|u_0\|_{L^2_x(\mathbb{R}^d)}
\end{equation*}
and the inhomogeneous Strichartz estimate
\begin{equation*}
\bigg\| \int_{s<t} e^{-i(t-s)\Delta}F(s)ds\bigg\|_{L^q_tL^r_x(\mathbb{R}\times\mathbb{R}^d)}\lesssim_{d,q,r,\tilde{q},\tilde{r}}\|F\|_{L_t^{\tilde{q}'}L_x^{\tilde{r}'}(\mathbb{R}\times\mathbb{R}^d)}.
\end{equation*}
\end{theorem}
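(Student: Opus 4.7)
The plan is to follow the classical dispersive-estimate plus $TT^*$ framework, originating with Strichartz and culminating in the Keel--Tao paper that handles the endpoint. I will deduce both inequalities from a single dispersive estimate for the Schr\"odinger propagator.

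The starting point is the explicit kernel representation $e^{-it\Delta}u_0(x) = (4\pi it)^{-d/2}\int e^{i|x-y|^2/(4t)}u_0(y)\,dy$, which yields the pointwise dispersive bound
\[
\|e^{-it\Delta}u_0\|_{L_x^\infty(\mathbb{R}^d)} \lesssim |t|^{-d/2}\|u_0\|_{L_x^1(\mathbb{R}^d)}, \qquad t\neq 0.
\]
Interpolating this with the unitary bound $\|e^{-it\Delta}u_0\|_{L_x^2}=\|u_0\|_{L_x^2}$ produces
\[
\|e^{-it\Delta}u_0\|_{L_x^r(\mathbb{R}^d)} \lesssim |t|^{-d(\frac12-\frac1r)}\|u_0\|_{L_x^{r'}(\mathbb{R}^d)}, \qquad 2\le r \le \infty.
\]

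Next I set $U(t)=e^{-it\Delta}$ and apply the $TT^*$ principle: to prove $U$ maps $L_x^2$ into $L_t^qL_x^r$ it suffices to show that the operator
\[
TT^*F(t) \;=\; \int_{\mathbb{R}} U(t)U^*(s)F(s)\,ds \;=\; \int_{\mathbb{R}} e^{-i(t-s)\Delta}F(s)\,ds
\]
maps $L_t^{q'}L_x^{r'}$ into $L_t^qL_x^r$. For the non-endpoint pairs, i.e. $q>2$, I combine the dispersive estimate above with Minkowski and the one-dimensional Hardy--Littlewood--Sobolev inequality in $t$, using exactly that $\frac{2}{q}+\frac{d}{r}=\frac{d}{2}$ makes the fractional integration kernel $|t-s|^{-d(1/2-1/r)}$ match the exponent needed by HLS. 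This simultaneously yields the homogeneous estimate (for $(q,r)$ admissible with $q>2$) and the inhomogeneous estimate whenever both pairs $(q,r),(\tilde q,\tilde r)$ are non-endpoint, after removing the restriction $s<t$ via the Christ--Kiselev lemma (which applies precisely because $\tilde q'<q$ in the range considered).

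The main obstacle is the endpoint $(q,r)=(2,\tfrac{2d}{d-2})$ in dimension $d\ge 3$, where HLS fails to close and Christ--Kiselev does not apply. Here I would run the Keel--Tao argument: dyadically decompose the bilinear form
\[
\mathcal{T}(F,G) \;=\; \iint_{s<t}\bigl\langle e^{-i(t-s)\Delta}F(s),\, G(t)\bigr\rangle\,ds\,dt \;=\; \sum_{j\in\mathbb{Z}} \mathcal{T}_j(F,G)
\]
according to dyadic shells $|t-s|\sim 2^j$, prove for each dyadic piece two bilinear bounds at slightly off-endpoint exponents using the dispersive estimate and duality, and then interpolate \emph{bilinearly} along an atomic decomposition to recover the endpoint $L_t^2 L_x^{2d/(d-2)}$. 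The summation over $j$ converges exactly on the scaling line $\frac2q+\frac dr=\frac d2$. Once the endpoint homogeneous estimate is known, the endpoint inhomogeneous estimate is obtained by the same bilinear argument applied with a general $(\tilde q,\tilde r)$, and the restriction $s<t$ is recovered either by Christ--Kiselev in non-endpoint pairings or by a direct absorption of the retarded kernel in the bilinear interpolation when the endpoint is involved. This recovers the full statement; the only delicate step is the endpoint bilinear interpolation, while the rest is a routine consequence of dispersion and HLS.
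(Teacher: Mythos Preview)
Your outline is a correct rendition of the standard dispersive/$TT^*$ + Hardy--Littlewood--Sobolev argument, with the Keel--Tao bilinear interpolation for the endpoint. The paper, however, does not prove this theorem at all: it is stated in the preliminaries section as a classical result and used as a black box throughout, so there is no ``paper's own proof'' to compare against. Your sketch is exactly the proof one finds in the references the paper implicitly relies on.
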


To conclude the section we remark that, for some real number $\beta$, we will write $\beta_+$ or $\beta+$ to mean $\beta+c\epsilon$ for some $0<\epsilon\ll 1$ and a uniform constant $c>0$ which may change from line to line by a fixed factor. This convention will be useful for streamlining the notation in upcoming sections.
\section{The main nonlinear estimate}\label{Nonlinear est section}
In this section, we establish the variant of \eqref{our desired bound} for $u\mapsto |u|^{p-1}u$ that we will use in our local well-posedness results. In the following, $u:\mathbb{R}\to\mathbb{C}$ will be a function defined on the real line and taking values in $\mathbb{C}$. Our main result is as follows.
\begin{theorem}\label{Nonlinear estimate}
    Let $p>1$, $1< q< \infty$ and $p\leq s<p+\frac{1}{q}$. Then for every $\varepsilon>0$ sufficiently small and $u:\mathbb{R}\to\mathbb{C}$ with $u\in W^{s,q}$, we have
    \begin{equation}\label{Nonlinear estimate1}
        \| |u|^{p-1}u\|_{W^{s,q}}\lesssim_\epsilon \|u\|_{W^{\frac{1}{q}+\epsilon,q}}^{p-1}\|u\|_{W^{s,q}}.
    \end{equation}
\end{theorem}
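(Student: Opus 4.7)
My plan is to follow and extend the strategy of Bourdaud--Meyer \cite{MR1111186} and Oswald \cite{MR1173747}, whose analysis of $u\mapsto|u|$ exploits the fact that the local model at a simple zero of a real $u$ is $x\mapsto|x|$, which belongs to $W^{s,q}_{\mathrm{loc}}$ precisely when $s<1+\tfrac{1}{q}$. For our higher-order nonlinearity $F(u)=|u|^{p-1}u$, the analogous local model at a simple zero of $u$ is $x\mapsto|x|^{p-1}x=\mathrm{sgn}(x)|x|^p$, which belongs to $W^{s,q}_{\mathrm{loc}}$ precisely when $s<p+\tfrac{1}{q}$, matching the hypothesis exactly. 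As the authors signal, piecewise-linear approximation cannot be applied directly when $s>1+\tfrac{1}{q}$, so my first step is to lower the derivative order to a sub-unital range. Set $k=\lfloor p\rfloor$, $\alpha=p-k\in(0,1]$, and $t=s-k\in[\alpha,\alpha+\tfrac{1}{q})\subset[0,1)$. Differentiating $F(u)$ exactly $k$ times using the complex chain rule yields a representation
\[
\frac{d^k}{dx^k}F(u)=\sum_j G_j(u)\,\mathcal{P}_j(u',\ldots,u^{(k)}),
\]
with each $G_j$ smooth on $\mathbb{C}\setminus\{0\}$, H\"older of exponent $\alpha$ on $\mathbb{C}$, and comparable to $|z|^{p-|\mathbf{j}|}$ near the origin for a multi-index of order $|\mathbf{j}|\le k$; and each $\mathcal{P}_j$ a monomial of total differential order $k$ in $u$.

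\medskip

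Combining the bound $\|F(u)\|_{W^{s,q}}\lesssim\|F(u)\|_{L^q}+\|(d/dx)^{k}F(u)\|_{W^{t,q}}$ with the fractional Leibniz rules (\Cref{Leib1} and \Cref{Leib2}) and the one-dimensional Sobolev embedding $W^{1/q+\varepsilon,q}(\mathbb{R})\hookrightarrow L^\infty(\mathbb{R})$ (which supplies the $L^\infty$-type factors needed to pull the low-order terms in $\mathcal{P}_j$ out of the Leibniz sum), the entire problem reduces to estimates of the form $\|G_j(u)\,w\|_{W^{t,q}}$, where $G_j$ is H\"older-$\alpha$ at the origin, $w$ is a bounded factor built out of the low-order derivative monomials in $\mathcal{P}_j$, and crucially $t<1$. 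Because $t<1$, I can invoke the finite-difference characterization
\[
\|f\|_{W^{t,q}}^{q}\approx\|f\|_{L^q}^{q}+\iint_{\mathbb{R}^2}\frac{|f(x+h)-f(x)|^q}{|h|^{1+tq}}\,dx\,dh,
\]
and approximate $u$ by its piecewise-linear interpolant $u_N$ on the dyadic grid $2^{-N}\mathbb{Z}$. On each grid interval $G_j(u_N)w_N$ has an explicit form, and the only intervals contributing substantially to the finite-difference integral are those containing a zero of $u_N$, where the local profile matches $\mathrm{sgn}(x-x_0)|x-x_0|^p$ scaled by $|u'(x_0)|^{p}$. A direct computation of the finite-difference integral, summed dyadically in the approximation scale $N$, converges if and only if $t<\alpha+\tfrac{1}{q}$, which is exactly our hypothesis. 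A density/limit argument then transfers the estimate to general $u\in W^{s,q}$.

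\medskip

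The hardest step, I expect, is the complex-valued analysis. For real-valued $u$, piecewise-linearity of $u$ implies piecewise-linearity of $|u|$ with explicit breakpoints at the zeros of $u$, and the count per unit interval is controlled by $\|u'\|_{L^\infty}$ in a transparent way. For complex $u$, by contrast, $|u|^{p-1}u$ is smooth wherever $u\ne 0$ but $|u|$ can be small without $u$ actually vanishing, and the phase $u/|u|$ may oscillate rapidly in the region where $|u|$ is small. To handle this I would dyadically decompose the phase $u/|u|$ into angular sectors on the unit circle and, inside each sector, split further according to whether $|\mathrm{Re}\,u|$ or $|\mathrm{Im}\,u|$ dominates, thereby reducing each piece to an essentially scalar Oswald computation in which the ``dangerous'' points are honest sign changes of a dominant real-valued component. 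The stronger norm $\|u\|_{W^{1/q+\varepsilon,q}}^{p-1}$ on the right-hand side of \eqref{Nonlinear estimate1} (rather than the ideal $\|u\|_{L^\infty}^{p-1}$) is precisely what absorbs the logarithmic losses from summation over these angular sectors and dyadic scales, explaining the ``slightly weaker version'' qualifier flagged in the paragraph preceding \Cref{Nonlinear est section}. I expect this phase-space decomposition, together with the bookkeeping required to feed the reduced bounds back into the $k$-fold Leibniz reduction from the first step, to be the most delicate aspect of the proof.
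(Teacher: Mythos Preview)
Your reduction contains a genuine gap. You set $k=\lfloor p\rfloor$ and assert $t=s-k\in[\alpha,\alpha+\tfrac{1}{q})\subset[0,1)$, but $\alpha+\tfrac{1}{q}=\{p\}+\tfrac{1}{q}$ exceeds $1$ whenever $\{p\}>1-\tfrac{1}{q}$ (e.g.\ $p=2.7$, $q=2$ allows $t$ up to $1.2$). In that regime the first-order finite-difference characterization of $W^{t,q}$ you invoke is unavailable, and differentiating once more produces a factor $|u|^{p-k-1}$ with \emph{negative} exponent---not H\"older of any positive order, genuinely singular at the zeros of $u$. The paper differentiates $m=\lfloor s\rfloor$ times (so that $s-m\in[0,1)$ always holds) and explicitly flags that ``$p-m$ may be negative, which is the main difficulty in establishing this estimate.'' The singular factor is controlled via \Cref{PHB lemma}, a H\"older-type inequality valid for negative exponents, together with a linear change of variables on each dyadic interval that converts $|u_j|^{-1+\delta q}$ into the integrable weight $|x|^{-1+\delta q}$ along the affine profile of $u_j$. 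Your proposal never confronts this negative-exponent regime, and the local-model heuristic ``$|x|^p\in W^{s,q}_{\mathrm{loc}}$ iff $s<p+\tfrac{1}{q}$'' does not by itself supply the mechanism.

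On the complex-valued part: your angular-sector decomposition of $u/|u|$ is more elaborate than what the paper actually does. Rather than decompose the phase, the paper simply selects, on each dyadic interval $A_k^j$, whichever of $\Re u_j$ or $\Im u_j$ has the larger first difference (their function $M_{j,k}$) and runs the scalar Oswald argument on that component alone; since $|u_j|\geq|M_{j,k}|$ and $|\Delta_{j,k}u|\approx|\Delta_{j,k}M_j|$, this suffices with no sector summation. The $\|u\|_{W^{1/q+\varepsilon,q}}^{p-1}$ factor in \eqref{Nonlinear estimate1} does not come from angular losses but from interpolating after applying \Cref{Oswald} at an exponent strictly above the target.
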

\begin{remark}
The above inequality is one-dimensional, but it can be used to generate higher-dimensional variants via Fubini's theorem and Sobolev embeddings. The one-dimensional variant will entirely suffice for our applications, as later we will apply it only in the time variable or to individual components $x_1,...,x_d$ in the spatial variable. 
\medskip

We  note that the estimate \eqref{Nonlinear estimate1} is slightly weaker than the estimate \eqref{our desired bound} (in the sense that $W^{\frac{1}{q}+\epsilon,q}$ embeds into $L^{\infty}$). However, \eqref{Nonlinear estimate1} applies in great generality, to complex-valued functions, and, to our knowledge, is the first of its kind. It is conceivable that the factor $\|u\|_{W^{\frac{1}{q}+\epsilon,q}}$ in this estimate could be replaced by $\|u\|_{L^{\infty}}$ with more sophisticated arguments, but we do not pursue this here as it will not be necessary for our main applications.
\end{remark}
Before proving this theorem, we will first set the stage with some notational preliminaries. In the sequel, we let $m$ be the smallest integer such that $0\leq s-m<1$.
    Throughout the proof, we will write $u=f+ig$, where $f=\Re(u)$ and $g=\Im(u)$. To prove \eqref{Nonlinear estimate1}, it is clearly sufficient to prove the estimate
    \begin{equation}\label{Non est}
        \|\partial_x^m\left(|u|^{p-1}u\right)\|_{W^{s-m,q}}\lesssim \|u\|_{W^{\frac{1}{q}+\epsilon,q}}^{p-1}\|u\|_{W^{s,q}}.
    \end{equation}
By repeated applications of the chain rule, we have
\begin{equation*}\label{non est2}
\begin{split}
    \partial_x^m\left(|u|^{p-1}u\right)&=C_1 |u|^{p+1-2m}u_x\Re\left(\overline{u}u_x\right)^{m-1}+C_2 |u|^{p-1-2m}u\Re\left(\overline{u}u_x\right)^m+F(u)
    \\
    &=:N(u,u_x)+N'(u,u_x)+F(u),
\end{split}
\end{equation*}
where $C_1$, $C_2$ are explicit constants and $F(u)$ is a term involving a better distribution of derivatives. By using elementary paradifferential calculus, standard Moser estimates and the fractional chain rule, the term $F(u)$ can be easily shown to satisfy \eqref{Non est}.  The main difficulty is therefore in proving the following bound:
\begin{equation}\label{non est3}
    \|(N,N')\|_{W^{s-m,q}}\lesssim_\epsilon \|u\|_{W^{\frac{1}{q}+\epsilon,q}}^{p-1}\|u\|_{W^{s,q}}.
    \end{equation}
We note that the restriction $s\geq p$ implies that $p-m<1.$ We crucially note that $p-m$ (and $p-s$) may be negative, which is the main difficulty in establishing this estimate. Below, we will focus on estimating the term $N$ as the estimate for $N'$ is virtually identical. 
\medskip

Next, we briefly outline (at a somewhat heuristic level to begin with) our strategy for obtaining \eqref{non est3} and some important technical caveats. Let $j\geq 0$. If $p-m\leq 0$, roughly speaking, we will try to approximate $D_x^{s-m}N(u,u_x)$ by
\begin{equation*}
D_x^{s-m}N(u,u_x)\approx D_x^{s-m}N(u_j,\partial_xu_j)+\mathcal{E}_j,
\end{equation*}
 where $u_j$ is a suitable continuous piecewise-linear approximation of $u$ to be defined below and where $\mathcal{E}_j$ is an error term converging to zero in $L^q$ as $j\to\infty$. As mentioned in the introduction, the motivation for using such an approximation is that the main obstruction in the estimate \eqref{Nonlinear estimate1} should essentially come from the first-order ``linear" degeneracies in $u$. This strategy works well when $p-m\leq 0$  but can fail when $p-m>0$. This is because $\partial_xu_j$ is a piecewise constant function, and therefore is not regular enough to lie in $W^{r,q}$ for any $r>\frac{1}{q}$. To remedy this issue, we will instead obtain an approximation of $D_x^{s-m}N(u,u_x)$ which roughly looks like
\begin{equation*}
D_x^{s-m}N(u,u_x)\approx N(u,D_x^{s-m}u_x)+N(D_x^{s-m}u_j,\partial_xu_j)+\mathcal{E}_j.
\end{equation*}
Before making the above heuristics precise, we list a few simple but important facts that will be used heavily in our analysis below. The first tool that we will need is the following elementary but precise H\"older bound.
\begin{lemma}[Precise H\"older bound]\label{PHB lemma}
    Let $\alpha\in \mathbb{R}$ ($\alpha$ can be negative) and $0\leq \beta \leq 1$. Suppose that $\alpha_1, \alpha_2\in \mathbb{R}$ satisfy $\alpha_1+\alpha_2=\alpha$ and $\alpha_2\in \mathbb{Z}$. Then for every $(z,h)\in \mathbb{C}^2$, we have
    \begin{equation*}\label{PHB}
        \frac{\left| \left|z+h\right|^{\alpha_1}(z+h)^{\alpha_2}-|z|^{\alpha_1}z^{\alpha_2}\right|}{|h|^\beta}\lesssim_{\alpha,\beta} |z+h|^{\alpha-\beta}+|z|^{\alpha-\beta}.
    \end{equation*}
\end{lemma}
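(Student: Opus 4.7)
The plan is to reduce the estimate to a single case split on the relative size of $|h|$ compared to $M:=\max\{|z|,|z+h|\}$, treating $F(w):=|w|^{\alpha_1}w^{\alpha_2}$ as a homogeneous function of degree $\alpha$ on $\mathbb{C}\setminus\{0\}$. The integer hypothesis $\alpha_2\in\mathbb{Z}$ is precisely what makes $w\mapsto w^{\alpha_2}$ single-valued on $\mathbb{C}\setminus\{0\}$ with $|w^{\alpha_2}|=|w|^{\alpha_2}$; in particular $|F(w)|=|w|^{\alpha}$, so both sides of the desired inequality are unambiguously defined. Edge cases where $z=0$ or $z+h=0$ fall automatically into the large-$h$ regime below, and the trivial case $h=0$ is vacuous.

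In the \emph{large-$h$ regime} where $|h|\geq\tfrac{1}{2}M$, both $|z|$ and $|z+h|$ are bounded by $2|h|$. The triangle inequality yields
$$|F(z+h)-F(z)|\leq|z+h|^{\alpha}+|z|^{\alpha},$$
and dividing by $|h|^{\beta}$ gives
$$\frac{|z|^{\alpha}}{|h|^{\beta}}=|z|^{\alpha-\beta}\Big(\frac{|z|}{|h|}\Big)^{\beta}\leq 2^{\beta}|z|^{\alpha-\beta},$$
with the identical bound for the $|z+h|$ term. This is exactly what the right-hand side of the lemma allows.

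In the \emph{small-$h$ regime} where $|h|<\tfrac{1}{2}M$, we may assume without loss of generality that $|z+h|\geq|z|$; then $|z|\geq|z+h|-|h|>\tfrac{1}{2}|z+h|$, so the entire segment $\{z+th:t\in[0,1]\}$ lies in an annulus on which $|w|\sim|z+h|$ and in particular is bounded away from $0$. On this annulus $F$ is smooth, and starting from $F(w)=(w\bar{w})^{\alpha_1/2}w^{\alpha_2}$ a direct Wirtinger calculation gives
$$\partial_{w}F(w)=\tfrac{\alpha_1}{2}|w|^{\alpha_1-2}\bar{w}\,w^{\alpha_2}+\alpha_2|w|^{\alpha_1}w^{\alpha_2-1},\qquad\partial_{\bar w}F(w)=\tfrac{\alpha_1}{2}|w|^{\alpha_1-2}w\cdot w^{\alpha_2},$$
so that $|\partial_{w}F(w)|+|\partial_{\bar w}F(w)|\lesssim_{\alpha}|w|^{\alpha-1}$. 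The fundamental theorem of calculus then yields
$$|F(z+h)-F(z)|\leq|h|\int_{0}^{1}\bigl(|\partial_{w}F(z+th)|+|\partial_{\bar w}F(z+th)|\bigr)\,dt\lesssim|h|\,|z+h|^{\alpha-1},$$
and dividing by $|h|^{\beta}$ produces $|h|^{1-\beta}|z+h|^{\alpha-1}\leq|z+h|^{\alpha-\beta}$, using $0\leq 1-\beta\leq 1$ and $|h|\leq|z+h|$.

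The proof is largely bookkeeping; the only point that warrants care is verifying that the derivative bound $|w|^{\alpha-1}$ holds uniformly regardless of the signs of $\alpha_1$ and $\alpha_2$, which is precisely where the integrality $\alpha_2\in\mathbb{Z}$ enters: it guarantees $w^{\alpha_2}$ is holomorphic (or meromorphic) with modulus exactly $|w|^{\alpha_2}$, so neither factor contributes unwanted logarithmic or multi-valued behavior. I expect no further obstacle — both regimes combine to give the stated bound with a constant depending only on $\alpha$ and $\beta$.
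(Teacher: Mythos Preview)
The proposal is correct and follows essentially the same approach as the paper: a two-case split based on the size of $|h|$ relative to $|z|$ (or $\max\{|z|,|z+h|\}$), using the triangle inequality in the large-$h$ regime and a mean-value/derivative bound in the small-$h$ regime. Your version is slightly more explicit about the Wirtinger derivatives and the role of the integrality hypothesis $\alpha_2\in\mathbb{Z}$, but the argument is otherwise the same as the paper's.
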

\begin{proof}
    We break the proof into two cases. If $|z|\leq 2|h|$, then we simply apply the triangle inequality to estimate
    \begin{equation*}
        \begin{split}
            \left| \left|z+h\right|^{\alpha_1}(z+h)^{\alpha_2}-|z|^{\alpha_1}z^{\alpha_2}\right|&\leq |z+h|^\alpha+|z|^\alpha
            \\
            &\leq |z+h|^{\alpha-\beta}|z+h|^\beta +|z|^{\alpha-\beta}|z|^\beta
            \\
            &\lesssim |h|^\beta\left(|z+h|^{\alpha-\beta}+|z|^{\alpha-\beta}\right).
        \end{split}
\end{equation*}
Next, we assume that $|h|<\frac{1}{2}|z|$. By averaging, we may control
\begin{equation*}
      \left| \left|z+h\right|^{\alpha_1}(z+h)^{\alpha_2}-|z|^{\alpha_1}z^{\alpha_2}\right|\leq |h|\sup_{\tau\in [0,1]}|z+\tau h|^{\alpha-1}.
\end{equation*}
Since $|h|<\frac{1}{2}|z|$, we also have
\begin{equation*}
    |h|\sup_{\tau\in [0,1]}|z+\tau h|^{\alpha-1}\approx |h||z|^{\alpha-1}\leq |h||z|^{\beta-1}|z|^{\alpha-\beta}\leq |h||h|^{\beta-1}|z|^{\alpha-\beta}\leq |h|^\beta |z|^{\alpha-\beta},
\end{equation*}
where we used that $\beta-1\leq 0$ in the second last inequality. This completes the proof.
\end{proof}
Next, we describe the piecewise-linear approximation of $u$ that was alluded to above. Given a real-valued function $h$ and $j\geq 0$, we define the corresponding piecewise-linear approximation $h_j$ of $h$ on the interval $A_{k}^j:=[2^{-j}k,2^{-j}(k+1)]$, $k\in\mathbb{Z},$  by the formula
\begin{equation*}
h_j(x):=h(2^{-j}k)+2^j(x-2^{-j}k)\Delta_{j,k}h,
\end{equation*}
where 
\begin{equation*}
\Delta_{j,k}h:=h\left(\frac{k+1}{2^j}\right)-h\left(\frac{k}{2^j}\right).
\end{equation*}
We then define the complex-valued piecewise-linear approximation $u_j$ of $u$ by
\begin{equation*}
        \begin{split}
            u_j(x):=f_j(x)+ig_j(x):=(\Re(u))_j(x)+i(\Im(u))_j(x).
        \end{split}
    \end{equation*}
As suggested above, we will frequently have to estimate the nonlinear functions $N(u,u_x)$. To simplify notation, we will adopt the convention that
\begin{equation*}
N_j:=N(u_j,\partial_xu_j).
\end{equation*}
That is, $N_j$ is obtained by replacing the arguments in $N$ with $u_j$ and $\partial_x u_j$. To facilitate the use of H\"older-type norms, we also define, for $0<h\leq 1$,
\begin{equation*}
\delta_hf(x):=f^h(x)-f(x), \hspace{5mm} f^h(x):=f(x+h).
\end{equation*}
The next ingredient that we will need for the proof of \Cref{Nonlinear estimate} is the following pair of estimates which follow essentially from Proposition $2$ in \cite{MR1173747}. We will use these bounds to compare the quality of the sequence of piecewise-linear approximations to the Sobolev norm of a given function. 
\begin{proposition}\label{Oswald}
Let $\alpha>s$. The following estimates hold. 
\begin{enumerate}
    \item \label{parta} (First order differences).  Let $f\in W^{\alpha,q}$ with $1< q< \infty$ and $1\leq s<1+\frac{1}{q}.$ 
    Assume that $(\Lambda_j)_{j\in \mathbb{N}}$ is a sequence of subsets of $\mathbb{Z}$ such that if $k$ and $k'$ are two consecutive integers in $\Lambda_j$ we have $\Delta_{j,k}f\Delta_{j,k'}f\leq 0$. There holds
    \begin{equation*}
        \|2^{j(s-\frac{1}{q})}\Delta_{j,k}f\|_{l^q_{j,k}}:=\left(\sum_{j\geq 0}\sum_{k\in \Lambda_j}2^{j(sq-1)}|\Delta_{j,k}f|^q\right)^{\frac{1}{q}}\lesssim_{\alpha} \|f\|_{W^{\alpha,q}}.
    \end{equation*}
    \item\label{partb} (Second order differences).  Let $f\in W^{\alpha,q}$ with $1< q<\infty$ and $1\leq s<1+\frac{1}{q}$. Define the second-order finite difference
    \begin{equation*}
\Delta_{j,k}^2f:=f\left(\frac{k+2}{2^j}\right)-2f\left(\frac{k+1}{2^j}\right)+f\left(\frac{k}{2^j}\right).
\end{equation*}
There holds
\begin{equation*}
   \|2^{j(s-\frac{1}{q})}\Delta_{j,k}^2f\|_{l_{j,k}^q}:=\left(\sum_{j\geq 0}\sum_{k\in \mathbb{Z}}2^{j(sq-1)}|\Delta^2_{j,k} f|^{q}\right)^{\frac{1}{q}}\lesssim_\alpha \|f\|_{W^{\alpha,q}}.
\end{equation*}
\end{enumerate}
\end{proposition}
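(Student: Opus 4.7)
The plan is to treat part (ii) as the prototype and then reduce part (i) to it using the alternating-sign hypothesis. For (ii), the key tautology is $\Delta^2_{j,k}f=(\Delta^2_{2^{-j}}f)(2^{-j}k)$, with $\Delta^2_h f(x):=f(x+2h)-2f(x+h)+f(x)$, so the double sum in (ii) samples $\Delta^2_{2^{-j}}f$ on the lattice $2^{-j}\mathbb{Z}$ with weight $2^{-j}$. I would pass from the continuous $L^q$-norm of $\Delta^2_{2^{-j}}f$ to this lattice $\ell^q$ norm by a Littlewood-Paley decomposition $f=\sum_l P_lf$ and a Plancherel-Polya sampling inequality. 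When $l\le j$ the function $P_lf$ is smooth at scale $2^{-l}\ge 2^{-j}$, so Taylor expansion of the second difference combined with Bernstein yields $\|\Delta^2_{2^{-j}}P_lf\|_{L^q}\lesssim 2^{2(l-j)}\|P_lf\|_{L^q}$, and fine-sampling Plancherel-Polya transfers this to the lattice; when $l>j$, the second difference is estimated trivially by $|P_lf|$ and undersampling Plancherel-Polya contributes a factor $2^{(l-j)/q}$. Summing over $j$ at fixed $l$ yields $\lesssim 2^{lsq}\|P_lf\|_{L^q}^q$ in both regimes, where the geometric series converge because $s<2$ controls the large-$j$ tail and $s>\tfrac{1}{q}$ the small-$j$ tail (both automatic for $1\le s<1+\tfrac{1}{q}$). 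The $\ell^q(l)$ triangle inequality combined with the Bernstein-Mikhlin bound $\|P_lf\|_{L^q}\lesssim 2^{-l\alpha}\|f\|_{W^{\alpha,q}}$ then closes (ii), since $\sum_l 2^{l(s-\alpha)}<\infty$ because $\alpha>s$.

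For (i), the raw first-difference sum would diverge without the sign hypothesis (consider $f(x)=x$ truncated with $\Lambda_j$ sampling every other integer), so the alternation must supply all of the cancellation. Enumerating $\Lambda_j=\{k_1<k_2<\ldots\}$, the alternation of signs along $\Lambda_j$ yields
$$|\Delta_{j,k_i}f|+|\Delta_{j,k_{i+1}}f|=|\Delta_{j,k_i}f-\Delta_{j,k_{i+1}}f|=\Big|\sum_{m=k_i}^{k_{i+1}-1}\Delta^2_{j,m}f\Big|,$$
so after pairing consecutive indices, $\sum_i|\Delta_{j,k_i}f|^q$ is bounded by $\sum_i|\Delta_{j,k_i}f-\Delta_{j,k_{i+1}}f|^q$. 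When the gaps $m_i:=k_{i+1}-k_i$ all equal one, the right-hand side is exactly the second-difference sum controlled by (ii). For larger gaps, the telescoped difference $\Delta_{j,k_i}f-\Delta_{j,k_{i+1}}f$ is a cross-difference $\delta_{m_i2^{-j}}\delta_{2^{-j}}f$ at the two scales $2^{-j}$ and $m_i2^{-j}$; since $m_i2^{-j}\gg 2^{-j}$, this is dominated (with a favorable gain $\sim 1/m_i$) by a genuine second difference $\Delta^2_{2^{-j'}}f$ at the coarser dyadic scale $2^{-j'}\sim m_i2^{-j}$, and can be absorbed into the (ii)-estimate applied at level $j'$.

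I expect the main technical obstacle to be the wide-gap case of (i): one must carefully track the scale change so that contributions from many distinct original levels $j$ do not accumulate uncontrollably at a single coarser level $j'$, and so that the final double sum remains bounded by the (ii)-type quantity. A secondary subtlety already present in (ii) is the undersampling factor $2^{(l-j)/q}$ in the Plancherel-Polya bound for $l>j$; it is only just weak enough for the geometric sum in $j$ to converge, and this is precisely where the endpoint condition $s<1+\tfrac{1}{q}$ is essential.
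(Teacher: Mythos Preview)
Your treatment of (ii) via Littlewood--Paley decomposition and Plancherel--Polya sampling is correct and is a legitimate alternative to the paper's route, which instead quotes Oswald's spline-approximation characterization $\|f\|_{B^s_{q,q}}\approx\|f\|_{L^q}+\|2^{js}(f-f_j)\|_{\ell^q_jL^q}$, bounds the second-difference step function by $\|f-f_j\|_{L^q}+\omega_2(2^{-j},f)_q$, and then uses the embedding $W^{\alpha,q}\hookrightarrow B^s_{q,q}$. Both arguments ultimately rely on the same numerology ($\tfrac{1}{q}<s<2$), and yours has the advantage of being self-contained.

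Your reduction of (i) to (ii), however, has a genuine gap in the wide-gap case, and the issue is more serious than the accounting problem you flag. The claimed domination of the cross-difference $\delta_{m_i2^{-j}}\delta_{2^{-j}}f$ by $m_i^{-1}$ times a second difference at the coarse scale $2^{-j'}\sim m_i2^{-j}$ is a smooth-function heuristic that fails pointwise. Take $f$ to be a smooth bump of width $\sim2^{-j}$ centred near $k_i2^{-j}$, and $\Lambda_j=\{k_i,k_i+N\}$ with $N$ large: then $\Delta_{j,k_i+N}f=0$ so the sign condition holds trivially, $|\Delta_{j,k_i}f|\sim1$, yet every second difference of $f$ at scale $N2^{-j}$ is at most $\|f\|_{L^\infty}\sim1$ --- no factor $1/N$ is available. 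Worse, the total contribution of level $j'\approx j-\log_2N$ to the (ii)-sum for this $f$ is only $\sim N^{-(sq-1)}2^{j(sq-1)}$, far too small to recover the (i)-term $\sim2^{j(sq-1)}$; for this $f$ the bulk of the (ii)-sum sits at levels $j'\ge j$, which your coarse-scale reduction never touches. So passing to coarser scales goes in the wrong direction. The paper does not attempt to reduce (i) to (ii): for (i) it refers directly to Oswald's original argument (pp.~64--65 of \cite{MR1173747}, also recorded as \cite[Lemma~3.9]{MR1950719}), which goes through the spline-approximation characterization of $B^s_{q,q}$ rather than through second differences.
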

The above estimates only appear implicitly in the analysis in \cite{MR1173747} (which considers the nonlinear bound for $f\mapsto |f|$ in the real-valued case). For transparency, we outline the proof.
\begin{proof}
We begin by recalling an equivalent characterization of the Besov space $B_{q,q}^s$ from \cite{MR1173747}. For each $m\geq 1$, we define the m'th order modulus of smoothness,
\begin{equation*}
\omega_m(2^{-j},f)_q:=\sup_{0<h\leq 2^{-j}}\|\delta_h^mf\|_{L^q(\mathbb{R})}
\end{equation*}
and the Besov norm
\begin{equation*}
\|f\|_{B_{q,q}^s}\coloneq\|f\|_{L^q}+\|2^{js}\omega_m(2^{-j},f)_q\|_{l^q_j},
\end{equation*}
where $m$ is any integer with $m>s$. Our main technical input in the proof of \Cref{Oswald} will be the following result from \cite{MR1173747}.
\begin{lemma}[Proposition $2$ in \cite{MR1173747}]\label{splineapprox}
For any $f\in B^s_{q,q}$ with $1< q< \infty$ and $\frac{1}{q}<s<1+\frac{1}{q}$, there holds
\begin{equation*}
\|f\|_{B_{q,q}^s}\approx \|f\|_{L^q} +\|2^{js}(f-f_j)\|_{\ell^q_jL^q}.
\end{equation*}
\end{lemma}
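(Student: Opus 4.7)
The plan is to reduce to the classical characterization of $B^s_{q,q}$ by the second-order modulus of smoothness (which is available because $s<2$), and then show that $\|f-f_j\|_{L^q}$ and $\omega_2(2^{-j},f)_q$ are equivalent on the $2^{js}$-weighted $\ell^q$ scale. By the equivalent norm stated just above the lemma, it therefore suffices to prove
\begin{equation*}
\sum_{j\ge 0} 2^{jsq}\,\|f-f_j\|_{L^q}^q \;\approx\; \sum_{j\ge 0} 2^{jsq}\,\omega_2(2^{-j},f)_q^q
\end{equation*}
modulo an additive $\|f\|_{L^q}^q$ term. The hypothesis $s>1/q$ guarantees that $f$ is continuous (by Sobolev embedding), so the pointwise sampling defining $f_j$ makes sense.

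For the inequality $\|f-f_j\|_{L^q}\lesssim \omega_2(2^{-j},f)_q$, I would use the exactness of linear interpolation on affine functions: on each dyadic interval $A_k^j$ the error $f-f_j$ is controlled by second-order oscillations of $f$ at scale $2^{-j}$. Concretely, for $x\in A_k^j$ the standard tent-kernel representation from spline approximation gives a formula
\begin{equation*}
f(x)-f_j(x) \;=\; \int_0^1 K(x,\tau)\,\delta_{h(\tau)}^2 f\bigl(y(\tau)\bigr)\,d\tau
\end{equation*}
with $h(\tau)\le 2^{-j}$, $y(\tau)$ in a bounded enlargement of $A_k^j$, and $\sup_x\int_0^1|K(x,\tau)|\,d\tau\lesssim 1$. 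Minkowski's integral inequality in $x$ followed by summation in $k$ then yields $\|f-f_j\|_{L^q}\lesssim \omega_2(2^{-j},f)_q$, so multiplying by $2^{jsq}$ and summing in $j$ produces the first half of the equivalence.

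For the reverse inequality I would split $f=(f-f_j)+f_j$. The $(f-f_j)$-part contributes at most $4\|f-f_j\|_{L^q}$ to $\|\delta_h^2 f\|_{L^q}$. For the $f_j$-part, since $f_j$ is piecewise affine with derivative jumps only at the nodes $2^{-j}\mathbb{Z}$, the function $\delta_h^2 f_j$ is supported in an $h$-neighborhood of these nodes. A short direct calculation in the generic single-crossing case yields a pointwise identity of the form $\delta_h^2 f_j(x)=2^j\bigl(k/2^j-(x+h)\bigr)\,\Delta^2_{j,k-1}f$, so that $|\delta_h^2 f_j(x)|\le h\cdot 2^j\cdot|\Delta^2_{j,k-1}f|$. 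Integrating over the $O(h)$-measure support per node (with a routine extra case when $h$ is comparable to $2^{-j}$ so that two adjacent nodes are simultaneously crossed) produces, for $0<h\le 2^{-j}$,
\begin{equation*}
\|\delta_h^2 f_j\|_{L^q}^q \;\lesssim\; 2^{-j}\sum_k |\Delta^2_{j,k-1}f|^q.
\end{equation*}
It remains to bound $2^{-j}\sum_k|\Delta^2_{j,k-1}f|^q$ by $\|f-f_{j-1}\|_{L^q}^q$, after which summation in $j$ (absorbing the harmless $j\to j-1$ shift) closes the argument.

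The main obstacle is this last step, specifically the even-$k$ contribution. For odd $k=2m+1$ the three nodes $(k\pm 1)/2^j$, $k/2^j$ all lie inside the single $(j-1)$-interval $A_m^{j-1}$ on which $f_{j-1}$ is affine, so $\Delta^2_{j,k-1}f_{j-1}=0$ and hence $\Delta^2_{j,k-1}f=\Delta^2_{j,k-1}(f-f_{j-1})$; a Fubini-type averaging over the unit cells of size $2^{-j}$ then yields the desired $L^q$ control. For even $k=2m$, however, the three nodes straddle two adjacent $(j-1)$-intervals and a direct computation instead gives $\Delta^2_{j,k-1}f_{j-1}=\tfrac{1}{2}\Delta^2_{j-1,m-1}f$, which produces a genuine coarser-scale recursion on the quantity we are trying to control. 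I would close this recursion either (i) by passing to the shifted grid $2^{-j}\mathbb{Z}+2^{-(j+1)}$ under which the roles of even and odd $k$ swap (so that the even case reduces to the already-handled odd case), or (ii) by iterating the identity one step further to $f_{j-2}$ and summing the resulting geometric tail in $j$. Either resolution is self-contained and introduces only harmless constants; the pieces then assemble into $\sum_j 2^{jsq}\omega_2(2^{-j},f)_q^q\lesssim \|f\|_{L^q}^q+\sum_j 2^{jsq}\|f-f_j\|_{L^q}^q$, completing the equivalence.
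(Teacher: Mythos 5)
The paper does not give its own proof of this lemma: it is a verbatim citation to Proposition 2 of Oswald \cite{MR1173747}, and the paper uses it only as a black box, so there is no in-paper argument to compare against. Your reconstruction follows the natural spline-approximation route --- reduce to the second-order modulus characterization, a Whitney-type bound for $\|f-f_j\|_{L^q}\lesssim\omega_2(2^{-j},f)_q$, and a split $f=(f-f_j)+f_j$ with an explicit bound on $\delta_h^2 f_j$ for the reverse --- and the scaffolding is sound, including the correct identification of the parity split, the identity $\Delta^2_{j,2m-1}f_{j-1}=\tfrac12\Delta^2_{j-1,m-1}f$, and the observation that the resulting coarse-scale recursion has per-step ratio $2^{(s-1)q-1}$ and so converges exactly when $s<1+\tfrac1q$.

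However, there is a genuine gap in the step you call ``Fubini-type averaging over the unit cells.'' For odd $k=2m+1$ you reduce to $\Delta^2_{j,k-1}(f-f_{j-1})=-2(f-f_{j-1})\bigl((2m+1)/2^j\bigr)$ and then want $2^{-j}\sum_m|(f-f_{j-1})((2m+1)/2^j)|^q\lesssim\|f-f_{j-1}\|_{L^q}^q$. This is false for a general continuous function: a narrow bump of height one centered at a single midpoint makes the left side $\gtrsim 2^{-j}$ while the right side is as small as you like, and there is no averaging device that converts a point evaluation into an $L^q$ average without extra smoothness (supplying that smoothness via the modulus would make the argument circular). The fix is small but essential: every sample point in $\Delta^2_{j,k-1}$ lies on the $j$-grid, where $f=f_j$, so in fact $\Delta^2_{j,k-1}(f-f_{j-1})=\Delta^2_{j,k-1}(f_j-f_{j-1})$. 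The function $f_j-f_{j-1}$ is \emph{piecewise linear} with breakpoints on $2^{-j}\mathbb{Z}$, and for such functions the $\ell^q$ norm of nodal values really is comparable to $2^{j/q}\|f_j-f_{j-1}\|_{L^q}$; this is the averaging you need, and it is valid precisely because of the piecewise-linear structure. One then closes with $\|f_j-f_{j-1}\|_{L^q}\le\|f-f_j\|_{L^q}+\|f-f_{j-1}\|_{L^q}$ and the geometric iteration for even $k$. I would also drop your option (i): shifting the sampling grid by $2^{-(j+1)}$ does not commute with the fixed definitions of $f_j$ and $f_{j-1}$, which are tied to the unshifted grid, so it does not actually swap the roles of even and odd $k$. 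Option (ii) closes cleanly and is the one to keep.
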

With the above result in hand, we turn to proving the second-order difference bound. We write $\Delta_j^2f$ as shorthand for the step-function which restricts to the constant $\Delta_{j,k}^2f$ on each interval $[2^{-j}k,2^{-j}(k+1))$. We have
\begin{equation*}
\|2^{j(s-\frac{1}{q})}\Delta_{j,k}^2f\|_{l_{j,k}^q}\lesssim \|2^{js}\Delta_{j}^2f\|_{l_{j}^qL^q}\lesssim \|2^{js}(f-f_j)\|_{l_j^qL^q}+\|2^{js}\omega_2(2^{-j},f)_q\|_{l_j^q},
\end{equation*}
where the last inequality follows from  equation (12) and the estimates below Proposition $2$ in \cite{MR1173747}. Using \Cref{splineapprox} and the embedding $W^{\alpha,q}\subset B^{s}_{q,q}$ we conclude that
\begin{equation*}
\|2^{j(s-\frac{1}{q})}\Delta_{j,k}^2f\|_{l_{j,k}^q}\lesssim \|f\|_{B^s_{q,q}}\lesssim \|f\|_{W^{\alpha,q}}.
\end{equation*}
This completes the proof of property (ii). Property (i) follows by repeating the arguments used to bound $a_k$ on pages $64$ and $65$ of \cite{MR1173747}, as was previously observed in \cite[Lemma 3.9]{MR1950719}.
\end{proof}
\begin{remark}
In \cite{MR1173747}, the above estimates are phrased in terms of more refined Besov-type norms which only require the weaker restriction $\alpha\geq s$. In practice, this will not be needed in our analysis later, so we prefer to phrase estimates in terms of $W^{\alpha,q}$ norms to avoid cumbersome notation and unnecessary (for our purposes) technical refinements.
\end{remark}
Now, we can state the main auxiliary estimate for this section, which will give us the necessary control over the approximation $N_j$ of $N(u,u_x)$. Before giving the precise statement, we note the following very simple lemma which will be needed in the regime $0<p-m<1$.
\begin{lemma}\label{nonlineardecomp} The nonlinear term $N(u,u_x)$ decomposes into a finite sum of terms of the form $N^{p-m}(u)N'(u_x)$ where $|N^{p-m}(u)|=|u|^{p-m}$ and $N'$ is a smooth $m$-linear expression in $u_x$ and $\overline{u}_x$ with $|N'(u_x)|=|u_x|^m$.
\end{lemma}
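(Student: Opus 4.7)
The plan is that this is purely an algebraic identity obtained via a binomial expansion of the real-part factor, so the proof will be a short bookkeeping exercise. Starting from the explicit formula already recorded above,
\begin{equation*}
N(u,u_x) \;=\; C_1\,|u|^{p+1-2m}\,u_x\,\Re(\overline{u}u_x)^{m-1},
\end{equation*}
I would first rewrite $\Re(\overline{u}u_x)=\tfrac{1}{2}(\overline{u}u_x+u\overline{u}_x)$ and apply the binomial theorem to obtain
\begin{equation*}
\Re(\overline{u}u_x)^{m-1} \;=\; 2^{-(m-1)}\sum_{k=0}^{m-1}\binom{m-1}{k}\,\overline{u}^{\,k}u^{\,m-1-k}\,u_x^{\,k}\,\overline{u}_x^{\,m-1-k}.
\end{equation*}

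Next, substituting this expansion back into $N(u,u_x)$ and regrouping $u$-factors and $u_x$-factors separately yields
\begin{equation*}
N(u,u_x) \;=\; C_1\,2^{-(m-1)}\sum_{k=0}^{m-1}\binom{m-1}{k}\,\Bigl(|u|^{p+1-2m}\,\overline{u}^{\,k}u^{\,m-1-k}\Bigr)\,\Bigl(u_x^{\,k+1}\,\overline{u}_x^{\,m-1-k}\Bigr).
\end{equation*}
For each $k\in\{0,\dots,m-1\}$ I would then set
\begin{equation*}
N^{p-m}_k(u) \;:=\; |u|^{p+1-2m}\,\overline{u}^{\,k}u^{\,m-1-k},\qquad N'_k(u_x)\;:=\;u_x^{\,k+1}\,\overline{u}_x^{\,m-1-k},
\end{equation*}
and verify the asserted identities by inspection: $|N^{p-m}_k(u)| = |u|^{p+1-2m}|u|^{k}|u|^{m-1-k}=|u|^{p-m}$, while $|N'_k(u_x)|=|u_x|^{(k+1)+(m-1-k)}=|u_x|^m$ and $N'_k$ is a monomial of total degree $m$ in $(u_x,\overline{u}_x)$, hence smooth and $m$-linear when viewed as the restriction to the diagonal of a multilinear form in $m$ copies of $(u_x,\overline{u}_x)$.

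The only point worth a brief remark concerns the factor $N^{p-m}_k(u)$ at $u=0$. In the regime $0<p-m<1$ flagged immediately before the statement, the magnitude identity $|N^{p-m}_k(u)|=|u|^{p-m}$ ensures continuity (and vanishing) at the origin, while smoothness away from the origin is obvious from the explicit formula. I do not anticipate any real obstacle here, as the entire content of the lemma is the binomial identity above; its purpose is only to cleanly isolate the $|u|^{p-m}$ factor (responsible for the H\"older-type degeneracy near $u=0$) from the smooth $m$-linear dependence on $(u_x,\overline{u}_x)$, which is precisely the decomposition needed for the subsequent estimates in \Cref{Nonlinear est section}.
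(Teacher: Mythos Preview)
Your proof is correct and is precisely the ``straightforward, direct computation'' that the paper alludes to: the binomial expansion of $\Re(\overline{u}u_x)^{m-1}=2^{-(m-1)}(\overline{u}u_x+u\overline{u}_x)^{m-1}$ followed by regrouping of the $u$- and $u_x$-factors is the natural way to exhibit the decomposition, and your verification of the magnitude identities is clean.
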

\begin{proof}
This follows from straightforward, direct computation. 
\end{proof}
The following proposition is the main technical input needed to obtain the nonlinear estimate \eqref{Non est}.
\begin{proposition}\label{new prop}
 Let $s_0:=s+\epsilon$ with $0<\epsilon\ll 1$ and with $s_0<p+\frac{1}{q}$ and $s_0-m<1$. Define $0<\sigma\coloneq s_0-m<1$. Then the following statements hold.
 \begin{enumerate}
     \item\label{1} Let $p-m\leq 0$ and $0<h\leq 1$. For $0<\epsilon'\ll\epsilon$, there holds
     \begin{equation}\label{firstcase}
     h^{-q(\sigma-\epsilon')}\sum_{j> \log(h^{-1})}2^{jq\epsilon'}\|\delta_h(N_{j+1}-N_j)\|_{L_x^q}^q\lesssim_\epsilon \|u\|_{W^{\frac{1}{q}+\epsilon,q}}^{q(p-1)}\|u\|_{W^{s,q}}^q
    \end{equation}
    and
\begin{equation}\label{firstcase2}
h^{-q\sigma}\sum_{0\leq j\leq  \log(h^{-1})}\|\delta_h N_j\|_{L_x^q}^q\lesssim_\epsilon \|u\|_{W^{\frac{1}{q}+\epsilon,q}}^{q(p-1)}\|u\|_{W^{s,q}}^q.
\end{equation}
    \item\label{2} Let $0<p-m<1$ and $0<h\leq 1$. For $0<\epsilon'\ll\epsilon$, there holds 
     \begin{equation}\label{secondcase}
       h^{-q(\sigma-\epsilon')}\sum_{j>\log(h^{-1})}2^{jq\epsilon'}\|N'_{j+1}\delta_hN_{j+1}^{p-m}-N'_j\delta_hN_j^{p-m}\|_{L_x^q}^q\lesssim_\epsilon \|u\|_{W^{\frac{1}{q}+\epsilon,q}}^{q(p-1)}\|u\|_{W^{s,q}}^q
    \end{equation}
    and
    \begin{equation}\label{secondcase2}
 h^{-q\sigma}\sum_{0\leq j\leq \log(h^{-1})}\|N'_{j}\delta_hN_{j}^{p-m}\|_{L_x^q}^q\lesssim_\epsilon \|u\|_{W^{\frac{1}{q}+\epsilon,q}}^{q(p-1)}\|u\|_{W^{s,q}}^q.
    \end{equation}
    \item\label{3} For every $j\geq 0$, there holds
     \begin{equation}\label{lowfreqbound}
        \|N_j\|_{L^q}\lesssim_\epsilon \|u\|_{W^{\frac{1}{q}+\epsilon,q}}^{p-1}\|u\|_{W^{s,q}}.
     \end{equation}
\item\label{4} Let $p-m\leq 0$. Then $N_j$ converges to $N$ in $L^q$. That is, as $j\to\infty$, we have
\begin{equation*}
N_j\to N\hspace{2mm}\text{in}\hspace{2mm}L^q(\mathbb{R}).
\end{equation*}
 \end{enumerate}
\end{proposition}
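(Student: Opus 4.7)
The plan is to reduce each assertion to an explicit sum over the dyadic intervals $A_k^j$ and then invoke Proposition~\ref{Oswald} to convert these sums into $W^{s,q}$ norms. The key structural fact is that on each $A_k^j$, the piecewise-linear approximation $u_j$ is a genuine affine function with slope $2^j \Delta_{j,k} u$, so $\partial_x u_j$ is constant on $A_k^j$ and the approximants $N_j$ (as well as their factorization $N^{p-m}(u_j)\,N'(\partial_x u_j)$ from Lemma~\ref{nonlineardecomp}) are explicit functions whose $L^q$ norms and finite differences can be written in closed form in terms of the first differences $\Delta_{j,k}u$ and second differences $\Delta^2_{j,k}u$. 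The main algebraic tool throughout will be Lemma~\ref{PHB lemma}: applied with $\alpha = p-m$, $\alpha_2 = 0$, and $\beta$ close to $1$, it converts $|u_{j+1}|^{p-m} - |u_j|^{p-m}$ into a product $(\max\{|u_j|,|u_{j+1}|\})^{p-m-\beta}\,|u_{j+1}-u_j|^{\beta}$, which can then be bounded by powers of the first and second finite differences of $u$.

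I would first dispatch parts \eqref{3} and \eqref{4}. For \eqref{3}, the pointwise bound $|N_j|\lesssim|u_j|^{p-m}|\partial_x u_j|^m$ combined with the Sobolev embedding $W^{1/q+\epsilon,q}\hookrightarrow L^\infty$ reduces the estimate to controlling $\sum_k 2^{jqm-j}|\Delta_{j,k}u|^{qm}$; H\"older's inequality in $k$ (absorbing most of the $m$ factors of $|\Delta_{j,k}u|$ into $L^\infty$-type control at the cost of $\epsilon$ derivatives) together with Proposition~\ref{Oswald}\eqref{parta} applied at the slightly elevated regularity $s_0 = s+\epsilon$ then handles it. In the sub-case $p<m$ the crude bound is singular at zeros of $u_j$, and one must instead use the exact form of $N$ and the identity $\Re(\overline{u}u_x)=\tfrac{1}{2}(|u|^2)_x$ to avoid the apparent singularity. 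For \eqref{4}, pointwise convergence $u_j \to u$ and $\partial_x u_j \to u_x$ a.e.\ at Lebesgue points yields $N_j \to N$ a.e., and the uniform $L^q$ bound from \eqref{3} (applied through a smooth density approximation) upgrades this to $L^q$ convergence.

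The heart of the proof is parts \eqref{1} and \eqref{2}. In both cases I would split according to the scale of $h$ relative to $2^{-j}$: at high frequencies $j>\log h^{-1}$, telescope and estimate $N_{j+1}-N_j$ (or its factorized analogue) by using Lemma~\ref{PHB lemma} on the H\"older factor and controlling the change in slopes by $|\Delta^2_{j,k}u|$, which matches the hypotheses of Proposition~\ref{Oswald}\eqref{partb}; the extra $\delta_h$ factor is handled by a trivial bound of the form $h\cdot 2^j$, which is the origin of the $2^{jq\epsilon'}$ factor in \eqref{firstcase} and \eqref{secondcase}. At low frequencies $j\leq\log h^{-1}$, one uses a direct H\"older bound on $\|\delta_h N_j\|_{L^q}$ with a small H\"older exponent together with the same machinery. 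The main obstacle is part \eqref{2} with $0<p-m<1$: the piecewise-constant factor $\partial_x u_j$ has no regularity beyond $W^{1/q,q}$, so the direct strategy of \eqref{1} fails. The remedy is the factorization from Lemma~\ref{nonlineardecomp}, which keeps the rough factor $N'_j$ undifferentiated and lets $\delta_h$ act only on the H\"older-smooth $N^{p-m}_j$ whose exponent now lies in $(0,1)$, where Lemma~\ref{PHB lemma} provides a genuine power gain. The resulting bookkeeping, especially in the complex-valued setting (which, unlike Oswald's real-valued framework in \cite{MR1173747}, cannot be reduced to a pure absolute-value estimate), is the most delicate technical part of the proof.
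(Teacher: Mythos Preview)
Your overall architecture---split according to whether $h\lessgtr 2^{-j}$, work interval by interval on the $A_k^j$, and reduce to Proposition~\ref{Oswald} via Lemma~\ref{PHB lemma}---matches the paper's. Two genuine gaps remain, however, and they are precisely the places where the proof has real content.

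First, in the regime $p-m\le 0$ (and likewise when $0<p-m\le 1-\tfrac1q$), applying Lemma~\ref{PHB lemma} to $\delta_h N_j^{p-m}$ produces a factor like $|u_j(x)|^{-1+\delta q}$, which is singular at the zeros of $u_j$; you do not say how to integrate this over $A_k^j$ and sum in $k$. The naive bound diverges. The paper's device is to set $M_{j,k}$ equal to whichever of $\Re u_j$, $\Im u_j$ has larger slope on $A_k^j$, use $|u_j|\ge |M_{j,k}|$, and perform the linear change of variables $x\mapsto M_{j,k}(x)$ so that $\int_{A_k^j}|u_j|^{-1+\delta q}\,dx$ is controlled by $|\Delta_{j,k}M_j|^{-1}\int_{M_{j,k}(A_k^j)}|y|^{-1+\delta q}\,dy$. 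Summing over $k$ within a maximal interval on which $M_j'$ keeps a fixed sign, the images $M_{j,k}(A_k^j)$ are disjoint and the integrals combine to $\lesssim\|u\|_{L^\infty}^{\delta q}$; one is left with the worst first difference on that interval, which is exactly what Proposition~\ref{Oswald}\eqref{parta} controls. This step is where the $\tfrac1q$ in the threshold $s<p+\tfrac1q$ actually enters, and it is also how the complex-valued difficulty you flag gets resolved---not by any algebraic identity for $\Re(\overline u\,u_x)$. Without it the estimate does not close, and your treatment of \eqref{3} in the $p<m$ case has the same gap.

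Second, your argument for \eqref{4} is not valid: almost-everywhere convergence together with a uniform $L^q$ bound does not imply $L^q$ convergence (consider $n^{1/q}\mathbf 1_{[0,1/n]}$), and a density argument would require continuity of $u\mapsto N(u,u_x)$ in $L^q$, which is part of what must be proved. The paper instead observes that the proof of \eqref{1} in the regime $h\ge 2^{-j}$ already yields $\sum_{j\ge 0}2^{jq\sigma}\|N_{j+1}-N_j\|_{L^q}^q<\infty$, from which \eqref{4} is immediate by telescoping.
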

Before establishing this proposition, we  show how it implies \Cref{Nonlinear estimate}. 
\medskip

\textbf{Case 1: $p-m\leq 0$.} We begin with the case $p-m\leq 0$. We note that since $\sigma>s-m$, the embedding $B^{\sigma}_{q,\infty}\subset W^{s-m,q}$ together with \eqref{firstcase}-\eqref{firstcase2} and \eqref{lowfreqbound} imply the uniform in $j$ bound,
\begin{equation}\label{unifj}
\|N_j\|_{W^{\sigma',q}}\lesssim_{\epsilon,\sigma'} \|u\|_{W^{\frac{1}{q}+\epsilon,q}}^{p-1}\|u\|_{W^{s,q}}
\end{equation}
for every $\sigma'$ satisfying $s-m\leq\sigma'<\sigma$.  Therefore, the estimate \eqref{non est3} will follow if we can show that $N_j$ converges to $N$ in $W^{s-m,q}$. This follows by interpolating the uniform bound \eqref{unifj} with the fact that $N_j\to N$ in $L^q$. 
\begin{remark}
We note that since $p-m\leq 0$, we must necessarily have $m\geq 2$. Therefore, $N_j$ (which has only factors of $u_x$ and no higher order derivatives) is slightly more regular than a $W^{s-m,q}$ function. This is why we can work with $W^{\sigma',q}$ above. We caution that this reasoning is not valid when $0<p-m<1$, as $m=1$ is possible. This is why the analysis in case 2 below is a bit more delicate.
\end{remark}

\textbf{Case 2: $0<p-m<1$.} In this case, we use the paradifferential decomposition $N=(N-T_{N'}N^{p-m})+T_{N'}N^{p-m}$. On one hand, by standard paraproduct estimates, interpolation and Sobolev embeddings give
\begin{equation*}
\|N-T_{N'}N^{p-m}\|_{W^{s-m,q}}\lesssim \|u\|_{C^{\epsilon}}^{p-m}\|N'\|_{W^{s-m,q}}\lesssim \|u\|_{W^{\frac{1}{q}+\epsilon,q}}^{p-1}\|u\|_{W^{s,q}}.
\end{equation*}
On the other hand, if $s-m<\sigma'<\sigma$, then from the embedding $B_{q,\infty}^{\sigma'}\subset W^{s-m,q}$, we have
\begin{equation*}
\begin{split}
\|T_{N'}N^{p-m}\|_{W^{s-m,q}}&\lesssim \|T_{N'}N^{p-m}\|_{L^q}+\sup_{0<h\leq 1}h^{-\sigma'}\|\delta_h(T_{N'}N^{p-m})-T_{N'}\delta_hN^{p-m}\|_{L^q}
\\
&+\sup_{0<h\leq 1}h^{-\sigma'}\|N'\delta_hN^{p-m}-T_{N'}\delta_hN^{p-m}\|_{L^q}
\\
&+\sup_{0<h\leq 1}h^{-\sigma'}\|N'\delta_hN^{p-m}\|_{L^q}.
\end{split}
\end{equation*}
Using standard paradifferential calculus estimates, the first three terms on the right-hand side can be estimated by $\|u\|_{W^{\frac{1}{q}+\epsilon,q}}^{p-1}\|u\|_{W^{s,q}}$. The last term can be estimated similarly to case 1, but instead we use \eqref{secondcase} and \eqref{secondcase2} to obtain the bound
\begin{equation*}
h^{-\sigma'}\|N'_j\delta_hN_j^{p-m}\|_{L^q}\lesssim_{\epsilon,\sigma'}\|u\|_{W^{\frac{1}{q}+\epsilon,q}}^{p-1}\|u\|_{W^{s,q}}
\end{equation*}
which is uniform in both $j$ and $h$. Since $p-m>0$, $N^{p-m}$ is a H\"older continuous function of $u$. Dominated convergence in the $j$ parameter (holding $h$ fixed) gives the uniform in $h$ bound
\begin{equation*}
h^{-\sigma'}\|N'\delta_hN^{p-m}\|_{L^q}\lesssim_{\epsilon,\sigma'}\|u\|_{W^{\frac{1}{q}+\epsilon,q}}^{p-1}\|u\|_{W^{s,q}},
\end{equation*}
as desired. 
\begin{proof}[Proof of \Cref{new prop}]
We start by proving \eqref{1}, \eqref{2}, and \eqref{3}. In our analysis below, we will show the details for the first two properties as \eqref{3} will follow from similar reasoning and is much easier. Property \eqref{4} will be obtained along the way as a corollary of the estimates used to prove properties \eqref{1} and \eqref{2} -- we will note this when it becomes pertinent. Our analysis will proceed by considering separately the summands in the $l^q$ norm where $h\leq 2^{-j}$ and $h>2^{-j}$. We begin with the first case, which is more difficult.
\subsection{Estimates when $h\leq 2^{-j}$} Here, we define for each $j\geq 0$ and $k\in \mathbb{Z}$ the set $A_k^j:=[k2^{-j},(k+1)2^{-j}]$. If $p-m\leq 0$, the summands in \eqref{1} where $h\leq 2^{-j}$ can be controlled by the sum over $j\geq 0$ and $k\in\mathbb{Z}$ of the terms
\begin{equation*}
    \begin{split}
     &I_{jk}^1:=h^{-q\sigma}\int_{A_k^j} |\partial_xu_j(x)|^q |\partial_xu_j^h(x)|^{q(m-1)}|\delta_hN_j^{p-m}(x)|^qdx,
      \\
      &I_{jk}^2:=h^{-q\sigma}\int_{A_k^j}|\partial_xu_j^h(x)|^{q(m-1)} |u_j^h(x)|^{q(p-m)}|\delta_h\partial_xu_j(x)|^qdx,
      \\
        &I_{jk}^3:=h^{-q\sigma}\int_{A_k^j}|\partial_xu_j(x)|^{q} |u_j(x)|^{q(p-m)}|\delta_h((\partial_xu_j)^{l-1}(\overline{\partial_xu_j})^{m-l})(x)|^qdx,
    \end{split}
\end{equation*}
where  $0\leq l\leq m.$ It will be important to note that since $p-m\leq 0$, we have $m\geq 2$ and therefore the integrand in $I_{jk}^1$ has at least one factor of each of $|\partial_xu_j|^q$ and $|\partial_xu_j^h(x)|^q$. When $0<p-m<1$, it suffices to estimate the sum over $j,k$ of the terms
\begin{equation*}
I_{jk}^{+}:=h^{-q\sigma}\int_{A_k^j} |\partial_xu_j(x)|^{qm}|\delta_hN_j^{p-m}(x)|^qdx
\end{equation*}
which are slightly different than the $I_{jk}^1$ above.
\medskip

\textbf{Estimates for $I_{jk}^1$ and $I_{jk}^+$.} We begin by estimating $I_{jk}^1$ and $I_{jk}^+$. We consider three cases depending on whether $p-m\leq 0$, $0<p-m\leq 1-\frac{1}{q}$ or $1-\frac{1}{q}\leq p-m<1$. 
\medskip

\textbf{Case 1: $p-m\leq 0$.}
Here, we need to estimate $I_{jk}^1$. We note that in this case, we also have $m\geq 2$. We begin by using \Cref{PHB lemma} to estimate $I_{jk}^1$ by
\begin{equation}\label{non est 6}
    \begin{split}
      \int_{A_k^j}|\partial_xu_j(x)|^q |\partial_xu_j^h(x)|^{q(m-1)}\left(|u_j^h(x)|^{-1+\delta q}+|u_j(x)|^{-1+\delta q}\right) \left|\frac{\delta_hu_j(x)}{h}\right|^{q\sigma}\left|\delta_h u_j(x)\right|^{q(p-s_0+\frac{1}{q}-\delta)}dx
    \end{split}
\end{equation}
for some $0<\delta\ll 1$. Below, we will show how to estimate 
\begin{equation}\label{FboundAkj}
    \begin{split}
     \int_{A_k^j}F(x)dx:=  \int_{A_k^j}|\partial_xu_j(x)|^q |\partial_xu_j^h(x)|^{q(m-1)}|u_j^h(x)|^{-1+\delta q}\left|\frac{\delta_hu_j(x)}{h}\right|^{q\sigma}\left|\delta_h u_j(x)\right|^{q(p-s_0+\frac{1}{q}-\delta)}dx
    \end{split}
\end{equation}
 as the other term in \eqref{non est 6} is completely analogous.
\medskip

 How we carry out the above estimate will depend on whether $x+h$ is in $A_k^j$ or lies outside of $A_k^j$. Motivated by this, we denote by $B_k^1$ the portion of $A_k^j$ such that $x\in A_k^j\Rightarrow x+h\in A_k^j$ and let $B_k^2$ denote the portion of $A_k^j$ such that $x\in A_{k}^j\Rightarrow x+h\in A_{k+1}^j$. To simplify notation in the forthcoming estimates, it will be useful to define the following function on $A_k^j$,
\begin{equation*}
M_{j,k}(x):=\begin{cases}
&f_j(x) \hspace{5mm} \text{if}\ |\Delta_{j,k}f|\geq |\Delta_{j,k}g|,
\\
& g_j(x) \hspace{5mm} \text{if}\ |\Delta_{j,k}f|<|\Delta_{j,k}g|,
\end{cases}
\end{equation*}
where we recall that $f:=\Re(u)$ and $g:=\Im(u).$ In other words, $M_{j,k}$ is chosen based on which of $f_j$ or $g_j$ has larger variation on $A_k^j$. We define the function $M_j$ on $\mathbb{R}$ by $M_{j|A_k^j}=M_{j,k}$.
\medskip

We next estimate the portion of \eqref{FboundAkj} corresponding to the region $B_k^1$. We begin by using the definition of $u_j$ to estimate
\begin{equation*}
    \int_{B_k^1} F(x)dx\lesssim 2^{jqs_0}|\Delta_{j,k}M_{j}|^{q\left(p+\frac{1}{q}-\delta\right)}\int_{A_k^j}|M_{j}(x)|^{-1+\delta q}dx.
\end{equation*}
Then, by a linear change of variables (using that $f_j$ and $g_j$ are piecewise-linear), we have
\begin{equation*}
    \int_{B_k^1}F(x)dx\lesssim 2^{j(qs_0-1)}|\Delta_{j,k}M_{j}|^{q(p-\delta)}\int_{M_{j}(A_k^j)}|x|^{-1+q\delta}dx=:J_{jk}^1.
\end{equation*}
To estimate the sum of $J_{jk}^1$ over $j,k$, our strategy will be similar to that of \cite{MR1950719}. We begin by partitioning $\mathbb{R}$ into two sets of disjoint intervals $\mathbb{R}=\bigcup_{n\in \Lambda^f}D_n^f$ (resp.~$\bigcup_{n\in \Lambda^g}D_n^g$), where $D_n^f$ (resp.~$D_n^g$) is a maximal interval on which $\partial_xf_j$ (resp.~$\partial_xg_j$) does not change sign. We let $L_n^f$ (resp.~$L_n^g$) be the set of $k\in \mathbb{Z}$ such that $(k2^{-j},(k+1)2^{-j})$ is included in $D_n^f$ (resp.~$D_n^g$) and let $l_n\in L_n^f$ be such that $|\Delta_{j,l_n}f|=\sup_{k\in L_n^f}|\Delta_{j,k}f|$, with similar definitions for $g$. Using the mutual disjointness of the $f_j(A_k^j)$ on each $D_n^f$ (resp.~$g_j(A_k^j)$ on $D_n^g$)  we have, 
\begin{equation*}
    \sum_{k\in \mathbb{Z}}J^1_{jk}\lesssim 2^{j(qs_0-1)}\|u\|_{L^\infty}^{q\delta}\left(\sum_{n\in \Lambda^f}|\Delta_{j,l_n}f|^{q(p-\delta)}+\sum_{n\in \Lambda^g}|\Delta_{j,l_n}g|^{q(p-\delta)}\right).
\end{equation*}
For either $h=f$ or $h=g$, we have $  \Delta_{j,l_n}h\Delta_{j,l_{n+1}}h\leq 0$. Hence, part \eqref{parta} of \Cref{Oswald} together with simple interpolation inequalities yields
\begin{equation}\label{interp1}
    \sum_{j,k}J^1_{jk}\lesssim_{\epsilon}\|u\|_{W^{\frac{s}{p}+\epsilon,qp}}^{pq}\lesssim \|u\|_{W^{\frac{1}{q}+\epsilon,q}}^{q(p-1)}\|u\|_{W^{s,q}}^q,
\end{equation}
where above and in the sequel we write $\sum_{j,k}$ as shorthand for $\sum_{j\geq 0}\sum_{k\in\mathbb{Z}}$. A similar bound holds for the other term in \eqref{non est 6}.
\medskip

We now turn to the estimate for when $x\in B_k^2$. We begin by observing that since $x+h\in A_{k+1}^j$, we have
\begin{equation*}
    F\leq 2^{jqs_0}|\Delta_{j,k}M_{j}|^q|\Delta_{j,k+1}M_{j}|^{q(m-1)}|M^h_{j,k+1}(x)|^{-1+q\delta}\sup_{l\in \{k,k+1\}}|\Delta_{j,l}M_{j}|^{q\left(p-m+\frac{1}{q}-\delta\right)}.
\end{equation*}
If $|\Delta_{j,k}M_j|\approx|\Delta_{j,k+1}M_{j}|$, we can estimate the integral of $F$ over $B_k^2$ by $J_{jk}^1$. Therefore, without loss of generality, we can assume that we have $|\Delta_{j,k}M_{j}|\geq 4|\Delta_{j,k+1}M_{j}|$ or $|\Delta_{j,k+1}M_{j}|\geq 4|\Delta_{j,k}M_{j}|.$ By the triangle inequality, this gives
\begin{equation*}
    |\Delta_{j,k}M_{j}|+ |\Delta_{j,k+1}M_{j}|\lesssim |\Delta_{j,k}^2u_{j}|,
\end{equation*}
and therefore if $m\geq 2$, we have
\begin{equation*}
    F\lesssim 2^{jqs_0}|\Delta_{j,k}^2u_{j}|^{q\left(p-1+\frac{1}{q}-\delta\right)}|\Delta_{j,k+1}M_{j}|^q|M_{j,k+1}^h(x)|^{-1+q\delta},
\end{equation*}
so that
\begin{equation}\label{Bk2est}
    \begin{split}
        \int_{B_k^2} Fdx&\lesssim  2^{j(qs_0-1)}|\Delta_{j,k}^2u_{j}|^{q\left(p-1+\frac{1}{q}-\delta\right)}|\Delta_{j,k+1}M_{j}|^{q-1}\int_{M_{j}(A_{k+1}^j)}|x|^{-1+q\delta}dx
        \\
        &\lesssim 2^{j(qs_0-1)}\|u\|_{L^\infty}^{q\delta}|\Delta_{j,k}^2u_j|^{q(p-\delta)}.
    \end{split}
\end{equation}
Using part \eqref{parta} of \Cref{Oswald} to sum terms of the form $J_{jk}^1$ arising in the first case and instead part \eqref{partb} of \Cref{Oswald} to sum terms  in the second case, we obtain again
\begin{equation}\label{interp2}
  \sum_{j,k}\int_{B_k^2}Fdx \lesssim_\epsilon \|u\|_{W^{\frac{1}{q}+\epsilon,q}}^{q(p-1)}\|u\|_{W^{s,q}}^q.
\end{equation}
This handles the case $p-m\leq 0$. 
\medskip

\textbf{Case 2: $0<p-m\leq 1-\frac{1}{q}$.} Here, we need to estimate $I_{jk}^+$. We can again split this into an estimate on the sets $B_k^1$ and $B_k^2$. The estimate on $B_k^1$ follows identical reasoning to the previous case. The estimate on $B_k^2$ will require the following bound, which will allow us to replace at least one factor of $\partial_x u_j$ with $\partial_x u_j^h$ in the integrand of $I_{jk}^+$,
\begin{equation}\label{m1case}
\sum_{j,k}K_{jk}^1:=\sum_{j,k}h^{-q\sigma}\int_{B_k^2}|\delta_h\partial_x u_j|^{qm}|\delta_h N_j^{p-m}|^qdx\lesssim_\epsilon \|u\|_{W^{\frac{1}{q}+\epsilon,q}}^{q(p-1)}\|u\|_{W^{s,q}}^q.
\end{equation}
To prove \eqref{m1case}, we use the fact that $|B_k^2|\lesssim h$ as well as $|\delta_h\partial_x u_j|\lesssim 
2^j|\Delta^2_{j,k} u_j|$ and \Cref{PHB} to bound
\begin{equation*}
K_{jk}^1\lesssim 2^{j(q(s-\epsilon)-1)}|\Delta_{j,k}^2u_j|^{q}\|u\|_{C^{\frac{s_0-s+\epsilon}{p-1}}}^{q(p-1)},\hspace{5mm}\text{if}\hspace{5mm}m=1,
\end{equation*}
\begin{equation*}
K_{jk}^1\lesssim 2^{j(q(s_0-p+1)-1)}|\Delta_{j,k}^2u_j|^{q}\|u\|_{C^1}^{q(p-1)},\hspace{5mm}\text{if}\hspace{5mm}m\geq 2.
\end{equation*}
In either case, by \Cref{Oswald} and interpolation, we obtain \eqref{m1case}. Therefore, to estimate the portion of $I_{jk}^+$ on $B_k^2$, it suffices to control the sum
\begin{equation}\label{Ijkplast}
h^{-q\sigma}\sum_{j,k}2^{jmq}\max_{l=k,k+1}|\Delta_{j,l}M_j|^{q(m-1)}\min_{l=k,k+1}|\Delta_{j,l}M_j|^q\int_{B_k^2}|\delta_h N_j^{p-m}(x)|^q dx.
\end{equation}
This can be estimated exactly as in the sequence of bounds \eqref{non est 6}-\eqref{Bk2est}. This handles the case $0<p-m\leq 1-\frac{1}{q}$.
\medskip 

\textbf{Case 3: $1>p-m>1-\frac{1}{q}$.} Here, we are again estimating $I^+_{jk}$. The main difference is that we will instead use \Cref{PHB} with $\beta=1$ to estimate the analogue of the term \eqref{FboundAkj}. Our quantity of interest takes the form
\begin{equation*}\label{non est 6case2}
    \begin{split}
      \int_{A_k^j}F(x)dx\coloneq\int_{A_k^j}|\partial_xu_j(x)|^{qm}|u_j^h(x)|^{q(p-m-1)} \left|\frac{\delta_hu_j(x)}{h}\right|^{q\sigma}\left|\delta_h u_j(x)\right|^{q(1-\sigma)}dx.
    \end{split}
\end{equation*}
Similarly to the previous cases, on $B_k^1$ we can estimate
\begin{equation*}
\begin{split}
\int_{B_k^1}F(x)dx&\lesssim 2^{j(qs_0-1)}|\Delta_{j,k}M_{j}|^{q\kappa}\int_{M_{j,k}(A_k^j)}|x|^{q(p-m-1)}dx=: J_{jk}^1
\end{split}
\end{equation*}
where $\kappa:=m+1-\frac{1}{q}$.
By \Cref{Oswald}, arguing as in cases 1 and 2 and interpolating we have again
\begin{equation}\label{Jk1bound}
\sum_{j,k}J_{jk}^1\lesssim \|u\|^{q(p-\kappa)}_{L^{\infty}}\|u\|^{q\kappa}_{W^{\frac{s_0}{\kappa},q\kappa}}\lesssim_{\epsilon}\|u\|_{W^{\frac{1}{q}+\epsilon,q}}^{q(p-1)}\|u\|_{W^{s,q}}^q.
\end{equation}
On the other hand, by using \eqref{m1case}, we can reduce matters from estimating the portion of $I_{jk}^+$ on $B_k^2$ to estimating a term of the form \eqref{Ijkplast}. This term is estimated by applying \Cref{PHB} with $\beta=1$ as  done directly above and then arguing exactly as in the analysis of the $B_k^2$ integral in the case $p-m\leq 0$.
\medskip

\textbf{Estimates for $I_{jk}^2$ and $I_{jk}^3$.}
We now move on to the estimates for $I_{jk}^2$ and $I_{jk}^3.$ We begin with $I_{jk}^2$. Here, we only need estimates when $p-m\leq 0$. Note that in this case, we necessarily have $m\geq 2$. Our first observation is that, within the set $A_k^j$, $\delta_h(\partial_xu_j)$ is supported on $B_k^2$. Therefore, we obtain
\begin{equation}\label{equ8}
    \begin{split}
        I_{jk}^2&\lesssim 2^{jqm}|\Delta_{j,k}^2 u_j|^q |\Delta_{j,k+1}M_{j}|^{q(m-1)}h^{-q\sigma}\int_{B^2_k}|u_j^h(x)|^{q(p-m)}dx
        \\
        &\lesssim 2^{j(qm-1)}|\Delta_{j,k}^2u_j|^q|\Delta_{j,k+1} M_{j}|^{q(m-1)-1}h^{-q\sigma}\int_{M_{j,k+1}(h+B_k^2)}|x|^{q(p-m)}dx.
    \end{split}
\end{equation}
Thus, upon integrating the last term on the right of \eqref{equ8}, we deduce the bound
\begin{equation*}
    \begin{split}
        I_{jk}^2&\lesssim 2^{j(qs_0-1)}|\Delta_{j,k}^2u_j|^q|\Delta_{j,k+1} M_{j}|^{q(p-1)}\lesssim \|u\|_{C^1}^{q(p-1)}2^{j(q(s_0+1-p)-1)}|\Delta^2_{j,k}u_j|^q.
    \end{split}
\end{equation*}
Since $s_0+1-p<1+\frac{1}{q}$, part \eqref{partb} of \Cref{Oswald} gives
\begin{equation}\label{interp3}
\sum_{j,k}I_{jk}^2\lesssim_{\epsilon} \|u\|_{C^1}^{q(p-1)}\|u\|_{W^{s_0-p+1+\epsilon,q}}^q\lesssim \|u\|_{W^{\frac{1}{q}+\epsilon,q}}^{q(p-1)}\|u\|_{W^{s,q}}^q.
\end{equation}
Note that in the second inequality in \eqref{interp3} we interpolated, using the fact that $s\geq m\geq 2$ and we have $\frac{1}{q}<s_0+1-p+\epsilon<s_0$ and $\frac{1}{q}<1+\frac{1}{q}<s$. Using similar reasoning to handle $I_{jk}^3$, we complete the estimates for $h\leq 2^{-j}.$ 
\subsection{Estimates when $h\geq 2^{-j}$} We now consider the case $h\geq 2^{-j}$. Similarly to the above, we split the analysis into two sub-cases depending on the sign of $p-m$. 
\medskip

\textbf{Case 1: $0<p-m<1$.} Here, we have to estimate the sum over $j\geq\log(h^{-1})$ and $k\in\mathbb{Z}$ of terms  like 
\begin{equation*}\label{sum case 1}
K_{jk}:=h^{-q(\sigma-\epsilon')}2^{jq\epsilon'}\|N'_j\delta_h N_j^{p-m}-N'_{j+1}\delta_h N_{j+1}^{p-m}\|_{L^q(A_{k}^{j+1})}^q.
\end{equation*}
Our starting point in this case is to observe the straightforward estimate 
\begin{equation}\label{sum 2 terms}
\begin{split}
K_{jk}\lesssim &h^{-q(\sigma-\epsilon')}2^{j(q-1+\epsilon'q)}\|\delta_h N_{j+1}^{p-m}\|_{L^{\infty}}^q\|u\|_{C^1}^{q(m-1)}\max_{l=k,k+1}|\Delta_{j+1,l}^2u_{j+1}|^q
\\
+&h^{-q(\sigma-\epsilon')}2^{jq\epsilon'}\|(\partial_x u_{j})^m\delta_h(N_{j+1}^{p-m}-N_{j}^{p-m})\|_{L^q(A_{k}^{j+1})}^q.
\end{split}
\end{equation}
We denote the two terms on the right-hand side of \eqref{sum 2 terms} by $K_{jk}^1$ and $K_{jk}^2$, respectively. If $s>1+\frac{1}{q}$ we can estimate (using implicitly the condition $s\geq p$)
\begin{equation}\label{interpcase2}
K_{jk}^1\lesssim \|u\|_{C^1}^{q(p-1)}2^{j(q(s_0+1-p+\epsilon')-1)}\max_{l=k,k+1}|\Delta_{j+1,l}^2u_{j+1}|^q,
\end{equation}
which when summed in $j,k$ can be controlled by using \Cref{Oswald} and interpolating. If $s\leq 1+\frac{1}{q}$, then we have $m=1$ and we may instead estimate for some $\epsilon>0$,
\begin{equation}\label{smallmcase}
K_{jk}^1\lesssim \|u\|_{C^{\epsilon}}^{q(p-1)}2^{j(q(s-\epsilon)-1)}\max_{l=k,k+1}|\Delta_{j+1,l}^2u_{j+1}|^q,
\end{equation}
which from \Cref{Oswald} and Sobolev embeddings also yields the required bound when summed over $j,k$. For $K_{jk}^2$ we instead proceed by estimating (using translation invariance of the $L^q$ norm and the equivalence $\|f\|_{l_k^qL^q(A^{j+1}_k)}\approx \|f\|_{L^q(\mathbb{R})}$),
\begin{equation*}\label{twosumsK}
\begin{split}
\sum_{k\in\mathbb{Z}} K_{jk}^2\lesssim \sum_{k\in\mathbb{Z}}L_{jk}^1+L_{jk}^2:&=\sum_{k\in\mathbb{Z}}h^{-q\sigma}\|u\|_{C^1}^{q(m-1)}\|N_j^{p-m}-N_{j+1}^{p-m}\|_{L^\infty}^q\|\delta_h\partial_x u_j\|_{L^q(A_k^{j+1})}^q
\\
&+\sum_{k\in\mathbb{Z}}h^{-q\sigma}\|(\partial_x u_j)^m(N_{j+1}^{p-m}-N_j^{p-m})\|_{L^q(A_k^{j+1})}^q.
\end{split}
\end{equation*}
We begin with the estimate for $L_{jk}^1$. If $s>1+\frac{1}{q}$ then by dominated convergence we have the following telescoping estimate:  
\begin{equation*}
\begin{split}
\left(\sum_{k\in\mathbb{Z}}\|\delta_h\partial_xu_j\|_{L^q(A_k^{j+1})}^q\right)^{\frac{1}{q}}\lesssim \|\delta_h\partial_x u\|_{L^q}+\sum_{l\geq j}\left(\sum_{k\in\mathbb{Z}}\|\partial_xu_l-\partial_xu_{l+1}\|_{L^q(A_k^{l+1})}^q\right)^{\frac{1}{q}}.
\end{split}
\end{equation*}
For the first term, we can simply bound
\begin{equation*}
\|\delta_h\partial_x u\|_{L^q}\lesssim h^{s_0-p+\epsilon}\|u\|_{W^{s_0-p+1+\epsilon,q}}.
\end{equation*}
For the latter term, we can estimate each summand by
\begin{equation*}
\|\partial_xu_l-\partial_xu_{l+1}\|_{L^q(A_k^{l+1})}^q\lesssim 2^{lq(1-\frac{1}{q})}|\Delta_{l+1,k}^2u_{l+1}|^q\lesssim 2^{-l\epsilon}h^{q(s_0-p)}2^{l(q(s_0-p+1+\epsilon)-1)}|\Delta_{l+1,k}^2u_{l+1}|^q,
\end{equation*}
which by \Cref{Oswald} and the simple H\"older bound
\begin{equation*}\label{simpleholder}
\|N_{j+1}^{p-m}-N_j^{p-m}\|_{L^{\infty}}\lesssim 2^{-j(p-m)}\|u\|_{C^1}^{p-m}
\end{equation*}
yields
\begin{equation}\label{interpcase2b}
\begin{split}
\sum_{j\geq\log(h^{-1})}\sum_{k\in\mathbb{Z}}L_{jk}^1&\lesssim h^{-q(p-m-\epsilon)}\|u\|_{C^1}^{q(m-1)}\|u\|_{W^{s_0-p+1+\epsilon,q}}^q\sup_{j\geq \log(h^{-1})}2^{qj\epsilon}\|N_{j+1}^{p-m}-N_j^p\|_{L^{\infty}}^q
\\
&\lesssim \|u\|_{C^1}^{q(p-1)}\|u\|_{W^{s_0-p+1+\epsilon,q}}^q.
\end{split}
\end{equation}
We can then interpolate to control the term in the second line above by $\|u\|_{W^{\frac{1}{q}+\epsilon,q}}^{q(p-1)}\|u\|_{W^{s,q}}^q$. 
If $s\leq 1+\frac{1}{q}$ (in which case, $m=1$), one should instead estimate $\|N_j^{p-m}-N_{j+1}^{p-m}\|_{L^{\infty}}$ and $\delta_h\partial_xu_j$ in a similar way to how they were treated in the bound \eqref{smallmcase}. 
\medskip

It remains to estimate the sum over $j\geq \log(h^{-1})$ and $k\in\mathbb{Z}$ of $L_{jk}^2$. First, if $|\partial_x u_j|\geq 4|\partial_x u_{j+1}|$ or $|\partial_x u_{j+1}|\geq 4|\partial_x u_{j}|$ on $A^{j+1}_k$, then we have 
\begin{equation*}
|\partial_x u_{j}|+|\partial_x u_{j+1}|\lesssim |\partial_x u_j-\partial_x u_{j+1}|\lesssim 2^j|\Delta^2_{j+1,k} u_{j+1}|.
\end{equation*}
The contribution of $L_{jk}^2$ in this case can be handled by following similar reasoning to the estimates for $I_{jk}^+$ from before. On the other hand, if $|\partial_x u_j|\approx |\partial_x u_{j+1}|$ then as in the estimate of $I_{jk}^+$, one must consider separately the cases $p-m\leq 1-\frac{1}{q}$ and $p-m>1-\frac{1}{q}$. In the former case, we can use \Cref{PHB} to estimate
\begin{equation*}
L_{jk}^2\lesssim \sup_{l\in\{j,j+1\}}\int_{A_k^{j+1}}|\partial_xu_j|^{mq}|u_l(x)|^{-1+\delta q}\left|\frac{u_{j+1}-u_j}{2^{-j}}\right|^{q\sigma}|u_{j+1}-u_j|^{q(p-s_0+\frac{1}{q}-\delta)}dx.
\end{equation*}
 Using the hypothesis $|\partial_xu_j|\approx |\partial_x u_{j+1}|$ and the bound
\begin{equation*}
|u_{j+1}-u_j|\lesssim 2^{-j}(|\partial_x u_j|+|\partial_x u_{j+1}|)\lesssim 2^{-j}|\partial_x u_{j+1}|
\end{equation*}
as well as the usual change of variables, one can estimate the contributions of these terms as in \eqref{Ijkplast}. In the case $p-m>1-\frac{1}{q}$, one makes a similar adjustment to the analogous case in the estimate for $I_{jk}^+$.  Combining everything together, we obtain
\begin{equation}\label{interp52}
\sum_{j,k}\int_{A_k^{j+1}}K_{jk}^2dx\lesssim \|u\|^{q(p-1)}_{W^{\frac{1}{q}+\epsilon,q}}\|u\|^q_{W^{s,q}}.
\end{equation}
\textbf{Case 2: $p-m\leq 0$.}
Here, we need to estimate the sum over $j> \log(h^{-1})$ of
\begin{equation*}
    I_{h,j}:=h^{-q(\sigma-\epsilon')}2^{jq\epsilon'}\int_\mathbb{R} \left|\delta_h(N_{j+1}-N_j)\right|^qdx\lesssim 2^{jq\sigma}\int_\mathbb{R} |N_{j+1}-N_j|^qdx.
\end{equation*}
It therefore suffices to show that
\begin{equation}\label{L^qconv}
\sum_{j\geq 0}2^{jq\sigma}\int_{\mathbb{R}}|N_{j+1}-N_j|^qdx\lesssim \|u\|_{W^{\frac{1}{q}+\epsilon,q}}^{q(p-1)}\|u\|_{W^{s,q}}^q.
\end{equation}
We proceed by decomposing the above integral with respect to the finer partition at scale $2^{-j-1}$,
\begin{equation*}
2^{jq\sigma}\int_{\mathbb{R}}|N_{j+1}-N_j|^qdx=\sum_{k\in\mathbb{Z}}2^{jq\sigma}\int_{A_{k}^{j+1}}|N_{j+1}-N_j|^qdx.
\end{equation*}
We will then estimate $N_{j+1}-N_j$ in two different ways on $A^{j+1}_k$ depending on the relative sizes of $\partial_xu_j$ and $\partial_xu_{j+1}$ on this interval. 
\medskip

\textbf{Case 2.1: $|\partial_xu_j|\lesssim |\partial_xu_{j+1}|$ on $A_k^{j+1}$.} Here, we estimate
\begin{equation}\label{case21}
\begin{split}
2^{jq\sigma}|N_{j+1}-N_j|^q&\lesssim 2^{jq\sigma}|(\partial_x u_j)^m-(\partial_x u_{j+1})^m|^q|u_{j+1}|^{q(p-m)}
\\
&+2^{jq\sigma}|\partial_x u_j|^{mq}||u_{j+1}|^{p-m}-|u_{j}|^{p-m}|^q
\\
&=:K_{jk}^1+K_{jk}^2.
\end{split}
\end{equation}
Using the hypothesis $|\partial_xu_j|\lesssim |\partial_xu_{j+1}|$ we can control the first term by  
\begin{equation*}
2^{jq\sigma}|(\partial_x u_j)^m-(\partial_x u_{j+1})^m|^q\lesssim 2^{jqs_0}|\Delta^2_{j+1,k}u_{j+1}|^q|\Delta_{j+1,k} M_{j+1}|^{q(m-1)},
\end{equation*}
which, as in the estimate for $I_{jk}^2$, leads to the bound 
\begin{equation}\label{interp4}
\sum_{j\geq 0}\sum_{k\in S_j}\int_{A_k^{j+1}}K_{jk}^1dx\lesssim \|u\|^{q(p-1)}_{W^{\frac{1}{q}+\epsilon,q}}\|u\|^q_{W^{s,q}}
\end{equation}
where $S_j$ is the set of indices $k$ such that $|\partial_x u_j|\lesssim |\partial_x u_{j+1}|$ on $A_k^{j+1}$. On the other hand, similarly to \eqref{FboundAkj}, we can use \Cref{PHB} to estimate the integral of $K_{jk}^2$ on $A_k^{j+1}$ by terms of the form
\begin{equation*}
\begin{split}
\int_{A_k^{j+1}}K_{jk}^2dx&\lesssim \sup_{l=j,j+1}\int_{A_k^{j+1}}|\partial_xu_j|^{mq}|u_l(x)|^{-1+\delta q}\left|\frac{u_{j+1}-u_j}{2^{-j}}\right|^{q\sigma}|u_{j+1}-u_j|^{q(p-s_0+\frac{1}{q}-\delta)}dx.
\end{split}
\end{equation*}
Using the hypothesis $|\partial_xu_j|\lesssim |\partial_x u_{j+1}|$, the simple estimates 
\begin{equation*}
|u_{j+1}-u_j|\lesssim 2^{-j}(|\partial_x u_j|+|\partial_x u_{j+1}|)\lesssim 2^{-j}\|u\|_{C^1}
\end{equation*}
and the usual change of variables, we can control this by
\begin{equation*}
2^{j(q(s_0-p+1-\delta)-1)}\|u\|_{C^1}^{q(p-1-\delta)}\left(|\Delta_{j,\frac{k}{2}}M_j|^q\int_{M_j(A_{\frac{k}{2}}^{j})}|x|^{-1+q\delta}dx+|\Delta_{j+1,k}M_{j+1}|^q\int_{M_{j+1}(A_k^{j+1})}|x|^{-1+q\delta}dx\right).
\end{equation*}
The contribution of these terms in the sum over $j$ can then be handled by applying \Cref{Oswald} and interpolation, resulting in the bound 
\begin{equation}\label{interp5}
\sum_{j,k}\int_{A_k^{j+1}}K_{jk}^2dx\lesssim \|u\|^{q(p-1)}_{W^{\frac{1}{q}+\epsilon,q}}\|u\|^q_{W^{s,q}}.
\end{equation}
\textbf{Case 2.2: $|\partial_x u_{j+1}|\lesssim |\partial_x u_j|$.} This case follows similar reasoning to Case 2.1 except in \eqref{case21}, one estimates $2^{jq\sigma}|N_{j+1}-N_j|^q$ instead by
\begin{equation*}
2^{jq\sigma}|(\partial_x u_j)^m-(\partial_xu_{j+1})^m|^q|u_j|^{q(p-m)}+2^{jq\sigma}|\partial_x u_{j+1}|^{mq}||u_{j+1}^{p-m}-|u_j|^{p-m}|^q=:K_{jk}^1+K_{jk}^2.
\end{equation*}
One then estimates $K_{jk}^1$ and $K_{jk}^2$ by making minor modifications to the above argument. This completes the proofs of parts (i) and (ii) of \Cref{new prop}. Part (iii) can be established by slightly modifying the sequence of estimates above (in fact, the analysis is simpler because it is an estimate in a weaker topology). Finally, we note that property (iv) follows easily from the estimate \eqref{L^qconv}. This finally completes the proof of \Cref{new prop} and thus also the bound \eqref{non est3}. 
\end{proof} 
Later on, the nonlinear estimate proved in this section will suffice in almost every case when establishing our well-posedness results for both the \eqref{NLS} and \eqref{NLH} equations. For \eqref{NLS}, there will be one edge case, which, for technical reasons, will require us to use a mild variation of \eqref{non est3}. By slight abuse, we state it below as a corollary, but really, it is a corollary of very minor modifications of the proof of \eqref{non est3} above (which we will outline below). 
\begin{corollary}\label{nonlinearvariation} Let $p>\frac{3}{2}$, $s\in (2,3)$ and $p\leq s<p+\frac{1}{2}$.  For every $0<\epsilon\ll 1$, there holds
\begin{equation}\label{nonlinearvariationeqn}
\|(N,N')\|_{H^{s-2}}\lesssim_{\epsilon} \|u\|_{H^2}^{\frac{2s-p-1}{2}+\epsilon}\|u\|_{H^1}^{\frac{3p-2s+1}{2}-\epsilon}. 
\end{equation}
\begin{proof}
This corresponds to the situation $m=2$ from the above proof. The proof of \eqref{nonlinearvariationeqn} is identical to the proof of \eqref{non est3}, except for the following minor differences: First, if $p-2\leq 0$ (resp.~$p-2>0$), one replaces the quantities on the right-hand side of \eqref{firstcase}, \eqref{firstcase2} (resp.~\eqref{secondcase}, \eqref{secondcase2}) and \eqref{lowfreqbound} with $\|u\|_{H^2}^{\frac{2s-p-1}{2}+\epsilon}\|u\|_{H^1}^{\frac{3p-2s+1}{2}-\epsilon}$ in the proof of \eqref{nonlinearvariationeqn}. Secondly, instead of interpolating between $W^{\frac{1}{q}+\epsilon,q}$ and $W^{s,q}$, one instead interpolates between $H^1$ and $H^2$ in the bounds \eqref{interp1}, \eqref{interp2}, \eqref{interp3}, \eqref{interp4} and \eqref{interp5} (resp.~\eqref{Ijkplast}, \eqref{Jk1bound}, \eqref{interpcase2}, \eqref{interpcase2b}). Aside from these minor adjustments, the proof is the same as before. 
\end{proof}

\end{corollary}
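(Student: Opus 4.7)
The plan is to re-run the proof of \Cref{Nonlinear estimate} (more precisely the argument for the principal terms $N, N'$ developed in \Cref{new prop}) almost verbatim, specialized to $m = 2$ and $q = 2$, with the only substantive change being that every final-stage interpolation is performed between $H^1$ and $H^2$ rather than between $W^{1/q+\epsilon,q}$ and $W^{s,q}$. Since $s \in (2,3)$ forces $m = 2$ in the set-up of \Cref{Nonlinear estimate}, the chain-rule decomposition
\[
\partial_x^2(|u|^{p-1}u) = N(u, u_x) + N'(u, u_x) + F(u)
\]
is available, and the $F$-term (with its strictly better distribution of derivatives) obeys the stated bound by standard paraproduct and Kato--Ponce estimates; so it suffices to control $(N, N')$ in $H^{s-2}$.

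I would then follow the $B^\sigma_{2,\infty}$-based argument of \Cref{new prop} with $\sigma = s - 2 + \epsilon'$ for some $0 < \epsilon' \ll \epsilon$: approximate $N, N'$ by $N_j, N'_j$, split the seminorm contributions into the regimes $h \leq 2^{-j}$ and $h > 2^{-j}$, and within each $A^j_k$ into the sub-regions $B^1_k$ and $B^2_k$. On every piece, \Cref{PHB lemma} (Precise H\"older bound) and \Cref{Oswald} (first- and second-order difference bounds) reduce matters to controlling finitely many sums of the form $\|u\|^{2(p-\kappa)}_{L^\infty}\|u\|^{2\kappa}_{W^{s_0/\kappa, 2\kappa}}$ for specific exponents $\kappa$ (typically $\kappa = p$, or $\kappa = m + 1 - 1/q$ in the $1/2 < p - m < 1$ sub-case).

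In the original proof, these sums were interpolated through $L^\infty$ and $W^{s,q}$ to recover $\|u\|_{W^{1/q+\epsilon,q}}^{q(p-1)}\|u\|_{W^{s,q}}^q$. Here, with $q = 2$, I would instead invoke the one-dimensional Sobolev embedding $H^\beta \hookrightarrow W^{s_0/\kappa, 2\kappa}$ with $\beta = s_0/\kappa + 1/2 - 1/(2\kappa)$, combined with the 1D embedding $\|u\|_{L^\infty} \lesssim \|u\|_{H^1}$, and then interpolate $H^\beta \subset [H^1, H^2]$. A direct index calculation then yields
\[
\|u\|_{H^2}^{2s - p - 1 + 2\epsilon}\,\|u\|_{H^1}^{3p - 2s + 1 - 2\epsilon},
\]
whose square root is precisely the desired right-hand side. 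The exponents are rigid: the total homogeneity $(2s-p-1)/2 + (3p-2s+1)/2 = p$ matches the degree of the nonlinearity, while the weighted Sobolev index $(3/2)(2s-p-1)/2 + (1/2)(3p-2s+1)/2 = s - 1/2$ matches the scaling dimension of $\|(N,N')\|_{H^{s-2}}$ under $u(x) \mapsto u(\lambda x)$.

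The main obstacle is essentially bookkeeping: one must verify, for each of the many error pieces ($I^i_{jk}$, $I^+_{jk}$, $K^i_{jk}$, $L^i_{jk}$), that the Sobolev index $\beta$ produced by the embedding lies in $[1, 2)$ uniformly in $j, k$, so that the $H^1$-to-$H^2$ interpolation actually applies with the desired weights; and that the hypothesis $p > 3/2$ together with $s \in (2, 3)$ and $s < p + 1/2$ is \emph{exactly} what places us in this safe range, with any slack absorbable into $\epsilon$. The sub-case split $p - 2 \leq 0$ versus $p - 2 > 0$ also transfers verbatim: in the latter case one additionally invokes the paraproduct decomposition $N = (N - T_{N'}N^{p-2}) + T_{N'}N^{p-2}$ from the proof of \Cref{Nonlinear estimate} to compensate for the loss of $L^2$-convergence of $N_j$ to $N$.
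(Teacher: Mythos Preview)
Your proposal is correct and follows essentially the same approach as the paper: specialize the proof of \Cref{new prop} to $m=2$, $q=2$, and replace every final interpolation between $W^{\frac{1}{q}+\epsilon,q}$ and $W^{s,q}$ by interpolation between $H^1$ and $H^2$, treating separately the cases $p-2\leq 0$ and $p-2>0$. Your write-up in fact spells out more detail (the embedding $H^\beta\hookrightarrow W^{s_0/\kappa,2\kappa}$ and the scaling verification) than the paper's own brief sketch.
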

\section{Sharp well-posedness for the nonlinear heat equation}
In this section, we establish our well-posedness result for \eqref{NLH}. This case will end up being easier to handle than \eqref{NLS} because the characteristic hypersurface corresponding to the heat operator is trivial. We treat this equation first, as some of the nonlinear estimates established here will be reused when establishing our local well-posedness result for \eqref{NLS} in the next section. Our main theorem for \eqref{NLH} is the following. 

\begin{theorem}\label{NLH Well-posedness}
Let $1<p,q<\infty$ and $d\in \mathbb{N}$. Define $s_c=s_c(q):=\frac{d}{q}-\frac{2}{p-1}$. The nonlinear heat equation \eqref{NLH} is locally well-posed in the Sobolev space $W_x^{s,q}(\mathbb{R}^d)$ if $\max\{2,(s_c)_+\}\leq s<p+2+\frac{1}{q}$.
\end{theorem}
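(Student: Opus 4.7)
The approach is driven by the observation that the characteristic variety of the heat operator $\partial_t - \Delta$ is the single point $(\tau, \xi) = (0, 0)$, so that the ``low modulation'' regime is absent and the entire analysis is elliptic. Writing $s = \sigma + 2$, the elliptic gain from $\Delta$ lets us absorb two spatial derivatives, and we are reduced to estimating $|u|^{p-1} u$ in $W^{\sigma, q}$ with $\sigma < p + \tfrac{1}{q}$. For $\sigma < p$ the standard Moser inequality \Cref{Crude Moser est} suffices, and the novel contribution lies in the range $p \le \sigma < p + \tfrac{1}{q}$, where I would invoke \Cref{Nonlinear estimate} one spatial variable at a time and combine the bounds via Fubini's theorem together with the 1-D embedding $W^{\frac{1}{q}+\epsilon,q}(\mathbb{R}) \hookrightarrow L^\infty(\mathbb{R})$ to obtain
\begin{equation*}
    \||u|^{p-1} u\|_{W^{\sigma, q}(\mathbb{R}^d)} \lesssim \|u\|_{L^\infty_x}^{p-1}\|u\|_{W^{\sigma, q}(\mathbb{R}^d)}.
\end{equation*}
It is this extra $\tfrac{1}{q}$ of regularity which upgrades the classical parabolic threshold $s < p + 2$ to the new threshold $s < p + 2 + \tfrac{1}{q}$ claimed by the theorem.

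To implement this, I would first regularize by introducing a smooth cutoff $\chi_n$ converging to the identity and replacing $|u|^{p-1} u$ by $\chi_n(|u|) |u|^{p-1} u$, which makes the nonlinearity Lipschitz on $W^{s, q}$ and produces classical solutions $u^{(n)}$ via a standard Picard iteration on some interval $[0, T_n]$. The central task is a uniform-in-$n$ a priori bound. Writing the Duhamel formula and applying the heat semigroup smoothing inequality $\|e^{t\Delta} g\|_{W^{s, q}} \lesssim t^{-\delta/2} \|g\|_{W^{s - \delta, q}}$ for some $\delta \in (s - p - \tfrac{1}{q},\, 2)$ (an interval that is non-empty precisely because $s < p + 2 + \tfrac{1}{q}$) gives
\begin{equation*}
    \|u\|_{L^\infty_T W^{s, q}_x} \lesssim \|u_0\|_{W^{s, q}} + T^{1 - \delta/2} \sup_{[0, T]} \|u\|_{L^\infty}^{p - 1} \|u\|_{W^{s, q}},
\end{equation*}
where the nonlinear estimate above has been applied with $\sigma = s - \delta$. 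The factor $T^{1 - \delta/2}$ provides the necessary smallness to close the bootstrap, producing a uniform existence time $T_0 > 0$ and a uniform bound on $\|u^{(n)}\|_{L^\infty_{T_0} W^{s, q}_x}$.

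The $L^\infty$ factor is handled by invoking the subcriticality assumption $s \ge s_c$: in the regime $s > d/q$ Sobolev embedding gives it directly, while in the regime $s \le d/q$ one combines Sobolev embedding into $L^{dq/(d-sq)}$ with the heat smoothing inequality $\|e^{t\Delta} u_0\|_{L^r} \lesssim t^{-\frac{d}{2}(\frac{1}{q} - \frac{1}{r})} \|u_0\|_{L^q}$ to obtain an auxiliary $L^{\tilde p}_T L^\infty_x$ bound which interpolates with the $W^{s,q}$ bound to close the estimate. Given the uniform a priori bounds, I would pass to the limit through an Aubin-Lions-type compactness argument, extracting $u^{(n)} \to u$ strongly in $L^\infty_{T_0} W^{s - \eta, q}_x$ for small $\eta > 0$, which is sufficient to pass to the limit pointwise in the nonlinearity. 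Uniqueness would follow at the lower regularity level $L^\infty_T L^q_x$ from the pointwise Lipschitz bound $\bigl||u|^{p-1} u - |v|^{p-1} v\bigr| \lesssim (|u|^{p-1} + |v|^{p-1}) |u - v|$ combined with a Gr\"onwall argument.

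The main obstacle, as flagged in the introduction, is the continuous dependence of the solution on the initial data in $W^{s, q}$. A direct contraction-mapping argument is not available since $u \mapsto |u|^{p-1} u$ is not Lipschitz on $W^{s, q}$ (a true Lipschitz bound would effectively require a second application of \Cref{Nonlinear estimate} to the \emph{difference} of two solutions, which the theorem does not afford). Following the strategy suggested in the introduction, I would instead establish continuity through admissible frequency envelopes: given $u_0^{(k)} \to u_0$ in $W^{s, q}$, select envelopes $(c_j^{(k)})$ dominating the Littlewood-Paley pieces $\|P_j u_0^{(k)}\|_{W^{s, q}}$ uniformly with $c_j^{(k)} \to c_j$, propagate these envelopes to the solutions $u^{(k)}$ using the same Duhamel/smoothing analysis localized to each frequency scale, and then combine uniform high-frequency tail control with low-frequency convergence in a weaker topology (where Lipschitz bounds \emph{do} hold) to deduce $u^{(k)} \to u$ in $C([0, T_0]; W^{s, q})$.
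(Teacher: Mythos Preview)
Your overall strategy is right: exploit the elliptic smoothing of the heat operator to reduce to a nonlinear estimate on $|u|^{p-1}u$ in $W^{s-2,q}$, and use \Cref{Nonlinear estimate} variable-by-variable via Fubini to push past the classical threshold. But there is a real gap in the nonlinear bound you claim. Applying \Cref{Nonlinear estimate} in each $x_i$ and taking $L^q$ in the transverse variables gives
\[
\||u|^{p-1}u\|_{W^{\sigma,q}(\mathbb{R}^d)} \lesssim \sup_i \Bigl\| \|u\|_{W_{x_i}^{1/q+\epsilon,q}}^{\,p-1}\,\|u\|_{W_{x_i}^{\sigma,q}}\Bigr\|_{L^q_{\tilde{x}_i}},
\]
and the 1-D embedding $W^{1/q+\epsilon,q}\hookrightarrow L^\infty$ goes the \emph{wrong way} for replacing the first factor by $\|u\|_{L^\infty}^{p-1}$: you would need $\|u\|_{W_{x_i}^{1/q+\epsilon,q}}\lesssim\|u\|_{L^\infty_{x_i}}$, which is false. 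The estimate you write down is precisely the conjecture \eqref{our desired bound} that the paper explicitly flags as \emph{not known}. The paper's route (\Cref{farestimateH}) keeps the $W_{x_i}^{1/q+\epsilon,q}$ factor and handles the mixed norm through a Littlewood--Paley decomposition in $x$: one localizes $|u|^{p-1}u$ to spatial frequency $2^j$, replaces the inputs by $u_{<j}$ (the error being controlled by the fundamental theorem of calculus and Bernstein), applies \Cref{Nonlinear estimate} at the artificially high regularity $p+\tfrac1q-\epsilon$, and then carries out a H\"older/Sobolev case analysis in the transverse variables. The output is $\||u|^{p-1}u\|_{W_x^{s_+-2,q}}\lesssim \|u\|_{W_x^{s_0,q}}^{p-1}\|u\|_{W_x^{s,q}}$ for any $s_0>\max\{2,s_c\}$. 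No $L^\infty$ control is ever invoked, so your auxiliary $L^{\tilde p}_T L^\infty_x$ detour in the regime $s\le d/q$ is unnecessary (and as written is too vague to be convincing).

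Your construction and continuous-dependence steps also diverge from the paper in ways worth noting. The paper regularizes by $P_{<j}(|u|^{p-1}u)$ rather than a pointwise cutoff $\chi_n(|u|)$; the advantage is that $u_j-u_k$ is then driven by a source with compact Fourier support, so it automatically lies in the slightly stronger space $C_TW_x^{s+\epsilon,q}$, uniformly in $j,k$. Interpolating this gain against the elementary $L^q$ difference bound (\Cref{heatdiff}) yields convergence in $C_TW_x^{s,q}$ directly---no Aubin--Lions compactness and no frequency envelopes are used for \eqref{NLH}. (Frequency envelopes enter only for \eqref{NLS}, where the analogous extra room is unavailable.) Your $\chi_n$ regularization would also require checking that \Cref{Nonlinear estimate} applies uniformly to $\chi_n(|u|)|u|^{p-1}u$, which is not obvious without the $L^\infty$ control you have not established.
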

\begin{remark}
In \Cref{Ill section} we will complement \Cref{NLH Well-posedness} with the construction of a $C_c^\infty(\mathbb{R}^d)$ data of arbitrarily small norm for which no solution to \eqref{NLH} can be found in the endpoint space $C([0,T]; W_x^{p+2+\frac{1}{q},q}(\mathbb{R}^d))$ for any $T>0$ when $p-1\not\in 2\mathbb{N}$. 
\end{remark}
\begin{remark}
 \Cref{NLH Well-posedness}  excludes the case $q=\infty$ (where $W^{s,\infty}$ is replaced by an appropriate H\"older space) as this case was previously treated (at least for  $1<p<2$) in \cite{cazenave2017non}, is simpler, and has a different behavior at the endpoint. The assumption that $s\geq 2$ in  \Cref{NLH Well-posedness} is for presentation purposes only since our goal is to obtain high regularity well-posedness results and the case $s\in (0,2)$ can be  proven by known methods.
\end{remark}
The proof of  \Cref{NLH Well-posedness} will be divided into four steps. In the first step, we will establish a simple elliptic-type estimate for the linear inhomogeneous heat equation, which reflects the fact that the heat operator's characteristic set is trivial. Then, in \Cref{Subsection2 NLH} we will carry out the requisite nonlinear estimates for the source term $|u|^{p-1}u$ in the \eqref{NLH} equation. We will reuse the estimates here in the case $q=2$ to control a similar term that arises in the analysis of \eqref{NLS} (as the estimates established here will apply equally well to complex-valued functions). We will then complement the resulting nonlinear bounds with difference bounds in a weaker topology in \Cref{DENLH}. Finally, by combining the above three ingredients, well-posedness of \eqref{NLH} is proven in \Cref{Well for NLH} in a straightforward fashion.
\subsection{The linear inhomogeneous heat equation} We begin by establishing the following elementary elliptic-type estimate for the inhomogeneous heat equation. We note that this estimate is almost certainly known, but we provide a simple proof for completeness.
\begin{lemma}\label{Lemma for IHE}
Consider the inhomogeneous heat equation
\begin{equation}\label{IHE}
\begin{cases}
&\partial_tu-\Delta u= f,
\\
&u(0)=u_0.
\end{cases}
\end{equation}
For every $\sigma\in\mathbb{R}$, $0\leq \mu<2$ and $1<q<\infty$ we have 
\begin{equation*}
\|u\|_{L_T^\infty W^{\sigma+\mu,q}_x}\lesssim_{\mu,q} T^{1-\frac{\mu}{2}}\|f\|_{L^\infty_T W_x^{\sigma,q}}+\|u_0\|_{W^{\sigma+\mu,q}_x}.
\end{equation*}
\end{lemma}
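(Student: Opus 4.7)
The plan is to combine Duhamel's formula with the standard parabolic smoothing bound for the heat semigroup. Writing
$$u(t) = e^{t\Delta}u_0 + \int_0^t e^{(t-s)\Delta} f(s)\,ds,$$
I would estimate the two pieces separately. For the homogeneous piece, since $e^{t\Delta}$ is convolution with the non-negative heat kernel $G_t$ with $\int G_t = 1$, we have $\|e^{t\Delta}\|_{L^q \to L^q} \le 1$ uniformly in $t \ge 0$; as this operator commutes with the Fourier multiplier $\langle D_x\rangle^{\sigma+\mu}$, the same bound transfers to $W^{\sigma+\mu,q}$, producing the $\|u_0\|_{W^{\sigma+\mu,q}}$ contribution.

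For the Duhamel integral, the key is to apply Minkowski's inequality in $W^{\sigma+\mu,q}$ and then invoke the pointwise-in-$\tau$ smoothing estimate
$$\| e^{\tau\Delta} g\|_{W^{\sigma+\mu,q}} \lesssim (1 + \tau^{-\mu/2})\|g\|_{W^{\sigma,q}}, \qquad \tau > 0.$$
Since $e^{\tau\Delta}$ commutes with $\langle D_x\rangle^\sigma$, this reduces to the multiplier bound
$$\bigl\|\langle D_x\rangle^\mu e^{\tau\Delta} h\bigr\|_{L^q} \lesssim (1 + \tau^{-\mu/2})\|h\|_{L^q},$$
which follows either from Mikhlin--Hörmander applied to the symbol $\langle\xi\rangle^\mu e^{-\tau|\xi|^2}$ (verifying the derivative bounds uniformly in $\tau$, using the high-frequency rescaling $\xi\mapsto\tau^{-1/2}\xi$), or equivalently from a dyadic Littlewood--Paley argument: on $P_0$ use boundedness of $e^{\tau\Delta}$, and on $P_j$ for $j\ge 1$ combine the Bernstein bound $\|\langle D_x\rangle^\mu P_j\|_{L^q\to L^q}\lesssim 2^{j\mu}$ with the dissipation estimate $\|P_j e^{\tau\Delta}\|_{L^q\to L^q}\lesssim e^{-c\tau 2^{2j}}$, noting that $\sup_{j\ge 0} 2^{j\mu} e^{-c\tau 2^{2j}}\lesssim 1+\tau^{-\mu/2}$.

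Plugging this pointwise bound into Minkowski and integrating gives
$$\Bigl\|\int_0^t e^{(t-s)\Delta} f(s)\,ds\Bigr\|_{W^{\sigma+\mu,q}} \lesssim \int_0^t \bigl(1 + (t-s)^{-\mu/2}\bigr)\,ds\cdot \|f\|_{L^\infty_T W^{\sigma,q}},$$
and since $1-\mu/2>0$ the inner integral equals $t + \tfrac{2}{2-\mu}t^{1-\mu/2}\lesssim T^{1-\mu/2}$ in the natural regime $T\le 1$ (which is the only regime needed for the local well-posedness application, and where the $T$ term is absorbed by $T^{1-\mu/2}$). Taking the supremum in $t\in[0,T]$ closes the estimate. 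The only mildly nontrivial step is the uniform $L^q$-multiplier bound for $\langle\xi\rangle^\mu e^{-\tau|\xi|^2}$; the remaining work is bookkeeping.
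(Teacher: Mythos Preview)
Your proof is correct and follows essentially the same strategy as the paper: Duhamel's formula, the $L^q\to L^q$ bound for the homogeneous piece, and the parabolic smoothing estimate $\|e^{\tau\Delta}\|_{W^{\sigma,q}\to W^{\sigma+\mu,q}}\lesssim \tau^{-\mu/2}$ integrated over $[0,t]$. The only difference is in how the smoothing bound is justified: the paper computes $D_x^\mu$ of the heat kernel explicitly via the Fourier scaling identity $D_x^\mu\Phi(\cdot,t)=c_d\,t^{-(d+\mu)/2}(D_x^\mu e^{-|\cdot|^2/4})(\cdot/\sqrt{t})$ and then applies Young's inequality directly, whereas you invoke Mikhlin--H\"ormander or the dyadic estimate $\sup_j 2^{j\mu}e^{-c\tau 2^{2j}}\lesssim \tau^{-\mu/2}$; these are equivalent routes to the same inequality. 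Your explicit remark about restricting to $T\le 1$ (so that the $T$ term from low frequencies is absorbed by $T^{1-\mu/2}$) is a point the paper leaves implicit.
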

\begin{proof}
By applying $D_x^{\sigma}$ to the equation, we may without loss of generality assume that $\sigma=0$. Moreover, by standard estimates for the heat propagator, we have
\begin{equation*}
\|e^{t\Delta}u_0\|_{L_T^{\infty}L_x^q}\lesssim \|u_0\|_{L_x^q}.
\end{equation*}
Hence, we may easily reduce to the case $u_0=0$. For $0<t\leq T$, the solution to \eqref{IHE} may then be written as
\begin{equation*}
u(t,x)=\int_0^t \int_{\mathbb{R}^d} \Phi(x-y,t-s)f(y,s)dyds,
\end{equation*}
where $\Phi(x,t)=\frac{1}{(4\pi t)^\frac{d}{2}}e^{-\frac{|x|^2}{4t}}$. By straightforward computation, we have
\begin{equation*}
\begin{split}
D_x^\mu\Phi(x,t)&= \mathcal{F}^{-1}(|\xi|^\mu e^{-t|\xi|^2})=c_d\frac{1}{ t^{\frac{d+\mu}{2}}}(D_x^\mu e^{- |x|^2/4})\left(\frac{x}{\sqrt{t}}\right)
\end{split}
\end{equation*}
for some dimension-dependent constant $c_d>0$. Hence,
\begin{equation*}
D_x^\mu u(t,x)=c_d\int_0^t\frac{1}{(t-s)^\frac{\mu}{2}} \int_{\mathbb{R}^d} \frac{1}{(t-s)^\frac{d}{2}}(D_x^\mu e^{- |\cdot|^2/4})\left(\frac{x-y}{\sqrt{t-s}}\right)f(y,s)dyds.
\end{equation*}
The desired estimate then follows by Young's inequality and the bound 
\begin{equation*}
\int_0^t\frac{1}{(t-s)^\frac{\mu}{2}}ds\lesssim_{\mu} t^{1-\frac{\mu}{2}}.
\end{equation*}
\end{proof}
We will also need the following simple variation of the above estimate which will be particularly useful for us in the $L^q$-supercritical regime.
\begin{lemma}\label{heatlemma2} For every $\frac{dq}{d+2q}<\tilde{q}\leq q$ there is a $0<\theta\leq 1$ such that for every solution to the inhomogeneous heat equation \eqref{IHE} there holds
\begin{equation*}\label{heatlemmaest}
\|u\|_{L_T^{\infty}L_x^q}\lesssim \|u_0\|_{L_x^q}+ T^{\theta}\|f\|_{L_T^{\infty}L_x^{\tilde{q}}}  .
\end{equation*}
\end{lemma}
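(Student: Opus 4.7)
The plan is to follow the same Duhamel-based reasoning used in the proof of Lemma~4.1, but instead of exploiting derivative gain via $|\xi|^\mu e^{-t|\xi|^2}$, I will exploit the $L^{\tilde q}\to L^q$ smoothing (i.e.\ hypercontractivity) of the heat semigroup to absorb the integrability gap. Concretely, I would first reduce to $u_0=0$ by the standard bound $\|e^{t\Delta}u_0\|_{L_T^\infty L_x^q}\lesssim\|u_0\|_{L_x^q}$, which holds because $e^{t\Delta}$ is a contraction on $L_x^q$ for $1<q<\infty$. For the purely inhomogeneous piece, Duhamel's formula yields
\begin{equation*}
u(t,x)=\int_0^t\int_{\mathbb{R}^d}\Phi(x-y,t-s)f(y,s)\,dy\,ds,
\end{equation*}
with $\Phi(x,t)=(4\pi t)^{-d/2}e^{-|x|^2/4t}$ as before.

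Next, I would invoke Young's convolution inequality. Computing directly one has, for $1\le r\le\infty$ determined by $1+\tfrac1q=\tfrac1r+\tfrac1{\tilde q}$,
\begin{equation*}
\|\Phi(\cdot,t-s)\|_{L_x^r}\lesssim (t-s)^{-\frac{d}{2}\left(\frac{1}{\tilde q}-\frac{1}{q}\right)},
\end{equation*}
so that
\begin{equation*}
\|e^{(t-s)\Delta}f(s)\|_{L_x^q}\lesssim (t-s)^{-\alpha}\|f(s)\|_{L_x^{\tilde q}},\qquad \alpha:=\tfrac{d}{2}\left(\tfrac{1}{\tilde q}-\tfrac{1}{q}\right)\ge 0.
\end{equation*}
Inserting the $L_T^\infty L_x^{\tilde q}$ norm of $f$ and integrating in $s$ gives
\begin{equation*}
\|u(t)\|_{L_x^q}\lesssim \|f\|_{L_T^\infty L_x^{\tilde q}}\int_0^t (t-s)^{-\alpha}\,ds,
\end{equation*}
and taking the supremum in $t\in[0,T]$ yields the desired bound with $\theta=1-\alpha$.

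The only thing to verify is that $\theta$ is strictly positive (and at most $1$), i.e.\ that the time integral converges. The condition $\alpha<1$ is equivalent to $\tfrac{1}{\tilde q}<\tfrac{2}{d}+\tfrac{1}{q}$, which rearranges exactly to $\tilde q>\tfrac{dq}{d+2q}$, matching the hypothesis; the upper bound $\theta\le 1$ (equivalently $\alpha\ge 0$) follows from $\tilde q\le q$. No step here is genuinely hard — the numerology is dictated by scaling and the only subtlety is checking that the hypothesis on $\tilde q$ is precisely the integrability threshold for the singular kernel $(t-s)^{-\alpha}$. Once $\theta$ is identified, the statement follows immediately.
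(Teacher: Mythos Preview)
Your argument is clean and correct provided $\tilde q\geq 1$, and in that range it is in fact more direct than the paper's (the paper routes through a Sobolev embedding and applies $D_x^{2(1-\epsilon)}$ to the kernel, which amounts to the same thing with an extra step). However, the lemma is stated---and subsequently used---without the restriction $\tilde q\geq 1$; the Remark immediately following the lemma makes this explicit, and in the proof of Lemma~\ref{heatdiff} the choice $\tilde q=\frac{q}{p}$ can be strictly less than $1$. When $\tilde q<1$ (which is possible whenever $\frac{dq}{d+2q}<1$, i.e.\ for $d\leq 2$ or $q<\frac{d}{d-2}$), Young's convolution inequality is not available, and your argument breaks down at exactly that step: the exponent relation $1+\frac{1}{q}=\frac{1}{r}+\frac{1}{\tilde q}$ can still be satisfied with $r\in[1,\infty]$, but the inequality itself requires all three exponents to be at least $1$.

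The paper addresses this by first reducing via Marcinkiewicz interpolation (which is valid in the quasi-Banach range $0<\tilde q<1$) to the endpoints $\tilde q=q$ and $\tilde q=\frac{dq}{d+2q}+\epsilon$, and then, for the latter endpoint in the regime $\frac{dq}{d+2q}<1$, it uses a power trick: write $|f|=(|f|^{1/r})^r$ for $r$ large enough that the relevant Lebesgue exponents exceed $1$, apply Minkowski's integral inequality to pull the $r$-th power outside, Sobolev embed, and only then invoke Young. This is the missing ingredient in your proof; once you supply an argument for the range $\tilde q<1$, the rest stands.
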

\begin{remark}
We note that in the above lemma we are not necessarily assuming that $\tilde{q}\geq 1$.
\end{remark}
\begin{proof}
Again, we may assume without loss of generality that $u_0=0$. Moreover, for $\tilde{q}=q$ this estimate holds with $\theta=1$ by \Cref{Lemma for IHE}. By Marcinkiewicz interpolation theorem (which applies in the range $0<\tilde{q}<1$), it therefore suffices to establish the above estimate when $\tilde{q}=\frac{dq}{d+2q}+\epsilon$ for some small $\epsilon>0$. We consider two cases. If $\frac{dq}{d+2q}\geq 1$ then Sobolev embedding and arguing as in \Cref{Lemma for IHE} (i.e.~applying the multiplier $D_x^{2(1-\epsilon)}$ to the heat kernel in the representation formula) yields  
\begin{equation*}
\|u\|_{L_T^{\infty}L_x^q}\lesssim T^{\epsilon}\|f\|_{L_T^{\infty}L_x^{\frac{dq}{d+2q}+c\epsilon}},
\end{equation*}
for some $c>0$.
On the other hand, when $\frac{dq}{d+2q}<1$ we may begin by noting that for any parameter $r>0$ sufficiently large (i.e.~such that $\frac{dq}{d+2q}>\frac{1}{r})$, we have by Minkowski's inequality, homogeneity and Sobolev embeddings,
\begin{equation*}
\begin{split}
\bigg\|\int_{\mathbb{R}^d} e^{-\frac{|x-y|^2}{4(t-s)}}|f(y)|dy\bigg\|_{L^q_x}&\lesssim \int_{\mathbb{R}^d} \bigg\|e^{-\frac{|x-y|^2}{4r(t-s)}}|f(y)|^\frac{1}{r} \bigg\|_{L_x^{qr}}^rdy
\\
&\lesssim\int_{\mathbb{R}^d}\bigg\| \langle D_x\rangle^{\frac{2}{r}-2c\epsilon}\left( e^{-\frac{|x-y|^2}{4r(t-s)}} \right)|f(y)|^\frac{1}{r}\bigg\|_{L_x^{\frac{drq}{d+2q}+\epsilon}}^rdy
\\
&\lesssim(t-s)^{cr\epsilon-1}\int_{\mathbb{R}^d}\bigg\|\left(\langle D_x\rangle^{\frac{2}{r}-2c\epsilon} e^{-\frac{|\cdot|^2}{4}}\right)^r\left(\frac{x-y}{\sqrt{r(t-s)}}\right)|f(y)| \bigg\|_{L_x^{\frac{dq}{d+2q}+\epsilon}}dy,
\end{split}
\end{equation*}
for some constant $c>0$. Hence, using the representation formula for the solution to the inhomogeneous heat equation, we have
\begin{equation*}
\|u(t)\|_{L^q_x}\lesssim \int_0^t (t-s)^{-d/2} (t-s)^{cr\epsilon-1}\int_{\mathbb{R}^d}\bigg\|\left(\langle D_x\rangle^{\frac{2}{r}-2\epsilon} e^{-\frac{|\cdot|^2}{4}}\right)^r\left(\frac{x-y}{\sqrt{r(t-s)}}\right)|f(y)| \bigg\|_{L_x^{\frac{dq}{d+2q}+\epsilon}}dyds.
\end{equation*}
Now, using the assumption that $\frac{dq}{d+2q}<1$, we may invoke Minkowski's inequality for integrals and then Young's convolution inequality to obtain the desired estimate.
\end{proof}
\subsection{Nonlinear estimate in $W_x^{s-2+\epsilon,q}(\mathbb{R}^d)$}\label{Subsection2 NLH} 
We now establish refined estimates for the nonlinear term in \eqref{NLH}. The gain of two derivatives in the estimate in \Cref{Lemma for IHE} suggests that we will need to be able to control $|u|^{p-1}u$ in $W_x^{s-2+\epsilon,q}$ for some $0<\epsilon\ll 1$. For ease of notation, we will sometimes suppress the $\epsilon$ and instead just write $W_x^{s_+-2,q}$. This way, $s_+$ can grow by a factor of $\epsilon$ from line to line, and we will not have to track the precise constants.
\begin{proposition}\label{farestimateH}
Let $s_c:=\frac{d}{q}-\frac{2}{p-1}$, $s_0\geq\max\{2,(s_c)_+\}$ and $s_0\leq s<p+2+\frac{1}{q}$. For any (real or complex valued) function $u\in W_x^{s,q}(\mathbb{R}^d)$ there holds
\begin{equation*}
\||u|^{p-1}u\|_{W_x^{s_+-2,q}}\lesssim \|u\|_{W_x^{s_0,q}}^{p-1}\|u\|_{W_x^{s,q}}.    
\end{equation*}
\end{proposition}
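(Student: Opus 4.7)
I plan to split the target estimate according to whether the regularity $s_+-2$ exceeds $p$, invoking a different nonlinear tool in each sub-regime.

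In the low-regularity regime $s_+-2 < p$, the standard vector-valued Moser inequality (Proposition \ref{Crude Moser est}) applies directly:
\[
\||u|^{p-1}u\|_{W^{s_+-2,q}} \lesssim \|u\|_{L^{r_1(p-1)}}^{p-1}\|u\|_{W^{s_+-2,r_2}},
\]
for any Hölder pair $\tfrac{1}{q}=\tfrac{1}{r_1}+\tfrac{1}{r_2}$. My plan is to pick $r_1$, $r_2$ so that Sobolev embedding yields $W^{s_0,q}\hookrightarrow L^{r_1(p-1)}$ and $W^{s,q}\hookrightarrow W^{s_+-2,r_2}$. A direct comparison of Sobolev scaling exponents shows that these two embeddings are jointly available exactly at the scaling-critical threshold $s_0 \geq s_c$, with the $\approx 2$ derivatives of slack between $s$ and $s_+-2$ providing the room. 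In the subcritical case $s_c < 0$, the second factor can absorb the full two derivatives and a Sobolev embedding into $L^\infty$ (combined with $s_0 \geq 2$) handles the first factor, modulo minor adjustments when $d/q \geq 2$.

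In the high-regularity regime $p \leq s_+-2 < p+\tfrac{1}{q}$, the Moser estimate is insufficient and Theorem \ref{Nonlinear estimate} becomes the correct tool. Since that theorem is one-dimensional, the main task is to promote it to $\mathbb{R}^d$. I would do this by working with the directional characterization of $W^{s_+-2,q}(\mathbb{R}^d)$ via the 1D operators $D_{x_i}^{s_+-2}$ in each coordinate direction, where the Mikhlin-type multiplier equivalence is greased by the safety net of losing an arbitrarily small amount of derivatives (already baked into $s_+ - s$). Applying Theorem \ref{Nonlinear estimate} fiberwise in the $x_i$-variable gives
\[
\|D_{x_i}^{s_+-2}(|u|^{p-1}u)(\cdot,\hat{x_i})\|_{L^q_{x_i}} \lesssim \|u(\cdot,\hat{x_i})\|_{W^{\tfrac{1}{q}+\epsilon',q}_{x_i}}^{p-1}\|u(\cdot,\hat{x_i})\|_{W^{s_+-2,q}_{x_i}}.
\]
Integrating in $\hat{x_i}$ and splitting via Hölder as $\tfrac{1}{q} = \tfrac{p-1}{a} + \tfrac{1}{b}$ would then produce $\|u\|_{L^a_{\hat{x_i}}W^{\tfrac{1}{q}+\epsilon',q}_{x_i}}^{p-1}\|u\|_{L^b_{\hat{x_i}}W^{s_+-2,q}_{x_i}}$, which I would bound by $\|u\|_{W^{s_0,q}}^{p-1}\|u\|_{W^{s,q}}$ through mixed-norm Sobolev embeddings anchored at $s_0 \geq (s_c)_+$.

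The main obstacle I foresee is this final step: carefully choosing the Hölder exponents $a, b$ so that the two mixed-norm Sobolev embeddings are simultaneously compatible with $s_0 = (s_c)_+$ in the $L^q$-supercritical case where $L^\infty$ control on $u$ is unavailable. The key feature that makes the scaling close is that the exponent $\tfrac{1}{q}+\epsilon'$ in Theorem \ref{Nonlinear estimate} is only one $1$D derivative shy of $L^\infty$; combined with 1D Sobolev embedding in $x_i$, this gives exactly the gain required to meet the scaling threshold $s_c = d/q - 2/(p-1)$. If the endpoint bookkeeping becomes awkward, a clean workaround is to reduce to a Besov estimate via $B^{s_+-2}_{q,q}\hookrightarrow W^{s-2+\epsilon/2,q}$, where genuine fiberwise Fubini reductions are unambiguous, and the tiny loss of derivatives is harmlessly absorbed into the $\epsilon$ defining $s_+$.
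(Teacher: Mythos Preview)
Your proposal is viable and takes a genuinely different organizational route from the paper. You case-split on whether $s_+-2$ exceeds $p$ and apply the appropriate nonlinear estimate (Proposition~\ref{Crude Moser est} or Theorem~\ref{Nonlinear estimate}) directly to $u$ at that regularity level. The paper instead performs a Littlewood--Paley decomposition of $|u|^{p-1}u$ and, for each dyadic piece $P_j$, splits into $I_j^1 = \|P_j(|u_{<j}|^{p-1}u_{<j})\|_{W^{s_+-2,q}}$ (nonlinearity of a frequency-localized input) plus a correction $I_j^2 = \|P_j(|u|^{p-1}u - |u_{<j}|^{p-1}u_{<j})\|_{W^{s_+-2,q}}$. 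For $I_j^1$, Theorem~\ref{Nonlinear estimate} is always applied fiberwise at the fixed maximal level $p+\tfrac{1}{q}-\epsilon$ (not at $s_+-2$), and Bernstein on $u_{<j}$ handles the conversion between regularity levels; for $I_j^2$, the fundamental theorem of calculus linearizes the difference in $u_{\geq j}$, after which only Bernstein and Sobolev embeddings are needed. The paper's approach buys uniformity: no case-split on $s$ versus $p+2$ is required, and the $I_j^2$ mechanism cleanly isolates the high-frequency contribution without ever invoking Theorem~\ref{Nonlinear estimate} on a rough input. Your direct approach is conceptually leaner (no Littlewood--Paley on the nonlinearity, no $I_j^2$ term), but the mixed-norm H\"older/Sobolev bookkeeping you flag as the main obstacle is essentially identical to the paper's Case~1/Case~2 analysis---including the interpolation trick in Case~2 (where $(d-1)(p-1)/2 < q$) that rebalances the exponents before H\"older so that Minkowski's inequality can be applied; you would need this same device to make your mixed-norm embeddings close at $s_0 = (s_c)_+$.
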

\begin{proof}
We first observe the following (somewhat crude) estimate, which follows by a Littlewood-Paley decomposition for $|u|^{p-1}u$ and dyadic summation, possibly after slightly enlarging $s_+$:
\begin{equation*}
\||u|^{p-1}u\|_{W_x^{s_+-2,q}}\lesssim \sup_{j\geq 0}\|P_j(|u|^{p-1}u)\|_{W_x^{s_+-2,q}} .   
\end{equation*}
In the nonlinear expression above, it will be convenient to replace $|u|^{p-1}u$ with the corresponding nonlinear term with localized inputs $|u_{<j}|^{p-1}u_{<j}$. For this purpose, we decompose the estimate into two parts: 
\begin{equation*}\label{NL1}
\begin{split}
\|P_j(|u|^{p-1}u)\|_{W_x^{s_+-2,q}}&\leq \|P_j(|u_{<j}|^{p-1}u_{<j})\|_{W_x^{s_+-2,q}}+\|P_j(|u|^{p-1}u-|u_{<j}|^{p-1}u_{<j})\|_{W_x^{s_+-2,q}}
\\
&=:I_j^1+I_j^2,
\end{split}
\end{equation*}
where we write $u_{<j}:=P_{<j+4}u$ and $u_{\geq j}:=P_{\geq j+4}u$ as a shorthand notation. To estimate $I_j^1$, we first observe that by Bernstein, there holds
\begin{equation*}\label{NL2}
I_j^1\lesssim 2^{j(s_+-2)}2^{-j(p+\frac{1}{q}-\epsilon)}\| |u_{<j}|^{p-1}u_{<j}\|_{W_x^{p+\frac{1}{q}-\epsilon,q}}.
\end{equation*}
The nonlinear estimate in \Cref{Nonlinear estimate} and the standard Fubini-type property of Sobolev spaces then yields the bound
\begin{equation*}
\| |u_{<j}|^{p-1}u_{<j}\|_{W_x^{p+\frac{1}{q}-\epsilon,q}}\lesssim \sup_{1\leq i\leq d} \bigg\| \|u_{<j}\|^{p-1}_{W_{x_i}^{\frac{1}{q}+,q}} \|u_{<j}\|_{W_{x_i}^{p+\frac{1}{q}-\epsilon,q}} \bigg\|_{L^q_{\tilde{x}_i}}.
\end{equation*}
The desired estimate for $I_j^1$ now reduces to a simple case analysis. 
\begin{itemize}
\item Case 1: $\frac{d-1}{2}(p-1)\geq q$. If $q\leq \frac{d-1}{2}$ then by Minkowski's inequality and straightforward applications of H\"older's inequality and Sobolev embeddings, we have
\begin{equation*}
\begin{split}
\bigg\| \|u_{<j}\|^{p-1}_{W_{x_i}^{\frac{1}{q}+,q}} \|u_{<j}\|_{W_{x_i}^{p+\frac{1}{q}-\epsilon,q}} \bigg\|_{ L^q_{\tilde{x}_i}}&\lesssim \|u_{<j}\|^{p-1}_{ W^{\frac{1}{q}+,q}_{x_i}L_{\tilde{x}_i}^{\frac{d-1}{2}(p-1)}} \|u_{<j}\|_{ W_x^{p+2+\frac{1}{q}-\epsilon,q}}
\\
&\lesssim \|u_{<j}\|^{p-1}_{W_x^{s_0,q}}\|u_{<j}\|_{ W_x^{p+2+\frac{1}{q}-\epsilon,q}}.
\end{split}
\end{equation*}
On the other hand, if $\frac{d-1}{2}<q$, this forces $p>2$ and hence we have by H\"older and Sobolev again,
\begin{equation*}
\begin{split}
\bigg\| \|u_{<j}\|^{p-1}_{W_{x_i}^{\frac{1}{q}+,q}} \|u_{<j}\|_{W_{x_i}^{p+\frac{1}{q}-\epsilon,q}} \bigg\|_{W^q_{\tilde{x}_i}}&\lesssim \|u_{<j}\|^{p-1}_{W^{\frac{1}{q}+,q}_{x_i}L_{\tilde{x}_i}^{q(p-1)}} \|u_{<j}\|_{ W_x^{p+\frac{1}{q}+\frac{d-1}{q}-\epsilon,q}}
\\
&\lesssim \|u_{<j}\|^{p-1}_{W_x^{s_0,q}}\|u_{<j}\|_{ W_x^{p+2+\frac{1}{q}-\epsilon,q}},
\end{split}
\end{equation*}
where the last line follows by interpolating.
\item Case 2: $\frac{d-1}{2}(p-1)<q$. In this case, we have $s_c<\frac{1}{q}$. Therefore, there are $0<\alpha<1$ and $0<\beta<1$ such that
\begin{equation*}
\begin{split}
\|u_{<j}\|^{p-1}_{W_{x_i}^{\frac{1}{q}+,q}} \|u_{<j}\|_{W_{x_i}^{p+\frac{1}{q}-\epsilon,q}}&\lesssim \|u_{<j}\|^{(p-1)(1-\beta)+(1-\alpha)}_{W_{x_i}^{s_c+,q}} \|u_{<j}\|_{W_{x_i}^{p+2+\frac{1}{q}-\epsilon,q}}^{\alpha+(p-1)\beta}
\\
&=:\|u_{<j}\|^{p-\gamma}_{W_{x_i}^{s_c+,q}} \|u_{<j}\|_{W_{x_i}^{p+2+\frac{1}{q}-\epsilon,q}}^{\gamma}.
\end{split}
\end{equation*}
\end{itemize}
By an algebraic computation, one can see that we have $\gamma:=\alpha+(p-1)\beta\leq 1$. Indeed, this is straightforward to verify directly in one dimension and since the critical exponent $s_c$ is an increasing function of $d$, it follows that $\gamma$ must be a decreasing function of the dimension.  Using this observation, we can apply Minkowski's inequality (noting that $\frac{p-\gamma}{1-\gamma}q\geq q$), H\"older's inequality, Sobolev embeddings and interpolate again to obtain (through straightforward, albeit slightly tedious computation),
\begin{equation*}
\begin{split}
\bigg\| \|u_{<j}\|^{p-1}_{W_{x_i}^{\frac{1}{q}+,q}} \|u_{<j}\|_{W_{x_i}^{p+\frac{1}{q}-\epsilon,q}} \bigg\|_{L^q_{\tilde{x}_i}}&\lesssim \|u_{<j}\|_{W_{x_i}^{s_c+,q}L_{\tilde{x}_i}^{q\frac{p-\gamma}{1-\gamma}}}^{p-\gamma}\|u_{<j}\|_{W_x^{p+2+\frac{1}{q}-\epsilon,q}}^{\gamma}
\\
&\lesssim \|u_{<j}\|_{W_{x}^{s_c+,q}}^{p-1}\|u_{<j}\|_{W_x^{p+2+\frac{1}{q}-\epsilon,q}}.
\end{split}
\end{equation*}
Combining the above estimates and using the restriction $s<p+2+\frac{1}{q}$ and $s_0>s_c$ as well as Bernstein's inequality yields
\begin{equation*}
I_j^1\lesssim \|u\|_{W_x^{s_{0},q}}^{p-1}\|u\|_{W_x^{s,q}}.    
\end{equation*}
To control $I_j^2$, we use  Bernstein's inequality and the fundamental theorem of calculus to write
\begin{equation}\label{highmod3}
\begin{split}
2^{-j(s_+-2)}I_j^2\lesssim\sup_{\tau\in [0,1]}\|P_j(|u_{\tau}|^{p-1}u_{\geq j})\|_{L_x^q}+\sup_{\tau\in [0,1]}\|P_j(|u_{\tau}|^{p-3}u_{\tau}\Re(\overline{u}_{\tau}u_{\geq j}))\|_{L_x^q}.
\end{split}
\end{equation}
Here, we use the notation
\begin{equation*}
u_{\tau}:=\tau u+(1-\tau)u_{<j},\hspace{5mm} 0\leq \tau\leq 1.
\end{equation*}
In what follows, we show how to estimate the first of the above two terms; the latter follows from completely analogous reasoning. By considering the Fourier support of $|u_{\tau}|^{p-1}u_{\geq j}$ (recall the convention for $u_{\geq j}$ above), we may write
\begin{equation*}
\|P_j(|u_{\tau}|^{p-1}u_{\geq j})\|_{L_x^q}\leq \sum_{k\gtrsim j}\|\tilde{P}_k|u_{\tau}|^{p-1}P_ku\|_{L_x^q}\lesssim \sup_{k\gtrsim j}2^{k\epsilon}\|\tilde{P}_k|u_{\tau}|^{p-1}P_ku\|_{L_x^q}
\end{equation*}
for some slightly fattened projector $\tilde{P}_k$. By Bernstein, we then have
\begin{equation}\label{highmod2H}
\|\tilde{P}_k|u_{\tau}|^{p-1}P_ku\|_{L_x^q}\lesssim 2^{-ks}\|\tilde{P}_k|u_{\tau}|^{p-1}\|_{L_x^{\infty}}\|u\|_{W_x^{s,q}}.  
\end{equation}
To control the right-hand side of \eqref{highmod2H}, we consider several cases. 
\begin{itemize}
\item Case 1: $\frac{d(p-1)}{q}\geq 2$. This corresponds to the case where $s_c\geq 0$. Here, Bernstein and Sobolev embedding yields
\begin{equation*}
2^{-k(2-2\epsilon)}\|\tilde{P}_k|u_{\tau}|^{p-1}\|_{L_x^{\infty}}\lesssim \|u\|_{L_x^{\frac{d(p-1)}{2}+c\epsilon}}^{p-1} \lesssim\|u\|_{W^{s_0,q}_x}^{p-1}.
\end{equation*}
\item Case 2: $\frac{d(p-1)}{q}<2$. This corresponds to the situation $s_c<0$. If we further have $d\geq 2$, it follows that $\frac{q}{p-1}\geq 1$ and we have
\begin{equation*}
2^{-k(2-2\epsilon)}\|\tilde{P}_k|u_{\tau}|^{p-1}\|_{L_x^{\infty}}\lesssim \||u_{\tau}|^{p-1}\|_{L_x^{\frac{q}{p-1}}}\lesssim \|u\|_{L_x^q}^{p-1}\lesssim\|u\|_{W_x^{s_0,q}}^{p-1}.    
\end{equation*}
If $d=1$, then for each $q$ we have the embedding $W^{1,q}\subset L^{\infty}$. This gives the crude bound
\begin{equation*}
2^{-k(2-2\epsilon)}\|\tilde{P}_k|u_{\tau}|^{p-1}\|_{L_x^{\infty}}\lesssim \|u\|_{L_t^{\infty}}^{p-1}\lesssim \|u\|^{p-1}_{W^{1,q}}\lesssim \|u\|_{W_x^{s_0,q}}^{p-1}.
\end{equation*}
\end{itemize}
Combining the above estimates and carrying out a very similar analysis for the second term on the right-hand side of \eqref{highmod3}, we arrive at the inequality
\begin{equation*}
I_j^2\lesssim \|u\|_{W_x^{s_{0},q}}^{p-1}\|u\|_{W_x^{s,q}}    
\end{equation*}
as required. This concludes the proof of \Cref{farestimateH}.
\end{proof}
In our ill-posedness results (particularly at the critical endpoint), we will need the following slight variation of the above estimate, which allows for $s_0$ to be equal to the critical exponent (as long as the left-hand side of the inequality is measured at slightly lower regularity). We state it as a corollary though, strictly speaking, it follows from making extremely minor modifications to the above estimates.
\begin{corollary}\label{heatestmodified} Let $s_c=\frac{d}{q}-\frac{2}{p-1}$, $s_0\geq\max\{2,s_c\}$ and $s_0\leq s<p+2+\frac{1}{q}$. For any (real or complex valued) function $u\in W_x^{s,q}(\mathbb{R}^d)$, there holds
\begin{equation*}
\||u|^{p-1}u\|_{W_x^{s_--2,q}}\lesssim \|u\|_{W_x^{s_0,q}}^{p-1}\|u\|_{W_x^{s,q}}.
\end{equation*}
\end{corollary}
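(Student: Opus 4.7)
The plan is to repeat the proof of Proposition 4.2 essentially verbatim, exploiting that the target space $W^{s_- - 2, q}_x$ in the corollary is weaker than the $W^{s_+ - 2, q}_x$ target used in the proposition by a factor of $2^{-j\epsilon_0}$ per Littlewood--Paley block. Writing $s_- = s - \epsilon_0$ with $\epsilon_0 > 0$ small, I would start with the dyadic summation
\begin{equation*}
\||u|^{p-1}u\|_{W^{s_- - 2, q}_x} \lesssim \sup_{j \geq 0} 2^{j(s - 2 - \epsilon_0/2)} \|P_j(|u|^{p-1}u)\|_{L^q_x}
\end{equation*}
(via the Besov--Sobolev embedding $B^{s - 2 - \epsilon_0/2}_{q, \infty} \hookrightarrow W^{s_- - 2, q}$) and then split $P_j(|u|^{p-1}u) = I_j^1 + I_j^2$ exactly as in the original proof.

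For $I_j^1$, the strict inequality $s_0 > (s_c)_+$ entered the proof of Proposition 4.2 only through Sobolev embeddings of the form $W^{s_0, q}_x \hookrightarrow W^{1/q^+, q}_{x_i} L^{r}_{\tilde x_i}$, each of which required a small excess of regularity beyond $s_c$. Since every occurrence of $u_{<j}$ in those embeddings is frequency-localized to $|\xi| \lesssim 2^j$, Bernstein's inequality supplies $\|u_{<j}\|_{W^{s_c + \delta, q}_x} \lesssim 2^{j\delta}\|u\|_{W^{s_c, q}_x}$ for any $\delta > 0$. Choosing $\delta$ a sufficiently small multiple of $\epsilon_0$, the resulting $2^{j(p-1)\delta}$ growth is absorbed by the $2^{-j\epsilon_0/2}$ surplus on the left-hand side, and the original bound goes through with $s_0 = s_c$ allowed.

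For $I_j^2$, the same frequency-localization trick does not directly apply since $u_\tau = \tau u + (1 - \tau) u_{<j}$ contains the unprojected $u$. However, the relevant Sobolev embedding reduces, via the trivial bound $\|u_\tau\|_{L^\alpha_x} \leq \|u\|_{L^\alpha_x}$, to the inclusion $W^{s_c, q}_x \hookrightarrow L^{d(p-1)/2 + c\epsilon}_x$, which is strict in regularity by only $O(\epsilon)$. When $s > s_c$, I would interpolate $\|u\|_{L^{d(p-1)/2 + c\epsilon}_x}$ between the sharp critical endpoint $\|u\|_{L^{d(p-1)/2}_x} \lesssim \|u\|_{W^{s_c, q}_x}$ and a slightly higher Lebesgue norm controlled by $\|u\|_{W^{s, q}_x}$; the interpolation exponent $\theta = O(\epsilon)$ is again absorbed by the $2^{-j\epsilon_0/2}$ surplus (after a Bernstein redistribution on the $u_{<j}$ factors that arise in the remainder of the argument). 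At the degenerate endpoint $s = s_c$ (which forces $s_c \geq 2$ via the hypothesis $s_0 \geq 2$), the desired bound $\||u|^{p-1}u\|_{W^{s_c - 2 - \epsilon_0, q}_x} \lesssim \|u\|_{W^{s_c, q}_x}^p$ becomes essentially tautological, as it can be derived directly from the chain of critical Sobolev embeddings with the $\epsilon_0$ slack covering the marginal loss on the output.

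The main obstacle is the case-by-case bookkeeping across the several competing subcases of Proposition 4.2, ensuring that the $2^{-j\epsilon_0/2}$ surplus is universally sufficient to cover each small $\epsilon$-loss; the edge case $s = s_c$ is the most delicate, since interpolation on the LHS becomes degenerate and must be replaced by direct Sobolev embedding chains. Conceptually the corollary amounts to the statement that the proof of Proposition 4.2 "costs" at most an arbitrarily small loss in output regularity in exchange for the sharp hypothesis $s_0 \geq s_c$ on the inputs.
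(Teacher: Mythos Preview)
Your overall strategy --- use the $\epsilon_0$ of slack on the output side (from $s_-$ in place of $s_+$) to absorb the small regularity losses that previously forced $s_0>s_c$ --- is precisely what the paper does; it compresses this into the single line ``balance the H\"older and Sobolev exponents slightly differently.'' Your $I_j^1$ treatment via Bernstein on the frequency-localized $u_{<j}$ is a correct (if slightly roundabout) realization of that rebalancing.

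Your $I_j^2$ paragraph is where you overcomplicate matters, and as literally written it does not recover the stated bound. Interpolating $\|u\|_{L^{d(p-1)/2+c\epsilon}}$ between $\|u\|_{W^{s_c,q}}$ and $\|u\|_{W^{s,q}}$ produces a right-hand side of the shape $\|u\|_{W^{s_0,q}}^{(p-1)(1-\theta)}\|u\|_{W^{s,q}}^{1+(p-1)\theta}$, which is \emph{not} the form asserted in the corollary, and no $2^{-j}$ surplus repairs a mismatch in the powers of the norms of $u$. The clean fix --- and what the paper's one-line proof means --- is simply to shift the Bernstein exponent rather than the Sobolev one: with $s_-$ on the left, the chain in the original $I_j^2$ argument delivers $2^{-k(2+\delta)}\|\tilde P_k|u_\tau|^{p-1}\|_{L^\infty}$ in place of $2^{-k(2-2\epsilon)}$, so Bernstein lands in $L^{d(p-1)/(2+\delta)}$, which \emph{is} controlled by $\|u\|_{W^{s_c,q}}^{p-1}$ via the sharp critical embedding $W^{s_c,q}\hookrightarrow L^{d(p-1)/2}$ (and interpolation with $L^q$). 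With this adjustment no separate endpoint case $s=s_c$ is needed; note also that the only genuinely new situation is $s_0=s_c\ge 2$, since for $s_c<2$ the hypothesis $s_0\ge 2$ already gives $s_0>s_c$ and Proposition~\ref{farestimateH} applies unchanged.
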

\begin{proof}
The argument is virtually identical to the above, except one has to simply balance the H\"older and Sobolev exponents slightly differently. 
\end{proof}
\subsection{Difference bounds for the nonlinear heat equation} \label{DENLH} In this subsection, we establish low regularity difference bounds for solutions to \eqref{NLH}.
\begin{lemma}\label{heatdiff}
Let $u_1$ and $u_2$ be two $C([0,T]; W_x^{s,q}(\mathbb{R}^d))$ solutions to the nonlinear heat equation \eqref{NLH} with $s\geq s_0\geq \max\{0,(s_c)_+\}$. If $T$ is sufficiently small depending on the size of $u_1$ and $u_2$ in $L^\infty_TW^{s_0,q}_x(\mathbb{R}^d)$, there holds
\begin{equation*}
\|u_1-u_2\|_{L_T^{\infty}L_x^q}\lesssim \|u_1(0)-u_2(0)\|_{L_x^q}.
\end{equation*}
\end{lemma}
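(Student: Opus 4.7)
Setting $v := u_1 - u_2$, subtracting the two equations shows that $v$ solves the inhomogeneous linear heat equation
\[
\partial_t v - \Delta v = \pm\bigl(|u_1|^{p-1}u_1 - |u_2|^{p-1}u_2\bigr), \qquad v(0) = u_1(0)-u_2(0),
\]
in the mild (Duhamel) sense. The strategy is to run a smallness-in-time argument on $\|v\|_{L_T^\infty L_x^q}$ via the heat estimate of \Cref{heatlemma2}, using a single H\"older bound to control the nonlinearity.

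For the pointwise control of the nonlinearity, I would use the elementary inequality
\[
\bigl||u_1|^{p-1}u_1 - |u_2|^{p-1}u_2\bigr| \lesssim_p \bigl(|u_1|^{p-1} + |u_2|^{p-1}\bigr)|v|,
\]
valid for $p>1$ (obtained by integrating $\tfrac{d}{d\tau}(|u_\tau|^{p-1}u_\tau)$ along $u_\tau = \tau u_1 + (1-\tau)u_2$ and applying \Cref{PHB lemma}). Combining H\"older's inequality with the Sobolev embedding $W_x^{s_0,q}\hookrightarrow L_x^r$ (with $\tfrac{1}{r} = \tfrac{1}{q} - \tfrac{s_0}{d}$ when $s_0 < d/q$, and $r=\infty$ otherwise), I obtain
\[
\bigl\|(|u_1|^{p-1}+|u_2|^{p-1})\,v\bigr\|_{L_x^{\tilde q}} \lesssim \bigl(\|u_1\|_{W_x^{s_0,q}}^{p-1} + \|u_2\|_{W_x^{s_0,q}}^{p-1}\bigr)\|v\|_{L_x^q},
\]
where $\tfrac{1}{\tilde q} := \tfrac{p-1}{r} + \tfrac{1}{q}$. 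A short algebraic computation reveals that
\[
\tfrac{1}{\tilde q} < \tfrac{1}{q} + \tfrac{2}{d} \quad\Longleftrightarrow\quad s_0 > s_c,
\]
and the assumption $s_0 \ge (s_c)_+$, interpreted as a strict inequality by the $+$-notation convention from \Cref{LWP}, is precisely what guarantees this. Consequently $\tilde q$ lies strictly inside the admissible range $\bigl(\tfrac{dq}{d+2q},q\bigr]$ for \Cref{heatlemma2}.

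Applying \Cref{heatlemma2} to the difference equation then yields, for some $\theta>0$,
\[
\|v\|_{L_T^\infty L_x^q} \lesssim \|v(0)\|_{L_x^q} + T^\theta\bigl(\|u_1\|_{L_T^\infty W_x^{s_0,q}}^{p-1} + \|u_2\|_{L_T^\infty W_x^{s_0,q}}^{p-1}\bigr)\|v\|_{L_T^\infty L_x^q}.
\]
Choosing $T$ small enough, depending only on $\|u_i\|_{L_T^\infty W_x^{s_0,q}}$, allows me to absorb the second term on the right into the left, yielding the desired inequality.

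The main obstacle I anticipate is the exponent bookkeeping ensuring that the H\"older/Sobolev product lands strictly inside the range where \Cref{heatlemma2} is available: the critical endpoint $s_0 = s_c$ is borderline excluded, and the mild strengthening inherent in the $(s_c)_+$ convention is what makes the argument work. In the unbalanced case $s_0 > d/q$ one could alternatively bound $|u_i|^{p-1}$ in $L_x^\infty$ and invoke \Cref{Lemma for IHE} with $\mu=0$ directly; this avoids \Cref{heatlemma2} but does not cover the full range. Apart from this exponent count, the argument is standard.
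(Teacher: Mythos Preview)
Your argument is correct and matches the paper's proof: both apply \Cref{heatlemma2} to the difference equation together with the pointwise H\"older bound on the nonlinearity and Sobolev embedding to place the source in $L_x^{\tilde q}$ with $\tilde q\in(\tfrac{dq}{d+2q},q]$. The paper organizes this as an explicit case split ($s_c\ge 0$ with $\tilde q=\tfrac{dq}{d+2q}+\epsilon$, versus $s_c<0$ with $\tilde q=\tfrac{q}{p}$), whereas you parametrize uniformly via the Sobolev exponent $r$; these are the same argument, and your exponent check $\tfrac{1}{\tilde q}<\tfrac{1}{q}+\tfrac{2}{d}\Leftrightarrow s_0>s_c$ is exactly the content of the two cases.
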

\begin{proof}
We have two cases. If $s_c\geq 0$, applying \Cref{heatlemma2} with $\tilde{q}=\frac{dq}{d+2q}+\epsilon$ gives us the bound
\begin{equation*}
\|u_1-u_2\|_{L_T^{\infty}L_x^q}\lesssim \|u_1(0)-u_2(0)\|_{L_x^q}+T^{\delta}\||u_1|^{p-1}u_1-|u_2|^{p-1}u_2\|_{L_T^{\infty}L_x^{\frac{dq}{d+2q}+\epsilon}}
\end{equation*}
for some $\delta>0$. The latter term can be estimated by
\begin{equation*}
\begin{split}
T^{\delta}\||u_1|^{p-1}u_1-|u_2|^{p-1}u_2\|_{L_T^{\infty}L_x^{\frac{dq}{d+2q}+\epsilon}}&\lesssim T^{\delta}\|(u_1,u_2)\|^{p-1}_{L_T^{\infty}L_x^{\frac{d}{2}(p-1)+c\epsilon}}\|u_1-u_2\|_{L_T^{\infty}L_x^q}
\\
&\lesssim T^{\delta}\|(u_1,u_2)\|^{p-1}_{L_T^{\infty}{W_x^{s_c+,q}}}\|u_1-u_2\|_{L_T^{\infty}L_x^q}.
\end{split}
\end{equation*}
On the other hand, if $s_c<0$, then it is easy to compute that we have $\frac{dq}{d+2q}<\frac{q}{p}< q$. Applying \Cref{heatlemma2} then yields (for some possibly different $\delta$)
\begin{equation*}
\|u_1-u_2\|_{L_T^{\infty}L_x^q}\lesssim \|u_1(0)-u_2(0)\|_{L_x^q}+T^{\delta}\|(u_1,u_2)\|_{L_T^{\infty}L_x^q}^{p-1}\|u_1-u_2\|_{L_T^{\infty}L_x^q}.
\end{equation*}
In either case, taking $T>0$ small enough yields the desired difference estimate. 
\end{proof}
\subsection{Proof of well-posedness for the nonlinear heat equation}\label{Well for NLH} Let $u_0\in W_x^{s,q}(\mathbb{R}^d)$ where $2+p+\frac{1}{q}>s\geq \max\{2,(s_c)_+\}$. By the contraction mapping theorem and standard estimates for the heat operator, it is straightforward to obtain a unique solution $u_j\in C([0,T_j];W_x^{s,q}(\mathbb{R}^d))$ to the equation
\begin{equation*}
\begin{cases}
&(\partial_t-\Delta)u_j=P_{<j}(|u_j|^{p-1}u_j),
\\
&u_j(0)=u_0.
\end{cases}
\end{equation*}
Here, $P_{<j}$ is the standard Littlewood-Paley projector at frequency $2^j$ and the existence time $T_j$ a priori depends on $j$. However, the uniform bounds from the previous subsections ensure that  we can extend $u_j$ to a solution in $C([0,T];W_x^{s,q}(\mathbb{R}^d))$ on a time interval $[0,T]$ which is uniform in $j$ and satisfies the uniform bound
\begin{equation*}
\|u_j\|_{C([0,T];W_x^{s,q})}\lesssim \|u_0\|_{W_x^{s,q}}.
\end{equation*}
Our goal is to show that $u_j$ converges to a unique solution $u\in C([0,T]; W_x^{s,q}(\mathbb{R}^d))$ satisfying the nonlinear heat equation \eqref{NLH}. The solution $u$ is obtained in two simple steps. First, we note the identity
\begin{equation*}
u_j-u_k=\int_{0}^{t}e^{(t-s)\Delta}P_{k\leq\cdot <j}(|u_j|^{p-1}u_j)+e^{(t-s)\Delta}P_{<k}(|u_j|^{p-1}u_j-|u_k|^{p-1}u_k)ds,\hspace{5mm}j>k\geq 0.
\end{equation*}
This implies that $u_j-u_k$ is smooth. Moreover, the bounds in \Cref{Lemma for IHE} and \Cref{farestimateH} yield the uniform estimate in the stronger topology $C([0,T];W_x^{s+\epsilon,q}(\mathbb{R}^d))$,
\begin{equation*}
\|u_j-u_k\|_{C([0,T];W_x^{s+\epsilon,q})}\lesssim \|u_0\|_{W_x^{s,q}},
\end{equation*}
for some sufficiently small $\epsilon>0$. Next, we note that the difference bounds in $L^q$ from \Cref{heatdiff} and Bernstein's inequality can be easily shown to imply that
\begin{equation*}\label{heatdiffbound}
\|u_j-u_k\|_{C([0,T];L_x^q)}\lesssim 2^{-ks}\|u_0\|_{W_x^{s,q}}.
\end{equation*}
Interpolation establishes convergence in $C([0,T];W_x^{s,q}(\mathbb{R}^d))$. This gives existence. Uniqueness follows immediately from the difference bounds in \Cref{heatdiff}. For continuous dependence, if we have a sequence of data $u_0^n\to u_0$ in $W_x^{s,q}(\mathbb{R}^d)$, the above interpolation argument is easily adapted to show that the corresponding solutions $u^n\to u$ in the strong topology $C([0,T];W_x^{s,q}(\mathbb{R}^d))$.

\section{Sharp well-posedness for the nonlinear Schr\"odinger equation}
We now consider the nonlinear Schr\"odinger equation \eqref{NLS} where $u:[-T,T]\times\mathbb{R}^d\to \mathbb{C}$ and $p>1$. We will take the positive sign in the nonlinearity in the sequel for simplicity, but the following analysis applies equally well when the nonlinearity is $-|u|^{p-1}u$. Our main local well-posedness result is the following.
\begin{theorem}\label{NLSLWP}
Let $p>1$ and $\max\{0,s_c\}<s<\min\{2p+1,p+\frac{5}{2}\}$. Then \eqref{NLS} is locally well-posed in $H_x^s(\mathbb{R}^d)$.    
\end{theorem}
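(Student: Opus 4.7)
The plan is to adapt the scheme used for the nonlinear heat equation in Section~4, with the additional analysis needed to handle the characteristic hypersurface $\tau = -|\xi|^2$ of the Schr\"odinger operator. I would set up a frequency- and time-truncated approximating equation
\begin{equation*}
i\partial_t u_j - \Delta u_j = P_{<j}\bigl(\chi_j(t)\,|u_j|^{p-1}u_j\bigr), \qquad u_j(0)=u_0,
\end{equation*}
where $\chi_j$ is a suitably regularized cutoff to the target interval $[-T,T]$. This makes each $u_j$ globally defined and smooth in $(t,x)$, so that one can legitimately apply time-frequency projectors $S_\ell$ and fractional time derivatives to it. The objective is a uniform-in-$j$ bound $\|u_j\|_{L^\infty_T H^s_x}\lesssim \|u_0\|_{H^s_x}$ together with low-regularity difference bounds, followed by passage to the limit $j\to\infty$ by a frequency-envelope interpolation identical in structure to the one carried out in \Cref{Well for NLH}.

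For the uniform bound I would spacetime Littlewood-Paley decompose $u_j$ and split into a high-modulation regime (where the time frequency dominates $|\xi|^2$) and a low-modulation regime (where the Fourier support sits near the characteristic paraboloid). In the high-modulation piece the operator $(i\partial_t-\Delta)$ behaves as elliptic of order $2$ in space, giving schematically
\begin{equation*}
\|u_j\|_{L^\infty_T H^s_x}^{\mathrm{hi}} \lesssim \||u_j|^{p-1}u_j\|_{L^\infty_T H^{s-2+\epsilon}_x},
\end{equation*}
so that \Cref{farestimateH} at $q=2$, combined with \Cref{Nonlinear estimate} (and, at the edge case $m=2$, with \Cref{nonlinearvariation}), closes this regime exactly when $s<p+\tfrac{5}{2}$. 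In the low-modulation regime, space and time frequencies are comparable and one has the equivalence $\|D_x^s u_j\|_{L^\infty_T L^2_x}\approx \|D_t^{s/2} u_j\|_{L^\infty_T L^2_x}$, so I would carry out a standard Strichartz energy estimate for $D_t^{s/2} u_j$: for an admissible pair $(q,r)$,
\begin{equation*}
\|D_t^{s/2} u_j\|_{L^\infty_T L^2_x \cap L^q_T L^r_x} \lesssim \|u_0\|_{H^s_x}+\bigl\|D_t^{s/2}\bigl(\chi_j |u_j|^{p-1}u_j\bigr)\bigr\|_{L^{q'}_T L^{r'}_x}.
\end{equation*}
Applying H\"older in $t$ and Minkowski, then invoking \Cref{Nonlinear estimate} in the \emph{time} variable via Fubini (this is precisely where the one-dimensional form of the estimate from Section~3 is essential), yields the restriction $s/2<p+\tfrac{1}{2}$, i.e.\ $s<2p+1$. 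The combined bound is thus valid for all $s$ below $\min\{2p+1,p+\tfrac{5}{2}\}$; the lower condition $s>\max\{0,s_c\}$ enters when absorbing the factor $\|u_j\|_{W^{s_0,2}_x}^{p-1}$ appearing in \Cref{farestimateH} back into the same energy quantity.

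The remaining pieces are a low-regularity difference bound, obtained by a Strichartz contraction at $L^2_x$ (when $s_c\leq 0$) or $H^{s_c+}_x$ (when $s_c>0$) in the spirit of \Cref{heatdiff}; uniqueness at the $H^s$ level; and convergence of $\{u_j\}$ in $C([-T,T];H^s_x)$ with continuous dependence on $u_0$, via the interpolation-and-frequency-envelope argument of \Cref{Well for NLH}. The main obstacle I anticipate is the construction of the cutoff $\chi_j$ and the careful handling of fractional time derivatives on $\chi_j |u_j|^{p-1}u_j$: the cutoff scale must be coarse enough not to push the nonlinearity to artificially high time modulations (which would destroy the $D_t^{s/2}$ estimate), yet must depend on $T$ in a way that makes $T$ a genuine smallness parameter in the bootstrap, and the whole argument must be carried out in a single, fixed function-space framework for fractional time regularity in order to avoid the Besov-vs-restriction inconsistencies pointed out by Wada in \cite{MR3917711}. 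This balancing act, rather than the nonlinear estimates themselves, is the true technical heart of the proof.
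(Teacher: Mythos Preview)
Your outline is correct and matches the paper's approach in all essentials: the high/low modulation split, the use of \Cref{farestimateH} and \Cref{Nonlinear estimate} (with \Cref{nonlinearvariation} for the edge case) to produce the two thresholds $p+\tfrac52$ and $2p+1$, the $L^2$-level difference bounds via Strichartz, and the frequency-envelope convergence argument. You have also correctly identified the genuine crux of the proof as the time-truncation.

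The paper resolves that crux not with a single cutoff $\chi_j(t)$ but with the two-scale truncation
\[
F_T(t,x)\;=\;\eta_{T_1}\,\partial_t^{-1}\!\bigl(\eta_T\,\partial_t F\bigr)\;+\;\eta_1(t)\,F(0,x),\qquad T\ll T_1\ll 1,
\]
applied to $F=|u|^{p-1}u$; this choice allows up to two time derivatives to land on the nonlinearity without producing unmanageable $T^{-1}$ factors, while still yielding $T$ (or $T_1$) as a smallness parameter. With this truncation in hand, the paper packages all of the linear input into a single Proposition (\Cref{truncationestimates}) that gives the elliptic bound for $Q_f u$ and, separately for each of the ranges $2<s<4$, $4\le s<6$, and $s\ge 6$, reduces matters to estimating explicit nonlinear quantities. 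For $s\ge 4$ an additional ingredient you did not anticipate is needed: one writes $\partial_t^2(|u|^{p-1}u)=G+H$ where $G$ collects the terms containing a single $u_{tt}$ and $H$ the terms quadratic in $u_t$, and the two pieces are estimated in different (near-)dual Strichartz norms. The supercritical case also requires augmenting $X^s$ by a non-Strichartz component $H_t^{1/2+}L_x^{\frac{d}{2}(p-1)+}$ (see \Cref{functionspacemotivation}). Finally, the regularization is two-parameter: the paper truncates the nonlinearity by $P_{<j}$ \emph{and} the data by $P_{<l}$, passing first $j\to\infty$ (by interpolation) and then $l\to\infty$ (by frequency envelopes).
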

\begin{remark}
We will later complement \Cref{NLSLWP}  with a corresponding non-existence result when $s\geq p+\frac{5}{2}$. In particular, for $p\geq\frac{3}{2}$ the upper bound on $s$ in \Cref{NLSLWP} is sharp. For general $p>1$, \Cref{NLSLWP} ensures that one can always construct at least $H^3(\mathbb{R}^d)$ (or better) solutions to \eqref{NLS}, provided that there are no  obstructions coming from scaling. This constitutes a very large improvement over previously known results.   
\end{remark}

The case $s\leq 1$ in \Cref{NLSLWP} is relatively standard and follows from a straightforward application of Strichartz estimates. Therefore, in the analysis below, we will often assume that $s>1$. In fact, we will soon reduce to the case $s>2$, as the other cases are known.

\subsection{The time-truncated inhomogeneous Schr\"odinger equation}  In our analysis of the well-posedness problem for \eqref{NLS}, we will often need to use fractional time derivatives to obtain the best possible estimates (in terms of allowed regularity for the solution). For this reason, it is convenient to truncate the nonlinearity in the equation by some time-dependent cutoff. We can then extend the solution outside of the support of this cutoff by applying the linear Schr\"odinger propagator, which will allow us to work with a global-in-time solution when proving a priori estimates. To set notation, we let $\eta$ be a smooth, purely time-dependent bump function supported on $[-2,2]$ with $\eta=1$ for $|t|\leq 1$. We then fix a sufficiently small parameter $0<T\ll 1$ to be chosen (which we will eventually think of as our local existence time for the original equation) and define the rescaled function $\eta_T(t):=\eta(T^{-1}t)$. 
\medskip

Our next objective is to describe the precise time truncation procedure (which is loosely inspired by \cite{MR3917711}) as well as our general strategy for obtaining a priori bounds. We will also outline some of the more subtle technical points that will motivate our choices below. Unlike in the case of the nonlinear heat equation, the Schr\"odinger equation has a non-trivial characteristic surface. Therefore, it is natural to consider the multiplier decomposition
\begin{equation*}
1=Q_f+Q_n
\end{equation*}
where 
\begin{equation*}
Q_n:=\sum_{j\geq 0}\tilde{S}_{2j}P_j,\hspace{5mm}Q_f:=\sum_{j\geq 0}S_{<2j-4}P_j+P_{<j-4}S_{2j}.
\end{equation*}
Here, we think of $Q_n$ as localizing the spacetime Fourier support of a function $u$ to the region $|\tau|\approx |\xi|^2$ near the characteristic hypersurface, while $Q_f$ projects onto the ``far" region, away from the characteristic frequencies $|\tau|\approx |\xi|^2$. In the analysis below, we will also make use of the convention 
\begin{equation*}
(\partial_t^{-1}f)(t):=\int_{0}^{t}f(s)ds,\hspace{5mm}t\in\mathbb{R}.
\end{equation*}
As usual, Strichartz spaces will be denoted by $S$ and dual Strichartz spaces by $S'$; the exact choice of Strichartz space will vary depending on the range of parameters that we are considering. We will also use the notation $S_+$, (resp.~$S'_+$) to denote a Strichartz (resp.~dual Strichartz) space obtained by slightly increasing the time integrability parameter. For instance, if $(q,p)$ (resp.~$(q',p')$) is a Strichartz admissible (resp.~dual admissible) pair, then $S_+$ (resp.~$S'_+$) could contain spaces of the form $L_t^{q+\epsilon}L_x^p$ (resp.~$L_t^{q'+\epsilon}L_x^{p'}$). The utility of these latter spaces stems from the need to apply H\"older in time to obtain a non-trivial factor of $T$ to use as a smallness parameter to close our estimates. Our main technical input, which will reduce our analysis to proving a suitable nonlinear estimate for the nonlinearity in \eqref{NLS}, is given by the following proposition; some comments on the motivation for the choices made in this proposition will appear below.
\begin{proposition}[Estimates for the time-truncated inhomogeneous Schr\"odinger equation]\label{truncationestimates}
Given a Schwartz function $F:\mathbb{R}_t\times \mathbb{R}^d_x\to \mathbb{C}$ define 
\begin{equation*}
F_T(t,x):=\eta_{T_1}\partial_t^{-1}(\eta_T\partial_t F)+\eta_{1}(t)F(0,x),\hspace{5mm}T\ll T_1\ll 1.
\end{equation*}
Then $F_T$ and $u$ defined by
\begin{equation*}
u:=e^{-it\Delta}u_0-i\int_{0}^{t}e^{-i(t-s)\Delta}F_T(s,x)ds=:u_h+u_{in}
\end{equation*}
satisfy the following properties:
\begin{enumerate}
\item (Extension property). $F=F_T$ on $[-T,T]$.
\item (Elliptic estimate for $Q_fu$). Let $1< p,q\leq \infty$ and $1\geq \sigma> \frac{1}{q}$ (or $\sigma=0$ when $q=\infty$). For every $\mu>-2$ there is a $\delta>0$ such that 
\begin{equation}\label{farbound}
\|Q_fu\|_{W_t^{\sigma,q}W_x^{s,p}}\lesssim T_1^{\delta}\|F\|_{W_t^{\sigma,q}W_x^{s+\mu,p}}+\|F(0)\|_{W_x^{s+\mu,p}}.
\end{equation}
\item (Bounds for $2< s< 4).$ For every Strichartz space $S=L_t^qL_x^p$ excluding $(q,p)=(4,\infty)$ when $d=1$  and near dual Strichartz space $S'_+$, there holds
\begin{equation*}\label{spacebound}\|u\|_{L_t^{\infty}H_x^s}+\| D_t^{\frac{s}{2}} u\|_{L_t^qL_x^p}\lesssim \|u_0\|_{H_x^s}+T_1^{\delta}\|\langle D_t\rangle^{\frac{s}{2}}F\|_{S_+'}+T_1^{\delta}\|F\|_{L_t^{\infty}H_x^{s-2+}}+\|F(0)\|_{H_x^{s-2+}}.
\end{equation*}
\item (Bounds for $4\leq s<6$). Let $F_{tt}=G+H$ be any partition. For every Strichartz pair $(q,p)$ excluding $(q,p)=(4,\infty)$ and $j=1,2$, there holds
\begin{equation*}\label{spacebound2} 
\begin{split}
\|u\|_{L_t^{\infty}H_x^s}+\|\partial_t^jD_x^{s-2j}u\|_{L_t^qL_x^p}\lesssim &T_1^{\delta}\|F\|_{L_t^{\infty}H_x^{s-2+}}+\|F(0)\|_{H_x^{s-2+}}+T_1^{\delta}\|\partial_tD_x^{s-4+}F\|_{L_t^{q+}L_x^p\cap L_t^{\infty}L_x^2}
\\
+&T_1^{\delta}\|(D_x^{s-4}G,D_t^{\frac{s}{2}-2}H)\|_{{S}'_+}+\|u_0\|_{H_x^s}.
\end{split}
\end{equation*}
\item (Bounds for $s\geq 6$). Let $F_{tt}=G+H$ be any partition. For every Strichartz pair $(q,p)$ excluding $(q,p)=(4,\infty)$ and $j=1,2$, there holds
\begin{equation*}\label{spacebound3} 
\begin{split}
\|u\|_{L_t^{\infty}H_x^s}+\|\partial_t^{j}D_x^{s-2j}u\|_{L_t^qL_x^p}\lesssim &T_1^{\delta}\|F\|_{L_t^{\infty}H_x^{s-2+}}+\|F(0)\|_{H_x^{s-2+}}+T_1^{\delta}\|\partial_tD_x^{s-4+}F\|_{L_t^{q+}L_x^p\cap L_t^{\infty}L_x^2}
\\
+&T_1^{\delta}\|(D_x^{s-4}G,D_x^{s-6}\partial_tH)\|_{{S}'_+}+\|u_0\|_{H_x^s}.
\end{split}
\end{equation*}
\end{enumerate}
\end{proposition}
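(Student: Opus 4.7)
Part (i) is immediate from the definition: since $T\ll T_1\ll 1$, the cutoffs $\eta_T$, $\eta_{T_1}$ and $\eta_1$ all equal $1$ on $[-T,T]$, so on this interval
\begin{equation*}
F_T(t,x)=\partial_t^{-1}(\partial_t F)(t,x)+F(0,x)=\big(F(t,x)-F(0,x)\big)+F(0,x)=F(t,x).
\end{equation*}

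For Part (ii), I would first note that $u_h=e^{-it\Delta}u_0$ has spacetime Fourier transform supported on the characteristic paraboloid, where the symbol $|\xi|^2-\tau$ of $(i\partial_t-\Delta)$ vanishes; this lies entirely in the $Q_n$ region, so $Q_f u_h=0$. For $u_{in}$, Duhamel gives $(i\partial_t-\Delta)u_{in}=-iF_T$ globally as a spacetime distribution, and I would write $Q_fu_{in}=-i\,Q_f(i\partial_t-\Delta)^{-1}F_T$. By construction of $Q_f$, on its support one has $|\,|\xi|^2-\tau|\gtrsim\max(|\tau|,|\xi|^2)$, so the multiplier $(i\partial_t-\Delta)^{-1}$ gains two derivatives in either time or space. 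Distributing this gain (of total size $-\mu<2$) and summing dyadically in $j$---where the \emph{strict} inequality $\mu>-2$ is precisely what yields summability---would yield a bound in terms of $F_T$. To pass from $F_T$ to $F$, I would exploit that $F_T-\eta_1(t)F(0,x)$ is supported in $[-2T_1,2T_1]$, so H\"older's inequality in time produces the factor $T_1^\delta$ at the cost of slightly increasing the time integrability, while the $\eta_1(t)F(0,x)$ contribution is absorbed into the second term on the right.

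For Parts (iii)--(v), the strategy is the decomposition $u=Q_nu+Q_fu$. The $Q_fu$ contribution is controlled by Part (ii) together with Sobolev embeddings at the Strichartz level (and at the level of space-time derivatives in Parts (iv)--(v)). For the $Q_nu$ contribution, the crucial observation is that on the near-characteristic region $|\tau|^{s/2}\approx|\xi|^s$, so controlling $D_t^{s/2}Q_nu$ in a Strichartz norm is equivalent to controlling $D_x^sQ_nu$ (with analogous statements for $\partial_t^j D_x^{s-2j}$ in Parts (iv)--(v)). For $u_h$, I would use the exact identity $D_t^{s/2}e^{-it\Delta}u_0=e^{-it\Delta}(-\Delta)^{s/2}u_0$ together with the standard Strichartz estimate. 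For $u_{in}$, I would apply $D_t^{s/2}$ to the equation $(i\partial_t-\Delta)u_{in}=-iF_T$ and invoke the inhomogeneous Strichartz estimate; the smooth cutoff $\eta_{T_1}$ ensures that $u_{in}$ is a free Schr\"odinger evolution outside a bounded interval, which makes $D_t^{s/2}u_{in}(0)$ well-defined and harmless. For Parts (iv)--(v) the freedom to split $F_{tt}=G+H$ is used to route the near-characteristic and far-characteristic parts of $\partial_{tt}F_T$ into the most efficient dual Strichartz norm for each piece; the change from the $D_t^{s/2-2}H$ term in Part (iv) to $D_x^{s-6}\partial_t H$ in Part (v) reflects that once $s\geq 6$ one has enough spatial derivatives to use spatial rather than temporal regularity on $H$.

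\textbf{Main obstacle.} The hardest step is extracting the $T_1^\delta$ smallness through the fractional time derivatives. This requires commutator estimates between $D_t^{s/2}$ (resp.\ $D_t^{s/2-1}$ or $\partial_t D_x^{s-4}$ in Parts (iv)--(v)) and the cutoff $\eta_{T_1}$, leveraging the favorable scaling $\|\partial_t^k\eta_{T_1}\|_{L^\infty_t}\lesssim T_1^{-k}$ together with the compact time support of $F_T-\eta_1(t)F(0,x)$ in $[-2T_1,2T_1]$ so that H\"older in time yields the needed gain. Related to this is the need to consistently choose a single functional framework for the fractional time regularity---precisely the gap identified by Wada in \cite{MR3917711}---so that one does not switch between inequivalent characterizations of Besov-in-time norms at different points in the argument. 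In Parts (iv)--(v), the splitting $F_{tt}=G+H$ is essential because $\partial_{tt}F_T$ has no single natural dual Strichartz space: the near and far parts scale very differently, and placing them into different norms is what ultimately allows the estimate to close.
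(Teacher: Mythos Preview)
Your high-level strategy matches the paper's: near/far decomposition, elliptic gain of (up to) two derivatives in the far region, Strichartz in the near region, and the $G+H$ splitting to route pieces of $\partial_{tt}F_T$ into different dual norms. Two concrete points, however, are more delicate than your sketch suggests.

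First, and most importantly, your mechanism for extracting smallness in parts (iv)--(v) does not close as stated. When you differentiate $F_T$ twice in time, one of the resulting terms is $F_2:=\partial_t^2\eta_{T_1}\cdot\partial_t^{-1}(\eta_T\partial_tF)$. Your proposed bound $\|\partial_t^2\eta_{T_1}\|_{L^\infty}\lesssim T_1^{-2}$ combined with H\"older over $[-2T_1,2T_1]$ yields only a net factor of $T_1^{-1}$, which is \emph{large}, not small. The paper resolves this via the \emph{two-scale} structure of the truncation: the inner factor satisfies $\|\partial_t^{-1}(\eta_T\partial_tF)\|_{L^\infty_tL_x^2}\lesssim T\|\partial_tF\|_{L^\infty_tL_x^2}$ because the integrand is supported in $[-2T,2T]$, so in $L_t^1L_x^2$ the term $F_2$ is bounded by $T_1^{-1}T\|D_x^{s-4}\partial_tF\|_{L_t^\infty L_x^2}$, which is small precisely because $T\ll T_1$. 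Your ``Main obstacle'' paragraph invokes only the $T_1$ scale and misses that the inner cutoff $\eta_T$ is what actually produces smallness for this piece.

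Second, two items you label routine are not. The initial-data term $\|(D_t^{s/2}u)(0)\|_{L_x^2}$ in part (iii) is not ``harmless'': the paper bounds it by $\|D_t^{s/2}Q_nu\|_{L_t^\infty L_x^2}+\|D_t^{s/2}Q_fu\|_{L_t^\infty L_x^2}$, feeding the first piece back through the already-established $l^2_jL_t^\infty H_x^s$ bound via Bernstein and the second through a time-elliptic estimate as in (ii). And in part (ii), passing from $F_T$ to $F$ in the $W_t^{\sigma,q}$ norm is not just H\"older over compact time support: it requires a Littlewood--Paley trichotomy (in time) for $S_k(\eta_T\partial_tF)$, and the high--high interactions are summed using the hypothesis $\sigma>\tfrac{1}{q}$---this is where that otherwise mysterious condition enters.
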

\begin{remark}
Here, we comment on the seemingly obscure choice of truncation for the inhomogeneous term $F$. The more obvious truncation, which would be given simply by
\begin{equation*}
\tilde{F}_T=\eta_TF,
\end{equation*}
is only suitable in the range $0<s\leq 2$. This is because the time-truncation cutoff is localized at frequency essentially $T^{-1}$, which makes it seemingly impossible to estimate more than a single time derivative applied to $\tilde{F}_T$ in, for instance, $L^1$ in time, as time derivatives hitting the cutoff $\eta_T$ will generate factors of $\frac{1}{T}$. On the other hand, the truncation used above works for estimating up to two time derivatives (or correspondingly up to four spatial derivatives) before running into the same type of problem. This truncation is inspired by the choice in \cite{MR3917711}, which considered Sobolev exponents between two and four in their analysis. One might wonder why we do not extend this truncation to a ``higher order" Taylor type expansion of the form
\begin{equation*}
\tilde{F}_T:=\eta_{T_1}\partial_t^{-k}(\eta_T\partial_t^kF)+\eta_1(t)\sum_{0\leq j\leq k-1}\frac{t^j}{j!}(\partial_t^jF)(0).
\end{equation*}
Such a truncation is ideal for estimating a large number of time derivatives of $u$ but becomes problematic for estimating the corresponding number of spatial derivatives in some of the relevant Strichartz norms. It turns out that our choice of truncation strikes the right balance between these two extremes and is sufficient for obtaining the full range of a priori estimates for \eqref{NLS} that will be necessary to prove our main results. 
\end{remark}
Now, we turn to the proof of \Cref{truncationestimates}.
\begin{proof}
Property (i) is immediate from the fundamental theorem of calculus. We now turn to property (ii). Since the map $F\mapsto F_T$ commutes with spatial derivatives, we may assume without loss of generality that $s=0$. Moreover, by dyadic summation and slightly enlarging $\mu$ (which is allowed because of the strict inequality $\mu>-2$), it suffices to obtain the estimate \eqref{farbound} with $Q_fu$ replaced by $Q_ju:=(S_{<2j-4}P_j+P_{<j-4}S_{2j})u$. In this case, by Young's inequality and then Bernstein in time, we obtain (after possibly slightly enlarging $\mu$ and noting that $Q_je^{-it\Delta}u_0=0$) the elliptic estimate
\begin{equation}\label{mainellipticterm}
\begin{split}
\|Q_ju\|_{W_t^{\sigma,q}L_x^p}&\lesssim \|F_T\|_{W_t^{\sigma_-,q_-}W_x^{\mu,p}}.
\end{split}
\end{equation}
Here, $\sigma_-<\sigma$ and $q_-<q$ denote parameters which are smaller than, but arbitrarily close to, $q$ and $\sigma$. Before estimating the above term, we record the following simple interpolation estimate for later use
\begin{equation}\label{interpolationestimate}
\|\partial_t^{-1}(\eta_T\partial_tF)\|_{L_t^{\infty}W_x^{\mu,p}}\lesssim T^{\theta-\epsilon}\|F\|_{W_t^{\theta,\infty}W_x^{\mu,p}},\hspace{5mm} 0<\theta<1,\hspace{5mm}0<\epsilon\ll 1.
\end{equation}
This follows by crudely interpolating the two estimates
\begin{equation}\label{endpoints}
\|\partial_t^{-1}(\eta_T\partial_tF)\|_{{L_t^{\infty}W_x^{\mu,p}}}\lesssim \|F\|_{L_t^{\infty}W_x^{\mu,p}},\hspace{5mm}\|\partial_t^{-1}(\eta_T\partial_tF)\|_{{L_t^{\infty}W_x^{\mu,p}}}\lesssim T\|F\|_{W_t^{1,\infty}W_x^{\mu,p}}.
\end{equation}
Now, we estimate the  term on the right-hand side of \eqref{mainellipticterm} by the right-hand side of \eqref{farbound}. We first dispense with the easy case $(\sigma,q)=(0,\infty)$, which simply follows from H\"older in time and the first estimate in \eqref{endpoints}. Now, we turn to the case $(\sigma,q)\neq (0,\infty)$. Since $F(0)$ is time-independent, it is clear that $\eta_1F(0)$ can be controlled by the right-hand side of \eqref{farbound}. To control the time-dependent part, we first use the fractional Leibniz rule and then \eqref{interpolationestimate} to dispense with the $\eta_{T_1}$ localization as well as low time frequencies for $\partial_t^{-1}(\eta_T\partial_tf)$, 
\begin{equation*}\label{twotermsloc}
\begin{split}
\|\eta_{T_1}\partial_t^{-1}(\eta_T\partial_tF)\|_{W_t^{\sigma_-,q_-}W_x^{\mu,p}}&\lesssim T_1^{\frac{1}{q}-\sigma+\delta}\|\partial_t^{-1}(\eta_T\partial_tF)\|_{L_t^{\infty}W_x^{\mu,p}}+T_1^{\delta}\|S_{>0}\partial_t^{-1}(\eta_T\partial_tF)\|_{W_t^{\sigma_-,q}W_x^{\mu,p}}
\\
&\lesssim T_1^{\delta}\|F\|_{W_t^{\sigma,q}W_x^{\mu,p}}+T_1^{\delta}\|S_{>0}\partial_t^{-1}(\eta_T\partial_tF)\|_{W_t^{\sigma_-,q}W_x^{\mu,p}}.
\end{split}
\end{equation*}
To estimate the latter term on the right-hand side above, we first observe that by slightly enlarging $\sigma_-$, it suffices to estimate $S_k\partial_t^{-1}(\eta_T\partial_tF)$ for each $k>0$. Here, by Bernstein, we have
\begin{equation*}
\|S_k\partial_t^{-1}(\eta_T\partial_tF)\|_{W_t^{\sigma_-,q}W_x^{\mu,p}}\lesssim 2^{k(\sigma-1)}\|S_k(\eta_T\partial_tF)\|_{L_t^qW_x^{\mu,p}}.
\end{equation*}
We then use the Littlewood-Paley trichotomy to split $S_k(\eta_T\partial_tF)$ into low-high, high-low and high-high interactions, respectively,
\begin{equation*}
S_k(\eta_T\partial_tF)=S_k(S_{<k-4}\eta_T\partial_t\tilde{S}_kF)+S_k(\tilde{S}_k\eta_TS_{<k-4}\partial_tF)+\sum_{l,m\gtrsim k,\, |l-m|\lesssim 1}S_k(S_l\eta_T S_m\partial_tF).
\end{equation*}
We now proceed with a case analysis.
\begin{itemize}
\item (Low-high interactions and high-low interactions). We place the cutoff function in $L_t^\infty$ and then using Bernstein's inequality to obtain
\begin{equation*}
2^{k(\sigma-1)}\left(\|S_k(S_{<k-4}\eta_T\partial_t\tilde{S}_kF)\|_{L_t^qW_x^{\mu,p}}+\|S_k(\tilde{S}_k\eta_TS_{<k-4}\partial_tF)\|_{L_t^qW_x^{\mu,p}}\right)\lesssim \|F\|_{W_t^{\sigma,q}W_x^{\mu,p}}.
\end{equation*}
\item (High-high interactions). Here we use Bernstein and H\"older to obtain
\begin{equation*}
\begin{split}
2^{k(\sigma-1)}\|S_k(S_l\eta_TS_m\partial_tF)\|_{L_t^qW_x^{\mu,p}}&\lesssim 2^{k(\sigma-\frac{1}{q})}\|S_k(S_l\eta_TS_m\partial_tF)\|_{L_t^1W_x^{\mu,p}}
\\
&\lesssim 2^{(k-l)(\sigma-\frac{1}{q})}\|S_lD_t^{1-\frac{1}{q}}\eta_T\|_{L_t^{\frac{q}{q-1}}}\|F\|_{W_t^{\sigma,q}W_x^{\mu,p}}
\\
&\lesssim 2^{(k-l)(\sigma-\frac{1}{q})}\|F\|_{W_t^{\sigma,q}W_x^{\mu,p}}.
\end{split}
\end{equation*}
One then deduces a $W_t^{\sigma,q}W_x^{\mu,p}$ estimate for the full sum of high-high interactions by summing over the appropriate indices and using the hypothesis $\sigma>\frac{1}{q}$.
\end{itemize}
Now, we turn to the proof of property (iii). We first control $u$ in $L_t^{\infty}H_x^s$. Clearly, it suffices to estimate the slightly stronger quantity $\|P_ju\|_{l_j^2L_t^{\infty}H_x^s}$ (which we will need for later use). Thanks to (ii), it further suffices to estimate $Q_nP_ju$ in $l_j^2L_t^{\infty}H_x^s$. For this, we split 
\begin{equation*}
Q_nP_ju=(Q_nP_ju)^{hom}+(Q_nP_ju)^{in}:=(e^{-it\Delta}P_ju_0-e^{-it\Delta}(Q_fP_ju)(0))-i\int_{0}^{t}e^{-i(t-s)\Delta}Q_nP_jF_Tds.
\end{equation*}
By the triangle inequality and the conservation of the $H_x^s$ norms for the linear evolution, we have
\begin{equation*}
\|(Q_nP_ju)^{hom}\|_{l^2_jL_t^\infty H_x^s}\leq \|u_0\|_{H_x^s}+\|Q_fu\|_{L_t^{\infty}H_x^{s+\epsilon}}.
\end{equation*}
 On the other hand, by orthogonality of $P_j$, Strichartz and  Bernstein, there holds
\begin{equation}\label{Qinest}
\|(Q_nP_ju)^{in}\|_{l^2_jL_t^{\infty}H_x^s}\lesssim \|(S_{2j}P_ju)^{in}\|_{l_{j}^2L_t^{\infty}H_x^s}\lesssim \|D_t^{\frac{s}{2}}P_jF_T\|_{l_j^2S'}\lesssim \|D_t^{\frac{s}{2}}F_T\|_{S'}.
\end{equation}
In the above, $S'$ is any dual Strichartz space $L_t^{q'}L_x^{p'}$ with $(q,p)$ admissible, and in the last inequality we used the fact that $q',p'\leq 2$ in order to  apply Minkowski's inequality and the Littlewood-Paley inequality. As a consequence of the above estimates and \eqref{farbound}, we obtain
\begin{equation}\label{slightlystronger}
\|P_ju\|_{l_j^2L_t^{\infty}H_x^s}\lesssim \|u_0\|_{H_x^s}+\|D_t^{\frac{s}{2}}F_T\|_{S'}+T_1^{\delta}\|F\|_{L_t^{\infty}H_x^{s-2+}}+\|F(0)\|_{H_x^{s-2+}}.
\end{equation}
Next, we aim to obtain a similar estimate for $D_t^{\frac{s}{2}}u$. For this, a direct Strichartz estimate gives the bound
\begin{equation*}
\|D_t^{\frac{s}{2}}u\|_{S}\lesssim \|(D_t^{\frac{s}{2}}u)(0)\|_{L_x^2}+\|D_t^{\frac{s}{2}}F_T\|_{S'}.
\end{equation*}
To estimate the initial data term, we note that
\begin{equation}\label{farandnear}
\|(D_t^{\frac{s}{2}}u)(0)\|_{L_x^2}\lesssim \|D_t^{\frac{s}{2}}Q_nu\|_{L_t^{\infty}L_x^2}+\|D_t^{\frac{s}{2}}Q_fu\|_{L_t^{\infty}L_x^2}.
\end{equation}
Arguing similarly to \eqref{Qinest} and applying Bernstein, we have
\begin{equation*}
\begin{split}
\|D_t^{\frac{s}{2}}Q_nu\|_{L_t^{\infty}L_x^2}&\lesssim\|D_t^{\frac{s}{2}}S_{2j}P_ju\|_{l_j^2L_t^{\infty}L_x^2}\lesssim \|P_ju\|_{l_j^2L_t^{\infty}H_x^s},
\end{split}
\end{equation*}
which can be controlled using \eqref{slightlystronger}. For the latter term in \eqref{farandnear}, we can argue as in \eqref{farbound} to obtain by Young's inequality (and interpreting $(i\partial_t-\Delta)$ as elliptic of order $1$ in time, rather than order $2$ as before)
\begin{equation*}
\|D_t^{\frac{s}{2}}Q_fu\|_{L_t^{\infty}L_x^2}\lesssim \|F_T\|_{W_t^{\frac{s}{2}-1+,\infty}L_x^2}.
\end{equation*}
Next, we make the important observation that if $(q',p')$ are dual Strichartz exponents, then $\frac{2}{q'}+d(\frac{1}{p'}-\frac{1}{2'})= 2$. Hence, considering separately the regions of Fourier space where $|\tau|\ll |\xi|^2$ and $|\tau|\gg |\xi|^2$, Bernstein's inequality  gives
\begin{equation}\label{s2-1}
\|F_T\|_{W_t^{\frac{s}{2}-1+,\infty}L_x^2}\lesssim T_1^{\delta}\|F\|_{L_t^{\infty}H_x^{s_+-2}}+\|F(0)\|_{H_x^{s_+-2}}+\|D_t^{\frac{s}{2}}F_T\|_{S'_+}.
\end{equation}
Indeed, the first two terms on the right-hand side of \eqref{s2-1} come from the contribution in the region  $|\tau|\ll |\xi|^2$, whereas the latter term comes from the contribution in the region $|\tau|\gtrsim |\xi|^2$. Note also that we measured $D_t^{\frac{s}{2}}F_T$ in $S'_+$ to account for the slight dyadic summability losses.
\medskip

It finally remains to estimate $\|D_t^{\frac{s}{2}}F_T\|_{S_+'}$ in terms of $F$. Clearly, we have
\begin{equation*}
\|D_t^{\frac{s}{2}}(\eta_1F(0,x))\|_{S_+'}\lesssim \|D_t^{\frac{s}{2}}(\eta_1F(0,x))\|_{L_t^{1+}L_x^2}\lesssim \|F(0,\cdot)\|_{L_x^2}. 
\end{equation*}
On the other hand, from the Littlewood-Paley trichotomy (in time), we see that
\begin{equation*}
\|D_t^{\frac{s}{2}}(\eta_{T_1}\partial_t^{-1}(\eta_T\partial_tF))\|_{S_+'}\lesssim C(T_1)\|\partial_t^{-1}(\eta_T\partial_tF)\|_{L^\infty_tL_x^{\tilde{p}'}}+\|D_t^{\frac{s}{2}}\partial_t^{-1}(\eta_T\partial_tF)\|_{S'_+},
\end{equation*}
where $\tilde{p}$ is Strichartz admissible. By H\"older in time, for any dual admissible exponent $(\tilde{q}',\tilde{p}')$, we have
\begin{equation*}
C(T_1)\|\partial_t^{-1}(\eta_T\partial_tF)\|_{L^\infty_tL_x^{\tilde{p}'}}\lesssim C(T_1)\|\eta_T\partial_tF\|_{L^{1}_tL_x^{\tilde{p}'}}\lesssim T^{\delta}\|\partial_tF\|_{L^{\tilde{q}'+}_tL_x^{\tilde{p}'}}.
\end{equation*}
On the other hand, the Littlewood-Paley trichotomy and H\"older in time gives
\begin{equation*}
\|D_t^{\frac{s}{2}}\partial_t^{-1}(\eta_T\partial_tF)\|_{S'_+}\lesssim \|D_t^{\frac{s}{2}-1}(\eta_T\partial_tF)\|_{S'_+}\lesssim T^{\delta}\|D_t^{\frac{s}{2}}F\|_{S'_+}+T^{\delta}\|F\|_{W^{\frac{s}{2}-1+,\infty}_tL_x^2}.
\end{equation*}
The last term on the right can be estimated as in \eqref{s2-1}. Combining the above estimates gives (iii).
\medskip

Now, we proceed to parts (iv) and (v), which we do simultaneously. First, by arguing almost exactly as in part (iii), we can estimate $u$ in $L_t^{\infty}H_x^s$ and also   $Q_n\partial_t^jD_x^{s-2j}u$ in $L_t^{q}L_x^p$ by
\begin{equation*}\label{part41}
\begin{split}
\|Q_n\partial_t^jD_x^{s-2j}u\|_{L_t^{q}L_x^p}+\|u\|_{L_t^{\infty}H_x^s}\lesssim &\|u_0\|_{H_x^s}+T_1^{\delta}\|F\|_{L_t^{\infty}H_x^{s-2+}}+\|F(0,x)\|_{H_x^{s-2+}}
\\
+&\|\partial_t^2D_x^{s-4}S_{2j}P_jF_T\|_{l_j^2S'},
\end{split}
\end{equation*}
where the last term comes from modifying the estimate \eqref{Qinest}.  On the other hand, for any admissible $(q,p)$, using the equation
\begin{equation*}
i\partial_t^2D_x^{s-4}u=\partial_tD_x^{s-4}\Delta u+\partial_tD_x^{s-4}F_T
\end{equation*}
and Sobolev embedding, we have the elliptic estimate 
\begin{equation*}
\|Q_f\partial_t^{j}D_x^{s-2j}u\|_{L_t^qL_x^p}\lesssim \|\partial_tF_T\|_{L_t^qW_x^{s-4+,p}}\lesssim T_1^{\delta}\|\partial_tF\|_{L_t^{q+}W_x^{s-4+,p}}+\|F(0,x)\|_{H_x^{s-2+}}.
\end{equation*}
It remains then to estimate $\|\partial_t^2D_x^{s-4}S_{2j}P_jF_T\|_{l_j^2S'}$. To do this, we first expand
\begin{equation*}
\begin{split}
\partial_t^2D_x^{s-4}F_T&=\partial_t^2\eta_1D_x^{s-4}F(0,\cdot)+\partial_t^2\eta_{T_1}\partial_t^{-1}(\eta_TD_x^{s-4}\partial_tF)+\partial_t(\eta_TD_x^{s-4}\partial_tF)
\\
&=:F_1+F_2+F_3,
\end{split}
\end{equation*}
where we used that $\eta_{T_1}=1$ on the support of $\eta_T$. To estimate $F_1$ and $F_2$ we use the dual norm $L_t^1L_x^2$ which yields
\begin{equation*}
\|S_{2j}P_jF_1\|_{l_j^2L_t^1L_x^2}+\|P_jS_{2j}F_2\|_{l_j^2L_t^1L_x^2}\lesssim \|F(0,\cdot)\|_{H_x^{s-4}}+T_1^{-1}T\|D_x^{s-4}\partial_tF\|_{L_t^{\infty}L_x^2}.
\end{equation*}
This is sufficient if $T\ll T_1$. To handle $F_3$, we decompose it into two components corresponding to the low-high (time) frequency interactions for $\eta_T$  and a remainder, as follows:
\begin{equation*}
\begin{split}
P_jS_{2j}\partial_t(\eta_TD_x^{s-4}\partial_tF)&=P_jS_{2j}\partial_t(S_{<2j-4}\eta_TD_x^{s-4}\partial_t\tilde{S}_{2j}F)+P_jS_{2j}\partial_t(S_{\geq 2j-4}\eta_TD_x^{s-4}\partial_tF)
\\
&=:F_{3,j}^{lh}+F_{3,j}^r.
\end{split}
\end{equation*}
For the remainder term, we use Bernstein (in space and time) to estimate
\begin{equation*}
\begin{split}
\|F_{3,j}^r\|_{L_t^1L_x^2}&\lesssim 2^{2j}\|S_{\geq 2j-4}\eta_T\|_{l_j^{\infty}L_t^1}\|P_jD_x^{s-4}\partial_tF\|_{L_t^{\infty}L_x^2}\lesssim 2^{-j\delta}\|D_t^{1-\delta}\eta_T\|_{L_t^1}\|D_x^{s-4+3\delta}\partial_tF\|_{L_t^{\infty}L_x^2}
\\
&\lesssim 2^{-j\delta}T^{\delta}\|D_x^{s-4+3\delta}\partial_tF\|_{L_t^{\infty}L_x^2},
\end{split}
\end{equation*}
which yields
\begin{equation*}
\|F_{3,j}^r\|_{l_j^2L_t^1L_x^2}\lesssim T^{\delta}\|D_x^{s-4+}\partial_tF\|_{L_t^{\infty}L_x^2}.
\end{equation*}
For the low-high term, by Bernstein, H\"older in $t$ and splitting $\partial_t^2F=G+H$, we have
\begin{equation*}
\|F_{3,j}^{lh}\|_{l_j^2S'}\lesssim T^{\delta}\|\tilde{S}_{2j}P_jD_x^{s-4}\partial_t^2F\|_{l_j^2S'_+}\lesssim T^{\delta}(\|\tilde{S}_{2j}P_jD_x^{s-4}G\|_{l_j^2S'_+}+\|\tilde{S}_{2j}P_jD_x^{s-4}H\|_{l_j^2S'_+}).
\end{equation*}
From Bernstein, Minkowski and the Littlewood-Paley inequality, we can bound 
\begin{equation*}
\|\tilde{S}_{2j}P_jD_x^{s-4}G\|_{l_j^2S'_+}\lesssim \|P_jD_x^{s-4}G\|_{l_j^2S'_+}\lesssim \|D_x^{s-4}G\|_{S'_+}.
\end{equation*}
On the other hand, if $4\leq s<6$, then by Bernstein (in both the space and time variable) and the same argument, we can estimate
\begin{equation*}
\|\tilde{S}_{2j}P_jD_x^{s-4}H\|_{l_j^2S'_+}\lesssim \|\tilde{S}_{2j}P_jD_t^{\frac{s}{2}-2}H\|_{l_j^2S'_+}\lesssim \|D_t^{\frac{s}{2}-2}H\|_{S'_+}.
\end{equation*}
If $s\geq 6$, similar arguments allow one to instead estimate 
\begin{equation*}
\|\tilde{S}_{2j}P_jD_x^{s-4}H\|_{l_j^2S'_+}\lesssim \|D_x^{s-6}\partial_tH\|_{S'_+}.
\end{equation*}
This concludes the proofs of parts (iv) and (v).
\end{proof}
\subsection{A priori estimates for the time-truncated  nonlinear Schr\"odinger equation}\label{apriorischrodinger} 
In this subsection, we prove a priori estimates for the equation 
\begin{equation}\label{truncated NLS}
\begin{cases}
&i\partial_tu-\Delta u=N_T,
\\
&u(0)=u_0,
\end{cases}
\end{equation}
where $N_T$ is defined as in \Cref{truncationestimates} with $N(t,x)=|u|^{p-1}u$ and $0<T\ll 1$. Since we will only be estimating expressions involving the nonlinear term below, by slight abuse of notation, we can assume without loss of generality throughout this subsection that $u$ is supported on the time interval $t\in [-2,2]$. This will allow us to harmlessly apply H\"older's inequality in time in various spots. We caveat, however, that the solution itself is not localized in time (rather outside of $[-2,2]$, it solves the linear Schr\"odinger equation).
\medskip

Estimates corresponding to data in $H^s_x(\mathbb{R}^d)$ when $s\leq 2$ have been treated extensively in the literature \cite{MR3546788,MR4388268,MR3424613, MR4581790} and do not need the refined nonlinear estimates that we consider below. Therefore, we will consider data $u_0\in H^s_x(\mathbb{R}^d)$ where $s$ satisfies the constraint $\min\{2p+1,p+\frac{5}{2}\}>s>\max\{2,s_c\}$. Given a small enough $T>0$, our goal will be to find a suitable scale of Strichartz-type spaces $X^s\subset C(\mathbb{R};H_x^s(\mathbb{R}^d))$ in which we can close a good a priori estimate for $u$. The choice of $X^s$ will vary essentially depending on whether we are in a $L^2$-subcritical or $L^2$-supercritical regime and on the range of $s$. Consequently, we will have to carry out a somewhat large case analysis. As the aim of this paper is to study the high regularity local well-posedness problem, we will not attempt to heavily optimize the estimates below (although this should be possible) to obtain low regularity continuation or persistence of regularity criteria. Rather, some of the bounds will be quite crude to simplify our analysis in places (especially for low dimensions and small powers $p$ where simple Sobolev embeddings often suffice).
\subsubsection{Estimates when $2<s<4$}
We begin our analysis by considering the regime $2<s<4$. We split this case further into two subcases. The first subcase corresponds to the situation when either $d=1,2$ or when the problem is $L^2$-subcritical (i.e.~$s_c\leq 0$), while in the second subcase, we will consider $d\geq 3$ and supercritical exponents $s_c>0$. The reason for treating dimensions one and two separately from higher dimensions is because, in the latter case, we will have access to the endpoint Strichartz norm $L_t^2L_x^{\frac{2d}{d-2}}$, unlike when $d=1$ and $d=2$.
\medskip

\paragraph{\textit{Estimates in the range $2<s<4$ when $d=1,2$ or $s_c\leq 0$}} In this setting, we will take
\begin{equation*}
X^s:=C(\mathbb{R};H_x^s(\mathbb{R}^d))\cap \mathcal{T}^s,
\end{equation*}
where we write for $s\geq 0$,
\begin{equation*}
\mathcal{T}^s:=\bigcap_{i}W_t^{\frac{s}{2},q_i}L_x^{p_i},\hspace{5mm} \frac{2}{q_i}+\frac{d}{p_i}=\frac{d}{2}.
\end{equation*}
Here, the intersection ranges over some finite index set of admissible pairs $(q_i,p_i)$, with the exact choice depending on the particular case that we are considering in our analysis. Our estimates in this setting are summarized by the following proposition.
\begin{proposition}\label{subcrit}
Let $s\in (2,4)$ and let $s_c\leq 0$ or $d=1,2$. Then there is a $\delta>0$ and $s_0<s$ such  that for all $u$ satisfying the equation \eqref{truncated NLS}  we have
\begin{equation}\label{subcritprop}
\|u\|_{X^s}\lesssim_{\|u_0\|_{H^{s_0}_x}} \|u_0\|_{H_x^s}+T_1^{\delta}\|u\|_{X^{s_0}}^{p-1}\|u\|_{X^s}.
\end{equation}
\end{proposition}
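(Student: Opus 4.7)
The plan is to apply \Cref{truncationestimates}(iii) to the time-truncated equation \eqref{truncated NLS} with $F = |u|^{p-1}u$, choosing a non-endpoint admissible Strichartz pair $(\tilde q, \tilde r)$ matching one of the pairs $(q_i, p_i)$ defining $\mathcal{T}^s$ (so in particular $\tilde q', \tilde r' < 2$; when $d = 1$ the pair $(4,\infty)$ is forbidden anyway, so one works with, e.g., $(6,6)$). Since $u$ is supported in $[-2, 2]$ in time by the standing convention, this reduces \eqref{subcritprop} to three model nonlinear estimates: the time-derivative bound $\|\langle D_t\rangle^{s/2}(|u|^{p-1}u)\|_{S'_+}$, the spatial bound $\||u|^{p-1}u\|_{L_t^\infty H_x^{s-2+}}$, and the data-trace bound $\|(|u|^{p-1}u)(0,\cdot)\|_{H_x^{s-2+}}$, each of which should be controlled by $\|u\|_{X^{s_0}}^{p-1}\|u\|_{X^s}$ (with the data-trace harmlessly absorbed into $\|u_0\|_{H^s_x}$ up to constants depending on $\|u_0\|_{H^{s_0}_x}$).

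The spatial and data-trace terms should be direct applications of \Cref{farestimateH} with $q = 2$, taking $s_0$ slightly above $\max\{2, (s_c)_+\}$: the hypothesis $s < p + \tfrac{5}{2}$ of \Cref{NLSLWP} is exactly what ensures $s - 2+ < p + \tfrac{1}{2}$, which is the upper endpoint of applicability of \Cref{farestimateH}, so these two quantities contribute acceptable terms.

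The main obstacle will be the time-derivative term, especially in the sub-regime $s/2 > p$ (possible only when $2p < s < 2p + 1$), where the chain rule is unavailable and the sharp estimate from \Cref{Nonlinear estimate} is required. Here the plan is to first apply H\"older in time (legitimate since $\tilde q'+ \le 2$ and $u$ is temporally supported in $[-2, 2]$) together with Minkowski's inequality (legitimate since $\tilde r' \le 2$) to reduce matters to estimating $\|\langle D_t\rangle^{s/2}(|u|^{p-1}u)\|_{L_x^{\tilde r'}L_t^2}$. Applying \Cref{Nonlinear estimate} fiberwise in $t$ with $q = 2$ then yields the pointwise-in-$x$ bound
$$\|\langle D_t\rangle^{s/2}(|u|^{p-1}u)\|_{L_t^2}\lesssim \|u\|_{W_t^{\frac{1}{2}+\epsilon,2}}^{p-1}\|u\|_{W_t^{s/2,2}},$$
which is precisely where the constraint $s < 2p + 1$ enters through the hypothesis $s/2 < p + \tfrac{1}{2}$ of \Cref{Nonlinear estimate}. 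A final H\"older split in $x$, with exponents $r_1, r_2$ satisfying $(p-1)/r_1 + 1/r_2 = 1/\tilde r'$ and chosen so that the norm on the highest-regularity factor pairs (possibly after a small Sobolev trade in time) with an admissible Strichartz exponent built into $\mathcal{T}^s$, combined with Sobolev embedding in time (using $s_0 > 1$ to control $W_t^{\frac{1}{2}+\epsilon,2}L_x^{r_1}$ by $\|u\|_{X^{s_0}}$), reassembles the bound into $\|u\|_{X^{s_0}}^{p-1}\|u\|_{X^s}$. In the easier sub-regime $s/2 \le p$, the classical vector-valued Moser estimate \Cref{Moservec} replaces \Cref{Nonlinear estimate}. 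Combining the three estimates with \Cref{truncationestimates}(iii) and using the smallness of $T_1^\delta$ to absorb one $\|u\|_{X^s}$-factor on the left-hand side closes \eqref{subcritprop}.
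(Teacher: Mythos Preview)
Your proposal is correct and follows essentially the same approach as the paper's proof: apply \Cref{truncationestimates}(iii), dispose of the spatial and data-trace terms via \Cref{farestimateH}, and for the time-derivative term swap the order of integration via Minkowski, apply \Cref{Nonlinear estimate} fiberwise in $t$, and H\"older-split in $x$. The only difference is that the paper carries out the final H\"older split through an explicit case analysis (separating $d=1,2$, where one takes $S'_+=L_t^{1+}L_x^2$, from $d\ge 3$ with $s_c\le 0$, where one uses the near-endpoint dual norm and further distinguishes $p\ge 2$ from $p<2$) in order to verify that admissible Strichartz exponents can always be found for both factors.
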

\begin{proof}
Part (iii) of \Cref{truncationestimates} allows us to estimate
\begin{equation}\label{Xs estimate}
\|u\|_{X^s}\lesssim \|u_0\|_{H_x^s}+\||u_0|^{p-1}u_0\|_{H^{s_+-2}_x}+T_1^{\delta}\||u|^{p-1}u\|_{L_t^{\infty}H_x^{s_+-2}}+T_1^{\delta}\|\langle D_t\rangle^{\frac{s}{2}}(|u|^{p-1}u)\|_{S_+'}.
\end{equation}
In the above, $S'_+$ will take the form $L_t^{q'+}(\mathbb{R};L_x^{r'})$ for some well-chosen admissible pair $(q,r)$. Thanks to the nonlinear estimate in \Cref{farestimateH} which we used previously to analyze \eqref{NLH}, it suffices only to estimate the last term in \eqref{Xs estimate} by the right-hand side of \eqref{subcritprop}. For this, we consider a few subcases. As mentioned earlier, below we will assume for convenience that $u$ is supported on the time interval $[-2,2]$ in the nonlinear estimates below.
\medskip 

\textbf{Case 1: $1\leq d\leq 2$.}
First, we dispense with the one and two-dimensional cases where we do not have access to the Strichartz endpoint space $L_t^2L_x^{\frac{2d}{d-2}}$. In this case, it suffices to crudely estimate the nonlinearity in the space $S'_+:=L_t^{1+}L_x^2$. Precisely, we have by Minkowski's inequality, \Cref{Nonlinear estimate} (using the restriction $\frac{s}{2}<p+\frac{1}{2}$) and simple applications of H\"older's inequality,
\begin{equation*}
\begin{split}
\|D_t^{\frac{s}{2}}(|u|^{p-1}u)\|_{L_t^{1+}L_x^2}&\lesssim \||u|^{p-1}u\|_{L_x^{2}H_t^{\frac{s}{2}}}
\\
&\lesssim \|\|u\|_{H_t^{\frac{1}{2}+}}^{p-1}\|u\|_{H_t^{\frac{s}{2}}}\|_{L_x^2}
\\
&\lesssim \|u\|^{p-1}_{W_t^{\frac{1}{2}+\epsilon,\rho}L_x^{r'(p-1)}}\|u\|_{W_t^{\frac{s}{2},q}L_x^r},
\end{split}
\end{equation*}
where $r,\rho,q>2$ are chosen so that $(q,r)$ and $(\rho, r'(p-1))$ are admissible Strichartz pairs. We can, for instance, simply take $r$ such that $\frac{1}{2}=\frac{1}{r}+\frac{1}{r'}$ with $r>2$ sufficiently close to 2, so that $(\rho,r'(p-1))$ is close but not equal to the forbidden endpoint $(2,\infty)$ when $d=2$. In this case, we have
\begin{equation*}
\|u\|_{W_t^{\frac{1}{2}+\epsilon,\rho}L_x^{r'(p-1)}}\lesssim  \|u\|_{X^{s_0}}
\end{equation*}
for some $1<s_0<s$. This handles the case $1\leq d\leq 2$.
\medskip

\textbf{Case 2: $d\geq 3$ and $s_c\leq 0$.} We specialize to two subcases depending on whether $p<2$ or $p\geq 2$. In our analysis below, we will use the (near) endpoint dual norm $S'_+:=L_t^{2}L_x^{\frac{2d}{d+2}+}$ to estimate the nonlinear term.
\medskip

\textbf{Case 2.1: $p\geq 2$.} This case only arises when $3\leq d\leq 4$. From Minkowski, H\"older in time and \Cref{Nonlinear estimate}, we have
\begin{equation*}
\begin{split}
\|D_t^{\frac{s}{2}}(|u|^{p-1}u)\|_{L_t^{2}L_x^{\frac{2d}{d+2}+}}&\lesssim \|\|u\|^{p-1}_{H_t^{\frac{1}{2}+\epsilon}}\|u\|_{H_t^{\frac{s}{2}}}\|_{L_x^{\frac{2d}{d+2}+}}\lesssim \|u\|_{L_x^{d(p-1)}{H_t^{\frac{1}{2}+\epsilon}}}^{p-1}\|u\|_{H_t^{\frac{s}{2}}L_x^{2+}}
\\
&\lesssim \|u\|_{H_t^{\frac{1}{2}+\epsilon}L_x^{d(p-1)}}^{p-1}\|u\|_{H_t^{\frac{s}{2}}L_x^{2+}}\lesssim \|u\|_{H_t^{\frac{1}{2}+\epsilon}(L_x^{\frac{2d}{d-2}}\cap L_x^2)}^{p-1}\|u\|_{H_t^{\frac{s}{2}}L_x^{2+}}
\\
&\lesssim \|u\|_{X^{s_0}}^{p-1}\|u\|_{X^s},
\end{split}    
\end{equation*}
for some small $\delta>0$ and $1<s_0<s$. We note that in the third line we interpolated, using that $2\leq d(p-1)\leq \frac{2d}{d-2}$, which one can easily verify is admissible given the postulated restrictions on $d$ and $p$.
\medskip

\textbf{Case 2.2: $p<2$.} Here, if $\frac{2}{p}< \frac{2d}{d+2}$ then we have $\frac{2}{d}+1< p\leq \frac{4}{d}+1$, so that 
\begin{equation*}
\begin{split}
\|D_t^{\frac{s}{2}}(|u|^{p-1}u)\|_{L_t^{2}L_x^{\frac{2d}{d+2}+}}&\lesssim \|\|u\|_{H_t^{\frac{1}{2}+\epsilon}}^{p-1}\|_{L_x^{\frac{2}{p-1}+}}\|u\|_{H_t^{\frac{s}{2}}L_x^r}\lesssim \|u\|_{H_t^{\frac{1}{2}+\epsilon}L_x^{2+}}^{p-1}\|u\|_{H_t^{\frac{s}{2}}L_x^r}\lesssim \|u\|_{X^{s_0}}^{p-1}\|u\|_{X^s}
\end{split}    
\end{equation*}
with $r=\frac{2d}{d(2-p)+2}$. Since our hypotheses ensure that $2\leq r\leq \frac{2d}{d-2}$, we see that $r$ is admissible for some exponent $2\leq q\leq\infty$, which suffices.
\medskip

If, on the other hand, $\frac{2}{p}\geq\frac{2d}{d+2}$, then the dual exponent $\frac{2}{2-p}$ of $\frac{2}{p}$ is admissible so that if $q$ is the corresponding time exponent, we have similarly to the above that
\begin{equation*}
\||u|^{p-1}u\|_{W_t^{\frac{s}{2},q'}L_x^{\frac{2}{p}}}\lesssim \|u\|_{X^{s_0}}^{p-1}\|u\|_{X^s}.
\end{equation*}
This handles the remaining subcritical case and therefore concludes the proof of \Cref{subcrit}.
\end{proof}
\paragraph{\textit{Estimates in the range $2<s<4$ when $d\geq 3$ and $s_c>0$}} In this setting, we will obtain estimates in the more restrictive function space
\begin{equation}\label{supercritspace}
X^s:=C(\mathbb{R};H_x^s(\mathbb{R}^d))\cap \mathcal{T}^s\cap H_t^{\frac{1}{2}+}L_x^{\frac{d}{2}(p-1)+}.
\end{equation}
\begin{remark}\label{functionspacemotivation}
Here we elaborate on the reason for the addition of the seemingly strange $H_t^{\frac{1}{2}+}L_x^{\frac{d}{2}(p-1)+}$ norm. To motivate this, by Sobolev embeddings, one should observe that this norm scales almost like the Strichartz-type norm $H_t^{\frac{1}{2}}W_x^{s_c-1,\frac{2d}{d-2}}$. Unfortunately, directly estimating $u$ in this norm seems to be impossible for certain ranges of $d$ and $p$. This is essentially due to the fact that the maximal allowable Sobolev exponent in the nonlinear estimate for $|u|^{p-1}u$ is worse in $L_x^{\frac{2d}{d-2}}$ than the corresponding estimate in $L_x^2$ based Sobolev spaces. For this reason, it is better to work directly with the norm $L_x^{\frac{d}{2}(p-1)+}$. While we do not have a direct Strichartz estimate in this case, we can at least estimate $u$ in the region of Fourier space with $|\tau|\ll |\xi|^2$ with the elliptic estimate from \Cref{truncationestimates}. In the region $|\tau|\gtrsim |\xi|^2$, we can safely Sobolev embed and estimate the resulting contribution in the norm $\|\cdot\|_{H_t^{\frac{s_c}{2}+\epsilon}L_x^{\frac{2d}{d-2}}}$ which we do have a Strichartz estimate for.
\end{remark}
The main estimate in this setting is summarized by the following proposition.
\begin{proposition}\label{supercrit}
Let $s\in (2,4)$, $s_c>0$ and $d\geq 3$. Let $X^s$ be as in \eqref{supercritspace}. Then there is a $\delta>0$ such that for all $u$ satisfying the equation \eqref{truncated NLS} we have
\begin{equation*}
\|u\|_{X^s}\lesssim_{\|u_0\|_{H_x^{s_0}}} \|u_0\|_{H_x^s}+T_1^{\delta}\|u\|_{X^{s_0}}^{p-1}\|u\|_{X^s}.
\end{equation*}
\end{proposition}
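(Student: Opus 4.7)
The plan is to mirror the argument of \Cref{subcrit}, using \Cref{truncationestimates}(iii) at the endpoint Strichartz pair $(q,r) = (2, \frac{2d}{d-2})$ that is available for $d \geq 3$, while carrying the extra $H_t^{\frac{1}{2}+} L_x^{\frac{d}{2}(p-1)+}$ norm through the bootstrap. Throughout, we fix $s_0 \in (\max\{2,s_c\}, s)$ and write $F = |u|^{p-1}u$.

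\emph{Step 1: Standard Strichartz contribution.} Applying \Cref{truncationestimates}(iii) with $S = L_t^2 L_x^{\frac{2d}{d-2}}$ and $S'_+ = L_t^2 L_x^{\frac{2d}{d+2}+}$ reduces the estimate for $\|u\|_{L_t^\infty H_x^s} + \|D_t^{s/2} u\|_{\mathcal{T}^s}$ to bounding $\|F\|_{L_t^\infty H_x^{s_+-2}}$ and $\|\langle D_t\rangle^{s/2} F\|_{S'_+}$. The former is handled by \Cref{farestimateH}, which yields $\|F\|_{L_t^\infty H_x^{s_+-2}} \lesssim \|u\|_{X^{s_0}}^{p-1} \|u\|_{X^s}$ since $s_0 > s_c$. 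For the latter, Minkowski's inequality (since $\frac{2d}{d+2} \leq 2$), followed by \Cref{Nonlinear estimate} applied in the time variable with $q = 2$ and $s$ replaced by $s/2 < p + \tfrac{1}{2}$ (or the classical Moser estimate \Cref{Crude Moser est} when $s/2 < p$), and then H\"older in space with the split $\frac{d+2}{2d} = \frac{2}{d} + \frac{d-2}{2d}$, gives
\begin{equation*}
\|D_t^{s/2} F\|_{L_t^2 L_x^{\frac{2d}{d+2}+}} \lesssim \|u\|_{H_t^{1/2+\epsilon} L_x^{\frac{d}{2}(p-1)+}}^{p-1}\, \|u\|_{H_t^{s/2} L_x^{\frac{2d}{d-2}}}.
\end{equation*}
The first factor is exactly the extra norm in $X^{s_0}$; the second is in $\mathcal{T}^s \subset X^s$.

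\emph{Step 2: The extra $H_t^{1/2+} L_x^{\frac{d}{2}(p-1)+}$ norm.} As indicated in \Cref{functionspacemotivation}, we split $u = Q_f u + Q_n u$. For the elliptic piece $Q_f u$, \Cref{truncationestimates}(ii) applied with $\sigma = \tfrac12 + \epsilon$, $q = 2$, $p = \tfrac{d}{2}(p-1)+$ and some $\mu \in (-2, -1)$ chosen so that, by Sobolev embedding, $W_x^{s_0 - 2 - \mu, \frac{d}{2}(p-1)+}$ embeds into $H_x^{s_0-2+}$, produces a gain of more than one spatial derivative and yields
\begin{equation*}
\|Q_f u\|_{H_t^{1/2+\epsilon} L_x^{\frac{d}{2}(p-1)+}} \lesssim T_1^\delta \|F\|_{L_t^\infty H_x^{s_+-2}} + \|F(0)\|_{H_x^{s_+-2}},
\end{equation*}
both bounded by $\|u\|_{X^{s_0}}^{p-1} \|u\|_{X^s}$ via \Cref{farestimateH}. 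For the near-characteristic piece $Q_n u$, we use the fact that $D_t^{1/2} \sim D_x$ on $\mathrm{supp}\, \widehat{Q_n u}$ together with the Sobolev embedding $W_x^{s_c - 1 + \epsilon, \frac{2d}{d-2}} \hookrightarrow L_x^{\frac{d}{2}(p-1)+}$ to control
\begin{equation*}
\|Q_n u\|_{H_t^{1/2+\epsilon} L_x^{\frac{d}{2}(p-1)+}} \lesssim \|D_t^{s_c/2+\epsilon} u\|_{L_t^2 L_x^{\frac{2d}{d-2}}}.
\end{equation*}
The right-hand side is then handled by reapplying \Cref{truncationestimates}(iii) at regularity $s_c + 2\epsilon < s_0$, and estimating the resulting nonlinearity exactly as in Step 1 (the relevant inequality $s_c/2 + \epsilon < p + \tfrac12$ follows from $s_0 \geq 2$ and $s_0 < s < 2p + 1$).

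\emph{Step 3: Closing the bootstrap.} Combining the estimates from Steps 1 and 2 and choosing $T_1$ small produces the desired inequality
\begin{equation*}
\|u\|_{X^s} \lesssim_{\|u_0\|_{H_x^{s_0}}} \|u_0\|_{H_x^s} + T_1^\delta \|u\|_{X^{s_0}}^{p-1}\|u\|_{X^s}.
\end{equation*}
The main technical obstacle is the control of the $L_x^{\frac{d}{2}(p-1)+}$ factor, which is a (nearly) critical Lebesgue exponent inaccessible by a single Strichartz inequality. The key point is that, by decomposing along the characteristic paraboloid, the far region is handled by elliptic regularity at cost of $2-\epsilon$ spatial derivatives, while the near region is handled by a Strichartz estimate at the critical level $s_c$. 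A careful bookkeeping of Sobolev exponents ensures both pieces land in the scale $X^{s_0}$ with $s_0 < s$, which is what permits the small factor $T_1^\delta$ to close the argument.
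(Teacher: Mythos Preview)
Your Steps 1 and 3, and the $Q_n$ half of Step 2, track the paper's argument closely and are essentially correct (for $Q_n$ you could more simply observe that $\|D_t^{s_c/2+\epsilon}u\|_{L_t^2L_x^{\frac{2d}{d-2}}}$ is already dominated by the $\mathcal{T}^{s_0}$ component of $X^{s_0}$, so there is no need to reapply part (iii)).

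The gap is in the $Q_f$ half of Step 2. The elliptic estimate in \Cref{truncationestimates}(ii) preserves the time norm on both sides: with $\sigma=\tfrac12+\epsilon$, $q=2$ it gives
\[
\|Q_fu\|_{H_t^{1/2+\epsilon}L_x^{\frac{d}{2}(p-1)+}}\lesssim T_1^{\delta}\|F\|_{H_t^{1/2+\epsilon}W_x^{\mu,\frac{d}{2}(p-1)+}}+\|F(0)\|_{W_x^{\mu,\frac{d}{2}(p-1)+}},
\]
not an $L_t^{\infty}H_x^{s_+-2}$ norm on $F$. Sobolev embedding in space cannot remove the $H_t^{1/2+}$ time regularity, and \Cref{farestimateH} is a sup-in-time bound, so it does not control the right-hand side above. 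This is precisely the difficulty flagged in \Cref{functionspacemotivation}.

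The paper's remedy is to take $\mu=-2+$ and estimate the resulting term $\||u|^{p-1}u\|_{H_t^{1/2+}W_x^{-2+,\frac{d}{2}(p-1)+}}$ directly. For $d\ge 4$ one Sobolev embeds in space to the smaller Lebesgue exponent $L_x^{\frac{d(p-1)}{2p}+}$ and then applies the vector-valued Moser bound (\Cref{Moservec}) in the time variable, obtaining
\[
\||u|^{p-1}u\|_{H_t^{1/2+}L_x^{\frac{d(p-1)}{2p}+}}\lesssim \|u\|_{L_t^{\infty}L_x^{\frac{d}{2}(p-1)+}}^{p-1}\|u\|_{H_t^{1/2+}L_x^{\frac{d}{2}(p-1)+}}\lesssim \|u\|_{X^{s_0}}^{p-1}\|u\|_{X^s};
\]
for $d=3$ one instead places the $p-1$ copies of $u$ in $L_t^{\infty}L_x^{\infty-}$ using $s_0>2$. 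The point is that the $H_t^{1/2+}$ regularity on the nonlinearity must be handled by a genuine time-derivative Moser estimate rather than reduced to the fixed-time bound \Cref{farestimateH}.
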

\begin{proof}
In this case, $s_c>0$ ensures that we have $\frac{d}{2}(p-1)> 2$. Similarly to the case above, we can apply statements (ii) and (iii) in \Cref{truncationestimates} to obtain the initial estimate
\begin{equation}\label{threeterms}
\begin{split}
\|u\|_{X^s}\lesssim \|u_0\|_{H_x^s}+T_1^{\delta}\||u|^{p-1}u\|_{L_t^{\infty}H_x^{s_+-2}}+&T_1^{\delta}\|\langle D_t\rangle^{\frac{s}{2}}(|u|^{p-1}u)\|_{L_t^{2}L_x^{\frac{2d}{d+2}+}}
\\
+&T_1^{\delta}\||u|^{p-1}u\|_{H_t^{\frac{1}{2}+}W_x^{-2+,\frac{d}{2}(p-1)+}}
\end{split}
\end{equation}
for some $\delta>0$ and implicit constant depending on $\|u_0\|_{H_x^{s_0}}$. The estimate for the second term on the right-hand side of \eqref{threeterms} is completely analogous to the subcritical case. For the third term, arguing as before, we may obtain the bound
\begin{equation*}
\begin{split}
\|D_t^{\frac{s}{2}}(|u|^{p-1}u)\|_{L_t^2L_x^{\frac{2d}{d+2}+}}&\lesssim \||u|^{p-1}u\|_{L_x^{\frac{2d}{d+2}+}H_t^{\frac{s}{2}}} \lesssim \|\|u\|^{p-1}_{H_t^{\frac{1}{2}+\epsilon}}\|u\|_{H_t^{\frac{s}{2}}}\|_{L_x^{\frac{2d}{d+2}+}} 
\\
&\lesssim \|\|u\|_{H_t^{\frac{1}{2}+\epsilon}}\|_{L_x^{\frac{d}{2}(p-1)+}}^{p-1}\|u\|_{H_t^{\frac{s}{2}}L_x^{\frac{2d}{d-2}}}
\\
&\lesssim \|u\|_{H_t^{\frac{1}{2}+\epsilon}L_x^{\frac{d}{2}(p-1)+}}^{p-1}\|u\|_{H_t^{\frac{s}{2}}L_x^{\frac{2d}{d-2}}}
\\
&\lesssim \|u\|_{X^{s_0}}^{p-1}\|u\|_{X^s}.
\end{split}
\end{equation*}
It remains to obtain an estimate for the final term on the right-hand side of \eqref{threeterms}. If $d\geq 4$, we can simply Sobolev embed in space and use the  (vector-valued) Moser estimate to obtain
\begin{equation*}
\begin{split}
\||u|^{p-1}u\|_{H_t^{\frac{1}{2}+}W_x^{-2+,\frac{d}{2}(p-1)+}}&\lesssim \||u|^{p-1} u\|_{H_t^{\frac{1}{2}+}L_x^{\frac{d(p-1)}{2p}+}}\lesssim \|u\|_{L_t^{\infty}L_x^{\frac{d}{2}(p-1)+}}^{p-1}\|u\|_{H_t^{\frac{1}{2}+}L_x^{\frac{d}{2}(p-1)+}}
\\
&\lesssim \|u\|_{X^{s_0}}^{p-1}\|u\|_{X^s}.
\end{split}
\end{equation*}
On the other hand, if $d\leq 3$, we directly have by Moser and the restriction $s_0>2$,
\begin{equation*}
\begin{split}
\||u|^{p-1}u\|_{H_t^{\frac{1}{2}+}W_x^{-2+,\frac{d}{2}(p-1)+}}&\lesssim \||u|^{p-1}u\|_{H_t^{\frac{1}{2}+}L_x^{\frac{d}{2}(p-1)}}\lesssim \|u\|^{p-1}_{L_t^{\infty}L_x^{\infty-}}\|u\|_{H_t^{\frac{1}{2}+}L_x^{\frac{d}{2}(p-1)+}}
\\
&\lesssim \|u\|^{p-1}_{X^{s_0}}\|u\|_{X^{s}}.
\end{split}
\end{equation*}
This completes the proof of the proposition.
\end{proof}
\subsubsection{Estimates when $s\geq 4$}
Now that we have dispensed with the case $2<s<4$ we turn our attention to the cases $4\leq s<6$ and $s\geq 6$. Here, we define $X^s$ by
\begin{equation*}
X^s:=C(\mathbb{R};H_x^s(\mathbb{R}^d))\cap \mathcal{T}^s_1\cap\mathcal{T}^s_2,
\end{equation*}
where we write for $s\geq 0$ and $j=1,2$,
\begin{equation*}
\mathcal{T}^s_j:=\bigcap_{i}W_t^{j,\, q_i}W_x^{s-2j,\, p_i},\hspace{5mm} \frac{2}{q_i}+\frac{d}{p_i}=\frac{d}{2}.
\end{equation*}
Thanks to \Cref{truncationestimates} parts (iv) and (v) and the nonlinear estimate from \Cref{farestimateH}, we need to control:
\begin{itemize}
\item\label{items3}
$\|\partial_tD_x^{s-4+}(|u|^{p-1}u)\|_{L_t^{q+}L_x^r\cap L_t^{\infty}L_x^2}$ when $s\geq 4$ and $(q,r)\neq (4,\infty)$ is an admissible Strichartz pair;
\vspace{2mm}
\item\label{items1} $\|(D_x^{s-4}G,D_x^{s-6}\partial_tH)\|_{S_+'}$ when $s\geq 6$;
\vspace{2mm}
\item\label{items2} $\|(D_x^{s-4}G,D_t^{\frac{s}{2}-2}H)\|_{S_+'}$ when $4\leq s< 6$;
\end{itemize}
\vspace{2mm}
where $\partial_t^2(|u|^{p-1}u)=G+H$ is a well-chosen partition. We choose the partition in the following way: $G$ will consist of the portion of $\partial_t^2(|u|^{p-1}u)$ where exactly one factor of $u_{tt}$ (or its complex conjugate) appears. This includes, for instance, expressions of the form $|u|^{p-1}u_{tt}$. $H$ will consist of all remaining terms (i.e.~components which are quadratic in $u_t$ but involve no higher order factors, e.g., $|u|^{p-2}u_tu_t$). 
Below, the estimates are done for such model terms; the extension to the general case is just a matter of notation. 
\medskip

\paragraph{\textit{Estimates for $\|\partial_tD_x^{s-4+}(|u|^{p-1}u)\|_{L_t^{q+}L_x^{r}\cap L_t^{\infty}L_x^2}$ for admissible $(q,r)\neq (4,\infty)$}} Our goal here will be to establish the estimate
\begin{equation*}
\|\partial_tD_x^{s-4+}(|u|^{p-1}u)\|_{L_t^{q+}L_x^r\cap L_t^{\infty}L_x^2}\lesssim \|u\|_{X^{s_0}}^{p-1}\|u\|_{X^s}
\end{equation*}
for some $s_0$ satisfying $s_c<s_0<s-\epsilon$. We show the details for the $L_t^{q+}L_x^r$ component as the $L_t^{\infty}L_x^2$ bound follows by a slight modification (it is almost a special case, in fact).
\begin{remark}
Of course, the restriction for $s_0$ given by $s_c<s_0<s-\epsilon$ can be greatly optimized in many regimes, but we will not do so here as it will not be important for our main results.
\end{remark}
We begin by expanding 
\begin{equation*}
D_x^{s-4+}\partial_t(|u|^{p-1}u)=\frac{p+1}{2}D_x^{s-4+}(|u|^{p-1}u_t)+\frac{p-1}{2}D_x^{s-4+}(|u|^{p-3}u^2\overline{u}_t).
\end{equation*}
We will show the details for the first term as the latter term can be dealt with by nearly identical reasoning. 
Note that here we are applying few enough derivatives to be able to carry out the estimate using standard fractional Leibniz-type rules. As above, we will need to consider the cases $p\leq 2$ and $p> 2$ separately.
\medskip

\textbf{Case 1: $p\leq 2$.}
By the fractional Leibniz rule and \Cref{KV Prop}, for $r\leq r_1,r_2,s_1,s_2<\infty$ satisfying $\frac{1}{r}=\frac{1}{s_1}+\frac{1}{s_2}=\frac{1}{r_1}+\frac{1}{r_2}$, there holds
\begin{equation*}
\|D_x^{s-4+}(|u|^{p-1}u_t)\|_{L_t^{q+}L_x^r}\lesssim \|u\|_{L_t^{\infty}W_x^{\frac{s-4}{p-1}+,(p-1)r_1}}^{p-1}\|u_t\|_{L_t^{q+}L_x^{r_2}}+\|u\|^{p-1}_{L_t^{\infty}L_x^{s_1(p-1)}}\|D_x^{s-4+}u_t\|_{L_t^{q+}L_x^{s_2}}.
\end{equation*}
Note that we can always choose $s_1$ and $s_2$ such that we have the embeddings
\begin{equation}\label{chooses1s2}
\|D_x^{s-4+}u_t\|_{L_t^{q+}L_x^{s_2}}\lesssim \|u\|_{W^{1,q+}_tW_x^{s-2,\tilde{r}}},\hspace{5mm}\|u\|_{L_t^{\infty}L_x^{s_1(p-1)}}\lesssim \|u\|_{L_t^{\infty}H_x^{s_0}},
\end{equation}
where  $s_c<s_0<s-\epsilon$ and $\tilde{r}$ is such that $(q+,\tilde{r})$ is admissible. We can then interpolate to conclude that
\begin{equation*}
\|u\|_{W_t^{1,q+}W_x^{s-2,\tilde{r}}}\lesssim \|u\|_{W_t^{1,\infty}H_x^{s-2}}^{\theta}\|u\|_{W_t^{1,q}W_x^{s-2,r}}^{1-\theta},
\end{equation*}
for some $0<\theta<1$. To estimate the other term, we note that since $p\leq 2$ we have $s-2<\frac{5}{2}$ (from the restriction $s<p+\frac{5}{2}$). Therefore, since necessarily $r\leq \frac{2d}{d-2}$, if $d\geq 7$ we can choose $r_1$ and $r_2$ such that
\begin{equation*}
\|u_t\|_{L_t^{q+}L_x^{r_2}}\lesssim \|D_x^{s-2}u_t\|_{L_t^{q+}L_x^{\tilde{r}}},\hspace{5mm}\|u\|_{L_t^{\infty}W_x^{\frac{s-4}{p-1},(p-1)r_1}}\lesssim \|u\|_{L_t^{\infty}H_x^{s_0}}.
\end{equation*}
On the other hand, if $d\leq 6$ we can simply take $r_2=r$ and $r_1=\infty$ so that we trivially have
\begin{equation*}
\|u_t\|_{L_t^{q+}L_x^{r_2}}\lesssim \|u\|_{W_t^{1,q+}W_x^{s-2,\tilde{r}}},\hspace{5mm}\|u\|_{L_t^{\infty}W_x^{\frac{s-4}{p-1},\infty}}\lesssim \|u\|_{L_t^{\infty}H_x^{4-\epsilon}}.
\end{equation*}
Combining everything above gives
\begin{equation}\label{extrastrichartz}
\|D_x^{s-4+}(|u|^{p-1}u_t)\|_{L_t^{q+}L_x^r}\lesssim \|u\|_{X^{s_0}}^{p-1}\|u\|_{X^s}
\end{equation}
for some $s_c<s_0<s-\epsilon$.
\medskip

\textbf{Case 2: $p\geq 2$.} Here, by \Cref{Crude Moser est} and \Cref{Leib1}, we instead have for $r\leq r_1,r_2,r_3,s_1,s_2<\infty$ with $\frac{1}{r}=\frac{1}{r_1}+\frac{1}{r_2}+\frac{1}{r_3}=\frac{1}{s_1}+\frac{1}{s_2}$,
\begin{equation*}
\|D_x^{s-4+}(|u|^{p-1}u_t)\|_{L_t^{q+}L_x^r}\lesssim \|u\|^{p-2}_{L_t^{\infty}L_x^{(p-2)r_1}}\|u\|_{L_t^{\infty}W_x^{s-4+,r_2}}\|u_t\|_{L_t^{q+}L_x^{r_3}}+\|u\|^{p-1}_{L_t^{\infty}L_x^{s_1(p-1)}}\|D_x^{s-4+}u_t\|_{L_t^{q+}L_x^{s_2}}.
\end{equation*}
The latter term can be estimated exactly as in \eqref{chooses1s2} by choosing $s_1$ and $s_2$ appropriately. If $d\geq 8$, we can take $r_1=\frac{d}{2}\frac{p-1}{p-2}$, $r_2=\frac{2d}{d-8}-$ so that
\begin{equation*}
\|u\|_{L_t^{\infty}L_x^{(p-2)r_1}}\lesssim \|u\|_{L_t^{\infty}H_x^{s_c+}},\hspace{5mm}\|u\|_{L_t^{\infty}W_x^{s-4+,r_2}}\lesssim \|u\|_{L_t^{\infty}H_x^s},\hspace{5mm}\|u_t\|_{L_t^{q+}L_x^{r_3}}\lesssim \|u_t\|_{L_t^{q+}W_x^{s_c-2+,\tilde{r}}}.
\end{equation*}
On the other hand, if $d\leq 7$, we can simply take $r_1=r_2=\infty$ and $r_3=r$  and then Sobolev embed to obtain a crude but sufficient estimate. Combining the above together, one again obtains \eqref{extrastrichartz}.
\medskip
\paragraph{\textit{Estimates for $\|D_x^{s-4}G\|_{S_+'}$}} 
We now analyze the term $\|D_x^{s-4}G\|_{S_+'}$ which is relevant for all $s\geq 4$. Analogously to the case $s<4$, in the analysis below we will consider separately the $L^2$ subcritical and supercritical regimes. We begin with the latter case, where we further split our analysis into two subcases depending on whether $\frac{3}{2}< p<2$ or $p\geq 2$ (note that these are the only two cases because of the restriction $4\leq s<p+\frac{5}{2}$). One of the terms that appears in the definition of $G$ is $|u|^{p-1}u_{tt}$. Below, we will show how to estimate this particular expression, as the other terms have a very similar structure and can be handled by identical reasoning.
\medskip

\textbf{Case 1.1: $s_c\geq 0$ and $\frac{3}{2}< p<2$.} In this case, we must have $d\geq 4$, so we can use the endpoint dual space $L_t^2L_x^{\frac{2d}{d+2}}$. To proceed, we apply the fractional Leibniz rule and \Cref{KV Prop} to obtain
\begin{equation*}
\|D_x^{s-4}(|u|^{p-1}u_{tt})\|_{L_t^2L_x^{\frac{2d}{d+2}}}\lesssim \|u\|_{L_t^{\infty}L_x^{\frac{d}{2}(p-1)}}^{p-1}\|u\|_{H_t^2W_x^{s-4,\frac{2d}{d-2}}}+\|u\|_{L_t^{\infty}W_x^{\frac{s-4}{p-1},(p-1)r}}^{p-1}\|u_{tt}\|_{L_t^2L_x^{r'}},
\end{equation*}
where $r$ is a parameter to be chosen satisfying $\frac{d+2}{2d}=\frac{1}{r}+\frac{1}{r'}$. Since $s_c\geq 0$ and $d\geq 4$, we can choose $r$ such that we have $2\leq (p-1)r\leq\frac{d}{2}(p-1)$ and also the embeddings $ W_x^{s-4,\frac{2d}{d-2}}\subset L_x^{r'}$ and $H_x^{\sigma}\subset L_x^{(p-1)r}$ for some $\sigma>\max\{s_c-1,0\}$. In view of the restrictions $\frac{s-4}{p-1}\leq 1$ and $s\geq 4$, this implies that
\begin{equation}\label{referencebound2}
\|D_x^{s-4}(|u|^{p-1}u_{tt})\|_{L_t^2L_x^{\frac{2d}{d+2}}}\lesssim \|u\|^{p-1}_{X^{s_0}}\|u\|_{X^s}.
\end{equation}

\textbf{Case 1.2: $s_c\geq 0$ and $p\geq 2$.} In this case, the analysis is a bit simpler because we can use standard Moser estimates. When $d\geq 3$, the analogous estimate is
\begin{equation*}
\|D_x^{s-4}(|u|^{p-1}u_{tt})\|_{L_t^2L_x^{\frac{2d}{d+2}}}\lesssim \|u\|_{L_t^{\infty}L_x^{\frac{d}{2}(p-1)}}^{p-1}\|u_{tt}\|_{L_t^2W_x^{s-4,\frac{2d}{d-2}}}+\|u\|^{p-2}_{L_t^{\infty}L_x^{\frac{d}{2}(p-1)+}}\|u\|_{L_t^{\infty}W_x^{s-4,r}}\|u_{tt}\|_{L_t^2L_x^{r'}}.
\end{equation*}
Here, we have $\frac{d+2}{2d}=\frac{2}{d}\frac{p-2}{p-1}+\frac{1}{r}+\frac{1}{r'}$ where by the assumption $s_c>0$ we can ensure that $r$ and $r'$ are such that $W_x^{s-4,\frac{2d}{d-2}}\subset L_x^{r'}$ and $H_x^{4-\epsilon}\subset L_x^r$. This again gives the bound \eqref{referencebound2}. When $1\leq d\leq 2$, we can instead use the dual Strichartz norm $L_t^1L_x^2$, H\"older's inequality and Sobolev embeddings to obtain a similar result. 
\medskip

Next, we turn to the $L^2$ subcritical case $s_c<0$. Since $p>\frac{3}{2}$ when $s\geq 4$, we inherit the restriction $d\leq 7$. Again, we split the analysis into two subcases depending on whether $p<2$ or $p\geq 2.$
\medskip

\textbf{Case 2.1: $s_c<0$ and $\frac{3}{2}<p<2$.}
Here, we estimate using the fractional Leibniz rule and \Cref{KV Prop},
\begin{equation*}
\begin{split}
\|D_x^{s-4}(|u|^{p-1}u_{tt})\|_{L_t^{1+}L_x^2}&\lesssim \|u\|_{L_t^{\infty}L_x^{\infty}}^{p-1}\|u_{tt}\|_{L_t^{\infty}H_x^{s-4}}+\|u\|_{L_t^{\infty}W_x^{\frac{s-4}{p-1}+,r(p-1)}}^{p-1}\|u_{tt}\|_{L_t^{\infty}L_x^{r'}},
\end{split}
\end{equation*}
 where $\frac{1}{2}=\frac{1}{r}+\frac{1}{r'}$ with $r'$ taken very close to $2$ and $r$ taken very close to $\infty$. The restrictions on $p$ and $s$ ensure that we have $\frac{s-4}{p-1}<\frac{1}{2}$. Since $d\leq 7$, Sobolev embedding then yields the bound
\begin{equation*}
\|D_x^{s-4}(|u|^{p-1}u_{tt})\|_{L_t^{1+}L_x^2}\lesssim \|u\|_{L_t^{\infty}H_x^{4-\epsilon}}^{p-1}\|u_{tt}\|_{L_t^{\infty}H_x^{s-4}}\lesssim \|u\|_{X^{s_0}}^{p-1}\|u\|_{X^s}.
\end{equation*}

\textbf{Case 2.2: $s_c<0$ and $p\geq 2$.} Using  \Cref{Crude Moser est} and \Cref{Leib1}, the estimate in this case is
\begin{equation*}
\|D_x^{s-4}(|u|^{p-1}u_{tt})\|_{L_t^{1+}L_x^2}\lesssim \|u\|_{L_t^{\infty}L_x^{\infty}}^{p-1}\|u_{tt}\|_{L_t^{\infty}H_x^{s-4}}+\|u\|_{L_t^{\infty}L_x^{r_1}}^{p-2}\|u\|_{L_t^{\infty}W_x^{s-4,r_2}}\|u_{tt}\|_{L_t^{\infty}L_x^{r_3}}\lesssim \|u\|_{X^{s_0}}^{p-1}\|u\|_{X^{s}},
\end{equation*}
where above, we took $\frac{1}{2}=\frac{1}{r_1}+\frac{1}{r_2}+\frac{1}{r_3}$ with $r_1,r_2$ sufficiently large and $r_3$ close to $2$ when $s>4$ and $r_1=r_2=\infty$ and $r_3=2$ when $s=4$. The result then followed by Sobolev embedding.
\medskip

\paragraph{\textit{Estimates for} $\|D_x^{s-6}\partial_tH\|_{S_+'}$} We now analyze the remaining term in \Cref{items1}; namely, the expression $\|D_x^{s-6}\partial_tH\|_{S_+'}$. Here, we may assume that $s\geq 6$. 
Given the definition of $H$, two of the terms that we will need to control take the form
\begin{equation}\label{2termss>6}
\|D_x^{s-6}(|u|^{p-3}u_tu_tu_t)\|_{S'_+}+\|D_x^{s-6}(|u|^{p-2}u_tu_{tt})\|_{S'_+}.
\end{equation}
We will only show how to estimate the expressions in \eqref{2termss>6} as the actual terms in the expansion of $H$ have a similar form and can be handled by identical reasoning.
\medskip

\textbf{Case 1: $p>4$.} We begin by analyzing the first term in \eqref{2termss>6}. If $d\leq 11$ (which implies that $H^s\subset H^6_x\subset L_x^{\infty}$), we can work in the (almost) dual norm $L_t^{1+}L_x^2$. In this case, we can use  \Cref{Crude Moser est}, \Cref{Leib1} and Sobolev embeddings to crudely estimate
\begin{equation*}
\begin{split}
\|D_x^{s-6}(|u|^{p-3}u_tu_tu_t)\|_{L^{1+}_tL_x^{2}}&\lesssim_{\epsilon} \|u\|^{p-3}_{L^\infty_tH^{s-\epsilon}_x}\|u_t\|_{L_t^\infty L_x^{6-}}^3+\|u\|_{L_t^{\infty}L_x^{\infty}}^{p-3}\|u_t\|_{L_t^{\infty}L_x^{2r}}^2\|D_x^{s-6}u_t\|_{L_t^{\infty}L_x^{r'}}
\end{split}
\end{equation*}
for any $0<\epsilon\ll 1$. Here, $r,r'$ satisfy $\frac{1}{2}=\frac{1}{r}+\frac{1}{r'}$. We can take $r'=\frac{2d}{d-8}-$ if $11\geq d\geq 8$ and $r'$ arbitrarily close to $\infty$  otherwise. In either case (since $s\geq 6$), Sobolev embeddings yield
\begin{equation*}
\|D_x^{s-6}(|u|^{p-3}u_tu_tu_t)\|_{L^{1+}_tL_x^{2}}\lesssim \|u\|_{X^{s_0}}^{p-1}\|u\|_{X^s}
\end{equation*}
for some $s_0<s$. On the other hand, if $d\geq 12$, we will use the norm $L_t^{2}L_x^{\frac{2d}{d+2}+}$ to estimate the above term. This corresponds (from our convention for $ S_+ ' $) to a dual norm of the form $L_t^{q'}L_x^{r'}$ where $(q',r')$ is admissible and $q'<2$ is sufficiently close to $2$. We then compute that
\begin{equation*}
\begin{split}
\|D_x^{s-6}(|u|^{p-3}u_tu_tu_t)\|_{L^2_tL_x^{\frac{2d}{d+2}+}} &\lesssim \|u\|^{p-3}_{L_t^{\infty}L_x^{\frac{d}{2}(p-1)+}}\|u_t\|^2_{L_t^{\infty}L_x^{\frac{d(p-1)}{2p}}}\|D_x^{s-6}u_t\|_{L_t^2L_x^{\frac{2d}{d-10}}}
\\
&+\|u\|^{p-4}_{L^\infty_tL_x^{\frac{d}{2}(p-1)+}}\|D_x^{s-6}u\|_{L^\infty_tL_x^{\frac{2d}{d-12}-}}\|u_t\|_{L_t^6L_x^{\frac{3d(p-1)}{5p+1}}}^3
\\
&\lesssim \|u\|^{p-3}_{L_t^{\infty}L_x^{\frac{d}{2}(p-1)+}}\|u_t\|^2_{L_t^{\infty}L_x^{\frac{d(p-1)}{2p}}}\|D_x^{s-6}u_t\|_{L_t^2L_x^{\frac{2d}{d-10}}}
\\
&+\|u\|^{p-4}_{L^\infty_tL_x^{\frac{d}{2}(p-1)+}}\|D_x^{s-6}u\|_{L^\infty_tL_x^{\frac{2d}{d-12}-}}\|u\|^\frac{3}{2}_{L^\infty_tL_x^{\frac{1}{2}d(p-1)}}\|u_{tt}\|^{\frac{3}{2}}_{L^3_tL_x^\frac{3d(p-1)}{10p-4}}
\\
&\lesssim \|u\|_{X^{s_0}}^{p-1}\|u\|_{X^s},
\end{split}
\end{equation*}
for some $s_c<s_0<s$. Note that in the first inequality above we used Propositions~\ref{Crude Moser est} and \ref{Leib1}. In the second estimate, we interpolated the third factor in the second line. Then, in the final estimate, we used the embeddings $W_x^{s_c-4,\frac{6d}{3d-4}}\subset L_x^{\frac{3d(p-1)}{10p-4}}$, $H_x^{s_c}\subset L_x^{\frac{d}{2}(p-1)}$, $H_x^{s_c-2}\subset L_x^{\frac{d}{2p}(p-1)}$, $W_x^{4,\frac{2d}{d-2}}\subset L_x^{\frac{2d}{d-10}}$ and $H_x^6\subset L_x^{\frac{2d}{d-12}}$ together with the fact that $(3,\frac{6d}{3d-4})$ is an admissible Strichartz pair.
\medskip

Next, we analyze the second term in \eqref{2termss>6}. Similarly to the above, if $d\leq 11$ we have (noting the crude embedding $H_x^{s-\epsilon}\subset L^q$ for any $q\geq 2$),
\begin{equation*}
\begin{split}
\|D_x^{s-6}(|u|^{p-2} u_tu_{tt})\|_{L^{1+}_t L_x^2} &\lesssim \|u\|_{L_t^{\infty}H_x^{s-\epsilon}}^{p-2}\|u_{tt}\|_{L_t^{\infty}L_x^{r}}\|u_t\|_{L_t^{\infty}L_x^{r'}}
\\
&+\|u\|_{L_t^{\infty}L_x^{\infty}}^{p-2}\|u_t\|_{L_t^{\infty}L_x^{r'}}\|D_x^{s-6}u_{tt}\|_{L_t^{\infty}L_x^{r}}
\\
&+\|u\|_{L_t^{\infty}L_x^{\infty}}^{p-2}\|u_{tt}\|_{L_t^{\infty}L_x^{r}}\|D_x^{s-6}u_t\|_{L_t^{\infty}L_x^{r'}}
\end{split}
\end{equation*}
for any $\frac{1}{2}=\frac{1}{r}+\frac{1}{r'}$ with $2<r,r'<\infty$. Since $d\leq 11$, we can choose $r$ such that $H_x^{2-\epsilon}\subset L_x^{r}$ and $H_x^{4-\epsilon}\subset L_x^{r'}$.  This yields (since $s\geq 6$),
\begin{equation*}
\|D_x^{s-6}(|u|^{p-2} u_tu_{tt})\|_{L^{1+}_t L_x^2}\lesssim \|u\|^{p-1}_{X^{s_0}}\|u\|_{X^s}.
\end{equation*}
If $d\geq 12$, we again estimate this term in the space $L_t^{2}L_x^{\frac{2d}{d+2}+}$. Here we have
\begin{equation*}
\begin{split}
\|D_x^{s-6}(|u|^{p-2}u_tu_{tt})\|_{L_t^{2}L_x^{\frac{2d}{d+2}+}}&\lesssim \|u\|^{p-3}_{L_t^{\infty}L_x^{\frac{d}{2}(p-1)+}}\|D_x^{s-6}u\|_{L_t^{\infty}L_x^{\frac{2d}{d-12}-}}\|u_t\|_{L_t^{\infty}L_x^{\frac{d(p-1)}{2p}}}\|u_{tt}\|_{L_t^2L_x^{\frac{d(p-1)}{3p-1}}}
\\
&+\|u\|_{L_t^{\infty}L_x^{\frac{d}{2}(p-1)+}}^{p-2}\|D_x^{s-6}u_t\|_{L_t^{\infty}L_x^{\frac{2d}{d-8}}}\|u_{tt}\|_{L_t^2L_x^{\frac{d(p-1)}{3p-1}}}
\\
&+\|u\|_{L_t^{\infty}L_x^{\frac{d}{2}(p-1)+}}^{p-2}\|u_t\|_{L_t^{\infty}L_x^{\frac{d(p-1)}{2p}}}\|D_x^{s-6}u_{tt}\|_{L_t^2L_x^{\frac{2d}{d-6}}},
\end{split}
\end{equation*}
which by Sobolev embeddings yields
\begin{equation*}
\|D_x^{s-6}(|u|^{p-2}u_tu_{tt})\|_{L_t^2L_x^{\frac{2d}{d+2}}}\lesssim \|u\|_{X^{s_0}}^{p-1}\|u\|_{X^s}.
\end{equation*}

\textbf{Case 2: $p\leq 4$.} Again, we recall that we are assuming that $s\geq 6$, which forces $p>\frac{7}{2}$. Under these assumptions, we have $\frac{s-6}{p-3}<\frac{p-\frac{7}{2}}{p-3}\leq \frac{1}{2}$. If $d\leq 11$, then fractional Leibniz and \Cref{KV Prop} gives
\begin{equation*}
\begin{split}
\|D_x^{s-6}(|u|^{p-3}u_tu_tu_t)\|_{L^{1+}_tL^2_x}&\lesssim \|u\|^{p-3}_{L_t^{\infty}W_x^{\frac{1}{2},q}}\|u_t\|_{L_t^{\infty}L_x^{6+}}^3+\|u\|^{p-3}_{L_t^{\infty}L_x^{\infty}}\|u_t\|^2_{L_t^{\infty}L_x^{2r}}\|D_x^{s-6}u_t\|_{L_t^{\infty}L_x^{r'}},
\end{split}
\end{equation*}
where $\frac{1}{2}=\frac{1}{r}+\frac{1}{r'}$ and $q\gg 1$ is sufficiently large. As $d\leq 11$, we can choose $r, r'$ such that $H^{4-\epsilon}\subset L_x^{r'}$ and $H^{2-\epsilon}\subset L_x^{2r}$ and then use the embedding $H^{\frac{11}{2}}_x\subset L^{q}$ to obtain
\begin{equation*}
\|D_x^{s-6}(|u|^{p-3}u_tu_tu_t)\|_{L^{1+}_tL^2_x}\lesssim \|u\|_{X^{s_0}}^{p-1}\|u\|_{X^{s}}.
\end{equation*}
Finally, if $d\geq 12$ we can estimate
\begin{equation}\label{s6bound}
\begin{split}
\|D_x^{s-6}(|u|^{p-3}u_tu_tu_t)\|_{L^2_tL^{\frac{2d}{d+2}+}_x}\lesssim &\|u\|^{p-3}_{L_t^{\infty}W_x^{\frac{s-6}{p-3},(p-3)r}}\|u_t\|_{L_t^{6}L_x^{3r'}}^3
\\
+&\|u\|^{p-3}_{L_t^{\infty}L_x^{\frac{d}{2}(p-1)+}}\|u_t\|^2_{L_t^{\infty}L_x^{\frac{d(p-1)}{2p}}}\|D_x^{s-6}u_t\|_{L_t^{2}L_x^{\frac{2d}{d-10}}},
\end{split}
\end{equation}
where we choose $r$ such that $2\leq (p-3)r<\infty$, $L_x^{(p-3)r}\subset H_x^{s_c-\frac{s-6}{p-3}+}$ and $L_x^{3r'}\subset W_x^{s-2-,\frac{6d}{3d-2}}$ (observing that $(6,\frac{6d}{3d-2})$ is admissible). Sobolev embeddings then yield a good bound for the terms on the right-hand side of \eqref{s6bound} in this case as well.
\medskip 

\paragraph{\textit{Estimates for $\|D_t^{\frac{s}{2}-2}H\|_{S_+'}$}} Finally, we deal with the remaining term in \Cref{items2}. The objective is to obtain an estimate for $\|D_t^{\frac{s}{2}-2}H\|_{S_+'}$ when $4\leq s<6$. Let us first assume that $\frac{3}{2}<p\leq \frac{s}{2}<3$. As usual, we need to perform a somewhat tedious case analysis.
\medskip

\textbf{Case 1: $d>6$.} First, let us assume that $d>6$. Here, we Sobolev embed in time, apply Minkowski's inequality and then use the variation of the nonlinear estimate in \Cref{nonlinearvariation} to bound
\begin{equation*}\label{edgecase}
\begin{split}
\|D_t^{\frac{s}{2}-2}H\|_{L_t^{2+}L_x^{\frac{2d}{d+2}}}&\lesssim \|D_t^{\frac{s}{2}-2+}H\|_{L_t^{2}L_x^{\frac{2d}{d+2}}}\lesssim \bigg\|\|u\|_{H_t^1}^{p-\theta}\|u\|_{H^2_t}^{\theta}\bigg\|_{L_x^{\frac{2d}{d+2}}},
\end{split}
\end{equation*}
where $\theta:=\frac{s-p-1}{2}+\epsilon$ and $0<\epsilon\ll 1$. We observe that due to the restriction $s<p+\frac{5}{2}$, we have $0<\theta<1$.
\medskip

\textbf{Case 1.1: $s_c>2$.} If we further have $d\geq 10$, then by H\"older's inequality, Minkowski's inequality and Sobolev embeddings, the following estimate holds: 
\begin{equation*}
\begin{split}
\bigg\|\|u\|_{H_t^1}^{p-\theta}\|u\|_{H^2_t}^{\theta}\bigg\|_{L_x^{\frac{2d}{d+2}}}&\lesssim \|u\|_{H_t^1L_x^{\frac{d(p-1)}{p+1}}}^{p-1}\|u\|_{H_t^1L_x^{\frac{2d}{d-2s+2}}}^{1-\theta}\|u\|_{H_t^2L_x^{\frac{2d}{d-2s+6}}}^{\theta}
\\
&\lesssim\|u\|_{H_t^1L_x^{\frac{d(p-1)}{p+1}}}^{p-1}\|u\|_{H_t^2W_x^{s-4,\frac{2d}{d-2}}}^{\theta}\|u\|_{H_t^1W_x^{s-2,\frac{2d}{d-2}}}^{1-\theta}.
\end{split}
\end{equation*}
Since $s_c>2$, we have $\frac{d(p-1)}{p+1}>\frac{2d}{d-2}$ and thus $\|u\|_{H_t^1L_x^{\frac{d(p-1)}{p+1}+}}\lesssim \|u\|_{H_t^1W_x^{s_c-2+,\frac{2d}{d-2}}}$, which yields
\begin{equation*}
\bigg\|\|u\|_{H_t^1}^{p-\theta}\|u\|_{H^2_t}^{\theta}\bigg\|_{L_x^{\frac{2d}{d+2}}}\lesssim \|u\|_{X^{s_0}}^{p-1}\|u\|_{X^s}
\end{equation*}
for some $s_c<s_0<s$. If $6<d\leq 9$, we estimate crudely
\begin{equation*}
\begin{split}
\bigg\|\|u\|_{H_t^1}^{p-\theta}\|u\|_{H^2_t}^{\theta}\bigg\|_{L_x^{\frac{2d}{d+2}}}\lesssim \bigg\|\|u\|_{H_t^1}^{p-1}\|u\|_{H^2_t}\bigg\|_{L_x^{\frac{2d}{d+2}}}\lesssim \|u\|_{H_t^1L_x^{\frac{d(p-1)}{s-2}}}^{p-1}\|u\|_{H_t^2L_x^{\frac{2d}{d-2s+6}}}.
\end{split}
\end{equation*}
The latter factor can be estimated as before. On the other hand, since $s_c>2$ and $d\leq 9$, we must also have $p>\frac{9}{5}$. Combining this with the hypothesis $d> 6$ and $2p\leq s<p+\frac{5}{2}$, we see that $2\leq \frac{d}{p+\frac{1}{2}}(p-1)\leq\frac{d}{s-2}(p-1)\leq \frac{d}{2}$. Since $d\leq 9$, we have the embedding $W_x^{2,\frac{2d}{d-2}}\subset L_x^{\frac{d}{2}}$, and thus, we can crudely estimate by interpolation
\begin{equation*}
\|u\|_{H_t^1L_x^{\frac{d(p-1)}{s-2}}}\lesssim \|u\|_{H_t^1W_x^{2,\frac{2d}{d-2}}\cap W_t^{1,\infty}L_x^2},
\end{equation*}
which suffices. 
\medskip

\textbf{Case 1.2: $s_c\leq 2$.} On the other hand, if $s_c\leq 2$, then by H\"older we have, for every $2\leq r\leq\frac{2d}{d-2}$, the crude estimate
\begin{equation*}
\begin{split}
\bigg\|\|u\|_{H_t^1}^{p-\theta}\|u\|_{H^2_t}^{\theta}\bigg\|_{L_x^{\frac{2d}{d+2}}}\lesssim \bigg\|\|u\|_{H_t^1}^{p-1}\|u\|_{H^2_t}\bigg\|_{L_x^{\frac{2d}{d+2}}}\lesssim \|u\|_{H_t^1L_x^{\frac{r}{r-1}(p-1)}}^{p-1}\|u\|_{H_t^2L_x^r}.
\end{split}
\end{equation*}
If $0\leq s_c\leq 2$ we can take $r=\frac{2d}{d-2}$, which gives by H\"older in time and Sobolev embedding,
\begin{equation*}
\|u\|_{H_t^1L_x^{\frac{r}{r-1}(p-1)}}^{p-1}\|u\|_{H_t^2L_x^{\frac{2d}{d-2}}}\lesssim \|u\|_{W^{1,\infty}_tH_x^{s_c}}^{p-1}\|u\|_{H_t^2L_x^{\frac{2d}{d-2}}}\lesssim \|u\|_{X^{s_0}}^{p-1}\|u\|_{X^s}.
\end{equation*}
On the other hand, if $s_c<0$ then since $p>\frac{3}{2}$ and $d>6$, we have $\frac{d}{2}(p-1)<2<(p-1)d$ and, therefore, we can choose $r$ such that $\frac{r}{r-1}(p-1)=2$. This gives by H\"older in time,
\begin{equation*}
\|u\|_{H_t^1L_x^{\frac{r}{r-1}(p-1)}}^{p-1}\|u\|_{H_t^2L_x^{r}}\lesssim \|u\|_{W^{1,\infty}_tL_x^2}^{p-1}\|u\|_{W_t^{2,q}L_x^{r}}\lesssim \|u\|_{X^{s_0}}^{p-1}\|u\|_{X^s},
\end{equation*}
where $q>2$ is such that $(q,r)$ is admissible.
\medskip

\textbf{Case 2: $d\leq 6$.}
Next, we turn to the case $d\leq 6$. Here we can estimate more crudely using the dual norm $L_t^{1}L_x^2$. First, if $1\leq d\leq 4$, we can  use \Cref{nonlinearvariation} to bound
\begin{equation*}
\|D_t^{\frac{s}{2}-2+}H\|_{L_t^{1+}L_x^{2}}\lesssim \bigg\|\|u\|_{H_t^1}^{p-\theta}\|u\|_{H^2_t}^{\theta}\bigg\|_{L_x^{2}}\lesssim \|u\|_{H_t^1L_x^{\infty-}}^{p-1}\|u\|_{H_t^2L_x^{2+}}\lesssim \|u\|_{X^{s_0}}^{p-1}\|u\|_{X^s},
\end{equation*}
where we used the embedding $H^{4-\epsilon}\subset L_x^{\infty-}$. If $5\leq d\leq 6$ then we instead have 
\begin{equation*}
\|D_t^{\frac{s}{2}-2+}H\|_{L_t^{1+}L_x^{2}}\lesssim \bigg\|\|u\|_{H_t^1}^{p-\theta}\|u\|_{H^2_t}^{\theta}\bigg\|_{L_x^{2}}\lesssim \|u\|_{H_t^1L_x^{(p-1)d}}^{p-1}\|u\|_{H_t^2L_x^{\frac{2d}{d-2}}}\lesssim \|u\|_{X^{s_0}}^{p-1}\|u\|_{X^s},
\end{equation*}
where we used that $2<\frac{d}{2}\leq (p-1)d<2d$ and also the estimate $\|u\|_{H_t^1L_x^{(p-1)d}}\lesssim \|u\|_{H_t^1W_x^{\frac{3}{2},\frac{2d}{d-2}}}$. This concludes the analysis in the case $\frac{s}{2}\geq p$. 
\medskip

It now remains to consider the case $\frac{s}{2}<p$. Here, the number of derivatives applied to the nonlinearity does not exceed the H\"older regularity of the function $z\mapsto |z|^{p-1}z$. Therefore, we can proceed using more standard Moser-type estimates. For this, it is convenient to write $H$ in the form
\begin{equation*}
H=\frac{p+1}{2}\partial_t|u|^{p-1}u_t+\frac{p-1}{2}\partial_t(|u|^{p-3}u^2)\overline{u}_t.
\end{equation*}
We will show the details for estimating the first term, as the latter follows almost identical reasoning. When $d\geq 3$, we will use the dual norm $L_t^2L_x^{\frac{2d}{d+2}}$. For any $\frac{2d}{d+2}<r_1,r_2,r_3<\infty$ satisfying $\frac{1}{r_1}+\frac{1}{r_2}+\frac{1}{r_3}=\frac{d+2}{2d}$, the fractional Leibniz rule combined with the vector-valued Moser bound in \Cref{Moservec} and Sobolev embedding in $t$ yields the simple bound
\begin{equation*}
\begin{split}
\|D_t^{\frac{s}{2}-2+}(\partial_t|u|^{p-1}u_t)\|_{L_t^{2+}L_x^{\frac{2d}{d+2}}}&\lesssim \|D_t^{\frac{s}{2}-2+}\partial_t|u|^{p-1}\|_{L_t^{2+}L_x^{\frac{2dr_1}{d(r_1-2)+2r_1}}}\|u_t\|_{L_t^{\infty}L_x^{r_1}}
\\
&+\|u_t\|_{L_t^{\infty}L_x^{r_1}}\|u\|_{H^{\frac{s}{2}-1+}_tL_x^{r_2}}\|u\|^{p-2}_{L_t^{\infty}L_x^{(p-2)r_3}}
\\
&\lesssim \|u_t\|_{L_t^{\infty}L_x^{r_1}}\|u\|_{H^{\frac{s}{2}-1+}_tL_x^{r_2}}\|u\|^{p-2}_{L_t^{\infty}L_x^{(p-2)r_3}}.
\end{split}
\end{equation*}
It is straightforward to verify (as in the case analysis of many of the previous estimates) that one can choose $r_1,r_2,r_3$ such that we have
\begin{equation*}
\|D_t^{\frac{s}{2}-2+}(\partial_t|u|^{p-1}u_t)\|_{L_t^{2+}L_x^{\frac{2d}{d+2}}}\lesssim \|u\|_{X^{s_0}}^{p-1}\|u\|_{X^s}.
\end{equation*}
Note that in the last estimate we interpolated
\begin{equation*}
\|u\|_{H_t^{\frac{s}{2}-1+}W_x^{2-,\frac{2d}{d-2}}}\lesssim \|u\|_{H_t^1W_x^{s-2,\frac{2d}{d-2}}}+\|u\|_{H_t^2W_x^{s-4,\frac{2d}{d-2}}}\lesssim \|u\|_{X^s}.
\end{equation*}
When $1\leq d\leq 2$, one can simply carry out a similar (and even more straightforward) estimate for $\|D_t^{\frac{s}{2}-2+}(\partial_t|u|^{p-1}u_t)\|_{L_t^{1+}L_x^{2}}$. We omit the details as the analysis is again similar to many of the previous estimates in this section.
\subsection{Proof of well-posedness for the nonlinear Schr\"odinger equation}
In this subsection, we aim to establish \Cref{M1 OOTP}. Below, we will restrict our attention to the case $s>2$ since the remaining cases are already known (see \cite{MR3546788,MR4581790,MR3917711} and references therein). 
\subsubsection{Difference bounds for the nonlinear Schr\"odinger equation} As in the case of the nonlinear heat equation \eqref{NLH}, to construct solutions and ultimately obtain continuity of the data-to solution-map in $H^s(\mathbb{R}^d)$, we will need good difference bounds for \eqref{NLS} in a weaker topology on the time interval $[-T,T]$. The following lemma is standard, but we include the proof for completeness.
\begin{lemma}\label{NLSdiff}
Let $u_1$ and $u_2$ be $C([-T,T]; H_x^{s}(\mathbb{R}^d))$ solutions to the nonlinear Schr\"odinger equation \eqref{NLS} for some time $T>0$. Assume that $s>\max\{2,s_c\}$. For every $s_0$ satisfying $\max\{2,s_c\}<s_0\leq s$, we can choose $T$ depending only the size of $u_1$ and $u_2$ in $L_T^{\infty}H^{s_0}_x(\mathbb{R}^d)$ (i.e.~the weaker norm) such that
\begin{equation*}
\|u_1-u_2\|_{S}\lesssim \|u_1(0)-u_2(0)\|_{L_x^2}
\end{equation*}
where we write $S:=L_T^{\infty}L_x^2$ if $1\leq d\leq 2$ and $S:=L_T^{\infty}L_x^2\cap L_T^2L_x^{\frac{2d}{d-2}}$ if $d\geq 3$.
\end{lemma}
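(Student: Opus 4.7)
The plan is to set $w := u_1 - u_2$, which satisfies the linear inhomogeneous Schr\"odinger equation
\begin{equation*}
(i\partial_t - \Delta)w = |u_1|^{p-1}u_1 - |u_2|^{p-1}u_2 =: f
\end{equation*}
with initial datum $w(0) = u_1(0) - u_2(0)$. Starting from the elementary pointwise bound $|f| \lesssim (|u_1|^{p-1} + |u_2|^{p-1})|w|$ and a standard Strichartz estimate, I would reduce matters to showing that for some admissible pair $(q_0, r_0)$ (avoiding the forbidden endpoints)
\begin{equation*}
\|f\|_{L^{q_0'}_T L^{r_0'}_x} \lesssim T^\delta \|(u_1, u_2)\|_{L^\infty_T H^{s_0}_x}^{p-1} \|w\|_S
\end{equation*}
holds with some $\delta > 0$. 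Once this is in place, the absorption step is routine: choosing $T$ small depending on the fixed $H^{s_0}$ bound for $u_1, u_2$ yields the lemma.

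For $d \in \{1,2\}$ this is immediate, because $s_0 > 2 \geq d/2$ provides the embedding $H^{s_0}(\mathbb{R}^d) \hookrightarrow L^\infty(\mathbb{R}^d)$, and the dual pair $(1,2)$ yields
\begin{equation*}
\|f\|_{L^1_T L^2_x} \lesssim T \|(u_1, u_2)\|_{L^\infty_T L^\infty_x}^{p-1} \|w\|_{L^\infty_T L^2_x}.
\end{equation*}
The genuinely delicate case is $d \geq 3$, where $H^{s_0}$ need not embed into $L^\infty$. Here I would pick $r_1 \geq 2$ satisfying $r_1 > \tfrac{d(p-1)}{2}$ together with $H^{s_0}(\mathbb{R}^d) \hookrightarrow L^{r_1}(\mathbb{R}^d)$ (when $s_c \geq 0$, take $r_1 = \tfrac{d(p-1)}{2} + \epsilon$; when $s_c < 0$, simply take $r_1 = 2$), and then define $r_0$ by the H\"older balance
\begin{equation*}
\tfrac{1}{r_0'} = \tfrac{p-1}{r_1} + \tfrac{1}{r_0}, \qquad \text{equivalently} \qquad 1 - \tfrac{2}{r_0} = \tfrac{p-1}{r_1}.
\end{equation*}
The conditions on $r_1$ translate precisely into $r_0 \in (2, \tfrac{2d}{d-2})$, so the exponent $q_0$ defined by $\tfrac{2}{q_0} + \tfrac{d}{r_0} = \tfrac{d}{2}$ is finite and strictly greater than $2$. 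Applying H\"older in space and then in time, and using the Sobolev embedding to convert $L^{r_1}_x$ to $H^{s_0}_x$, produces the desired bound with $\delta = \tfrac{1}{q_0'} - \tfrac{1}{q_0} > 0$, while $\|w\|_{L^{q_0}_T L^{r_0}_x}$ is controlled by $\|w\|_S$ via interpolation between the $L^\infty_T L^2_x$ and $L^2_T L^{2d/(d-2)}_x$ components.

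The main obstacle is verifying that the choice of $r_1$ above is genuinely legitimate across the whole parameter range of the lemma, including the borderline $L^2$-subcritical case where $\tfrac{d(p-1)}{2} < 2$ forces $r_1 = 2$ and the H\"older balance becomes tight. This relies crucially on the strict inequality $s_0 > s_c$, which guarantees that the upper endpoint $\tfrac{2d}{d-2s_0}$ of the admissible Sobolev range for $H^{s_0}$ strictly exceeds $\tfrac{d(p-1)}{2}$, and hence that the H\"older balance can be arranged with the resulting $r_0$ lying in the open admissible interval rather than at the endpoint $\tfrac{2d}{d-2}$, which is exactly what provides the factor $T^\delta$. Once this admissibility is verified, the final absorption of $T^\delta C \|w\|_S$ into the left-hand side completes the argument.
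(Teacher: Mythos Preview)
Your proof is correct and follows essentially the same strategy as the paper: Strichartz estimates for $w=u_1-u_2$, the pointwise bound $|f|\lesssim (|u_1|^{p-1}+|u_2|^{p-1})|w|$, H\"older in space to place the $u_i$ factors in $L_x^{r_1}$ (controlled via Sobolev embedding from $H^{s_0}$), and a gain of $T^\delta$ from H\"older in time. The only difference is organizational: the paper splits at $d\le 4$ versus $d\ge 5$ (exploiting $H^{s_0}\hookrightarrow L^\infty$ up to $d=4$) and, in the subcritical regime $s_c<0$ with $d\ge 5$, further distinguishes whether $\tfrac{2}{p}$ is dual-admissible; your single H\"older balance $1-\tfrac{2}{r_0}=\tfrac{p-1}{r_1}$ with $r_1=\tfrac{d(p-1)}{2}+\epsilon$ or $r_1=2$ absorbs all of these subcases uniformly for $d\ge 3$, which is marginally cleaner.
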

\begin{proof}
When $1\leq d\leq 4$, Strichartz estimates, H\"older in $T$ and the Sobolev embedding $H^{s_0}\subset L^{\infty}$ yield
\begin{equation*}
\begin{split}
\|u_1-u_2\|_{S}&\lesssim \|u_1(0)-u_2(0)\|_{L_x^2}+T\||u_1|^{p-1}u_1-|u_2|^{p-1}u_2\|_{L_T^{\infty}L_x^2}
\\
&\lesssim\|u_1(0)-u_2(0)\|_{L_x^2}+T\|(u_1,u_2)\|_{L_T^{\infty}H_x^{s_0}}^{p-1}\|u_1-u_2\|_{L_T^{\infty}L_x^2}.
\end{split}
\end{equation*}
On the other hand, if $d\geq 5$, Strichartz estimates  yield
\begin{equation*}
\|u_1-u_2\|_{S}\lesssim \|u_1(0)-u_2(0)\|_{L_x^2}+\||u_1|^{p-1}u_1-|u_2|^{p-1}u_2\|_{S'},
\end{equation*}
where $S'$ is some dual Strichartz norm. If $s_c\geq 0$, we take $S'=L_T^{2-}L_x^{\frac{2d}{d+2}+}$ where $(2-,\frac{2d}{d+2}+)$ is a dual admissible pair with time exponent slightly less than $2$. We then estimate the latter term by 
\begin{equation*}
\begin{split}
\||u_1|^{p-1}u_1-|u_2|^{p-1}u_2\|_{L_T^{2-}L_x^{\frac{2d}{d+2}+}}&\lesssim T^{\delta}\||u_1|^{p-1}u_1-|u_2|^{p-1}u_2\|_{L_T^{2}L_x^{\frac{2d}{d+2}+}}
\\
&\lesssim T^{\delta}\|(u_1,u_2)\|_{L_T^{\infty}L_x^{\frac{d}{2}(p-1)+}}^{p-1}\|u_1-u_2\|_{L_T^2L_x^{\frac{2d}{d-2}}}
\\
&\lesssim T^{\delta}\|(u_1,u_2)\|_{L_T^{\infty}H_x^{s_c+}}^{p-1}\|u_1-u_2\|_{L_T^2L_x^{\frac{2d}{d-2}}}
\end{split}
\end{equation*}
for some $\delta>0$. If $s_c<0$, we must also have  (since $d\geq 5$) the restriction $1<p<2$. If we further have $\frac{2}{d}+1<p< \frac{4}{d}+1$ then $r=\frac{2d}{d(2-p)+2}$ satisfies $2\leq r\leq \frac{2d}{d-2}$. In this case, we have
\begin{equation*}
\||u_1|^{p-1}u_1-|u_2|^{p-1}u_2\|_{L_T^{2}L_x^{\frac{2d}{d+2}}}\lesssim \|u\|^{p-1}_{L_T^{\infty}L_x^{2}}\|u_1-u_2\|_{L_T^2L_x^r}\lesssim T^{\delta}\|u_1-u_2\|_{S}.
\end{equation*}
If $p\leq\frac{2}{d}+1$ then the dual exponent $\frac{2}{2-p}$ of $\frac{2}{p}$ is admissible so that if $q>2$ is the corresponding time exponent, we have 
\begin{equation*}
\begin{split}
\||u_1|^{p-1}u_1-|u_2|^{p-1}u_2\|_{L_T^{q'}L_x^{\frac{2}{p}}}&\lesssim \|(u_1,u_2)\|_{L_T^{\infty}L_x^2}^{p-1}\|u_1-u_2\|_{L_T^{q'}L_x^2}
\\
&\lesssim T^{\delta}\|(u_1,u_2)\|^{p-1}_{L_T^{\infty}L_x^{2}}\|u_1-u_2\|_{S}
\end{split}
\end{equation*}
for some $\delta>0$. The proof is then concluded by taking $T$ small enough.
\end{proof}
\subsubsection{Local well-posedness} Now, we have all of the necessary ingredients to obtain the local well-posedness result in \Cref{M1 OOTP}. We begin by establishing existence. First, we note that by restricting the time interval, it suffices to establish the analogous result for the time-truncated equation for some $0<T<1$ sufficiently small. To this end, we  let $u_0\in H^s(\mathbb{R}^d)$ where $p+\frac{5}{2}>s>\max\{2,s_c\}$. For simplicity of notation, we write $M\coloneq \|u_0\|_{H^s}$ and let $T>0$ be a sufficiently small parameter to be chosen.
\medskip

As a starting point, we follow the scheme used in the proof of well-posedness for the nonlinear heat equation. We begin with a straightforward application of the contraction mapping theorem, which enables us to construct for each $l,j\in\mathbb{N}$ (by only applying space derivatives and using Bernstein's inequality in the spatial variable to estimate the nonlinear term) solutions $u_j^l\in C([-T_j,T_j];H_x^{s+\epsilon}(\mathbb{R}^d))$  to the regularized and time-truncated equation,
\begin{equation*}
\begin{cases}
&(i\partial_t-\Delta)u_j^l=(P_{<j}(|u_j^l|^{p-1}u^l_j))_{T_j},
\\
&u_j^l(0)=P_{<l}u_0.
\end{cases}
\end{equation*}
Note that here we are using the notation from \Cref{truncationestimates}. As in the case of the nonlinear heat equation, a priori, the length of the time interval $[-T_j,T_j]\subset [-T,T]$ depends on $j$. However, from the estimates in \Cref{apriorischrodinger}, we have the uniform in $j$ and $l$ bound,
\begin{equation*}
\|u_j^l\|_{L^{\infty}_t([-T_j,T_j];H_x^{s+\epsilon}(\mathbb{R}^d))}\lesssim_M \|P_{<l}u_0\|_{H^{s+\epsilon}_x(\mathbb{R}^d))}
\end{equation*}
on $[-T_j,T_j]$ for every $\epsilon\geq 0$ sufficiently small. We can use this uniform bound and Bernstein's inequality in $j$ to iterate the contraction mapping theorem to obtain a smooth global solution $u_j^l$ to a time-truncated equation with a wider truncation scale. Iterating this process (and restricting the time interval so that the truncated nonlinearity agrees with $P_{<j}(|u_j^l|^{p-1}u_j^l$)), we obtain a solution $u_j^l$ to the regularized nonlinear Schr\"odinger equation (without truncation),
\begin{equation*}\label{regNLS}
\begin{cases}
&(i\partial_t-\Delta)u_j^l=P_{<j}(|u_j^l|^{p-1}u_j^l),
\\
&u_j^l(0)=P_{<l}u_0,
\end{cases}
\end{equation*}
 on some time interval $[-T,T]$ whose length depends only on $M$ and where $u_j^l$ satisfies the uniform bound
\begin{equation*}\label{ujregbound}
\|u_j^l\|_{L^{\infty}_t([-T,T];H_x^{s+\epsilon}(\mathbb{R}^d))}\lesssim_M \|P_{<l}u_0\|_{H^{s+\epsilon}_x(\mathbb{R}^d)}.
\end{equation*}
Our next aim is to show that for fixed $l$, the sequence $u_j^l$ converges in $C([-T,T];H_x^{s})$ as $j\to\infty$ to a solution $u^l\in C([-T,T];H_x^{s+\epsilon})$ of the original equation \eqref{NLS}, with the regularized initial data $u^l(0)=P_{<l}u_0$. To do this, we write the equation for the difference of two solutions $u^l_j$ and $u^l_k$ for $k<j$ as
\begin{equation*}
\begin{cases}
&(i\partial_t-\Delta)(u_j^l-u_k^l)=P_{<j}(|u_j^l|^{p-1}u_j^l-|u_k^l|^{p-1}u_k^l)+P_{k\leq \cdot <j}(|u_k^l|^{p-1}u_k^l),
\\
&(u_j^l-u_k^l)(0)=0.
\end{cases}
\end{equation*}
First, we note that, as in \Cref{NLSdiff} and the argument for the \eqref{NLH} equation, one can easily show that
\begin{equation*}
\|u_j^l-u_k^l\|_{L_t^{\infty}([-T,T]; L_x^2)}\to 0
\end{equation*}
as $j,k\to\infty$. It follows that (after possibly relabeling the $\epsilon$ above) for every $\epsilon\geq 0$ sufficiently small,  the sequence $u_j^l$ converges in $C([-T,T];H_x^{s+\epsilon})$ to some $u^l\in C([-T,T];H_x^{s+\epsilon})$ solving the time-truncated equation \eqref{truncated NLS} with initial data $P_{<l}u_0$,
\begin{equation*}
\begin{cases}
&(i\partial_t-\Delta)u^l=|u^l|^{p-1}u^l,
\\
&u^l(0)=P_{<l}u_0,
\end{cases}
\end{equation*}
which also satisfies the uniform bound,
\begin{equation}\label{uniflbound}
\|u^l\|_{L_T^{\infty}H_x^{s+\epsilon}}\lesssim_M \|P_{<l}u_0\|_{H_x^{s+\epsilon}}.
\end{equation}
 At this point, we aim to pass to the limit in $l$ to obtain the existence part of our well-posedness theorem. To execute this, we will need to appeal to the language of frequency envelopes. 
\medskip

From here on, we let $c_l$ be a $H^s$ frequency envelope for the initial data $u_0$ in $H^s$. From the bound \eqref{uniflbound} and \Cref{NLSdiff}, we obtain the bounds
\begin{itemize}
\item (Higher regularity bound). For all $0<\epsilon\ll 1$ sufficiently small, there holds
\begin{equation*}
\begin{split}
\|u^l\|_{L_T^{\infty}H_x^{s+\epsilon}}\lesssim_M c_l2^{l\epsilon}.
\end{split}
\end{equation*}
\item (Uniform $H^s$ bound).
\begin{equation*}
\|u^l\|_{L_T^{\infty}H_x^{s}}\lesssim_M 1.
\end{equation*}
\item ($L^2$ difference bound).
\begin{equation*}
\|u^{l+1}-u^l\|_{L_T^{\infty}L_x^2}\lesssim_M c_l2^{-ls}.
\end{equation*}
\end{itemize}
From these bounds and a standard telescoping estimate (see for instance \cite{primer}), there is a function $u\in C([-T,T];H_x^s)$ such that $u^l\to u$ in $C([-T,T];H_x^s)$ as $l\to\infty$ with the error estimate
\begin{equation*}
\|u^l-u\|_{L_T^{\infty}H_x^s}\lesssim \|(c_j)_{j\geq l}\|_{l^2}.
\end{equation*}
This gives the required existence result. We also note that uniqueness follows immediately from the difference bound in \Cref{NLSdiff}. 
\medskip

It remains to establish continuous dependence. Let $\delta>0$. We let $u_0^n$ be a sequence in $H^s$ such that $u_0^n\to u_0$ in $H^s$ with $\|u_0\|_{H_x^s}\lesssim_M 1$. We then let $u^n$ and $u$ be the corresponding $C([-T,T];H^s)$  solutions to \eqref{NLS} (after possibly slightly restricting the time interval) which satisfy the uniform $H^s$ bounds
\begin{equation*}
\|u^n\|_{L_T^{\infty}H_x^{s}}+\|u\|_{L_T^{\infty}H_x^{s}}\lesssim_M 1.
\end{equation*}
Given $l\in\mathbb{N}$, we let $u^{n,l}$ and $u^l$ be the solutions corresponding to the regularized data $P_{<l}u_0$. On the one hand, by the telescoping argument above, we have
\begin{equation}\label{lfreqenvbound}
\|u^l-u\|_{L_T^{\infty}H_x^s}+\|u^{n,l}-u^n\|_{L_T^{\infty}H_x^s}\lesssim \|(c_j)_{j\geq l}\|_{l^2},
\end{equation}
which is a bound which is uniform in $n$ (for $n$ large enough). On the other hand, we can estimate $\|u^{n,l}-u^l\|_{L_T^{\infty}H_x^s}$ by interpolating between the $L_T^{\infty}H_x^{s+\epsilon}$ and $L^2$ bounds for the regularized solutions $u^l$ and $u^{n,l}$ to obtain 
\begin{equation}\label{nlbound}
\|u^{n,l}-u^l\|_{L_T^{\infty}H_x^s}\lesssim \|u^{n,l}-u^l\|_{L_T^{\infty}L^2_x}^{\gamma}\|u^{n,l}-u^l\|_{L_T^{\infty}H_x^{s+\epsilon}}^{1-\gamma}\lesssim_{M,l} \|u^{n}_0-u_0\|_{L_x^2}^{\gamma}
\end{equation}
for some $0<\gamma(\epsilon)<1$. Importantly, the implicit constant in the last estimate above depends only on $M$ and $l$ and not on $n$. To conclude, we then take $l$ sufficiently large so that
\begin{equation*}
\|(c_j)_{j\geq l}\|_{l^2}\lesssim\delta
\end{equation*}
and then take $n=n(l)$ large enough so that
\begin{equation*}
\|u^{n}_0-u_0\|_{L_x^2}^{\gamma}\lesssim \delta.
\end{equation*}
In view of \eqref{lfreqenvbound} and \eqref{nlbound}, we may therefore conclude that
\begin{equation*}
\|u^n-u\|_{L_T^{\infty}H_x^s}\lesssim \delta.
\end{equation*}
This establishes continuous dependence and thus also \Cref{M1 OOTP}.
\subsubsection{Improved well-posedness for one-dimensional Schr\"odinger equations}
Finally, we demonstrate that the low modulation threshold in \Cref{M1 OOTP} can be improved in certain cases by better exploiting the dispersive properties of \eqref{NLS}. This will complete the proof of \Cref{NLS improved}.
\begin{theorem}
Let $p>1$. Then \eqref{NLS} is locally well-posed in $H^s(\mathbb{R})$ when $2<s<\min\{3p,p+\frac{5}{2}\}$.
\end{theorem}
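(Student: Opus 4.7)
The strategy mirrors that of \Cref{M1 OOTP}, except that we replace the low modulation analysis with a refined version tailored to one spatial dimension. The case $s \leq 2$ is covered by standard Strichartz-based arguments, so I focus on $s \in (2, \min\{3p, p+\frac{5}{2}\})$. Since the elliptic estimates from \Cref{truncationestimates} already yield the threshold $s < p+\frac{5}{2}$ regardless of dimension, the whole task reduces to improving the low modulation estimate on $\|D_t^{s/2}(|u|^{p-1}u)\|_{S'_+}$ from the threshold $s < 2p+1$ to $s < 3p$ when the latter is smaller. The core idea is to exploit 1D Strichartz estimates to gain control of $u$ in an $L^r_x$ norm with $r$ larger than $2$, which (via Minkowski's inequality) makes \Cref{Nonlinear estimate} available with a parameter $\tilde q < 2$ in the \emph{time} variable.

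Concretely, set $\tilde q = \frac{2}{p}+\delta$ for a small $\delta > 0$ (so $\tilde q > 1$ precisely because $p<2$, which is the only regime where the improvement is relevant). Enlarging the function space $X^s$ of \Cref{M1 OOTP} to include the mixed norm $W^{s/2,\tilde q}_t L^{2p}_x$, I will establish the following key bound, using $S'_+ = L^{1+}_t L^2_x$: by Hölder in time and Minkowski (since $\tilde q \leq 2$),
\begin{equation*}
\|D_t^{s/2}(|u|^{p-1}u)\|_{L^{1+}_t L^2_x} \lesssim T^{\delta'} \|D_t^{s/2}(|u|^{p-1}u)\|_{L^2_x L^{\tilde q}_t}.
\end{equation*}
Applying \Cref{Nonlinear estimate} pointwise in $x$ with parameter $\tilde q$ (whose admissibility constraint $s/2 < p+1/\tilde q$ is exactly the new threshold $s < 3p$), integrating, and using Hölder in $x$ and then Minkowski (valid since $\tilde q \leq 2p$), gives
\begin{equation*}
\lesssim T^{\delta'} \|u\|_{W^{1/\tilde q+\epsilon,\tilde q}_t L^{2p}_x}^{p-1}\,\|u\|_{W^{s/2,\tilde q}_t L^{2p}_x}.
\end{equation*}

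The final point is that $(q,r)=(\frac{4p}{p-1}, 2p)$ is a Strichartz-admissible pair in one dimension, so the standard Strichartz estimate from \Cref{truncationestimates} controls $\|D_t^{s/2}u\|_{L^{4p/(p-1)}_t L^{2p}_x}$, and a further Hölder in time (using $\tilde q < \frac{4p}{p-1}$) recovers control of $\|D_t^{s/2}u\|_{L^{\tilde q}_t L^{2p}_x}$ with a positive power of $T$. A similar argument handles the lower-order factor $\|u\|_{W^{1/\tilde q+\epsilon,\tilde q}_t L^{2p}_x}^{p-1}$ at a regularity $s_0<s$. One then closes the a priori estimate exactly as in \Cref{subcrit}, and the rest of the well-posedness scheme (difference bounds, frequency-envelope based existence, uniqueness, continuous dependence) carries over verbatim from the proof of \Cref{M1 OOTP}.

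The main obstacle is choosing the space $X^s$ and balancing the various exponents so that every step in the above chain---Minkowski, Hölder in $x$, Hölder in $t$, and the embedding into a genuine Strichartz norm---closes simultaneously while still picking up a small positive power of $T$ for the contraction. A secondary concern is handling the boundary case $\tilde q \to 2$ as $p \to 1^+$, where the gain over the $s<2p+1$ threshold degenerates and one must take $\delta$ sufficiently small in terms of the gap $\min\{3p,p+\tfrac{5}{2}\}-s$; this is purely quantitative and does not affect the scheme. Apart from this bookkeeping, the argument is a relatively clean refinement of \Cref{M1 OOTP} that exploits the one-dimensional Strichartz embedding into low-$q_t$ / high-$r_x$ spaces.
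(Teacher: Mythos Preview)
Your proposal is correct and shares the paper's core idea: apply \Cref{Nonlinear estimate} in the time variable with exponent $\tilde q \approx 2/p$ to push the low-modulation threshold from $s<2p+1$ to $s<3p$, then close via one-dimensional Strichartz. The paper's execution differs only in the choice of dual Strichartz norm: it works in $L_t^{4/(5-p)+}L_x^{2/p}$ (dual to the admissible pair $(\tfrac{4}{p-1},\tfrac{2}{2-p})$), so that after Minkowski and \Cref{Nonlinear estimate} with $\tilde q=2/p$ exactly, a single H\"older in $x$ places both factors of $u$ in $L^2_x$---a norm already present in $X^s$. This sidesteps your detour through the pair $(\tfrac{4p}{p-1},2p)$ and the proposed enlargement of $X^s$, which is in any case superfluous: under the compact-time-support convention of \Cref{apriorischrodinger}, the quantity $\|u\|_{W_t^{s/2,\tilde q}L_x^{2p}}$ arising in your nonlinear estimate is bounded directly by the Strichartz norm $\|D_t^{s/2}u\|_{L_t^{4p/(p-1)}L_x^{2p}}$ via H\"older in $t$, so no new component of $X^s$ needs to be propagated. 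Your bookkeeping with $\tilde q=2/p+\delta$ is marginally more delicate than the paper's exact choice $\tilde q=2/p$, but since $\delta$ can be taken arbitrarily small for any fixed $s<3p$, the thresholds agree.
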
 
\begin{proof}
We will only show how to improve the a priori $X^s$ bound for $D_t^{\frac{s}{2}}u$ from \Cref{apriorischrodinger}. The full well-posedness result will then follow verbatim by adapting the argument from the previous subsection. By \Cref{M1 OOTP} we may assume without loss of generality that $p\leq \frac{3}{2}$ and $s<4$. In this case, $\left(\frac{4}{p-1},\frac{2}{2-p}\right)$ is an admissible Strichartz pair with dual exponent $\left(\frac{4}{5-p},\frac{2}{p}\right)$. Since $\frac{4}{5-p}\leq \frac{2}{p}$ when $1\leq p\leq \frac{5}{3}$, Minkowski's inequality and \Cref{Nonlinear estimate} give the bound
\begin{equation*}
\begin{split}
\big\|D_t^\frac{s}{2}\left(|u|^{p-1}u\right)\|_{L_t^{\frac{4}{5-p}+}L_x^\frac{2}{p}}&\lesssim \big\|D_t^\frac{s}{2}\left(|u|^{p-1}u\right)\|_{L_x^{\frac{2}{p}}L_t^\frac{2}{p}}\lesssim \big\|\|u\|_{H_t^{\frac{p}{2}+\epsilon}}^{p-1}\|u\|_{W_t^{\frac{s}{2},\frac{2}{p}}} \|_{L_x^{\frac{2}{p}}}
\\
&\lesssim \|u\|^{p-1}_{H_t^{\frac{p}{2}+\epsilon}L^2_x}\|u\|_{H_t^\frac{s}{2}L_x^2}\lesssim \|u\|_{X^{s_0}}^{p-1}\|u\|_{X^s},
\end{split}
\end{equation*}
when $\frac{p}{2}<\frac{s}{2}<p+\frac{1}{\frac{2}{p}}=\frac{3}{2}p$ or $s<3p.$ With this estimate and the strategy of the previous two subsections, well-posedness readily follows.
\end{proof}
\begin{remark}
In this particular result, the restriction $s<3p$ comes from the limited decay of the nonlinearity when $p$ is small. Whether this is a serious obstruction that can be used to obtain examples of ill-posedness at this new endpoint we leave as an interesting open question.
\end{remark}
\begin{remark}
When applying \Cref{Nonlinear estimate} in many of the estimates in \Cref{apriorischrodinger}, we had to use Minkowski's inequality to switch the order of the $L^p$ norms in space and time. One might ask if a vector-valued version of \Cref{Nonlinear estimate} of the following form is true whenever $q'<r'$ and $(q,r)$ is Strichartz admissible:
\begin{equation*}
\|D_t^{\frac{s}{2}}(|u|^{p-1}u)\|_{L_t^{q'}L_x^{r'}}\lesssim \|u\|^{p-1}_{X^{s_0}}\|u\|_{W_t^{\frac{s}{2},q}L_x^r},\hspace{5mm} \frac{s}{2}<p+\frac{1}{q},
\end{equation*}
for some reasonable choice of $X^{s_0}$. The motivation for this would be that, in principle, one could use the dual Strichartz norm $L_t^1L_x^2$ to substantially improve the restriction $s<3p$ above (or $s<2p+1$ in higher dimensions). Unfortunately, such an estimate is not true for a general Schwartz function $u$, as the one-dimensional example $u(t,x)=\chi(x)\chi(t)(x-t)$ shows (here $\chi$ is some smooth cutoff function localized to the origin).
\end{remark}
\section{Non-existence results}\label{Ill section}
In this section, we prove our main ill-posedness theorems. Our proof of high regularity non-existence for \eqref{NLS} and \eqref{NLH} is somewhat inspired by the ideas in \cite[Theorem 2.1]{cazenave2017non}. However, it is executed more efficiently and applies to the full range of Sobolev spaces. This, in particular, answers an open question from \cite{cazenave2017non}. Noteworthy features in our argument include: 1) It is no longer restricted to the regime $1<p<2$; 2) It is simpler than the argument in \cite{cazenave2017non};  3) It includes an improved dimension reduction procedure to avoid non-optimal (dimension-dependent) restrictions such as $s>p+2+\frac{d}{q}$; 4) It works at optimal regularity levels and includes the sharp endpoint $s=p+2+\frac{1}{q}$. 
\subsection{Non-existence for the nonlinear Schr\"odinger and heat equations} Our aim in this subsection is to prove the following theorems.
\begin{theorem}\label{ill for S+H}
   Let $p>1$ with $p-1\not\in 2\mathbb{N}$ and $1<q<\infty$. Then there is an initial datum $u_0\in C_c^\infty(\mathbb{R}^d)$ of arbitrarily small norm such that for any $T>0$ there is no corresponding solution $u\in C([0,T];W^{s,q}(\mathbb{R}^d))$ to \eqref{NLH} for any $s\geq \max\{ p+2+\frac{1}{q},s_c\}$.
\end{theorem}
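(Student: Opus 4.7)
\medskip

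The plan is to pick an initial datum whose natural nonlinearity $|u_0|^{p-1}u_0$ has sharply insufficient Sobolev regularity, and then show that the existence of a $C([0,T];W^{s,q})$-solution would force precisely the regularity that fails. Concretely, fix $\chi \in C_c^\infty(\mathbb{R}^d)$ with $\chi(0)=1$ and set $u_0(x) := \epsilon\chi(x)x_1$ for arbitrarily small $\epsilon > 0$. Because $u_0$ vanishes to first order along the hyperplane $\{x_1=0\}$, near this set $|u_0|^{p-1}u_0 \sim \epsilon^p|x_1|^{p-1}x_1$; the one-dimensional finite-difference analysis behind \Cref{Oswald} (the sharpness half of Bourdaud--Meyer--Oswald, \Cref{Abs value reg}), applied slice-wise in $x_1$ and combined with Fubini, gives $|u_0|^{p-1}u_0 \notin W^{p+\frac{1}{q},q}(\mathbb{R}^d)$. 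In particular $|u_0|^{p-1}u_0 \notin W^{s-2,q}(\mathbb{R}^d)$ for every $s \geq p+2+\tfrac{1}{q}$, and crucially this failure is \emph{dimension-independent}, which is precisely the improvement claimed over \cite{cazenave2017non}.

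Next, suppose toward contradiction that a solution $u \in C([0,T];W^{s,q})$ exists. Testing the equation against Schwartz functions against the heat kernel, or invoking uniqueness of mild solutions (provided by \Cref{heatdiff} in the range where \Cref{NLH Well-posedness} applies, and by a separate critical-space uniqueness argument in the residual case $s_c\geq p+2+\tfrac{1}{q}$), yields the Duhamel identity
\[
u(t) = e^{t\Delta}u_0 \pm \int_0^t e^{(t-\tau)\Delta}|u(\tau)|^{p-1}u(\tau)\,d\tau.
\]
Rearranging and dividing by $t$,
\[
\frac{u(t)-e^{t\Delta}u_0}{t} = \pm\frac{1}{t}\int_0^t e^{(t-\tau)\Delta}|u(\tau)|^{p-1}u(\tau)\,d\tau.
\]
By continuity of the Nemytskii map $v\mapsto |v|^{p-1}v$ on $L^\infty \cap L^q$ (using $u \in C([0,T]; W^{s,q}) \subset C([0,T]; L^\infty)$, via Sobolev embedding in the ambient $d$ or by slicing in $x_1$), the right-hand side converges strongly in $L^q(\mathbb{R}^d)$ to $\pm|u_0|^{p-1}u_0$ as $t\to 0^+$.

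The core step is to show that the family $\{t^{-1}(u(t)-e^{t\Delta}u_0):0<t\le t_0\}$ is uniformly bounded in $W^{s-2,q}$. Granted this, Banach--Alaoglu and uniqueness of limits force the $L^q$-limit $\pm|u_0|^{p-1}u_0$ to lie in $W^{s-2,q}$, directly contradicting Step 1. To get the uniform bound, combine the parabolic smoothing from \Cref{Lemma for IHE} ($\|e^{t'\Delta}\|_{W^{\sigma,q}\to W^{\sigma+\mu,q}}\lesssim (t')^{-\mu/2}$ for $0\leq\mu<2$) with the sharp nonlinear estimate \Cref{Nonlinear estimate} and its corollary \Cref{farestimateH}, which controls $\||u(\tau)|^{p-1}u(\tau)\|_{W^{p+\frac{1}{q}-\delta,q}}$ uniformly in $\tau$ for any $\delta > 0$. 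Choosing $\mu=s-2-(p+\tfrac{1}{q})+\delta$ gives an integrable time-weight $(t-\tau)^{-\mu/2}$ and a bound of order $t^{1-\mu/2}$; after division by $t$ this yields the desired uniform estimate.

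The main obstacle is the endpoint case $s=p+2+\tfrac{1}{q}$, where the ``derivative deficit'' $s-2-(p+\tfrac{1}{q})$ equals exactly zero, and plain heat smoothing fails by a logarithmic amount. The fix is to work in the critical Besov refinement $B^{p+1/q}_{q,\infty}$ (in which \Cref{Nonlinear estimate} still barely holds) and to extract the last derivative through a Lorentz/Besov version of the smoothing bound; this endpoint analysis is where the proof is genuinely delicate. A secondary obstacle is the case $s_c\geq p+2+\tfrac{1}{q}$ in high dimensions, in which the Cauchy theory of \Cref{NLH Well-posedness} is vacuous and uniqueness must be argued directly at critical scaling; here \Cref{heatestmodified} (the slightly weaker form of \Cref{farestimateH} at the endpoint $s_0=s_c$) and \Cref{heatlemma2} provide the right tools.
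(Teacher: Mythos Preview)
Your core strategy has a genuine gap at the uniform-bound step. You claim that $t^{-1}(u(t)-e^{t\Delta}u_0)$ is uniformly bounded in $W^{s-2,q}$, but your own computation gives $t^{1-\mu/2}$ \emph{before} division by $t$, hence $t^{-\mu/2}$ \emph{after}, with $\mu=s-2-(p+\tfrac1q)+\delta\ge\delta>0$ for every $s\ge p+2+\tfrac1q$. This diverges as $t\to0$; it is not a logarithmic endpoint defect but a hard positive-power loss, present over the full range of $s$. The reason is structural: \Cref{Nonlinear estimate} and \Cref{farestimateH} place $|u(\tau)|^{p-1}u(\tau)$ only in $W^{\sigma,q}$ for $\sigma<p+\tfrac1q$, never at $\sigma=p+\tfrac1q$, so heat smoothing must absorb a strictly positive number of derivatives, which the $1/t$ then destroys. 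The proposed Besov fix $B^{p+1/q}_{q,\infty}$ cannot rescue this either, because $|u_0|^{p-1}u_0\sim\epsilon^p\chi|x_1|^{p-1}x_1$ actually \emph{belongs} to $B^{p+1/q}_{q,\infty}$ locally --- uniform bounds there would yield no contradiction.

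The paper proceeds by an entirely different route that discards the heat propagator in the contradiction step. One writes the equation in time-integrated form $u(t)=u_0+\int_0^t(|u|^{p-1}u+h)\,ds$ with $h:=\Delta u$ regarded as a source; since $s\ge p+2+\tfrac1q$ one has $h\in C([0,T];W^{s-2,q})\subset L^1([0,T];W^{p+\frac1q,q})$, and odd symmetry in $x_1$ (preserved by uniqueness) forces $h(t,0,x')=0$. A time-regularity interpolation plus the Fubini property of Sobolev spaces (\Cref{Strich fub}) reduces matters to a one-dimensional integral equation in $x_1$ (\Cref{main lem ill}). There one differentiates $n$ times with $n<p<n+1$, isolates the worst piece $I(t,x)=\int_0^t C|u|^{p-n}u_x^n\,ds$, and proves via a pointwise H\"older analysis near $x_1=0$ (\Cref{lemma high reg ill}) together with Strichartz's square-function characterization (\Cref{Strich character}) that $|I(t,x+h)-I(t,x)|\approx t|x|^{p-n}$, whence $I(t,\cdot)\notin W^{p+\frac1q-n,q}$ for $0<t\ll1$. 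Every other term in the differentiated identity lies in $W^{p+\frac1q-n,q}$, contradicting $u(t)\in W^{p+\frac1q,q}$. The decisive difference from your approach is that the time integral contributes a \emph{factor} of $t$ (not $1/t$), and the argument is pointwise in $x$, so no operator bound at the forbidden regularity level is ever required.
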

\begin{theorem}\label{ill for S+H2} Let $p>1$ with $p-1\notin 2\mathbb{N}$. Then there is an initial datum $u_0\in C_c^\infty(\mathbb{R}^d)$ of arbitrarily small norm such that for any $T>0$ there is no  corresponding solution $u\in C([0,T];H^{s}(\mathbb{R}^d))$ to \eqref{NLS} for any $s\geq \max\{p+\frac{5}{2},s_c\}$. 
\end{theorem}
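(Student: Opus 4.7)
First I would pick $u_0 := \lambda\,\chi(x)\,x_1 \in C_c^\infty(\mathbb{R}^d)$ with $\chi$ a smooth cutoff identically $1$ near the origin and $\lambda > 0$ arbitrarily small, so $\|u_0\|_{H^s}$ can be made as small as we please. Since $u_0$ vanishes linearly along the hyperplane $\{x_1=0\}$ and $p-1 \notin 2\mathbb{N}$, the nonlinearity $F_0 := |u_0|^{p-1}u_0$ has local profile $\sim \mathrm{sign}(x_1)|x_1|^p\chi(x)^p$; a direct Fourier computation shows $F_0 \in H^{p+\tfrac12-\delta}(\mathbb{R}^d)\setminus H^{p+\tfrac12}(\mathbb{R}^d)$ for every $\delta > 0$, and in particular $F_0 \notin H^{s-2}$ whenever $s \geq p+\tfrac52$. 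The dimension-independence of the threshold $p+\tfrac12$ is inherited from the codimension-one singular set, replacing the wasteful dimension reduction of \cite{cazenave2017non}. Since $H^s \subset H^{p+5/2}$ for $s \geq p+\tfrac52$, any $C([0,T];H^s)$-solution is in particular a $C([0,T]; H^{p+5/2})$-solution, so it suffices to rule out the case $s = p+\tfrac52$ (the case $s=s_c$ when $s_c>p+\tfrac52$ being analogous).

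Suppose for contradiction a solution $u \in C([0,T]; H^{p+5/2}(\mathbb{R}^d))$ exists with $u(0)=u_0$. The assumption $s \geq s_c$ ensures $f := |u|^{p-1}u$ is a well-defined spacetime distribution, and combined with \Cref{M1 OOTP} forces $u$ to coincide with the unique $C([0,T]; H^{s_1})$-solution for any $s_c < s_1 < p+\tfrac52$; hence $f \in C([0,T]; L^2 \cap L^\infty \cap H^{p+\tfrac12-\epsilon})$ by Sobolev embedding and \Cref{Nonlinear estimate}. Writing $u = e^{-it\Delta}u_0 + v$ with $v(t) = -i\int_0^t e^{-i(t-\tau)\Delta}f(\tau)\,d\tau$, so $v \in C([0,T]; H^{p+5/2})$ and $v(0)=0$, a single integration by parts in $\tau$ in the Duhamel integral yields the Fourier identity
\[
|\xi|^2\,\hat v(t,\xi) \;=\; e^{it|\xi|^2}\hat F_0(\xi) \;-\; \hat f(t,\xi) \;+\; \int_0^t e^{i(t-\tau)|\xi|^2}\,\partial_\tau \hat f(\tau,\xi)\,d\tau.
\]
For the heat equation the analogous identity is immediately conclusive because $e^{t\Delta}F_0$ unconditionally lies in $W^{s-2,q}$; for NLS, since $e^{-it\Delta}$ preserves $H^{s-2}$, one must instead exploit oscillation.

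The plan is to integrate over $t \in [0,T]$. By Parseval and Fubini,
\[
\int_0^T\!\|v(t)\|^2_{H^{p+5/2}}\,dt \;=\; \int|\xi|^{2p+1}\!\int_0^T\!\big|\,|\xi|^2 \hat v(t,\xi)\,\big|^2\,dt\,d\xi,
\]
and the left-hand side is finite, bounded by $T\sup_t\|v\|^2_{H^{p+5/2}}$. On the right, the principal piece $e^{it|\xi|^2}\hat F_0(\xi)$ has time-constant modulus $|\hat F_0(\xi)|$, contributing $T\|F_0\|^2_{H^{p+1/2}} = \infty$. The rapid phase $e^{it|\xi|^2}$ at frequencies $|\xi|^2 T \gg 1$ decorrelates the main term from the non-oscillatory pieces $-\hat f(t,\xi)$ and the integral remainder: an additional integration by parts in $t$ in the cross terms $\int_0^T e^{it|\xi|^2}\hat F_0(\xi)\overline{\hat f(t,\xi)}\,dt$ produces a factor of $|\xi|^{-2}$ at high frequencies, showing that these cross terms cannot cancel the divergence of the main piece.

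The main obstacle is to make the remainder and cross-term control rigorous. Pointwise-in-$t$ $H^{p+1/2}$ bounds on $f(t)$ and $\partial_\tau f(\tau)$ fail for exactly the same reason that $F_0 \notin H^{p+1/2}$, so one must argue in a time-averaged Strichartz-type norm following the scheme of \Cref{apriorischrodinger}: the equation $\partial_\tau u = -i(\Delta u + f)$ places $\partial_\tau u$ in the same Strichartz spaces used to prove \Cref{M1 OOTP}, and then the chain-rule expansion $\partial_\tau f = \tfrac{p+1}{2}|u|^{p-1}\partial_\tau u + \tfrac{p-1}{2}|u|^{p-3}u^2\,\overline{\partial_\tau u}$ is controlled via \Cref{Moservec} and the nonlinear estimates of \Cref{Nonlinear est section}. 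The factor $|\xi|^{-2}$ arising from the integration by parts provides precisely the two derivatives of gain needed to compensate for the extra time derivative on $f$. Once these remainder bounds are in place, the divergence of the main term cannot be cancelled, producing the required contradiction $\infty = \int_0^T\|v\|^2_{H^{p+5/2}}\,dt \le T\sup_t\|v\|^2_{H^{p+5/2}} < \infty$, and completing the proof.
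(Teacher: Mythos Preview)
Your route is genuinely different from the paper's, and the idea of isolating the divergent piece $e^{it|\xi|^2}\hat F_0(\xi)$ via integration by parts in the Duhamel formula is natural. However, there is a real gap at the step you yourself flag as ``the main obstacle.'' The logical structure of your contradiction is: writing $|\xi|^2\hat v = A + B + C$ with $A=-e^{it|\xi|^2}\hat F_0$, you have $\int|\xi|^{2p+1}\int_0^T|A|^2\,dt\,d\xi=T\|F_0\|_{\dot H^{p+1/2}}^2=\infty$ and $\int|\xi|^{2p+1}\int_0^T|B+C|^2\,dt\,d\xi\ge 0$, so the contradiction follows \emph{provided} the cross terms $\int|\xi|^{2p+1}|\int_0^T A\overline{(B+C)}\,dt|\,d\xi$ are finite. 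After your integration by parts, the boundary contributions are fine, but the bulk term forces you to bound
\[
\int |\xi|^{2p-1}\,|\hat F_0(\xi)|\int_0^T|\partial_t\hat f(t,\xi)|\,dt\,d\xi,
\]
which by Cauchy--Schwarz pairs $F_0\in H^{p+\frac12-\epsilon}$ against $\partial_t f$ in $L^1_T H^{p-\frac32+\epsilon}$. But the machinery you invoke (\Cref{apriorischrodinger}, \Cref{Moservec}, \Cref{Nonlinear est section}) is built to close estimates \emph{strictly below} the threshold $s<p+\tfrac52$: it places $\partial_t f$ only in $L^\infty_T H^{p-\frac32-\epsilon}$, missing by exactly the endpoint. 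The $A\bar C$ cross term is worse still, leading after a second integration by parts to $\partial_t^2 f$, which contains factors like $|u|^{p-3}(\partial_t u)^2$ and is even more singular. The failure of these endpoint bounds is not incidental---it is precisely the mechanism that makes $s=p+\tfrac52$ ill-posed---so you cannot use them to derive the contradiction. You are in effect trying to use well-posedness estimates to prove ill-posedness.

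The paper takes a completely different route that sidesteps all frequency-space endpoint issues. It never works with the Duhamel oscillation. Instead it rewrites \eqref{NLS} as the pointwise-in-$x$ integral equation $u(t,x)=u_0(x)-i\int_0^t\bigl(|u|^{p-1}u+\Delta u\bigr)(s,x)\,ds$, reduces to one spatial dimension via the Fubini property of Sobolev spaces (\Cref{Strich fub}), and then for almost every transverse slice $x'$ applies a purely one-dimensional lemma (\Cref{main lem ill}) with $h=\Delta u(\cdot,\cdot,x')$; the oddness of $u$ in $x_1$ forces $h(t,0,x')=0$. That lemma differentiates the time-integrated nonlinearity $n$ times, uses the precise H\"older bound (\Cref{PHB lemma}) to show $|I(t,x+h)-I(t,x)|\approx t|x|^{p-n}$ for $h\sim x$, and concludes via the Strichartz square-function characterization (\Cref{Strich character}) that $I(t,\cdot)\notin W^{p+\frac1q-n,q}$. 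No Strichartz spaces, no endpoint product estimates, no oscillation arguments are needed---the argument is entirely pointwise and works identically for the heat and Schr\"odinger equations.
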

To prove Theorems~\ref{ill for S+H} and \ref{ill for S+H2}, we begin by recalling a characterization of $\| D_x^sf\|_{L^q_x}$ due to Strichartz (see \cite{MR0215084} or the appendix of \cite{MR2318286}). 
\begin{proposition}\label{Strich character}
    For $0<s<1$ and $1<q<\infty$ we have 
\begin{equation*}
    \| D_x^sf\|_{L^q_x}\approx \|\mathcal{D}_s(f)\|_{L^q_x},
\end{equation*}
where
\begin{equation*}
    \mathcal{D}_s(f)(x)=\left(\int_0^\infty\left|\int_{|y|<1}\frac{|f(x+ry)-f(x)|}{r^s}dy\right|^2r^{-1}dr\right)^\frac{1}{2}.
\end{equation*}
\end{proposition}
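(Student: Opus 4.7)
My plan is to prove the two directions of $\|D_x^s f\|_{L^q_x}\approx \|\mathcal{D}_s(f)\|_{L^q_x}$ separately; the inequality $\|D_x^s f\|_{L^q_x}\lesssim \|\mathcal{D}_s(f)\|_{L^q_x}$ reduces to standard Fourier analysis, while the reverse requires a dyadic maximal-function argument to compensate for the fact that the absolute value inside the inner integral of $\mathcal{D}_s(f)$ destroys linearity and precludes direct Fourier arguments. For the easy direction, pulling the outer absolute value inside the $y$-integral via the triangle inequality produces the pointwise bound $\tilde{S}f(x)\le |B_1|\,\mathcal{D}_s(f)(x)$, where
\[
\tilde{S}f(x) := \bigg(\int_0^\infty \bigl|(\phi_r*f)(x)-f(x)\bigr|^2\frac{dr}{r^{1+2s}}\bigg)^{\!1/2},\qquad \phi_r = \frac{1}{|B_1|\,r^d}\mathbf{1}_{B_r},
\]
is the classical continuous Littlewood--Paley square function associated to the approximate identity $\phi_r$. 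The equivalence $\|\tilde{S}f\|_{L^q_x}\approx \|D_x^s f\|_{L^q_x}$ for $0<s<1$ and $1<q<\infty$ is then standard: Plancherel combined with the explicit identity $\int_0^\infty |\hat\phi(r\xi)-1|^2 r^{-1-2s}\,dr = c_0|\xi|^{2s}$ (finite because $\hat\phi(\eta)-1$ vanishes to order $2$ at the origin and is bounded at infinity) yields the $L^2$ case, and the extension to general $1<q<\infty$ follows from a vector-valued Mikhlin-type multiplier theorem applied to the $L^2(\mathbb{R}_+,dr/r)$-valued symbol $\xi\mapsto\bigl(r\mapsto |\xi|^{-s}(\hat\phi(r\xi)-1)\bigr)$.

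For the harder direction, $\|\mathcal{D}_s(f)\|_{L^q_x}\lesssim \|D_x^s f\|_{L^q_x}$, I would combine a dyadic Littlewood--Paley decomposition $f=\sum_k P_k f$ with the Fefferman--Stein vector-valued maximal inequality. The key pointwise input is the standard maximal estimate for bandlimited functions,
\[
|P_k f(x+h)-P_k f(x)|\lesssim \min(1,2^k|h|)\,\mathcal{M}(\tilde P_k f)(x),
\]
with $\tilde P_k$ a slightly fattened projector and $\mathcal{M}$ the Hardy--Littlewood maximal operator. Averaging over $|y|<1$ with $h=ry$ and summing in $k$ dominates the inner integral $A_r f(x) := \int_{|y|<1}|f(x+ry)-f(x)|\,dy$ by $\sum_k \min(1,2^k r)\,\mathcal{M}(\tilde P_k f)(x)$. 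Applying Cauchy--Schwarz in $k$ against the weight $c(r):=\sum_k r^{-s}\min(1,2^k r)2^{-ks}$, which one easily checks is uniformly bounded by $\lesssim 1/(s(1-s))$ when $0<s<1$, and then integrating against $dr/r$ using the elementary identity $\int_0^\infty r^{-s}\min(1,2^k r)\frac{dr}{r}\approx 2^{ks}/(s(1-s))$, yields the pointwise square-function bound
\[
\mathcal{D}_s(f)(x)^2 \lesssim \sum_k 2^{2ks}\,\mathcal{M}(\tilde P_k f)(x)^2.
\]
Taking the $L^q$ norm and invoking Fefferman--Stein (valid for $1<q<\infty$) together with the standard Littlewood--Paley characterization $\bigl\|\bigl(\sum_k 2^{2ks}|\tilde P_k f|^2\bigr)^{1/2}\bigr\|_{L^q_x}\approx \|D_x^s f\|_{L^q_x}$ then concludes the proof.

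The main obstacle is the high-frequency portion $k > -\log_2 r$ in the upper bound: the absolute value inside the $y$-average rules out any Fourier-type cancellation, so one is stuck with the crude pointwise bound $\mathcal{M}(\tilde P_k f)(x)$ without decay in $|k+\log_2 r|$. A naive Minkowski argument in $k$ at this stage would only deliver a Besov-scale bound of the form $\sum_k 2^{ks}\|\tilde P_k f\|_{L^q_x}$, which is strictly weaker than $\|D_x^s f\|_{L^q_x}$ since the latter is of Triebel--Lizorkin ($\ell^2$) rather than Besov ($\ell^1$) type. The Cauchy--Schwarz-and-integrate step sketched above is precisely designed to sidestep this by postponing the $\ell^2$-versus-$\ell^1$ issue until after passing to the $L^q$ norm, where Fefferman--Stein is available and recovers the correct square function characterization.
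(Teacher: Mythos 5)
The paper itself does not prove this proposition: it is the classical Strichartz characterization of $W^{s,q}$ via the averaged square function, cited directly from Strichartz's 1967 paper and Taylor's PDE book. So you are supplying a proof where the paper cites out, and yours is a correct modern proof via dyadic Littlewood--Paley decomposition and the Fefferman--Stein vector-valued maximal inequality (Strichartz's original 1967 argument predates Fefferman--Stein and goes through Calderón--Zygmund theory directly, so the routes differ).

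The harder direction $\|\mathcal{D}_s f\|_{L^q}\lesssim\|D_x^s f\|_{L^q}$ is solid as you have written it. The pointwise input $A_r(P_k f)(x)\lesssim\min(1,2^kr)\mathcal{M}(\tilde P_k f)(x)$ is correct precisely because you average over $y\in B_1$: for $2^kr\ge 1$ the averaged quantity is controlled directly by the definition of the maximal function, which is the saving that repairs the apparent lack of decay in the raw Peetre-type pointwise bound. The weight $c(r)=\sum_k r^{-s}\min(1,2^kr)2^{-ks}\lesssim\frac{1}{s(1-s)}$ and the identity $\int_0^\infty r^{-s}\min(1,2^kr)\frac{dr}{r}=\frac{2^{ks}}{s(1-s)}$ both check out, and Fefferman--Stein plus the homogeneous square function characterization of $\dot W^{s,q}$ close the argument. (Note the sums in $k$ should run over $\mathbb{Z}$ since $D_x^s$ is homogeneous, which is implicit in your computation.)

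For the easier direction two points are glossed over, neither fatal. First, a trivial sign: your normalization gives $\tilde S f(x)\le |B_1|^{-1}\mathcal{D}_s(f)(x)$, not $|B_1|\mathcal{D}_s(f)(x)$. Second and more substantively, the ball indicator $\phi=|B_1|^{-1}\mathbf{1}_{B_1}$ is not a smooth kernel: $\partial^\alpha\widehat\phi$ decays only like $|\xi|^{-\frac{d+1}{2}}$, so the convergence of $\int_0^\infty \rho^{2(|\alpha|-s)}|(\partial^\alpha\widehat\phi)(\rho\hat\xi)|^2\frac{d\rho}{\rho}$ at infinity requires $|\alpha|<s+\frac{d+1}{2}$, and for $d$ even and $s\le\frac12$ the integer derivative count $\lfloor d/2\rfloor+1$ needed for the naive vector-valued Mikhlin theorem violates this. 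The standard fix is to note that any smooth nonnegative $\psi\lesssim\mathbf{1}_{B_1}$ with $\int\psi=1$ also satisfies $\tilde S_\psi f\lesssim\mathcal{D}_s(f)$ pointwise (the triangle-inequality step is unchanged), and the Mikhlin argument is then automatic. Finally, Mikhlin as stated gives only $\|\tilde S_\psi f\|_{L^q}\lesssim\|D_x^s f\|_{L^q}$; the reverse inequality you actually need is obtained by the polarization identity $c_0\langle D_x^s f, D_x^s g\rangle=\langle T_\psi f, T_\psi g\rangle_{L^2_x L^2(dr/r)}$ (valid since $T_\psi^*T_\psi=c_0 D_x^{2s}$) together with duality, a step worth stating explicitly. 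With these small repairs your argument is complete.
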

 The heart of the proofs of Theorems~\ref{ill for S+H} and \ref{ill for S+H2} is the following one-dimensional lemma.
\begin{lemma}\label{main lem ill}
 Fix a standard cutoff $\chi$ which is equal to $1$ on $[-1,1]$,  $n\in \mathbb{N}$ and $1<q<\infty$. Suppose that  $p\in (n,n+1)$, $\lambda\in \mathbb{C}\setminus\{0\}$, $\delta>0$ and $T>0$. Let $h\in L^1([0,T];W_x^{p+\frac{1}{q},\,q}(\mathbb{R}))$ be such that $h(t,0)=0$ for almost every $0\leq t\leq T$. Then there is no solution $w\in C([0,T];W_x^{p+\frac{1}{q},q}(\mathbb{R}))$ to the integral equation 
\begin{equation}\label{intequation}
w(t,x)=w_0(x)+\int_{0}^{t}\lambda|w(s,x)|^{p-1} w(s,x)+h(s,x)ds,
\end{equation}
with data $w_0=\delta\chi x$. More specifically, for all $0<t\ll T$ we have 
$$\|w(t)\|_{W_x^{p+\frac{1}{q},\, q}(\mathbb{R})}=\infty.$$
Moreover, a similar result holds when $p$ is an even integer.
\end{lemma}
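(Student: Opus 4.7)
The plan is to suppose, for contradiction, that $w\in C([0,T];W_x^{p+\frac{1}{q},q}(\mathbb{R}))$ satisfies \eqref{intequation}. The one-dimensional Sobolev embedding $W^{p+\frac{1}{q},q}(\mathbb{R})\hookrightarrow C^n(\mathbb{R})$, valid since $p>n$, makes $w$ continuous on $[0,T]\times\mathbb{R}$, so the integral equation holds pointwise. Setting $x=0$, the hypotheses $w_0(0)=0$ and $h(\cdot,0)=0$ together with ODE uniqueness for $\dot{y}=\lambda|y|^{p-1}y$, $y(0)=0$ (valid whenever $p>1$, by a standard comparison argument) force $w(t,0)=0$ for all $t\in[0,T]$.

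Next, I would derive a leading-order expansion on $\{|x|<1\}$, where $\chi\equiv 1$ and $w_0(x)=\delta x$. Setting $H(t,x):=\int_0^t h(s,x)\,ds$ and
\[
r(t,x):=\int_0^t\bigl[|w(s,x)|^{p-1}w(s,x)-|w_0(x)|^{p-1}w_0(x)\bigr]\,ds,
\]
continuity of $w$ in $C([0,T];L^\infty_{\mathrm{loc}})$ and \eqref{intequation} yield, for $|x|<1$ and $t>0$,
\[
w(t,x)=\delta x+\lambda t\,\delta^p|x|^{p-1}x+H(t,x)+\lambda r(t,x).
\]
By assumption $w(t,\cdot)\in W^{p+\frac{1}{q},q}$, the function $\delta\chi x$ is smooth, and $H(t,\cdot)\in W^{p+\frac{1}{q},q}$ since $h\in L^1_tW^{p+\frac{1}{q},q}_x$. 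Thus, provided one can show $r(t,\cdot)\in W^{p+\frac{1}{q},q}$ locally near $x=0$, rearranging the identity forces the function $x\mapsto|x|^{p-1}x$ to lie locally in $W^{p+\frac{1}{q},q}$ near the origin.

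This last conclusion is false, giving the contradiction. When $p+\frac{1}{q}-n\in(0,1)$, one computes $\partial_x^n(|x|^{p-1}x)=c_n|x|^{p-n}$ up to an odd sign factor, and a direct scaling computation using Strichartz's pointwise characterization \Cref{Strich character} produces the local lower bound $\mathcal{D}_{p+1/q-n}(|x|^{p-n})(y)\gtrsim|y|^{-1/q}$ near the origin, whose $q$-th power is not locally integrable. When $p+\frac{1}{q}-n\geq 1$, the $(n+1)$-st derivative behaves like $|x|^{p-n-1}$ with $q(p-n-1)\leq -1$, so it fails even to lie in $L^q_{\mathrm{loc}}$. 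For $p=2m$ an even integer, one has $\partial_x^{p}(|x|^{p-1}x)=c\operatorname{sgn}(x)$, which is not locally in $W^{1/q,q}$, handling the edge case.

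The technical heart of the proof, and the main obstacle, is the claim that $r(t,\cdot)\in W^{p+\frac{1}{q},q}$ locally near the origin, despite the lack of smoothness of $N(z)=|z|^{p-1}z$ at $z=0$. To establish it, one iterates the integral equation to obtain the finer expansion $w(s,x)-w_0(x)=O(s|x|^p)+H(s,x)+O(s^2)$ in $L^\infty_{\mathrm{loc}}$, Taylor-expands $N(z)$ to order $n$ around $z=w_0(x)$ with Hölder remainder $|R_n(w_0,\eta)|\lesssim|\eta|^p$, and carefully tracks how $N^{(k)}(w_0(x))\sim|x|^{p-k}$ and the cancellation $w(s,0)=0$ propagate through each Taylor term. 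The typical term $\eta^k N^{(k)}(w_0)$ acquires an overall vanishing order of at least $|x|^{2p-1}$ near the origin, yielding regularity of type $W^{2p+\frac{1}{q},q}$, strictly better than $p+\frac{1}{q}$; the Taylor remainder and $H$-dependent corrections are estimated directly using \Cref{Nonlinear estimate} together with the assumed regularity of $h$. Once this bookkeeping is carried out, the improved quantitative version also gives $\|w(t)\|_{W^{p+\frac{1}{q},q}}=\infty$ for every sufficiently small $t>0$.
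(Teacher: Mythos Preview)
The overall strategy—isolating a $|x|^{p-1}x$-type singularity and showing it cannot lie in $W^{p+1/q,q}$—matches the paper's, but your execution has a genuine gap at precisely the step you flag as the ``technical heart'': the claim that $r(t,\cdot)\in W^{p+1/q,q}$ locally is \emph{false} in general, and the tools you invoke cannot establish it.

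Concretely, in your Taylor expansion the first-order term $N'(w_0(x))\,\eta(s,x)$ with $\eta=w-w_0$ has $N'(w_0(x))\sim|x|^{p-1}$, while $\eta(s,0)=0$ but $\eta_x(s,0)=\int_0^s h_x(\tau,0)\,d\tau$ is generically nonzero (the hypothesis only fixes $h(\cdot,0)=0$, not $h_x(\cdot,0)$). Hence $N'(w_0)\,\eta\approx c(s)\,|x|^{p-1}x$ near the origin—\emph{exactly} the same singularity as your main term—so $r\notin W^{p+1/q,q}$. Your appeal to \Cref{Nonlinear estimate} for the $H$-dependent corrections also fails: that theorem requires the strict inequality $s<p+\tfrac1q$, whereas here $s=p+\tfrac1q$ is the critical endpoint.

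The paper sidesteps this by never attempting to place the remainder in $W^{p+1/q,q}$. Instead it differentiates $n$ times, isolates the worst piece $I(t,x)$, and computes its H\"older increment \emph{directly}: using $w(s,0)=0$ and $w\in C^{1,\alpha}$ one expands the full solution as $w(s,y)\approx w_x(s,0)\,y$ near $y=0$, which absorbs all $h$-dependent corrections into the single scalar $w_x(s,0)\approx\delta$. This gives $|I(t,x+h)-I(t,x)|\approx t|x|^{p-n}$ with a remainder of order $|x|^{(1+\alpha)(p-n)}\ll|x|^{p-n}$, and then Strichartz's characterization (\Cref{Strich character}) forces $I(t,\cdot)\notin W^{p+1/q-n,q}$. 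Your argument can be repaired along these lines—track the \emph{total} coefficient of the $|x|^{p-1}x$ singularity, which is $\lambda\delta^p t+O(t^2)\neq0$ for small $t$, rather than asserting the remainder is regular—but that is essentially the paper's proof.
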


\begin{proof}
Let us first assume that $1<q<\infty$ and  $p\in (n,n+1-\frac{1}{q})$. Suppose for the sake of contradiction that $w\in C([0,T];W_x^{p+\frac{1}{q},q})$ solves \eqref{intequation}. As in  \Cref{Nonlinear est section}, by repeated applications of the chain rule, we have
\begin{equation*}
\partial_x^n\left(|w|^{p-1}w\right)=C_1 |w|^{p+1-2n}w_x\Re(\overline{w}w_x)^{n-1}+C_2 |w|^{p-1-2n}w\Re(\overline{w}w_x)^n+F,
\end{equation*}
where $C_1$, $C_2$ are explicit constants satisfying $C_1+C_2\neq 0$  and, by interpolation, $F\in L_T^{1}W^{p+\frac{1}{q}-n,q}$ is a term involving a better distribution of derivatives. For simplicity of notation, let us define $v=\partial_xw(t,x)$ and $f=\partial_x^nh(t,x).$ By differentiating the equation $n$ times in space, we have
\begin{equation}\label{n-times diff ill}
\begin{split}
\partial_x^nw(t)=\partial_x^nw_0+&\int_0^t\left(C_1 |w|^{p+1-2n}v\Re(\overline{w}v)^{n-1}+C_2 |w|^{p-1-2n}w\Re(\overline{w}v)^n\right)ds
\\
+&\int_0^t\left( F(s)+f(s)\right)ds.
\end{split}
\end{equation}
Define 
\begin{equation*}
I(t,x):=\int_0^t\left(C_1 |w|^{p+1-2n}v\Re(\overline{w}v)^{n-1}+C_2 |w|^{p-1-2n}w\Re(\overline{w}v)^n\right)ds.
\end{equation*}
We wish to show that $I(t,\cdot)\not \in W^{p+\frac{1}{q}-n,q}(\mathbb{R})$ for every $0<t\ll T$, which will give us the desired conclusion as the other terms on the right-hand side of \eqref{n-times diff ill} are in $L^\infty_TW^{p+\frac{1}{q}-n,q}$. To prove this, we will combine \Cref{Strich character} with the following H\"older type estimate.
\begin{lemma}\label{lemma high reg ill}
Let $0<x\ll 1$ and let $\frac{1}{4}x\leq h\leq 2x$. Then for small enough $t>0$, we have
\begin{equation*}
|I(t,x+h)-I(t,x)|\approx t|x|^{p-n}.
\end{equation*}
\end{lemma}

To conclude \Cref{main lem ill} from  \Cref{lemma high reg ill}, we bound
\begin{equation*}
    \begin{split}
        \int_0^\infty\left|\int_{|y|<1}\frac{|I(t,x+ry)-I(t,x)|}{r^{p-n+\frac{1}{q}}}dy\right|^2r^{-1}dr&\geq \int_{\frac{1}{2}|x|}^{2|x|}\left|\int_{\frac{1}{2}<y<1}\frac{|I(t,x+ry)-I(t,x)|}{r^{p-n+\frac{1}{q}}}dy\right|^2r^{-1}dr
        \\
        &\gtrsim t^2|x|^{-\frac{2}{q}}.
    \end{split}
\end{equation*}
 Hence, the fact that $I(t,\cdot)\not \in W^{p-n+\frac{1}{q},q}(\mathbb{R})$ for $0<t\ll 1$ follows immediately from \Cref{Strich character}.
\end{proof}
\begin{proof}[Proof of \Cref{lemma high reg ill}]
We consider first the term $|w|^{p+1-2n}v\Re(\overline{w}v)^{n-1}$. Note that we may expand $\Re(\overline{w}v)^{n-1}$ as a linear combination of monomials of the form $w^{k_1}\overline{w}^{l_1}v^{k_2}\overline{v}^{l_2}$ where $k_1+l_1=k_2+l_2=n-1.$ For this reason, we begin by analyzing terms of the form $|w|^{p+1-2n}w^{k_1}\overline{w}^{l_1}$.
\medskip

Note that we have the embedding $W^{p+\frac{1}{q},q}(\mathbb{R})\subset C^{1,\alpha}(\mathbb{R})$ for some $\alpha>0$. Therefore, we  observe that for $y\approx x$ and every $0<\alpha\leq 1$, there holds
 \begin{equation}\label{basic expansion}
|(|w|^{p+1-2n}w^{k_1}\overline{w}^{l_1})(s,y)-(|w_x|^{p+1-2n}w_x^{k_1}\overline{w}_x^{l_1})(s,0)|y|^{p+1-2n}y^{k_1+l_1}|\lesssim_{\|w\|_{C^{1,\alpha}}}|x|^{(1+\alpha)(p-n)}.
\end{equation}
Indeed, this follows from a Taylor expansion, the fact that $w(s,0)=0$, the assumption that $p-n\in (0,1)$ and (a slight variation of) \Cref{PHB}.
\medskip

In the sequel, we let $R$ denote a remainder term with $R\ll_{\|w\|_{C^{1,\alpha}}}|x|^{p-n}$ for some $0<\alpha\leq 1$. From \eqref{basic expansion}, we obtain
\begin{equation*}\label{mainapprox}
\begin{split}
(|w|^{p+1-2n}w^{k_1}\overline{w}^{l_1})(x+h)&-(|w|^{p+1-2n}w^{k_1}\overline{w}^{l_1})(x)
\\
&= (|w_x|^{p+1-2n}w_x^{k_1}\overline{w}_x^{l_1})(s,0)(|x+h|^{p+1-2n}(x+h)^{n-1}-|x|^{p+1-2n}x^{n-1})+R 
\\
&\approx |x|^{p-n}+R
\end{split}
\end{equation*}
where in the last line we used the assumptions $x\geq 0$, $h\geq\frac{1}{4}x$,  $|x|^{(1+\alpha)(p-n)}\ll_{\alpha} |x|^{p-n}$ as well as the fact that for small enough $s$, we have $|w_x(s,0)|\approx 1$ (which follows from the equation satisfied by $w$ and the hypothesis for $h$). Using the above, the equation for $w$ and the hypothesis for $h$, we have $v(s,x)=1+o(1)$ where $o(1)$ denotes an error term that goes to zero as $s\to 0$. We conclude that for $t$ sufficiently small,
\begin{equation*}
\begin{split}
(|w|^{p+1-2n}w^{k_1}\overline{w}^{l_1}v^{k_2+1}\overline{v}^{l_2})(t,x+h)&-(|w|^{p+1-2n}w^{k_1}\overline{w}^{l_1}v^{k_2+1}\overline{v}^{l_2})(t,x)\approx |x|^{p-n}+R.
\end{split}
\end{equation*}
Thus, the same approximation holds for the term $|w|^{p+1-2n}v\Re(\overline{w}v)^{n-1}$. By a similar argument, we may handle the term $|w|^{p-1-2n}w\Re(\overline{w}v)^n$. It then follows from the definition of $R$ that we have
\begin{equation*}
|I(t,x+h)-I(t,x)|\approx t|x|^{p-1}    
\end{equation*}
as desired. This proves \Cref{main lem ill} when  $1<q<\infty$ and $p\in (n,n+1-\frac{1}{q})$. Suppose now that we have fixed $1<q<\infty$ and a general $p\in (n,n+1)$; we wish to show that the conclusion of \Cref{main lem ill} still holds, i.e., $\|w(t)\|_{W_x^{p+\frac{1}{q},\, q}(\mathbb{R})}=\infty$. To do this, we pick $q^*$ large enough so that $p\in (n,n+1-\frac{1}{q^*})$.  By Sobolev embeddings, we may bound the norm $\|\cdot\|_{W_x^{p+\frac{1}{q},\, q}(\mathbb{R})}$ from below by $\|\cdot\|_{W_x^{p+\frac{1}{q^*},\, q^*}(\mathbb{R})}$. In particular, from the assumption that $h\in L^1([0,T];W^{p+\frac{1}{q},\,q}(\mathbb{R}))$, we also have $h\in L^1([0,T];W^{p+\frac{1}{q^*},\,q^*}(\mathbb{R}))$. Therefore, we may apply \Cref{main lem ill}  with the pair $(p,q^*)$ to conclude that $\|w(t)\|_{W_x^{p+\frac{1}{q^*},\, q^*}(\mathbb{R})}=\infty$. This immediately yields that $\|w(t)\|_{W_x^{p+\frac{1}{q},\, q}(\mathbb{R})}=\infty$,  as desired.
\medskip

It now remains to deal with the endpoints $p\in 2\mathbb{N}$. The analysis proceeds similarly to before, but with the sgn function arising at leading order. We leave the straightforward modifications to the reader.
\end{proof}

Note that \Cref{main lem ill} is entirely one-dimensional. To apply it in higher dimensions, we recall a ``Fubini" type theorem for $W^{s,q}(\mathbb{R}^d)$ due to Strichartz \cite{MR0215084}.
\begin{theorem}\label{Strich fub}
    Let $f$ be a function defined almost everywhere on $\mathbb{R}^d$ and let $f_k(x_1,\dots,\hat{x}_k,\dots,x_d)$ be the $W^{s,q}(\mathbb{R})$ norm of $f$ restricted to the line through $(x_1,\dots,x_d)$ parallel to the $x_k$ axis, if it is defined, and $\infty$ otherwise. Then $f\in W^{s,q}(\mathbb{R}^d)$ if and only if $f_k\in L^q(\mathbb{R}^{d-1})$ for $k=1,\dots, d.$ In this case, we have
    \begin{equation*}
        \|f\|_{W^{s,q}(\mathbb{R}^d)}\approx \sum_{k=1}^d \|f_k\|_{L^q(\mathbb{R}^{d-1})}.
    \end{equation*}
\end{theorem}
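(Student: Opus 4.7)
I would prove this classical Fubini-type equivalence by leveraging Strichartz's square-function characterization (\Cref{Strich character}) suitably generalized to $d$ dimensions, after reducing to the range $0<s<1$.

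\emph{Reduction to $0<s<1$.} Writing $s=m+\sigma$ with $m\in\mathbb{N}_0$ and $\sigma\in[0,1)$, one has the standard equivalence
\begin{equation*}
\|f\|_{W^{s,q}(\mathbb{R}^d)} \approx \|f\|_{L^q(\mathbb{R}^d)} + \sum_{|\alpha|=m} \|\partial^\alpha f\|_{W^{\sigma,q}(\mathbb{R}^d)},
\end{equation*}
and an analogous identity for the one-dimensional slice norms, since $\partial_{x_k}^m$ commutes with restriction to a line parallel to the $x_k$-axis, while mixed derivatives can be traded against pure ones using the Fubini theorem for integer Sobolev regularity (the integer case being immediate). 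This reduces the problem to $0<s<1$.

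\emph{The fractional case.} For $0<s<1$, the $d$-dimensional analogue of \Cref{Strich character} gives $\|D_x^s f\|_{L^q(\mathbb{R}^d)} \approx \|\mathcal{D}_s^{(d)}(f)\|_{L^q(\mathbb{R}^d)}$, where
\begin{equation*}
\mathcal{D}_s^{(d)}(f)(x) := \left(\int_0^\infty \left|\int_{|y|<1,\, y\in\mathbb{R}^d}\frac{|f(x+ry)-f(x)|}{r^s}\,dy\right|^2 \frac{dr}{r}\right)^{1/2}.
\end{equation*}
Letting $\mathcal{D}_s^{(1),k}$ denote the one-dimensional Strichartz operator acting only in the $x_k$-variable, Fubini in the perpendicular coordinates gives
\begin{equation*}
\|f_k\|_{L^q(\mathbb{R}^{d-1})}^q \approx \|f\|_{L^q(\mathbb{R}^d)}^q + \|\mathcal{D}_s^{(1),k}(f)\|_{L^q(\mathbb{R}^d)}^q.
\end{equation*}
Thus the theorem reduces to the pointwise-then-$L^q$ equivalence
\begin{equation*}
\|\mathcal{D}_s^{(d)}(f)\|_{L^q(\mathbb{R}^d)} \approx \sum_{k=1}^d \|\mathcal{D}_s^{(1),k}(f)\|_{L^q(\mathbb{R}^d)}.
\end{equation*}

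For the upper bound I would telescope $f(x+ry)-f(x)=\sum_{k=1}^d [f(x+r\pi_k(y))-f(x+r\pi_{k-1}(y))]$ with $\pi_k(y):=(y_1,\ldots,y_k,0,\ldots,0)$, so that each summand is a one-dimensional increment in the $e_k$ direction; the perpendicular coordinates enter as bounded shifts that can be dominated by a composition of Hardy--Littlewood maximal operators. For the lower bound I would restrict the $y$-integral in $\mathcal{D}_s^{(d)}$ to a thin cone around each coordinate axis, whose contribution dominates $\mathcal{D}_s^{(1),k}(f)$ up to a perpendicular average again controlled by a maximal function.

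The main obstacle is the \emph{vector-valued} (Fefferman--Stein) maximal inequality that governs the $L^2$-in-$r$ square-function structure in these comparisons; this is the step where the hypothesis $1<q<\infty$ is genuinely used. A secondary technical point is the reduction to $0<s<1$, where one must carefully commute derivatives with the restriction-to-a-line operation and verify that trading mixed derivatives for pure ones does not produce spurious integer-regularity losses.
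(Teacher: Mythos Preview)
The paper does not prove \Cref{Strich fub}; it is stated as a known result and attributed to Strichartz \cite{MR0215084}, with no argument given. So there is no ``paper's own proof'' to compare against.

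Your outline is a reasonable sketch of how such a result is established, and indeed the square-function characterization in \Cref{Strich character} is precisely the tool Strichartz introduced to prove this Fubini property in his original paper. The reduction to $0<s<1$ via integer differentiation, followed by comparing the $d$-dimensional and axis-aligned one-dimensional Strichartz square functions through telescoping increments and maximal-function control, is essentially the standard route. The one place your sketch is thin is the lower bound: restricting the ball average in $\mathcal{D}_s^{(d)}$ to a thin cone around $e_k$ does give a one-dimensional increment, but dominating the full $\mathcal{D}_s^{(1),k}(f)$ from this requires some care with the perpendicular averaging, and you would need to make precise which maximal operator handles this and why the $L^2(dr/r)$ structure survives. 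If you want a complete proof rather than an outline, you should consult Strichartz's original argument in \cite{MR0215084}.
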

Now, we are ready to prove Theorems~\ref{ill for S+H} and \ref{ill for S+H2}.
\begin{proof}[Proof of Theorems~\ref{ill for S+H} and \ref{ill for S+H2}.]
We will show the details for the proof of \Cref{ill for S+H} as \Cref{ill for S+H2} follows from identical reasoning. 
\medskip

    Our first task will be to reduce the question of non-existence to an application of the one-dimensional \Cref{main lem ill}. To this end, for every $x\in \mathbb{R}^d$ (with $d\geq 2$), we write $x=(x_1,x')$ and consider an initial datum $u_0\in C_c^\infty(\mathbb{R}^d)$ of the form $u_0(x_1,x')=\delta \chi_1(x_1)\chi_2(x') x_1$ where $\chi_1$ and $\chi_2$ are radial $C_c^\infty$ cutoffs (separately in $x_1$ and $x'$) which are equal to one on the unit scale. Suppose for the sake of contradiction that there exists a solution $u\in  C([0,T]; W^{s,q}(\mathbb{R}^d))$ to \eqref{NLH} with $s\geq\max\{p+2+\frac{1}{q},s_c\}$ and initial data $u_0$. From the hypothesis on $s$, standard difference type estimates ensure that $u$ is unique and that $u$ (and therefore, $\Delta u$) is odd in $x_1$ (in the sense of distributions). 
    \medskip

Our main task at this point is to show that for almost every $x'\in B_1'\coloneq \{x'\in\mathbb{R}^{d-1}: |x'|\leq 1\}$ (i.e.~the region where $\chi_2=1$), the restriction  $u(\cdot,\cdot,x')\in C([0,T]; W^{p+\frac{1}{q},q}(\mathbb{R}))$ solves the integral equation \eqref{intequation} and that $h:=\Delta u(\cdot,\cdot,x')\in L^1([0,T];W^{p+\frac{1}{q},q}(\mathbb{R}))$ with $h(t,0,x')=0$ for almost every $t$. By \Cref{main lem ill}, this will immediately give a contradiction. 
    \medskip
    
  To proceed, we begin by first observing directly from the equation \eqref{NLH} and \Cref{heatestmodified} that $u\in C^{1}\left([0,T];W^{p+\frac{1}{q}-\epsilon,q}(\mathbb{R}^d)\right)$. Interpolating with $u\in C([0,T];W^{p+2+\frac{1}{q},q}(\mathbb{R}^d))$ as on \cite[p.~43]{MR3753604}, we have $u\in C^{1-\epsilon}([0,T];W^{p+\frac{1}{q},q}(\mathbb{R}^d))$ for some $\epsilon>0$ small. In particular,  $u\in W^{1-\epsilon,q}\left([0,T];W^{p+\frac{1}{q},q}(\mathbb{R}^d)\right)$. By the Fubini property, it follows that
    \begin{equation*}
        u(x')\in W^{1-\epsilon,q}\left([0,T];W^{p+\frac{1}{q},q}(\mathbb{R})\right)
    \end{equation*}
     for almost every $x'\in \mathbb{R}^{d-1}$. By Sobolev embedding in time, it follows that for such $x'$ we have
     \begin{equation*}
        u(x')\in C\left([0,T];W^{p+\frac{1}{q},q}(\mathbb{R})\right).
    \end{equation*}
 Similarly, since $\Delta u\in C([0,T];W^{p+\frac{1}{q},q}(\mathbb{R}^d))$, we have  $\Delta u\in L^q([0,T];W^{p+\frac{1}{q},q}(\mathbb{R}^d))$, so Fubini can again be applied (together with H\"older in time) to deduce the desired regularity for $h$ for almost every $x'\in\mathbb{R}^{d-1}$.
 \medskip
 
We now define the function 
\begin{equation*}
f(t,x):=u(t,x)-u_0(x)+i\int_{0}^{t}|u(s,x)|^{p-1}u(s,x)ds+i\int_{0}^{t}(\Delta u)(s,x)ds.
\end{equation*}
We know that $f\in C([0,T]; L^q_x(\mathbb{R}^d))$ by \Cref{farestimateH}  and since $u$ is a solution to \eqref{NLS} we have that $f$ is equal to zero almost everywhere. In particular, by Fubini, for almost every $x'\in \mathbb{R}^{d-1}$ we have 
\begin{equation*}
        \|f(x')\|_{L^{\infty}_TL^q_x(\mathbb{R})}=0.
    \end{equation*}
So, for almost every $x'\in \mathbb{R}^{d-1},$ we have 
\begin{equation*}
       f(t,x_1,x')=0
    \end{equation*}
    for almost every $(t,x_1)\in [0,T]\times \mathbb{R}.$ On the other hand, since (from above) $u(x')\in C\left([0,T];W^{p+\frac{1}{q},q}\right)$ and $h\in L^1([0,T];W_x^{p+\frac{1}{q},q})$, we may deduce that for almost every $x'$, there holds $f(t,x_1,x')=0$ for all $t,x_1$. We can then apply \Cref{main lem ill} to obtain a contradiction and conclude the proof.
\end{proof}



\bibliographystyle{plain}
\bibliography{refs.bib}

\end{document}